\renewcommand\leq\leqslant
\renewcommand\geq\geqslant
\definecolor{ChadDarkBlue}{rgb}{.1,0,.2}  
\definecolor{ChadBlue}{rgb}{.1,.1,.5}  
\definecolor{ChadRoyal}{rgb}{.2,.2,.8}  
\definecolor{ChadGreen}{rgb}{0,.4,0}    
\definecolor{ChadRed}{rgb}{.5,0,.5} 
\definecolor{QueenBlue}{RGB}{8, 109, 133}
\definecolor{NiceGreen}{RGB}{25, 150, 100}
\definecolor{PaulGreen}{rgb}{0.0,.5,.0}
\definecolor{CP1}{HTML}{0F4C75}
\definecolor{CP2}{HTML}{3282B8}
\definecolor{CP3}{HTML}{097969}
\newcommand{\mytag}[2]{%
	\text{#1}%
	\@bsphack
	\begingroup
	\@onelevel@sanitize\@currentlabelname
	\edef\@currentlabelname{%
		\expandafter\strip@period\@currentlabelname\relax.\relax\@@@%
	}%
	\protected@write\@auxout{}{%
		\string\newlabel{#2}{%
			{#1}%
			{\thepage}%
			{\@currentlabelname}%
			{\@currentHref}{}%
		}%
	}%
	\endgroup
	\@esphack
}
\theoremstyle{plain}
\newtheorem{thm}{Theorem}[section]
\newtheorem{prp}[thm]{Proposition}
\newtheorem{lem}[thm]{Lemma}
\newtheorem{cor}[thm]{Corollary}
\theoremstyle{remark}
\newtheorem{dfn}[thm]{Definition}
\newtheorem{rem}{Remark}
\def\beq{\begin{equation}} 
\def\eeq{\end{equation}}
\def\beqn{\begin{eqnarray*}}
\def\eeqn{\end{eqnarray*}}
\def\Bal{\begin{align}}
\def\Eal{\end{align}}
\def\Bitem{\begin{itemize}\setlength{\itemsep}{.2in}}
\def\bitem{\begin{itemize}\setlength{\itemsep}{.05in}}
	\def\eitem{\end{itemize}}
\def\blatin{\begin{enumerate}\setlength{\itemsep}{.05in}\renewcommand{\labelenumi}{\roman{enumi}.}}
	\def\elatin{\end{enumerate}}
\def\Benum{\begin{enumerate}\setlength{\itemsep}{.2in}}
	\def\benum{\begin{enumerate}\setlength{\itemsep}{.05in}}
		\def\eenum{\end{enumerate}}
	\def\bmult{\begin{multline*}}
		\def\emult{\end{multline*}}
	\def\bcenter{\begin{center}}
		\def\ecenter{\end{center}}
	\def\bframe{\begin{frame}}
		\def\eframe{\end{frame}}
\newcommand{\thmref}[1]{Theorem~\ref{thm:#1}}
\newcommand{\prpref}[1]{Proposition~\ref{prp:#1}}
\newcommand{\corref}[1]{Corollary~\ref{cor:#1}}
\newcommand{\lemref}[1]{Lemma~\ref{lem:#1}}
\newcommand{\secref}[1]{Section~\ref{sec:#1}}
\newcommand{\subsecref}[1]{Subsection~\ref{subsec:#1}}
\newcommand{\figref}[1]{Figure~\ref{fig:#1}}
\newcommand{\dfnref}[1]{Definition~\ref{dfn:#1}}
\newcommand{\appref}[1]{Appendix~\ref{app:#1}}
\DeclareMathOperator{\diam}{diam}
\DeclareMathOperator{\tr}{tr}
\DeclareMathOperator{\diag}{diag}
\def\cA{\mathcal{A}}
\def\cB{\mathcal{B}}
\def\cC{\mathcal{C}}
\def\cD{\mathcal{D}}
\def\cE{\mathcal{E}}
\def\cF{\mathcal{F}}
\def\cG{\mathcal{G}}
\def\cH{\mathcal{H}}
\def\cI{\mathcal{I}}
\def\cJ{\mathcal{J}}
\def\cL{\mathcal{L}}
\def\cO{\mathcal{O}}
\def\cP{\mathcal{P}}
\def\cQ{\mathcal{Q}}
\def\cS{\mathcal{S}}
\def\cT{\mathcal{T}}
\def\cU{\mathcal{U}}
\def\cV{\mathcal{V}}
\def\cW{\mathcal{W}}
\def\cX{\mathcal{X}}
\def\cZ{\mathcal{Z}}
\newcommand{\bbeta}{{\boldsymbol\beta}}
\def\bbE{\mathbb{E}}
\def\bbN{\mathbb{N}}
\def\bbP{\mathbb{P}}
\def\bbR{\mathbb{R}}
\def\ind{\mathbbm{1}}
\newcommand{\<}{\langle}
\renewcommand{\>}{\rangle}
\let\lac\{
\let\rac\}
\renewcommand{\{}{\left\lac}
\renewcommand{\}}{\right\rac}
\newcommand{\inner}[2]{\langle #1, #2 \rangle}
\newcommand{\ceil}[1]{\lceil #1 \rceil}
\def\implies{\ \Rightarrow \ }
\def\iff{\ \Leftrightarrow \ }
\def\1{\mathbbm{1}}
\def\({\left(}
\def\){\right)}
\DeclareMathOperator{\op}{op} 
\DeclareMathOperator{\Card}{Card}
\DeclareMathOperator{\Leb}{Leb}
\DeclareMathOperator{\Id}{Id}
\DeclareMathOperator{\Vect}{Vect}
\DeclareMathOperator{\unif}{Unif}
\DeclareMathOperator{\pr}{pr}
\DeclareMathOperator{\supp}{supp}
\DeclareMathOperator{\vol}{vol}
\newcommand{\ve}{\varepsilon}
\newcommand*\diff{\mathop{}\!\mathrm{d}}
\newcommand*\Diff{\mathop{}\!\mathrm{D}}
\newcommand\wt{\widetilde}
\newcommand\wh{\widehat}
\DeclareMathOperator{\an}{an}
\DeclareMathOperator{\iso}{iso}
\newcommand{\hbm}{\cH^{\beta_0,\beta_\bot}_\delta(M,L)}
\newcommand{\hbu}{\cH_{\an}^{\bm\beta}(\cU,L,\zeta)}
\DeclareMathOperator{\hel}{d_H}
\DeclareMathOperator{\dhe}{d_H}
\DeclareMathOperator{\dhed}{d_H^2}
\newcommand*\ball{\mathsf{B}}
\DeclareMathOperator{\DP}{DP}
\DeclareMathOperator{\Invg}{Inv\Gamma}
\DeclareMathOperator{\BMF}{BMF}
\DeclareMathOperator{\Exp}{Exp}
\DeclareMathOperator{\Gibbs}{Gibbs}
\DeclareMathOperator{\pk}{pk}
\DeclareMathOperator{\Beta}{Beta}
\newcommand{\newclem}[1]{{\textcolor{black}{#1}}}
\newcommand{\newju}[1]{\textcolor{black}{#1}}
\newcommand{\newpaul}[1]{{\textcolor{black}{#1}}}
\begin{document}
	
		\title{Estimating a density near an unknown manifold: a Bayesian nonparametric approach}
		
		\author{
		Cl\'ement Berenfeld%
		\footnote{Institut für Mathematik, Universität Potsdam, Germany, \texttt{\url{berenfeld@uni-potsdam.de}}}
		\and
		Paul Rosa%
		\footnote{University of Oxford, Department of Statistics, Oxford, UK, \url{paul.rosa@jesus.ox.ac.uk}}
		\and
		Judith Rousseau%
		\footnote{University of Oxford, Department of Statistics, Oxford, UK, \url{judith.rousseau@stats.ox.ac.uk}}
	}
		\date{}

		\maketitle

		\begin{abstract}
			We study the Bayesian density estimation of data living in the offset of an unknown submanifold of the Euclidean space. In this perspective, we introduce a new notion of anisotropic H\"older for the underlying density and obtain posterior rates that are minimax optimal and adaptive to the regularity of the density, to the intrinsic dimension of the manifold, and to the size of the offset, provided that the latter is not too small --- while still allowed to go to zero. Our Bayesian procedure, based on location-scale mixtures of Gaussians, appears to be convenient to implement and yields good practical results, even for quite singular data. 
		\end{abstract}


	\section{Introduction}
	
	%
	
	\subsection{Manifold density estimation}
	
In many high dimensional statistical problems it is common to consider that the data has an intrinsic low dimensional structure. More precisely, statistics and computer sciences have seen a growing interest in the so-called \emph{manifold hypothesis} where the data is believed to be supported (or near supported) on a low dimensional submanifold $M$ of an ambient space (see \cite{ma2012manifold} for an introduction).  

There are good intuitive reasons to believe that real world data (such as natural images, sounds,  texts, etc) belong to the vicinity of a low dimensional submanifold, often due to physical constraints, see for instance \cite{lee2007nonlinear} or \cite{fefferman2016testing}. Empirical evidence has also been shown in a number of important cases such as texts data sets \cite{belkin2001laplacian}, sounds \cite{klein1970vowel,belkin2001laplacian}, images and videos \cite{weinberger2006unsupervised,VAEBayes} or more recently in Covid data \cite{MengersenCovid}. Analysing such data sets is often called manifold learning (see \cite{ma2012manifold} for an introduction). Manifold learning deal with either nonlinear dimension reduction techniques, manifold estimation or the construction of generative models and the estimation of the distribution on or near an unknown manifold. These  problems are strongly connected. Dimension reduction  consists in finding low dimensional representations of the data. This is typically done by constructing mappings as in Kernel PCA \cite{scholkopf1998nonlinear} or graph based methods such as Isomap \cite{tenenbaum2000global},  Locally Linear Embeddings \cite{roweis2000nonlinear} or Laplacian Eigenmaps \cite{LaplacianEigenmaps}.  Instead of estimating an embedding, the problem of  reconstructing the manifold is another popular aspect of  manifold learning, see   \cite{Genovese_2012, aamari2019nonasymptotic, Divol_minimax, Puchkin2022}   or \cite{dunson2021inferring} among others. Finally the estimation of the distributions on or near manifolds  and the construction of generative models have received recent wide interests in the statistics and machine learning communities, specially with the developments of deep learning algorithms.  There is a growing literature on generative models under the manifold hypothesis with many methodological developments around variational autoencoders (see \cite[Sec 14.6]{DeepLearning} or \cite{VAEBayes}), Generative adversarial networks (see \cite{GANs,GansMethods,WGANs} among others) or recent versions of normalizing flows (see \cite{horvat2021density}). The theoretical results associated to these approaches control the error between the true generative process and the estimated generative models typically under adversarial losses such as the Wasserstein distance,  as in \cite{tang2022minimax}, since the focus is more on generating interesting samples than on estimating the distribution per se. 
\\
	
In this paper we study the estimation of the density in the vicinity of an unknown submanifold $M$ \newclem{of the ambient space $\mathbb R^D$, of dimension $1 \leq d \leq D-1$}. Density estimation is a canonical  problem in statistics and machine learning and in addition to being of interest in itself can be used as an intermediate steps in many tasks of unsupervised or supervised learning such as clustering, prediction and classification, dimensionality reduction  or in ridge estimation, see for instance \cite{genovese2014nonparametric, chen2015asymptotic, mukhopadhyay2020estimating} among many others.
 Density or distribution estimation under the exact manifold hypothesis (assuming that the data belong to a submanifold) has been studied theoretically  for instance in \cite{ozakin2009submanifold} or \cite{divol2021reconstructing} under Wasserstein losses and in \cite{berenfeld2021density} under the pointwise loss.
 Assuming that the data belong exactly to a smooth manifold may be too restrictive since signals are often corrupted by noise. Hence in this paper we assume that the data belong to a neighbourhood $M^\delta = \{ x \in \mathbb R^D; d(x, M) \leq \delta \}$, where neither $M$ nor $\delta$ or the density $f$ are  known. This problem is studied in \cite{chae2021likelihood} in the  special case of data corrupted with Gaussian noise and \cite{mukhopadhyay2020estimating} proposes a Bayesian nonparametric method to estimate a density on $M^\delta$ based on mixtures of Fisher - Gaussian distributions for which they prove consistency under the assumption that the width $\delta$ of the tube $M^\delta$ is fixed.

 \subsection{Our approach and contributions}
 
As far as we are aware there is no theoretical results on convergence rates - either from a frequentist or a Bayesian approach - for estimating a density on tubes $M^\delta$ when $M$  and $\delta$ are unknown and $\delta$ is possibly small. In this paper we bridge this gap and we propose a  Bayesian nonparametric method based on specific families of location-scale  mixtures of Gaussian distributions. We study the posterior concentration rates associated to these priors, i.e. the smallest possible $\ve_n$ such that 
$$ \Pi( d(f_0, f) \leq \ve_n | X_1, \dots , X_n)\rightarrow 1,$$
in probability when the data $X_1, \dots, X_n$ are a $n$ sample from $f_0$ and  where $\Pi( \cdot  | X_1, \dots , X_n)$ denotes the posterior distribution, see \cite{ghosal2000convergence}. As is well known, when the distance $d(.,.)$ is the Hellinger or the $L_1$ metric,  this posterior concentration  rates induces  also a convergence rate $\ve_n$ for the posterior mean $\hat f$, see for instance \cite{ghosal2000convergence}. Typcally the rate $\ve_n$ depends on regularity properties of the density $f_0$ and on the prior. 

To do so we first define a general mathematical  framework describing regularity properties of densities defined on possibly small neighbourhoods $M^\delta$ of submanifolds $M$, with the idea that the density has a given smoothness $\beta_0$ along the manifold $M$ and another smoothness $\beta_\perp$ along the normal to the manifold. This manifold driven anisotropic smoothness is defined in Section \ref{subsec:manihold} and is an extension to anisotropic H\"older functions along coordinate axes. \newju{Because the corruption of a signal by some small noise can be modelled in many different ways, with additive homoscedastic noise being the simplest and most restrictive one, our framework only assumes that the density lives on $M^\delta$ and does not model the noise.} As shown in Section \ref{sec:model}, our framework encompasses common models of noisy data. 

Building on that we show that the posterior concentration rate depends on $\beta_0, \beta_\perp$ together \newclem{with the dimension $d$ of $M$} and the width $\delta$ of the tube. Interestingly the prior $\Pi$ does not need to depend on $\beta_0, \beta_\perp, d, \delta$ or $M$ which makes the approach fully adaptive and the rate we obtain, at least when $\delta$ is not too small is of order (up to a polylog term)
$$n^{-\gamma}~~~\text{with}~~~\gamma = \frac{\beta_0}{2\beta_0 + d + (D-d)\beta_0/\beta_{\perp}},$$ 
\newclem{where we recall that $D$ is the dimension of the ambient space,} and is minimax. 
\\

	Nonparametric location mixtures of Gaussians are known to be flexible models for densities, and adaptive minimax rates of convergence on H\"older types spaces have been obtained using Bayesian or frequentist estimation procedures based on location mixtures of normals, see \cite{kruijer2010adaptive, shen2013adaptive}, and \cite{ghosal2007posterior} for Bayesian methods and \cite{maugis2013adaptive} for a penalized likelihood approach. However, location mixtures are not versatile enough since the covariance matrix remains fixed across the components, so we instead take advantage of the flexibility of location-scale mixtures of Gaussians. In   \cite{canale2017posterior} the authors derive a suboptimal posterior concentration rate for isotropic positive H\"older  densities on $\mathbb R^D$, while \cite{maugis2013adaptive} obtained minimax convergence rates for penalized maximum likelihood methods based on the same type of location-scale mixtures and  \cite{naulet2017posterior} obtained also  minimax  posterior concentration rates using a hybrid location-scale mixture prior in the regression model. These results thus indicate that one has to be careful in designing the prior in nonparametric location-scale mixtures of Gaussians. The priors we consider in this paper are variants of location-scale mixture priors, see Section \ref{sec:prior}, which are flexible  enough to adapt to the non linear or manifold driven smoothness of the class of densities studied here. This prior construction can also be seen as tiling the manifold by low-rank \emph{Gaussian pancakes}, a method that is similar to mixtures of factors analyzers \cite{ghahramani1996algorithm,chen2010compressive} or manifold Parzen windows \cite{vincent2002manifold} where, however, no theoretical guarantees on the estimation of the density were proven.
\\

Hence our contributions are both methodological and theoretical.  From a methodological point of view, we provide with a family of versatile priors (see Section \ref{sec:model}) that are shown empirically and theoretically to perform very well in modelling data that are singularly supported near submanifolds. In particular we show empirically that these variants of location-scale mixtures of Gaussian priors behave much better than the standard \textit{conjuguate} location-scale mixture of Gaussian prior, see Section \ref{sec:num}. From a theoretical point of view, we introduce a new notion of H\"older smoothness along a submanifold (see Section \ref{subsec:manihold}) which is proving to be adequate for the study of such almost-degenerate densities, and we derive posterior concentration rates for this new model  (Section \ref{subsec:main}). The rates are optimal if the data do not collapse too quickly towards the manifold. These results rely on an intermediate result in approximation theory which has interests in its own right and is provided in Section \ref{subsec:approx}. 

\subsection{Organisation of the paper}

In Section \ref{sec:model}, we define  manifold-anisotropic H\"older function, together  with the families of priors we consider in the paper. Section \ref{sec:theoretical} contains the main theoretical  results and  \secref{num} the empirical, numerical results. We provide in Section \ref{sec:mainproof} proofs of the main results, namely the contraction rate Theorem \ref{thm:main} and the approximation Theorem \ref{thm:approx}. Some useful facts on manifolds are presented in Appendix \ref{app:A}. The rest of the Appendices contains additional proofs and lemmata, as well as details on the numerical setting of \secref{num}. 
	
	\subsection{Notations}
	For a multi-index $k = (k_1,\dots,k_D) \in \bbN^D$, we set $|k| = k_1 + \dots + k_D$ and $k! = k_1! \dots  k_D!$. For $x \in \bbR^D$, we write $x^k = x_1^{k_1} \dots x_D^{k_D} \in \bbR$ and $x_{\max}$ (resp. $x_{\min}$) to be the maximal (resp. minimal) value of its entries. For any two indices $i,j \in \{1,\dots,D\}$ with $i \leq j$, we set $x_{i:j} = (x_i,\dots,x_j) \in \bbR^{j-i+1}$. Finally, for a sufficiently regular function $f : \bbR^D \to \bbR$, we define its $k$-th partial derivative as
	$$
	\Diff^k f(x) = \frac{\partial^{|k|}f}{\partial x_1^{k_1}\dots \partial x_D^{k_D}} (x).
	$$ 
	
	When $M \subset \bbR^D$ is a measurable subset with Hausdorff dimension $d$,  $\mu_M$ denotes the Borel measure $
	\mu_M = \cH^d(\cdot \cap M)$ where $\cH^d$ is the $d$-dimensional Hausdorff measure on $\bbR^D$. For $r > 0$ and $x \in \bbR^D$, let $\ball_M(x,r) = \ball(x,r)\cap M$ where $\ball(x,r)$ is the usual Euclidean ball of $\bbR^D$. If $M$ is closed, then $\pr_M$ defines the (possible multi-valued) orthonormal projection from $\bbR^D$ to $M$. 
	\\
	
	We will denote by $\|\cdot\|$ the usual Euclidean norm of $\bbR^k$ for any $k \in \bbN^*$. When $\cL$ is a linear map between such spaces, we write $\|\cL\|_{\op}$ for the operator norm associated with the Euclidean norms. The notation $\|\cdot\|_1$ (resp. $\|\cdot\|_\infty$) will refer to both the $L^1$-norm (resp $\sup$-norm) for vectors of $\bbR^k$ for any $k \in \bbN^*$, and to the $L^1$-norm (resp $\sup$-norm) for measurable functions from $\bbR^k$ to $\bbR$ for any $k \in \bbN^*$. The brackets $\inner{\cdot}{\cdot}$ will be used to denote the usual Euclidean product in $\bbR^k$ for any $k\in\bbN^*$. For any matrix $A \in \bbR^{k\times k}$, the notation $\|\cdot\|_A^2$ will refer to the quadratic form over $\bbR^k$ defined by $x\mapsto \inner{Ax}{x}$, which is a squared norm if $A$ is positive definite. The set of orthogonal transform of $\bbR^D$ will be denoted by $\cO(D,\bbR)$, or sometimes simply $\cO(D)$.
	\\
	
	For two positive functions $f,g : \bbR^D \to \bbR$ we write the Hellinger distance as 
	$$
	\hel(f,g) = \{\int_{\bbR^D} (\sqrt{f(x)} - \sqrt{g(x)} )^2 dx \}^{1/2}.
	$$
	
	In this paper, $M$ will designate a closed submanifold of $\bbR^D$ of dimension $1 \leq d \leq D-1$. For any point $x \in M$, the tangent and normal spaces of $M$ at $x$ will be denoted $T_x M$ and $N_x M$, and the corresponding bundles $TM$ and $NM$. We write $\exp_{x} : (T_x M, 0) \to (M,x)$ for the exponential map of $M$ at point $x$. We let $d_{M}(x,y)$ denote the intrinsic distance between $x$ and $y$ in $M$.	
	\\
	
	Finally we will use throughout the symbols $\simeq$, $\lesssim$ and $\gtrsim$ to denote equalities or inequalities up to a constant, when the constant is not important. 
	
		\section{Model : distributions concentrated near manifolds}  \label{sec:model}
	We assume that we observe $X_1, \dots, X_n$ independent and identically distributed from $P_0$ on $\mathbb R^D$ with density $f_0$ with respect to Lebesgue measure. We assume that there is a low dimensional structure underlying our observations, i.e. that $f_0$ has support concentrated near a low dimensional manifold $M$ which is unknown. More precisely there exists $\delta>0$ unknown and typically small such that $P_0(M^\delta)=1$, where $M^\delta$ is the $\delta$-offset of $M$: it is the set of points that are at distance less than $\delta$ from $M$,
	$$
	M^\delta := \bigcup_{x\in M} \ball(x,\delta) = \{z \in \bbR^D~|~d(z,M) \leq \delta\}.
	$$
A typical example is when the observations are noisy versions of data whose support is $M$: $X=Y + Z$ with $Y \in M$ and $|Z|\leq \delta$ almost surely. 
	\newclem{We would like to underline here that $\delta$ is to be understood as $\delta = \delta_n$ which can either stay constant (and smaller than the reach of $M$, defined in Appendix \ref{app:A}) or goes to $0$ as $n$ goes to infinity, in which case $f_0 = f_{0,n}$ and $X_j = X_{j,n}$ is a triangular array of data. This dependence on $n$ for $f_{0,n}$ does not impact our results and  in order to make the notation lighter, we intentionally drop the extra $n$ from the indices, and will recall this dependency at any time when it might create confusion.}  
	\newju{ When $\delta$ is fixed, the density is not degenerate but its support has a non trivial geometry. Allowing $\delta = \delta_n = o(1)$ 
 is a way to model very concentrated densities (for a given $n$) and is similar in spirit to high dimensional regression where the number of covariates $p$ is modelled as a function of $n$. In other words it is a way to take into account the impact of $\delta$ in the concentration properties of the posterior distribution. }
	When the noise $Z$ has a density  smoother than the density of $Y$ on $M$ (with respect to the Hausdorff measure),  the density of $X$ is anisotropic with a smoothness along the manifold $M$ smaller  than that along the normal directions. In this paper we thus aim at constructing priors which are flexible enough  to lead to \textit{good} estimation of $f_0$ in situations where the density has a complex  anisotropic structure in that it has an unknown smoothness $\beta_0$ along an unknown manifold $M$ and a different (larger) smoothness $\beta_\perp$, also unknown, along the normal spaces of the manifold. In this context, since the anisotropy varies spatially, it is therefore important to consider priors which adapt spatially to such \textit{non linear smoothness}. In Section \ref{sec:prior} below we consider two families of location-scale mixtures with a new and careful modelling of the prior on the variance of the components and we show in Section \ref{sec:theoretical} that these priors are well behaved for densities  with manifold driven smoothness.
	\\
	

	To begin with, we define what we think is a new notion of anisotropic H\"older spaces on the Euclidean space $\bbR^D$, and which happens to be a natural extension of the usual notion of (isotropic) H\"older smoothness. We are aware that there exist various notions of anisotropic smoothness, see for instance \cite{kerkyacharian2001nonlinear, hoffman2002random, comte2013anisotropic, goldenshluger2011bandwidth, goldenshluger2014adaptive}, with most of them stemming from the anisotropic smoothness as defined in \cite{nikol2012approximation}. In all the aforementionned references, the anisotropy was consistently defined as a control of the variations of the partial derivatives along each axis separately, with no control of the cross-derivatives  (and no guarantee that they, in fact, exist). While this is enough in a Euclidean framework, we argue that, to the best of our effort, we could not make such assumptions sufficient in our non-linear setting, as the proofs presented in \secref{mainproof} or in the Appendices might highlight. Instead, we come out with a new notion of H\"older anisotropy, in the footsteps of what \cite{shen2013adaptive} already sketched in their paper, that handles cross-derivatives in the same way that the usual notion of (isotropic) H\"older smoothness does, and which in fact coincides with the latter when the anisotropy vector is isotropic. This new class is defined in the following subsection  and its main properties reviewed in Section \ref{app:auxhold}. We would also like to mention the notion of mixed smoothness as introduced in \cite{cleanthous2019minimax}, which is a stronger notion of regularity than ours: the classes defined below can be seen as nested between the ones of \cite{nikol2012approximation} and \cite{cleanthous2019minimax}.
	
	\subsection{General anisotropic H\"older functions} \label{subsec:anis}
	An anisotropic H\"older functions $f : \bbR^D \to \bbR$ is, informally, a function whose smoothness is different along each axis of $\bbR^D$. Letting $\bm{\beta} = (\beta_1,\dots,\beta_D) \in (\bbR_+^*)^D$, which will represent the regularity indices along each axis, we define
	\beq \label{eq:defalpha}
	\bm\alpha = (\alpha_1,\dots,\alpha_D)~~~\text{where}~~~\alpha_i = \beta/\beta_i \in [0,D]~~~\text{and}~~~ \beta^{-1}= \frac1D \sum_i \beta_i^{-1}.
	\eeq
	The coefficient $\beta$ acts as the effective smoothness of the function $f$. Notice that $\alpha_1 + \dots + \alpha_D = D$. In this section, we define the spaces of anisotropic functions over bounded open subset of $\bbR^D$. We defer to Section \ref{app:auxhold} the introduction of the same class over general open subsets. We let $\cU \subset \bbR^D$ be a bounded open subset and $L : \cU \to \bbR_+$ be any non-negative function. 
	\begin{dfn}\label{dfn:anihold}
		The anisotropic H\"older spaces $\cH_{\an}^{\bm\beta}(\cU,L)$ is the set of all functions $f : \cU \to \bbR$ 
		 satisfying:
		\bitem
		\item[i)] For any multi-index $k \in \bbN^D$ such that $\inner{k}{\bm\alpha} < \beta$, the partial derivative $\Diff^k f$ is well defined on $\cU$ and
		$|\Diff^k f(x)| \leq L(x)$ for all $x \in \cU$; 
		\item[ii)] For any multi-index $k \in \bbN^D$ such that $\beta - \alpha_{\max} \leq \inner{k}{\bm\alpha} < \beta$, there holds
		\beq
		\label{eq:hold} 
		|\Diff^k f(y) - \Diff^k f(x)| \leq L(x) \sum_{i=1}^D |y_i - x_i|^{\frac{\beta- \inner{k}{\bm\alpha}}{\alpha_i} \wedge 1}~~~ \forall x,y \in \cU.
		\eeq
		\eitem
	\end{dfn}
	See \figref{beta} for a graphical representation of the quantities at stake. The function $L$ acts as an upper-bound for the localized and anisotropic version of the usual H\"older-norm:
	$$
	\{\max_{\inner{k}{\bm\alpha} < \beta} |\Diff^k f(x)|\} \vee \max_{\beta - \alpha_{\max} \leq\inner{k}{\bm\alpha} < \beta} \sup_{y \in \cU} \frac{|\Diff^k f(y) - \Diff^k f(x)|}{\displaystyle\sum_{i=1}^D |y_i - x_i|^{\frac{\beta- \inner{k}{\bm\alpha}}{\alpha_i}\wedge 1}} . 
	$$
	\begin{figure}[!h]
		\centering
		\includegraphics[width = 6cm]{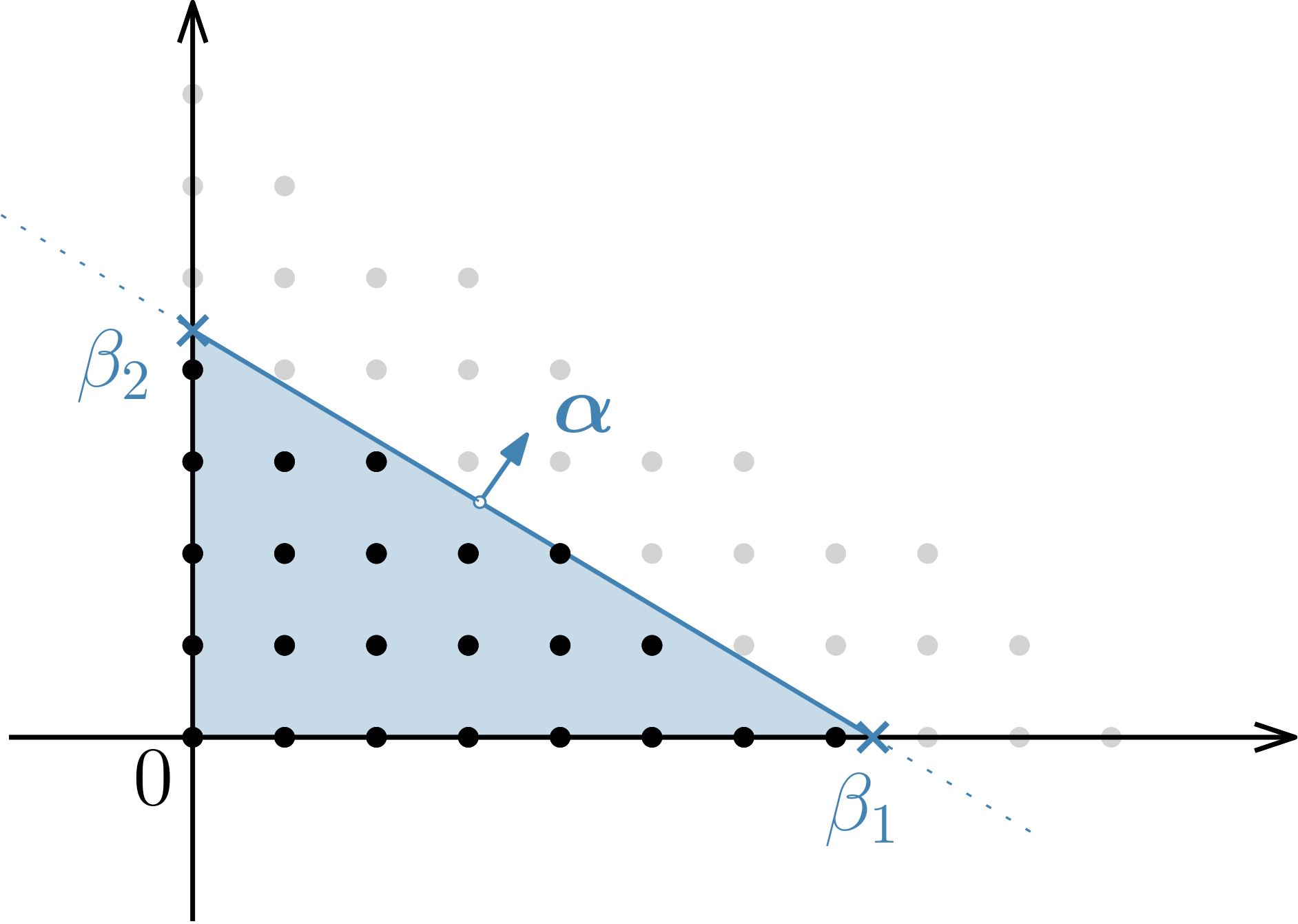}  \hspace{20pt}
		\includegraphics[width = 6cm]{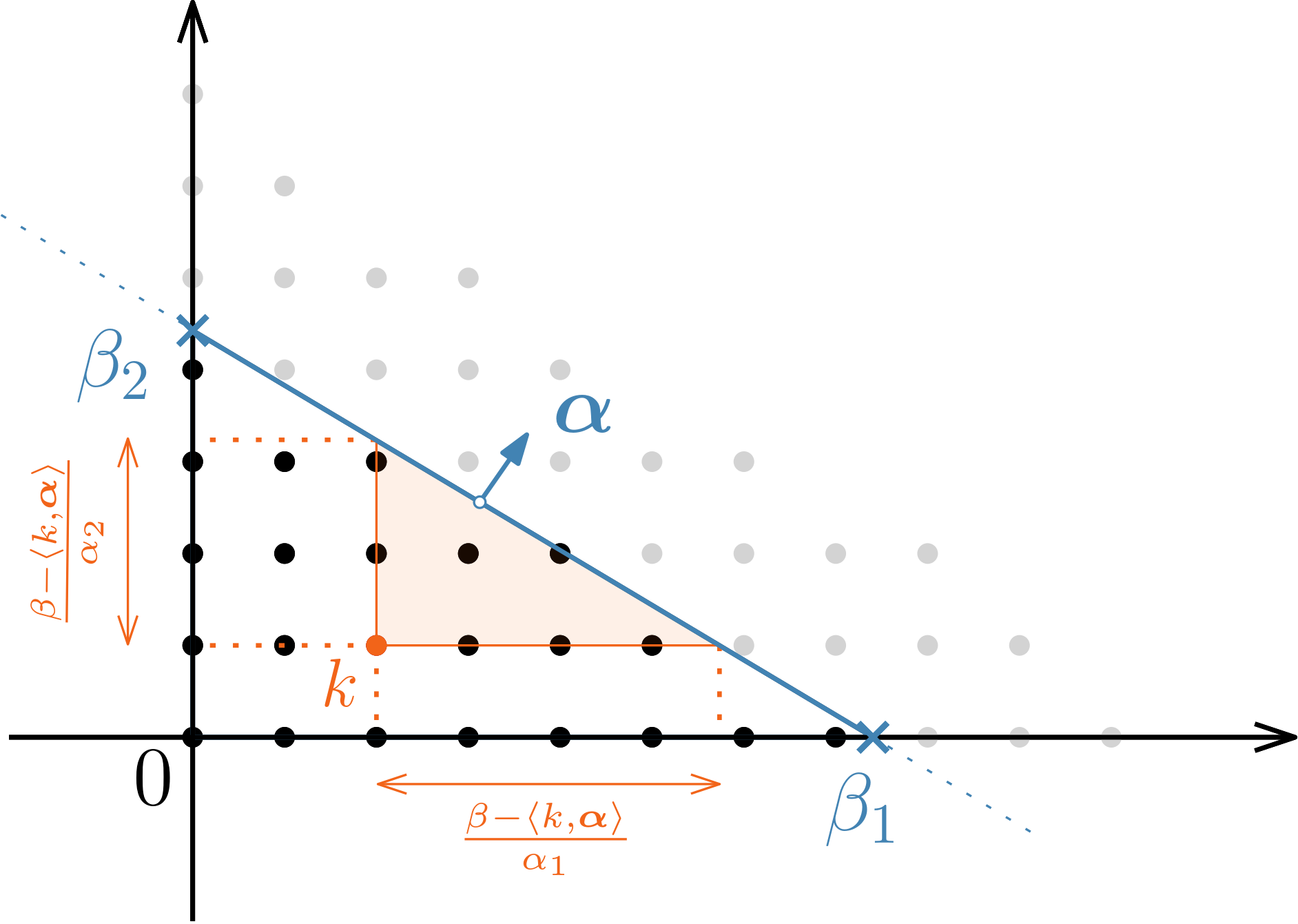} 
		\caption{An exemple in dimension $D = 2$.  The vector $\bm\alpha$ is the only vector of $1$-norm $D$ which has positive coordinates and which is orthogonal to the simplex of vertices $\{\beta_i e_i\}_{1 \leq i \leq D}$. In black are the points $k$ of $\bbN^2$ such that $\inner{k}{\bm\alpha} < \beta$.}
		\label{fig:beta}
	\end{figure}
	
	Note that  constraint (i) on the intermediate derivatives $\Diff^k f(x)$ for $0 < \inner{k}{\bm\alpha} < \beta$ may seem superfluous since some Kolmogorov-Landau type inequalities would yield some bounds on these derivatives, but we add them nonetheless to our functional class to simplify some notations. We list and prove in Section \ref{app:auxhold} various useful properties of functions in the anisotropic H\"older class. Also the function $L$ in the definition of $\cH^{\bm\beta}_{\an}(\cU,L)$ can be constant, in which case we will typically denote it $C$, to make it more explicit  (leading to $\cH^{\bm\beta}_{\an}(\cU,C)$).

\begin{rem}The usual isotropic H\"older spaces are  special cases of our definition of $\cH^{\bm\beta}_{\an}(\cU,L)$ corresponding to $\bm\beta = (\beta,\dots,\beta)$ with $\beta > 0$. In this case we write
	$$
	\cH^{\beta}_{\iso}(\cU,L) :=  \cH^{\bm\beta}_{\an}(\cU,L)~~~\text{for}~~~\bm\beta = (\beta,\dots,\beta).
	$$
\end{rem}

		As a final remark, we will use the same notations for the spaces of multivalued functions when their coordinate functions are all in the corresponding space. For instance, if $\Psi : \cU \to \bbR^{D_1}$, for $D_1 >1$,  then
	$$
	\Psi = \(\Psi_1,\dots,\Psi_{D_1}\right) \in \cH^{\bm\beta}_{\an}(\cU,L) ~~\underset{\text{def}}{\iff} ~~ \Psi_i \in \cH^{\bm \beta}_{\an}(\cU,L)~\text{for all}~i \in \{1,\dots,D_1\},
	$$ 
	and the same holds for the other spaces defined in this subsection.
	
	\subsection{Manifold anisotropic H\"older functions} \label{subsec:manihold}
	We now consider functions whose smoothness directions at point $x \in \bbR^D$ are dependent on the position of $x$ with respect to a given submanifold $M \subset \bbR^D$ of dimension $1 \leq d \leq D-1$. More specifically, we extend the above notions of anisotropy to functions with a given  regularity in the tangential directions of $M$, and of another regularity in the normal directions of $M$.  We call such functions \emph{manifold-anisotropic H\"older}, or sometimes simply \emph{M-anisotropic}. To define such a class of functions, we assume that $M$ is a closed submanifold with reach bounded from below by $\tau > 0$ (see \appref{A} for definition and properties of the reach) and we consider local parametrizations at any $x_0 \in M$
	$$
	\Psi_{x_0} : \cV_{x_0} \to M,
	$$
	where $\cV_{x_0}$ is a neighborhood of $0$ in $T_{x_0} M$. The maps $\Psi_{x_0}$ can be taken in a wide class of parametrizations of $M$. For instance, one could consider taking $\Psi_{x_0}$ to be (close to) the inverse projection over $M \to T_{x_0} M$ where $T_{x_0} M$ is seen as an affine subspace of $\bbR^D$ going through $x_0$, see for instance \cite{aamari2019nonasymptotic} or \cite{divol2021reconstructing}. For purely practical matter, we choose $\Psi_{x_0}$ to be the exponential map $\exp_{x_0}$, although the results in this paper could be carried out with other well-behaved parametrizations, such as the one mentioned above. 
	In particular,  in the case of the exponential maps,  we can define the domain of $\Psi_{x_0}$ to be $\ball_{T_{x_0} M}(0,\pi\tau)$, see \appref{A}. In the rest of this paper, we set
	$$
	\cV_{x_0} := \ball_{T_{x_0} M}(0,\tau/8),
	$$
	with factor $1/8$ being there for technical reasons. If all the maps $\Psi_{x_0}$ are of regularity $\beta_M > 1$, meaning that there exists  a constant $C_M > 0$ such that 
	\beq \label{betam}
	\Psi_{x_0} \in \cH_{\iso}^{\beta_M}(\cV_{x_0}, C_M),~~\forall x_0 \in M
	\eeq
	(in particular $M$ is at least $\cC^{k}$ with $k =\ceil{\beta_M-1}$), then one can construct a map 
	$$
	\bar \Psi_{x_0} : \begin{cases} \cV_{x_0} \times N_{x_0} M &\to \bbR^D \\
		~~~~(v,\eta) &\mapsto \Psi_{x_0}(v) + N_{x_0}(v,\eta).
	\end{cases}
	$$
	where $N_{x_0}(v,\cdot)$ is an isometry from $N_{x_0} M$ to $N_{\Psi_{x_0}(v)} M$ and where
	$
	v \mapsto N_{x_0}(v,\cdot) \in \cH_{\iso}^{\beta_M -1}(\cV_{x_0}, C_M^\perp)
	$
	for some other constant $C_M^\perp$ depending on $C_M$, $\tau$ and $\beta_M$. We refer to \appref{A} for further details concerning the construction of $\bar\Psi_{x_0}$ and the proof of its regularity. When restricting the latter map, one gets a local parametrization of the offset $M^{\tau/2}$ around $x_0$
	$$
	\bar\Psi_{x_0} : \cV_{x_0} \times \ball_{N_{x_0} M}(0,\tau/2) \to M^{\tau/2}
	$$
	as shown in Lemma \ref{lem:defNB}. This parametrization is such that $\pr_M ( \bar\Psi_{x_0}(v,\eta)) = \Psi_{x_0}(v)$ for any $(v,\eta) \in \cV_{x_0} \times \ball_{N_{x_0} M}(0,\tau/2)$ and $\bar\Psi_{x_0}$ is  a diffeomorphism from  $\cV_{x_0} \times \ball_{N_{x_0} M}(0,\tau/2) $ to its image which satisfies 
	\beq \label{barpsi}\bar\Psi_{x_0} \in \cH^{\beta_M-1}_{\iso}( \cV_{x_0} \times \ball_{N_{x_0} M}(0,\tau/2), C^*_M)
	\eeq 
	for some $C^*_M > 0$ depending on $C_M$, $\tau$ and $\beta_M$. See \figref{psix0} for a visual interpretation of this parametrizations.
	\\
	
	\begin{figure}[t!]
		\centering
		\includegraphics[width=12cm]{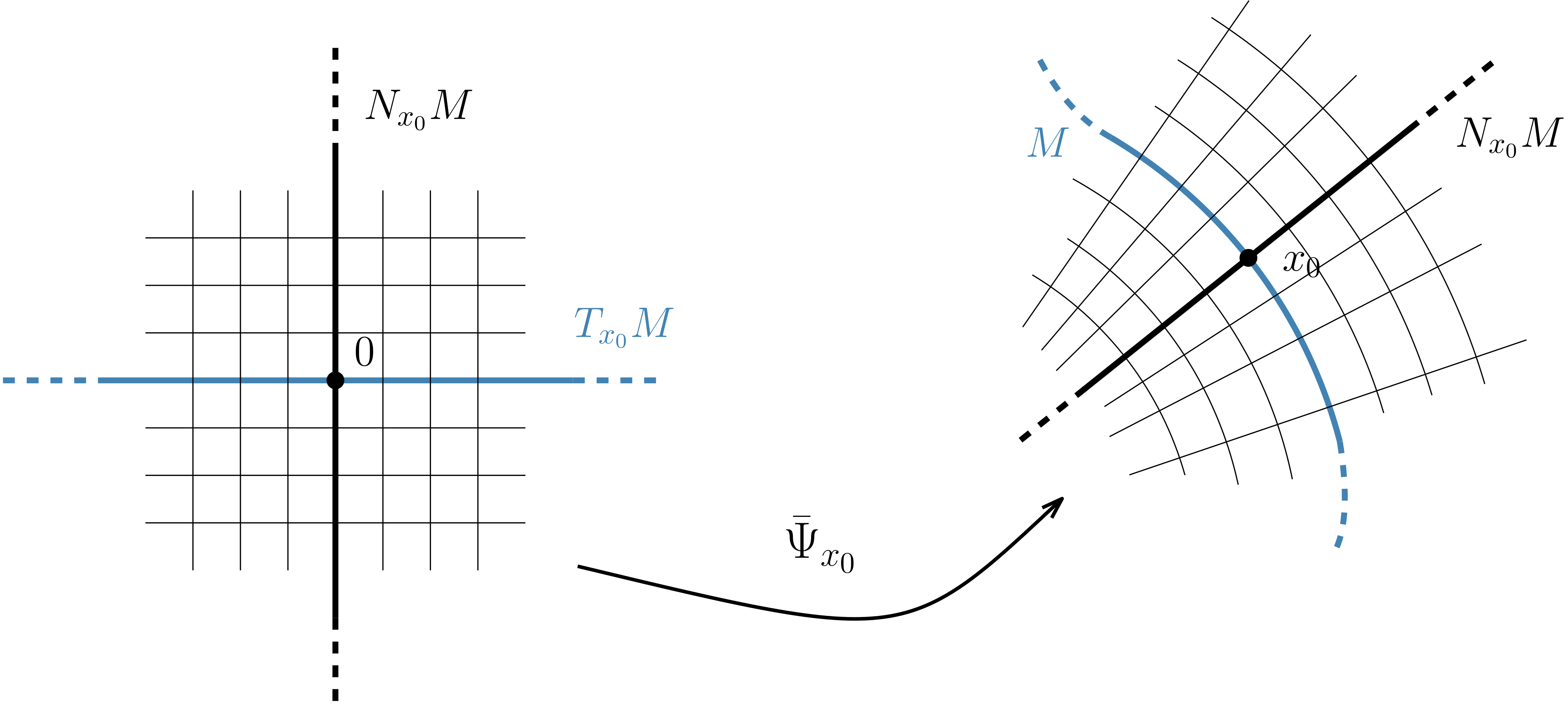} 
		\caption{A visual interpretation of the parametrization $\bar\Psi_{x_0}$.}
		\label{fig:psix0}
	\end{figure}
	
	For any $\delta > 0$, we define $\bar\Psi_{x_0,\delta}(v,\eta) := \bar\Psi_{x_0}(v,\delta \eta)$ to be the rescaled version of $\bar\Psi_{x_0}$ in the normal directions. It is a well defined parametrization of $M^{\tau/2}$ on the set $\cW_{x_0,\delta} := \cV_{x_0} \times \ball_{N_{x_0} M}(0,\tau/2\delta)$. We let $\beta_0, \beta_\bot$ be two positive real numbers, and define the vector 
	$$
	\bm\beta_{0,\perp} = (\underbrace{\beta_0,\dots,\beta_0}_{d},\underbrace{\beta_\bot,\dots,\beta_\bot}_{D-d}) \in \bbR^D. 
	$$
	\newclem{
	and define $\bm\alpha$ to be the counterpart of $\bm\beta_{0,\perp}$ as in \eqref{eq:defalpha}:
	\beq \label{eq:defalpha0}
	\begin{split}
	\bm\alpha = (\alpha_0,\dots,\alpha_0,\alpha_\bot,\dots,\alpha_\bot) ~~~~&\text{with}~~~\alpha_0 = \beta/\beta_0,~~\alpha_\perp = \beta/\beta_\perp, \\
	~~&\text{and}~~~\beta^{-1} = \frac{d}D \beta^{-1}_0+\frac{D-d}{D}\beta^{-1}_\perp.
	\end{split}
	\eeq
	}
	Now for any function $L : \bbR^D \to \bbR_+$, we define:
	\begin{dfn}\label{def:defmanihold}  Let $L : \bbR^D \to \bbR_+$ be a function; the class
		$
		\cH^{\beta_0,\beta_\bot}_\delta(M,L)
		$
		is the set of all functions $f : \bbR^D \to \bbR$ which satisfy:
		\benum
		\item[i)] $f$ is supported on $M^\delta$;
		\item[ii)] For any $x_0 \in M$, set $ \bar f_{x_0,\delta} := \delta^{D-d} f \circ \bar\Psi_{x_0,\delta}$ and $L_{x_0,\delta} :=  \delta^{D-d} L \circ \bar\Psi_{x_0,\delta}$,  then 
		\beq\bar f_{x_0,\delta} \in \cH^{\bm\beta_{0,\perp}}_{\an}(\cW_{x_0,\delta},  L_{x_0,\delta}). \label{eq:mholder}
		\eeq
		\eenum
	\end{dfn}
Informally, such a function  is  $\beta_0$-H\"older along the manifold $M$, and $\beta_\perp$-H\"older normal to the manifold $M$. The normalization $\delta^{D-d}$ accounts for the scaling $\eta \mapsto \delta \eta$ along the normal spaces (which are of dimension $D-d$) in the definition of $\bar\Psi_{x_0,\delta}$. Its presence is natural and can be understood as follows: when $f$ is a density supported on $M^\delta$, the typical magnitude of its values is of order $1/\delta^{D-d}$, and the absence of normalization would whence make the above functional class irrelevant to describe the regularity of such densities. 
\newju{\begin{rem}\label{stabi:def:main}
Strictly speaking  $\cH^{\beta_0,\beta_\bot}_\delta(M,L)$ depends on the choice of the parametrizations $\bar \Psi$ and should be written as  $\cH^{\beta_0,\beta_\bot}_\delta(M,L, \bar \Psi)$.  We show however in Proposition \ref{prop:stabilityPsi} that 
$$\cH^{\beta_0,\beta_\bot}_\delta(M,L, \bar \Psi) \subset \cH^{\beta_0,\beta_\bot}_\delta(M,C_1L,  \bar \Psi') \subset \cH^{\beta_0,\beta_\bot}_\delta(M,C_2L, \bar \Psi)$$ 
for some constant $C_1, C_2>0$ independant of $\Psi$ and $\Psi'$, as soon as $\bar \Psi'$ satisfies \eqref{barpsi} also. Hence throughout the paper we denote this H\"older space $\cH^{\beta_0,\beta_\bot}_\delta(M,L)$ .
\end{rem}
}


	M-anisotropic functions happen to be a convenient way to describe the regularity of a number of densities that are naturally supported around $M$. To illustrate this, take $f_\ast : M \to \bbR$ to be a $\beta_0$-H\"older density, meaning that there exists $L_0 : M \to \bbR$ such that for any $x_0 \in M$, 
	$$
	f_\ast \circ\Psi_{x_0} \in \cH^{\beta_0}_{\iso}(\cV_{x_0}, L_0\circ\Psi_{x_0}).
	$$
	Now take $K : \bbR^D \to \bbR$ to be a normalized positive smooth isotropic kernel supported on $\ball(0,1)$. We introduce 
	$
	c_\bot^{-1} = \int K(\ve) \diff \mu_E(\ve) 
	$ 
	where $E$ is any (through isometry) $(D-d)$-dimensional subspaces of $\bbR^D$. We also assume that 
	$K \in \cH^{\beta_\perp}_{\iso}(\bbR^D,L_\perp) $
	for some function $L_\perp$ which is also rotationally invariant. 
	\begin{prp} \label{prp:model}
		Let $f$ be the density of a random variable $Z = X + \delta \cE$ where $X \sim f_*(x) \mu_M(\diff x)$ and $0 < \delta < \tau$. Then,
		\benum
		\item \emph{(Orthonormal noise)} If $\beta_\perp \leq \beta_M - 1$, and if
		$\cE | X \sim c_{\bot} K(\ve) \mu_{N_X M}(\diff \ve),$
		then $$f \in \hbm, \quad  \text{with }, \quad 
		L (x) := C \delta^{-(D-d)} L_0(\pr_M x)  \times L_\perp(x-\pr_M x), \quad C>0.$$
		\item \emph{(Isotropic noise)} If $\delta < \tau/32$ and $\beta_0 \leq \beta_\perp \leq \beta_M - 1$, and if 
		$\cE \sim K(\ve) \diff \ve$, independently of $X,$
		then $$f \in \hbm,\quad \text{ with }\quad 
		L(x) := C \delta^{-D} \int_{M}  L_\perp\(\frac{x-y}{\delta}\right) L_0(y) \mu_M(\diff y),
		\quad C>0.$$

		\eenum
		In both cases  $C$ depends on $C_M$, $\tau$, $\beta_M$, $\beta_{0}$, $\beta_{\bot}$.
	\end{prp}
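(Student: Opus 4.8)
The plan is, in both cases, to verify the two requirements of Definition~\ref{def:defmanihold}. Requirement~(i) is immediate: since $K$ is supported in $\ball(0,1)$ we have $\|\delta\cE\|\le\delta<\tau$ almost surely, so $Z=X+\delta\cE\in M^\delta$; in the orthonormal case $\delta\cE\in N_XM$, so the reach lower bound (see \appref{A}) keeps the footpoint fixed, $\pr_M(Z)=X$. Everything then reduces to requirement~(ii): for a fixed $x_0\in M$, compute $\bar f_{x_0,\delta}=\delta^{D-d}\,f\circ\bar\Psi_{x_0,\delta}$ in closed form and recognise it, by means of the product, composition and rescaling rules for anisotropic H\"older functions from Section~\ref{app:auxhold}, as an element of $\cH^{\bm\beta_{0,\perp}}_{\an}(\cW_{x_0,\delta},L_{x_0,\delta})$.

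\textbf{Orthonormal noise.} I would push the law of $(X,\delta\cE)$ through the diffeomorphism $\bar\Psi_{x_0}$, using that $X=\Psi_{x_0}(v)$ and $\delta\cE=N_{x_0}(v,w)$ with $N_{x_0}(v,\cdot)$ a linear isometry, that the pushforward of $f_\ast\mu_M$ has density $\big(f_\ast\circ\Psi_{x_0}\big)\cJ^M_{x_0}$ with $\cJ^M_{x_0}(v)=\sqrt{\det(\Diff\Psi_{x_0}(v)^\top\Diff\Psi_{x_0}(v))}$, and that the Jacobian $\cJ_{x_0}=|\det\Diff\bar\Psi_{x_0}|$ is positive, bounded above and below, and coincides with $\cJ^M_{x_0}$ on $\{w=0\}$. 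After the rescaling $w=\delta\eta$ this yields
\[
\bar f_{x_0,\delta}(v,\eta)\;=\;c_\perp\,\big(f_\ast\circ\Psi_{x_0}\big)(v)\;\widetilde K(\eta)\;h_{x_0}(v,\delta\eta),
\]
where $\widetilde K$ is the restriction of $K$ to an isometric copy of $\bbR^{D-d}$ (well defined since $K$ is rotationally invariant) and $h_{x_0}(v,w)=\cJ^M_{x_0}(v)/\cJ_{x_0}(v,w)$ is positive, bounded away from $0$ and $\infty$ uniformly in $x_0$, equal to $1$ on $\{w=0\}$, and iso-H\"older in $(v,w)$ with the regularity of the parametrizations of \appref{A}. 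Now $f_\ast\circ\Psi_{x_0}\in\cH^{\beta_0}_{\iso}(\cV_{x_0},L_0\circ\Psi_{x_0})$ depends on $v$ only, $\widetilde K$ lies in the isotropic class of order $\beta_\perp$ with envelope $\widetilde L_\perp$ (the restriction of the rotationally invariant $L_\perp$) and depends on $\eta$ only, and $(v,\eta)\mapsto h_{x_0}(v,\delta\eta)$ has a constant envelope uniform in $(x_0,\delta)$ because $h_{x_0}\equiv1$ on $\{w=0\}$ and $\eta$ enters through $\delta\eta$, so each of its $\eta$-derivatives carries a factor $\delta\le\tau$. By Section~\ref{app:auxhold} each of the three factors, hence their product, lies in $\cH^{\bm\beta_{0,\perp}}_{\an}$, with envelope a constant times $\big(L_0\circ\Psi_{x_0}\big)(v)\,\widetilde L_\perp(\eta)$; undoing the normalisation $\bar\Psi_{x_0,\delta}$ (and replacing $L_\perp$ by a radially nonincreasing majorant, which leaves $K$ in the same isotropic class, to absorb the discrepancy between $\eta$ and $\delta\eta$) this is exactly $L(x)=C\delta^{-(D-d)}L_0(\pr_Mx)L_\perp(x-\pr_Mx)$.

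\textbf{Isotropic noise.} Here $f(z)=\delta^{-D}\int_M K\big((z-y)/\delta\big)f_\ast(y)\,\mu_M(\diff y)$. Substituting $z=\bar\Psi_{x_0,\delta}(v,\eta)=\Psi_{x_0}(v)+N_{x_0}(v,\delta\eta)$ and, in the integral, $y=\Psi_{x_0}(v+\delta s)$ (so that $\mu_M(\diff y)=\cJ^M_{x_0}(v+\delta s)\,\delta^d\,\diff s$, with $s$ ranging over a bounded set since $K$ is supported in $\ball(0,1)$), gives
\[
\bar f_{x_0,\delta}(v,\eta)\;=\;\int K\big(\Theta_{x_0,\delta}(v,s)+N_{x_0}(v,\eta)\big)\,\big(f_\ast\circ\Psi_{x_0}\big)(v+\delta s)\,\cJ^M_{x_0}(v+\delta s)\,\diff s,
\]
with $\Theta_{x_0,\delta}(v,s)=\delta^{-1}\big(\Psi_{x_0}(v+\delta s)-\Psi_{x_0}(v)\big)=\int_0^1\Diff\Psi_{x_0}(v+t\delta s)[s]\,\diff t$ smooth in $(v,s)$. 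I would then differentiate under the integral: $\eta$-derivatives fall only on $K$ (through the $\eta$-linear map $N_{x_0}(v,\cdot)$), while $v$-derivatives fall either on $K$ (through $\Theta_{x_0,\delta}$ or $N_{x_0}$, both very smooth in $v$) or on $f_\ast\circ\Psi_{x_0}(v+\delta s)$, which is only $\beta_0$-H\"older. The hypothesis $\beta_0\le\beta_\perp$ is exactly what guarantees that any multi-index $k=(k_v,k_\eta)$ with $\inner{k}{\bm\alpha}<\beta$ --- i.e.\ $|k_v|/\beta_0+|k_\eta|/\beta_\perp<1$ --- also satisfies $|k_v|+|k_\eta|<\beta_\perp$, so one may put on $K$ all the derivatives it must absorb without exceeding its smoothness, while the at most $\lceil\beta_0\rceil-1$ derivatives hitting $f_\ast$ and the top-order H\"older increment are controlled by $L_0$. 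Translating the resulting increments through the Lipschitz dependence of $\Theta_{x_0,\delta}$ and $N_{x_0}$ on $(v,\eta)$ and bounding the $s$-integral pointwise yields $\bar f_{x_0,\delta}\in\cH^{\bm\beta_{0,\perp}}_{\an}(\cW_{x_0,\delta},L_{x_0,\delta})$ with the claimed convolution envelope $L(x)=C\delta^{-D}\int_M L_\perp\big((x-y)/\delta\big)L_0(y)\,\mu_M(\diff y)$; the smallness $\delta<\tau/32$ serves only to keep $\Theta_{x_0,\delta}$ and the domains $\cW_{x_0,\delta}$ under uniform geometric control.

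\textbf{Main obstacle.} The hard part is step~(ii) --- checking that the geometric factors ($\cJ_{x_0}$, $\cJ^M_{x_0}$, $\Theta_{x_0,\delta}$, $N_{x_0}$, whose regularity is supplied by \appref{A}) interact correctly with the anisotropic H\"older calculus: in case~1, that after the $\eta\mapsto\delta\eta$ rescaling their normal derivatives are uniformly harmless and do not inflate the envelope, and in case~2, that convolution against $K$ neither raises nor destroys the tangential regularity and produces precisely the stated integral envelope. The regularity bounds of \appref{A} together with the stability properties of $\cH^{\bm\beta}_{\an}$ from Section~\ref{app:auxhold} are what carry this through.
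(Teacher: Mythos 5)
Your proposal is correct and follows essentially the same route as the paper's: in both cases you write $\bar f_{x_0,\delta}$ in a closed (respectively integral) form in the chart $\bar\Psi_{x_0,\delta}$ and then invoke the product/composition/tensorization stability of the anisotropic H\"older classes from Section~\ref{app:auxhold}, using $\beta_0\le\beta_\perp$ to make the multiplication rule apply. In the orthonormal case you are in fact slightly more careful than the paper: its displayed identity $\bar f_{x_0,\delta}(v,\eta)=f_{x_0}(v)\,c_\bot K(\eta)$ silently drops the change-of-variables ratio $h_{x_0}(v,\delta\eta)=\cJ^M_{x_0}(v)/|\det\Diff\bar\Psi_{x_0}(v,\delta\eta)|$, which you correctly keep and correctly argue is a benign factor (bounded, equal to $1$ on $M$, of isotropic regularity $\beta_M-1$, with $\eta$-derivatives carrying powers of $\delta$) that does not inflate the envelope. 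The isotropic case is essentially identical to the paper's, with your $\Theta_{x_0,\delta}+N_{x_0}$ and $(f_\ast\circ\Psi_{x_0})\cJ^M_{x_0}$ translated by $\delta s$ matching the paper's $g_{\delta,s}$ and $h_{\delta,s}$.
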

	See Section \ref{app:manihold} for a proof of this result.
	\begin{rem} Throughout the paper we assume that the true density $f_0 \in  \hbm$, which implies that $P_0(M^\delta)=1$. However it is enough to assume that  $P_0(M^\delta)\geq 1- o(1/n)$ where $n$ is the number of observations. This makes no difference in terms of the  results presented in  \secref{theoretical}.  The weaker assumption $P_0(M^\delta)\geq 1- o(1/n)$ is for instance fulfilled in the additive noise model (see \prpref{model})  with $Z = X + \frac{ \delta }{ \sqrt{ C\log n } } \cE$ with the Gaussian kernel $K(x) := (2\pi)^{D/2} \exp(-\|x\|^2/2)$.
	\end{rem}
\begin{rem} \label{rem:Holderconstant-delta}\newju{Note that under the model  $Z = X + \delta \cE$ with isotropic noise and $K$ the distribution of the noise belonging to $C^{\beta_\perp}$ with $\|K\|_{C^{\beta_\perp}} \lesssim 1$, then $f \in \hbm \cap C^\beta$, with $\|f\|_{C^{\beta_\perp}} \lesssim \delta^{-\beta-D+d}$. Hence for H\"older constant is strongly impacted by $\delta $ and $D$. The geometric framework of $M$ anisotropy,  allows to handle this dependence on $\delta$ and is therefore useful for such models when $\delta$ is small.}  
		\end{rem}
	In the following section we describe the family of priors which we use to estimate the above family of densities. 
	
	\subsection{Location-scale mixtures of normal priors}  \label{sec:prior}
	
	We model the manifold-anisotropic H\"older densities using location-scale mixtures of normals. We parametrize the covariances of the components by $\Sigma = O^T\Lambda O$ where $O$ is a unitary matrix and $\Lambda = \diag(\lambda_1, \cdots, \lambda_D)$ is diagonal. Location-scale mixtures can then be written as: 
	\begin{equation} \label{loc-scale-mix}
		f_{P}(x) = \int_{\bbR^D} \varphi_{O^T\Lambda O}(x-\mu)\diff P(\mu, O, \Lambda),\quad  P = \sum_{k=1}^K p_k \delta_{(\mu_k, O_k, \Lambda_k)}, \quad K \in \mathbb N \cup \{ +\infty\},
	\end{equation}
	where, for any positive definite matrix $\Sigma$, 
	$$
	\varphi_\Sigma(z) := \frac{1}{\det^{1/2} \(2\pi\Sigma\)} \exp\{-\frac{1}{2} \|z\|^2_{\Sigma^{-1}}\},
	$$
	is the density of a centered Gaussian with covariance matrix $\Sigma$. The two most well known families of priors on $P$ are Dirichlet process priors and mixtures with random number of components, also known as mixtures of finite mixtures. Recall that if $P$ follows a Dirichlet process priors with parameters $A$ and $H$ where $A > 0$ and $H$ is a probability measure on some measurable space $\Theta$ , then
	\begin{equation*} 
		P = \sum_{k=1}^\infty p_k \delta_{ \theta_k}~~~\text{with}~~~ p_k = V_k\prod_{i<k}(1-V_i), \quad V_i\stackrel{iid}{\sim} \Beta(1, A) \quad \text{and} \quad \theta_k \stackrel{ iid}{ \sim} H .
	\end{equation*}
	If $P$ follows a mixture of finite mixtures prior of parameters $\alpha_K$ and $\pi_K$ where $\alpha_K > 0$ and $\pi_K$ is a probability measure on $\bbN$, then 
	$$P = \sum_{k=1}^K p_k \delta_{ \theta_k}~~~\text{with}~~~K \sim \pi_K,\quad (p_1, \cdots, p_K) ~| K \sim \cD( \alpha_K, \dots , \alpha_K) \quad \text{and} \quad  \theta_k  \stackrel{iid}{\sim } H.$$
	In both cases (Dirichlet process and mixture of finite mixtures) we call $H$ the base probability measure. 
	Obviously in the case of mixtures of finite mixtures the conditional prior on $(p_1, \dots, p_K)$ and $\theta_1, \dots, \theta_K$ could be different but we  consider this setup for the sake of simplicity. \\
	
	As shown empirically  by \cite{mukhopadhyay2020estimating} location-scale Dirichlet process mixtures with base measure constructed from the conjuguate prior of the Gaussian model are not well adapted to the problem at hand. We show however that if  particular care is given to the choice of $H$, the posterior on manifold-anisotropic density is well behaved. In particular we consider the two following types of location-scale mixtures:
	\begin{itemize}
		\item \emph{Partial location-scale mixtures}: The eigenvalues $\Lambda$ of the covariance of the Gaussians are common accross components, 
		\begin{equation}\label{partialprior}
			f_{\Lambda,P}(x) = \int_{\bbR^D} \varphi_{O^T\Lambda O}(x-\mu)\diff P(\mu, O) ,\quad  P = \sum_{k=1}^K p_k \delta_{(\mu_k, O_k)}, \quad K \in \bbN \cup \{ +\infty\}
		\end{equation}
		where $P$ is a probability distribution on $\mathbb R^D \times \mathcal O(D)$ (where $\mathcal O(D)$ is the set of unitary matrices in $\mathbb R^D$) and is either a Dirichlet process prior or a mixture of finite mixtures. 
		\item \emph{Hybrid location-scale mixtures}:
		The density $f_P$ is written as \eqref{loc-scale-mix} where $P$ conditionally on a probability $Q_2$ on $\mathbb R_+^D$ follows a Dirichlet process mixture or a mixture of finite mixtures with base measure $H_0 (\diff\mu, \diff O, \diff\lambda)= H_1(\diff\mu, \diff O) \otimes Q_2(\diff\lambda)$, and $Q_2$ follows a distribution $\wt{\Pi}_\Lambda$. 
	\end{itemize}

	We denote by  $\Pi$ the prior on the parameter and we consider the following assumptions on $\Pi$. These conditions differs wether $\Pi$ is assumed to come from a partial location-scale mixture prior or a hybrid location-scale mixture prior.
	\\
	
	\noindent
	\textbf{Conditions on the partial location-scale mixtures.} $f$ is modelled as in \eqref{partialprior} and $P$ follows either a Dirichlet process with base measure $H$ or a mixture of finite mixtures with base measure $H$ and prior on $K$ satisfying
	 \beq\label{eq:finitemixt}
	 -\log \Pi_K( K = x ) \simeq x (\log x)^r , \quad r =0,1.\eeq 
	 Here $r=0$ corresponds for instance to the geometric  and $r=1$ to the Poisson priors on $K$.
	The base measure $H(\diff\mu, \diff O) = h(\mu, O)\diff\mu \diff O$ where $\diff\mu$ designates the Lebesgue measure on $\mathbb R^D$ and $\diff O$ the Haar measure on $\mathcal O(D)$, and we further assume that there exist $c_1,b_1 > 0$ and $b_2 > 2D-1$  such that 
	\beq\label{condH:partial}
	e^{-c_1\|\mu\|^{b_1}} \lesssim  h(\mu,O) \lesssim  (1 + \|\mu\|)^{-b_2} \quad \text{with} \quad  \quad \forall \mu, O.
	\eeq
	We also assume that $\Lambda$ is drawn from a probability measure $\Pi_\Lambda$ that has a density $\pi_\Lambda$ with respect to Lebesgue measure on $\mathbb R^D$, and that this density satisfies: there exist $c_2, c_3, b_3>0$ and $b_4 > D(D-1)/2$ such that 
	\beq\label{cond:piL}
	\begin{split}
		&e^{-c_2 \sum_{i=1}^D \lambda_i^{-d/2}} \lesssim \pi_\Lambda(\lambda_1, \cdots , \lambda_D ) \quad \text{for small}~\lambda_1,\dots,\lambda_D \in (\bbR_+^*)^D, \\
		&\Pi_\Lambda\( \min_{1\leq i\leq D} \lambda_i < x \) \lesssim e^{- c_3 x^{-b_3}}\quad~~ \text{for small}~x > 0,\\	\text{and}~~~~~~~~~~&\Pi_\Lambda\( \max_{1\leq i\leq D} \lambda_i > x \) \lesssim x^{-b_4} \quad \quad \quad \text{for large}~x > 0. \\
	\end{split}
	\eeq
	Condition \eqref{condH:partial} is weak and is  for instance satisfied as soon as $\mu$ and $O$ are independent under $H$ with positive and continuous density for $O$ and positive density for $\mu$ with weak tail assumptions.  Condition \eqref{cond:piL} is also weak and common in the case of location Gaussian mixtures and is verified in particular if the  $\sqrt{\lambda_i}$'s are independent inverse Gammas under $\Pi_\Lambda$, or if the $\lambda_i$'s are independent inverse Gammas and $d \geq 2$. 
	\\
	
	\noindent
	\textbf{Conditions on the hybrid  location-scale mixtures.} 
	$H_1$ satisfies \eqref{condH:partial} and $Q_2$ is random with distribution $\tilde \Pi_\lambda$ which satisfies: for all $b>0$ there exists $B_0, c_2>0$ such that  for \newclem{$x_1^2\leq  x_2$} both small, 
	\begin{equation}\label{cond:piH:LB}
		\begin{split}
			\tilde\Pi_\Lambda\left[Q_2\left( [x_1,x_1(1+x_1^b)]^d \times [x_2,x_2(1+x_1^b)]^{D-d} \right) \geq x_1^{B_0} \right] &\gtrsim e^{-c_2 x_1^{-d/2}}.
		\end{split}
	\end{equation}
	Moreover we assume that for some positive constant $c_3, b_3, c_4, b_4>0$ such that, 
	\begin{equation}\label{cond:piH:UB}
		\begin{split}
			\bbE_{\tilde \Pi_\Lambda}\left[Q_2\left(\min_{1\leq i\leq D} \lambda_i \leq x \right) \right] &\lesssim e^{-c_3  x^{-b_3}}\quad \text{for small}~x,\\
			\bbE_{\tilde \Pi_\Lambda}\left[Q_2\left(\max_{1\leq i\leq D} \lambda_i > x \right) \right] &\lesssim e^{-c_4 x^{b_4}} \quad\quad\quad \text{for large}~x.\\
		\end{split}
	\end{equation}
	\begin{rem}
		One can view the partial location-scale mixture as a special instance of the hybrid location-scale mixture defined above: take $Q_2 $ to be the Dirac mass at a value $\Lambda$ where $\Lambda \sim \Pi_\Lambda$. 	
	\end{rem}
	
	Conditions  \eqref{cond:piH:LB} and \eqref{cond:piH:UB} are in particular satisfied if 
	$ Q_2$ comes from a Dirichlet process. More precisely, if $Q_2$ is of the form
	\begin{align*}
		Q_2(\diff\lambda) = \prod_{i=1}^D  Q_0(\diff\lambda_i) ~~~\text{with}~~~Q_0 \sim \DP( BH_\lambda),
	\end{align*}
	with $B > 0$, then \eqref{cond:piH:LB} and \eqref{cond:piH:UB} are satisfied for reasonable choices of probability distributions $H_\lambda$ on $\bbR_+$. We show in the next proposition that this is in particular true when $H_\lambda$ is a square-root- inverse Gamma. 
	\begin{prp}\label{prp:hybridDP}
		Assume that $ Q_2 = Q_0^{\otimes D}$ where $Q_0 \sim \DP( B H_\lambda),$ where  $B>0$ and  $H_\lambda$ has density $h_\lambda$ verifying
		\newclem{
		$$ e^{- c_2 \lambda^{-1/4}}\1_{\lambda \leq 1} \lesssim h_\lambda(\lambda)\lesssim e^{- c_3 \lambda^{-b_3}}e^{-c_4 \lambda^{b_4} },$$}
		 then conditions \eqref{cond:piH:LB} and \eqref{cond:piH:UB} are satisfied. 
	\end{prp}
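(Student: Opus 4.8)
The plan is to check conditions \eqref{cond:piH:LB} and \eqref{cond:piH:UB} directly, exploiting the fact that under $Q_0 \sim \DP(BH_\lambda)$ the random measure of a fixed Borel set $A$ is a $\Beta(BH_\lambda(A), B(1-H_\lambda(A)))$ variable, and that the product structure $Q_2 = Q_0^{\otimes D}$ reduces everything to one-dimensional statements about $Q_0$. For the upper bound \eqref{cond:piH:UB}, note that $Q_2(\min_i \lambda_i \leq x) \leq \sum_{i=1}^D Q_0((0,x])$ and $Q_2(\max_i \lambda_i > x) \leq \sum_{i=1}^D Q_0((x,\infty))$, so by linearity of expectation it suffices to bound $\E[Q_0((0,x])] = H_\lambda((0,x])$ and $\E[Q_0((x,\infty))] = H_\lambda((x,\infty))$. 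The first is $\lesssim e^{-c_3 x^{-b_3}}$ and the second $\lesssim e^{-c_4 x^{b_4}}$, by integrating the assumed upper bound $h_\lambda(\lambda) \lesssim e^{-c_3\lambda^{-b_3}} e^{-c_4 \lambda^{b_4}}$ over $(0,x]$ (where the $e^{-c_3\lambda^{-b_3}}$ factor dominates, up to adjusting constants) and over $(x,\infty)$ (where the $e^{-c_4\lambda^{b_4}}$ factor dominates), respectively; a Laplace-type estimate handles the constants.

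For the lower bound \eqref{cond:piH:LB}, fix small $x_1 \leq x_2/2$ and $b > 0$, and set $A_1 = [x_1, x_1(1+x_1^b)]$, $A_2 = [x_2, x_2(1+x_1^b)]$. Since $Q_2 = Q_0^{\otimes D}$, one has $Q_2(A_1^d \times A_2^{D-d}) = Q_0(A_1)^d Q_0(A_2)^{D-d}$, so it is enough to show that with probability $\gtrsim e^{-c_2 x_1^{-d/2}}$ we have simultaneously $Q_0(A_1) \geq x_1^{B_1}$ and $Q_0(A_2) \geq x_1^{B_1}$ for a suitable exponent $B_1$ (then $B_0 = DB_1$ works). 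The sets $A_1$ and $A_2$ are disjoint (as $x_1(1+x_1^b) < 2x_1 \leq x_2$ for $x_1$ small), so $(Q_0(A_1), Q_0(A_2), 1 - Q_0(A_1) - Q_0(A_2))$ is Dirichlet with parameters $(B H_\lambda(A_1), B H_\lambda(A_2), B(1 - H_\lambda(A_1) - H_\lambda(A_2)))$. The mass $H_\lambda(A_1) = \int_{x_1}^{x_1(1+x_1^b)} h_\lambda \geq x_1^{1+b} \cdot c\, e^{-c_2' x_1^{-1/2}}$ using the assumed lower bound $h_\lambda(\lambda) \gtrsim e^{-c_2\lambda^{-1/2}}$ on $\lambda \leq 1$; similarly $H_\lambda(A_2) \geq x_2^{1+b} \cdot c\, e^{-c_2' x_2^{-1/2}}$, which is at least a comparable (in fact larger, since $x_2 \geq 2x_1$) quantity. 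Thus both parameters are bounded below by something of the form $e^{-c x_1^{-1/2}}$ up to polynomial factors in $x_1$.

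It then remains to estimate $\P(\beta_1 \geq x_1^{B_1}, \beta_2 \geq x_1^{B_1})$ where $(\beta_1,\beta_2)$ is a two-dimensional Dirichlet (equivalently, a rescaled pair of independent Gammas) with small shape parameters $a_1, a_2 \gtrsim e^{-c x_1^{-1/2}}$. Using the Dirichlet density $\propto \beta_1^{a_1-1}\beta_2^{a_2-1}(1-\beta_1-\beta_2)^{a_3-1}$ and integrating over the triangle $\{\beta_1,\beta_2 \geq x_1^{B_1}, \beta_1+\beta_2 \leq 1/2\}$, one gets a lower bound of the form $\frac{\Gamma(a_1+a_2+a_3)}{\Gamma(a_1)\Gamma(a_2)\Gamma(a_3)} \cdot (\text{constant volume factor}) \cdot x_1^{B_1(a_1+a_2)}$; since $1/\Gamma(a) \gtrsim a$ for small $a$ and $x_1^{B_1 a_i} = e^{B_1 a_i \log x_1} \geq e^{-C}$ because $a_i \log x_1^{-1} \lesssim e^{-cx_1^{-1/2}} x_1^{-1/2} \to 0$, the whole expression is $\gtrsim a_1 a_2 \gtrsim e^{-2c x_1^{-1/2}}$, which is of the required form $e^{-c_2 x_1^{-d/2}}$ after possibly enlarging $c_2$ (note $d \geq 1$, so $x_1^{-d/2} \geq x_1^{-1/2}$, giving room). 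The main obstacle is precisely this last step: one must track carefully that the tiny shape parameters do not make the Dirichlet normalizing constant blow up faster than $e^{c x_1^{-d/2}}$, and that the factor $x_1^{B_1(a_1+a_2)}$ stays bounded below — both hinge on the fact that $a_i$ decays only like $e^{-\Theta(x_1^{-1/2})}$, which is slower than any polynomial in $x_1$, so $a_i |\log x_1|$ is negligible. Everything else is bookkeeping with Gamma-function asymptotics and elementary integral estimates.
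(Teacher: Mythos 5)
Your proof is correct and follows essentially the same route as the paper's: the upper bounds are treated identically via $Q_2(\min_i\lambda_i\leq x)\leq D\,Q_0((0,x])$ with $\E Q_0(A)=H_\lambda(A)$, and the lower bound reduces to the two-component Dirichlet over the disjoint intervals $A_1,A_2$, uses $H_\lambda(A_i)\gtrsim x_1^{1+b}e^{-c_2 x_1^{-1/2}}$, and exploits $1/\Gamma(a)\sim a$ for tiny shape parameters exactly as the paper does. Your handling of the factor $x_1^{B_1(a_1+a_2)}$ via the observation that $a_i|\log x_1|\to 0$ is the same estimate the paper makes inside its integral $\int_{x_1^{B_0/D}}^{1/4}x^{BH_\lambda(A_i)-1}\diff x$.
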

	A proof of \prpref{hybridDP} can be found in \appref{prior}. For instance if \newclem{$\lambda^{1/4}$} follows an inverse Gamma truncated on $[0, R]$ for arbitrarily large $R$ under $H_\lambda$, then Proposition \ref{prp:hybridDP} holds. 
	
	\begin{rem}\label{rem:priord}
		Although the conditions on the prior, \eqref{cond:piL} and \eqref{cond:piH:LB}, depend on $d$, they are satisfies for all $d$ by setting $d=1$ and in particular they are verified for all $d\geq 1$ if \newclem{$\lambda_i^{1/4}$} follow an inverse Gamma under the base measure, which is agnostic to $d$. 
	\end{rem}
\begin{table}[t!]  
\centering
\begin{tabular}{c|c|c||c|c}
         & MFM & DPM & Partial & Hybrid \\ \hline
         Conditions & \eqref{eq:finitemixt}+\eqref{condH:partial} &\eqref{condH:partial}  & \eqref{cond:piL} & \eqref{cond:piH:LB}+\eqref{cond:piH:UB}
\end{tabular}
\caption{Summary table of the required conditions depending on the type of mixture and the type of scale sampling.}
\label{tab:condtab}
\end{table}

	\section{Main results} \label{sec:theoretical}
	
	\subsection{Posterior contraction rates}  \label{subsec:main}
	
	Recall that   $X_1,\dots,X_n$ is an $n$ sample drawn from a distribution $P_0$ with  density $f_0$. \newclem{Recall that $f_0 = f_{0,n}$ might depend on $n$, but that all the constants appearing in the subsequent conditions do not.} This density is concentrated around a submanifold $M$, with a  a given smoothness $\beta_0$ along the manifold and a typically much larger smoothness $\beta_\perp$ along the normal spaces. More precisely, we will assume:
	\bitem
	\item \emph{Conditions on $M$}: the submanifold $M$ is of dimension $d$ and has a reach greater than $\tau > 0$. Furthermore, there exists $\beta_M > 4$ and $C_M > 0$ such that $\Psi_{x_0} \in \cH^{\beta_M}_{\iso}(\cV_{x_0},C_M),~\forall x_0 \in M$. In particular, $M$ also satisfies \eqref{barpsi}.
	\item \emph{Conditions on $f_0$}: the density $f_0$ is in $\hbm$ \newclem{with $\delta = \delta_n \leq \tau/2$}. Furthermore, there exists 
	$c_5, \kappa > 0$, 
	\beq \label{expdec}
	f_0(x) \lesssim e^{- c_5 \|x \|^\kappa}~~~~\forall x\in \bbR^D,
	\eeq
	and  for some $\omega > 6 \beta$ and $ C_0<\infty$, 
	\beq \label{assint}
	\int_{\cW_{x_0,\delta}} \left|\frac{\Diff^k \bar f_{x_0,\delta}}{\bar f_{x_0,\delta}}\right|^{\omega/\inner{k}{\bm\alpha}} \bar f_{\delta,x_0} \leq  C_0 ~~~\text{and}~~~ \int_{\cW_{x_0,\delta}} \left|\frac{L_{x_0,\delta}}{\bar f_{x_0,\delta}}\right|^{\omega/\beta} \bar f_{\delta,x_0}  \leq  C_0. 
	\eeq
	for all $\delta$ small, $x_0 \in M$ and all $0 \leq \inner{k}{\bm\alpha} < \beta$. \newclem{Recall that the quantities $\bm\alpha$ and $\beta$ are defined through $\beta_0$ and $\beta_\perp$ through \eqref{eq:defalpha0}.} 
	\eitem  
	

	In the rest of this paper the symbols $\simeq$, $\lesssim$ and $\gtrsim$  denote equalities or inequalities up to a constant depending on $D$, $d$, $\tau$, $\beta_M$, $C_M$, $\beta_0$, $\beta_\perp$ and all the other constants appearing in conditions  \eqref{eq:finitemixt} to \eqref{assint}. 
	
	\begin{thm} \label{thm:main} 
		Let $X_1, \dots, X_n$ be a $n$-sample from $f_0$ satisfying \eqref{expdec} and \eqref{assint} with $\omega>6 \beta + (2\beta+D)(D-d)\log(1/\delta)/\log n$  
		and that \newclem{$\beta_0 \leq \beta_\perp \leq \beta_M - 4$}. 
Consider the partial location scale mixture or the hybrid location scale mixture prior [either the mixture of finite mixtures  or the Dirichlet process mixture] satisfying the conditions displayed in Table \ref{tab:condtab}.

		Then, under the conditions stated above,
		$$
		\mathbb E_0^n \left[ \Pi\left(\hel(f_P,f_0) \geq \ve_n~|~\cX_n\right) \right] = o(1) 
		$$
		where \newclem{
		\beq \label{eq:ratemain}
		\ve_n \simeq \log^p n \times \{\frac{1}{\sqrt{n \delta^{\frac{D}{2\alpha_0-\alpha_\perp}}}} \vee n^{-\frac{\beta}{2\beta + D}}\},
		\eeq}
		with $p > 0$ depending on $D$, $\kappa$ and $\beta$, \newclem{and where we recall that $\beta, \alpha_0$ and $\alpha_\perp$ are defined through \eqref{eq:defalpha0}}. 
	\end{thm}
	
	\newclem{A few remarks are in order.} 
	
\begin{rem} Note that the conditions regarding the prior $\Pi$ do not involve $M$, $\delta$, $L$, $\beta$, or $\tau$: they are regarded as unknown in this framework. \newclem{Also, while the conditions \eqref{cond:piL} or \eqref{cond:piH:LB} involve the intrinsic dimension $d$, one can always choose $\Pi$ independent of $d$ and satisfying these two conditions for all $d \geq 1$, as noted in Remark \ref{rem:priord}. Hence the Bayesian procedure is fully adaptive.} 
	\end{rem}	
\begin{rem} \label{minimax}
\newju{The rates obtained in Theorem \ref{thm:main} are minimax (up to a $
\log n$ term)  as soon as $\delta \gtrsim n^{ -( 2\alpha_0 - \alpha_\perp)/(2\beta+D)}:= \delta_n(\beta_0,\beta_\perp)$. To see this, note that 
the case of densities concentrating near the linear subspace spanned by the $d$ first vectors of the canonical basis of $\bbR^D$, described as 
$$\mathcal F_{\text{lin}}(\delta) :=\{ f_\delta(x) = f(x_1, x_	2/\delta)/\delta^{D-d}, f \in \mathcal H_{\text{an}}^{\bm{\beta}}(\mathcal U, \mathbb R) \}$$ 
where $x_1 \in \mathbb R^d, x_2 \in \mathbb R^{D-d}$ are restricted to a compact subset $\mathcal U $ of $\mathbb R^D$, is a special case of the assumption \eqref{expdec}. Even if $\delta = \delta_n = o(1)$, the minimax rate for estimating,  in $L_1$ norm,  densities belonging to $\mathcal F_{\text{lin}}(\delta)$ is bounded from below by $n^{-\beta/(2\beta + D)}$. This is a consequence of fact that for two densities $f_{\delta,1}$ and $f_{\delta,2}$, 
$\|f_{\delta,1} - f_{\delta,2}\|_1 = \|f_{1,1} - f_{1,2}\|_1$ is independent of $\delta$  and a look at the construction of the lower bound in \cite{goldenshluger2014adaptive} --- proof of Theorem 4 (ii) for $p=1$,  $r = \left(\infty,\dots,\infty\right)$, $s = \infty$ and $\theta = 1$ (tail dominance from \eqref{expdec}) --- shows that the lower bound is not impacted by $\delta$. Obviously the result would be different if other metrics,  such as the $L_2$ norm  were  considered. However these distances, being non  transformation invariant,  are less natural in the context of density estimation. Hence, at least when $\delta \gtrsim \delta_n(\beta_0,\beta_\perp)$  the rate $\ve_n$ is the minimax estimation rate (up to a $\log n$ term).}
	\end{rem}
	
	\begin{rem}[Relations to existing results] The result of Theorem \ref{thm:main} is in line with the many existing results regarding Bayesian multivariate density estimation, as in \cite{ghosal2001entropies,ghosal2007posterior, kruijer2010adaptive,shen2013adaptive,canale2017posterior}, and extends them in this new \emph{near manifold} framework. We illustrate these extensions with simple cases and examples below.
	\bitem
	\item \textbf{Linear case}: As explained above, the case where $M$ is a linear subspace spanned by vectors of the canonical basis of $\bbR^D$ and where $\delta$ is constant (not going to zero) recovers the classical Euclidean anisotropic setting and our result thus encompasses in particular those of  \cite{shen2013adaptive} regarding the Bayesian estimation of multivariate, anisotropic H\"older densities. The rates we obtain are the same and are minimax up to logarithmic factors. The technical assumptions we do are roughly identical to theirs --- although in our context, the regularity vectors can only take two values $\beta_0$ and $\beta_\perp$ because of the geometry of the problem. \textcolor{black}{ The linear anisotropic case with regularity taking more than 2 values can be treated similarly to what we have done in Theorem \ref{thm:approx},  thus extending the results of \cite{shen2013adaptive} to linear anisotropy  along different axes than the canonical basis.}
	\item \textbf{Isotropic case} When $\beta_0 = \beta_\perp = \beta$, the density $f_0$ is globally $\beta$-H\"older on $\bbR^D$, and the general results of \cite{shen2013adaptive} would also apply in that case. However, because $f_0$ has magnitude $\delta^{-(D-d)}$ in a tubular neighborhood of size $\delta$, its $\beta$-H\"older norm is of order $\delta^{-(D-d+\beta)}$. This yields a rate of order
	$$
	\frac{1}{\delta^{D-d+\beta}} n^{-\frac{\beta}{2\beta+D}}.
	$$
	The rate we find in \thmref{main} is:
	$$
	\frac{1}{\sqrt{n\delta^D}} \vee n^{-\frac{\beta}{2\beta+D}}
	$$
	which is always better if the density is sufficiently smooth, ie $\beta \geq (d-D/2)_+$, or for $\delta$ not too small otherwise, ie $\delta \gg n^{-D/((2\beta+D)(2d-D-2\beta))}$. That is because our procedure takes advantage of the geometry of the problem in this context.
	\item \textbf{Fixed width case}: The case where $\delta > 0$ is fixed and do not go to zero has been investigated by \cite{mukhopadhyay2020estimating} using Fisher-Gaussian mixtures. However, little theorical results or rates are derived as they were not interested in providing a general model for such densities. This case encompasses many natural situations such as for instance the ones considered in \cite{capitao2023deconvolution} where the data can be naturally described using spherical coordinates with an unknown center point.
	\item  \textbf{Diverging width case}: The case where $\delta \to \infty$ implies in particular that the reach of $M$ becomes infinite, which can only be the case if $M$ is an affine subspace of $\bbR^D$, as the reach lower-bounds the minimal radius of curvature of $M$. We thus fall back in the Linear case described at the beginning of this remark. \textcolor{black}{ In this case  there is no need to rescale with $\delta$ to define the smoothness class and we can consider the alternative smoothness class $\mathcal H^{\beta_0, \beta_\perp}(M,L)$ of function $f: \mathbb R^D \rightarrow \mathbb R$ verifying $f \circ \bar \Psi \in  \cH_{\an}^{\bm\beta_{0,\perp}}(M,L)$ where $\bar \Psi$ is the rotation aligning with $M, M^\perp$. }
	 Notice however that our functional classes, as defined in Definition \ref{def:defmanihold},  becomes trivial for $\delta = \infty$, and  one would need to threshold at $\delta = \log^t n$ to use this definition, where $t$ is driven by the exponential tail of the density, thus losing logarithmic terms in the final contraction rate. \textcolor{black}{It is therefore more interesting in this case to use the alternative class $\mathcal H^{\beta_0, \beta_\perp}(M,L)$.}
	
	\item \textbf{Decaying width case}: Finally, the case where $\delta = \delta_n \to 0$ is, to our knowledge, new and the rate we obtain, in its dependence to $\delta_n$, does not relate to any rate that we know of.
	\eitem
	\end{rem}

		\begin{rem} \label{rem:union}
		Because the approximation results of \subsecref{approx} are stable under finite mixtures, so do the results of \thmref{main}. In particular, the support of $f_0$ can be a finite union of vicinity of submanifolds $M_i$ with arbitrary intersections and with each $M_i$ fulfilling \eqref{betam},   see \figref{Mi} for a diagram of such a situation. Hence the assumption on the lower bound on the reach can be significantly weakened, \textcolor{black}{ 
as shown in the following proposition
\begin{prp}\label{prop:mixture}
Assume that there exist $M_j, j\leq J$ smooth manifolds with dimensions $d_j, j \leq J$ and verifying the \textit{conditions on $M$}; assume that there exist  probability densities $f_{0,j}$ $j\leq J$  verifying assumptions \eqref{expdec} and \eqref{assint} with respect to $(\beta_{0j}, \beta_{\perp,j})$ and $p_{0j}>0, j\leq J$ such that $\sum_j p_{0j} =1$.  Then under the hybrid location scale mixure prior satisfying the conditions diplayed in Table \ref{tab:condtab},
 \newclem{
 $$
		\mathbb E_0^n \left[ \Pi\left(\hel(f_P,f_0) \geq \ve_n~|~\cX_n\right) \right] = o(1) , \quad \text{where}
		$$
		$$
		\ve_n \simeq \log^p n \times \max_j \{\frac{1}{\sqrt{n \delta_j^{\frac{D}{2\alpha_{0,j}-\alpha_{\perp,j}}}}} \vee n^{-\frac{\beta_j}{2\beta_j + D}}\},
		$$}
If in addition for all $j$, $(d_j,\beta_{0j}, \beta_{\perp,j}, \delta_j) = (d,\beta_{0}, \beta_{\perp}, \delta)$
then the above result holds also true under the partial location scale mixture prior. 
\end{prp}
 Proposition \ref{prop:mixture} is a direct consequence of the proof of Theorem \ref{app:pr:thmain} and of  the fact that with $\text{KL}(f_0, f) = P_0( \log (f_0/f))$ and $f = \sum_{j=1}^J p_{0j}f_j$,  
 $$ KL(f_0, f) \leq \sum_{j=1}^Jp_{0j} KL(f_{0j},f_j),$$ see Section \ref{sec:mainproof} for more details.\\
The case of collections of manifolds has also been considered in \cite{kim2019Kernel}, who bound the variance term   of a Kernel estimator, in sup norm, under a condition on the support which include unions of manifolds.  }

We consider such an example in the simulations of Section \ref{sec:num}.
Interestingly, to allow for different $\beta_{0i}$ or different $d_i$ accross $i=1, \cdots, N$, we need to consider the hybrid location - scale prior, since we need to allow for different  $\Delta_{\sigma_i,\delta_i}$ , for $i=1, \cdots, N$ in the approximation theory presented in Theorem \ref{thm:approx}. 
	We stress out here that the accuracy of the density estimation does not decrease near the intersections. \textcolor{black}{It is possible that locally better rates can be achieved (depending on $\beta_j, d_j, \delta_j$ in particular, however deriving local rates is much harder than global Hellinger or $L_1$ rates, see \cite{rockova:rousseau:2023} and is beyond the scope of this paper. }
	\end{rem}
	\begin{figure}[h!]
		\centering
		\includegraphics[width = 5cm]{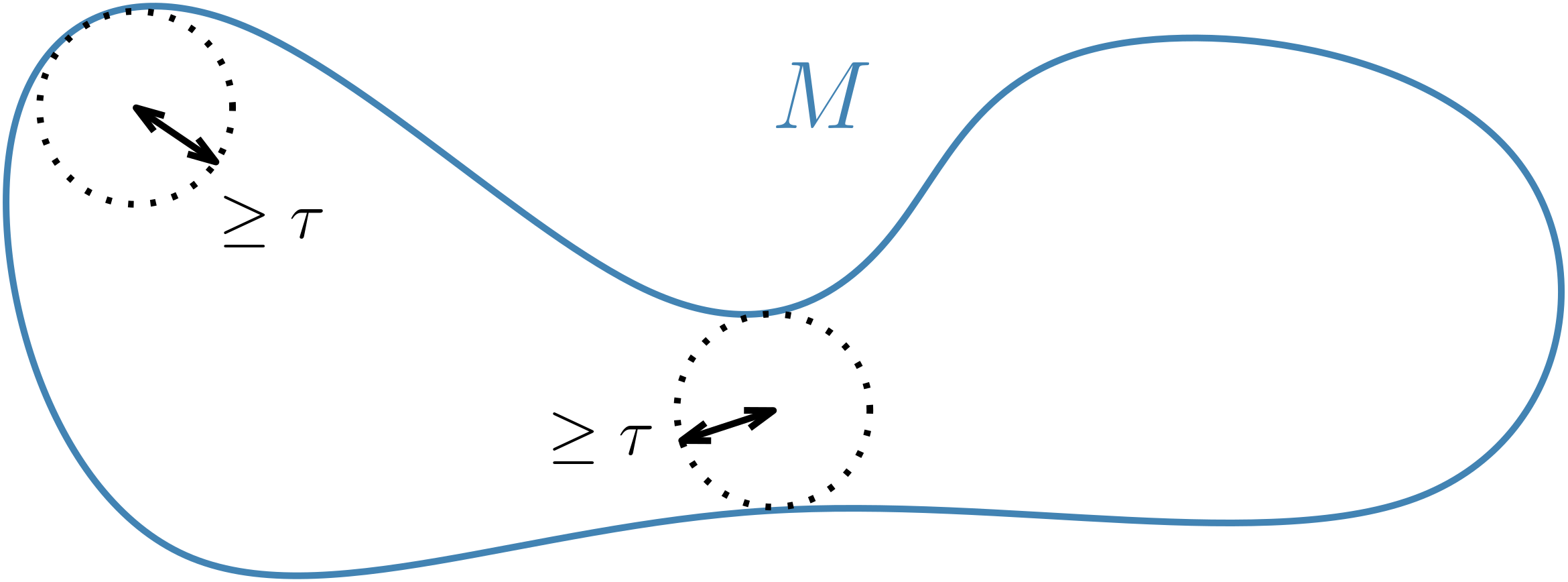}~~~~
		\includegraphics[width = 5cm]{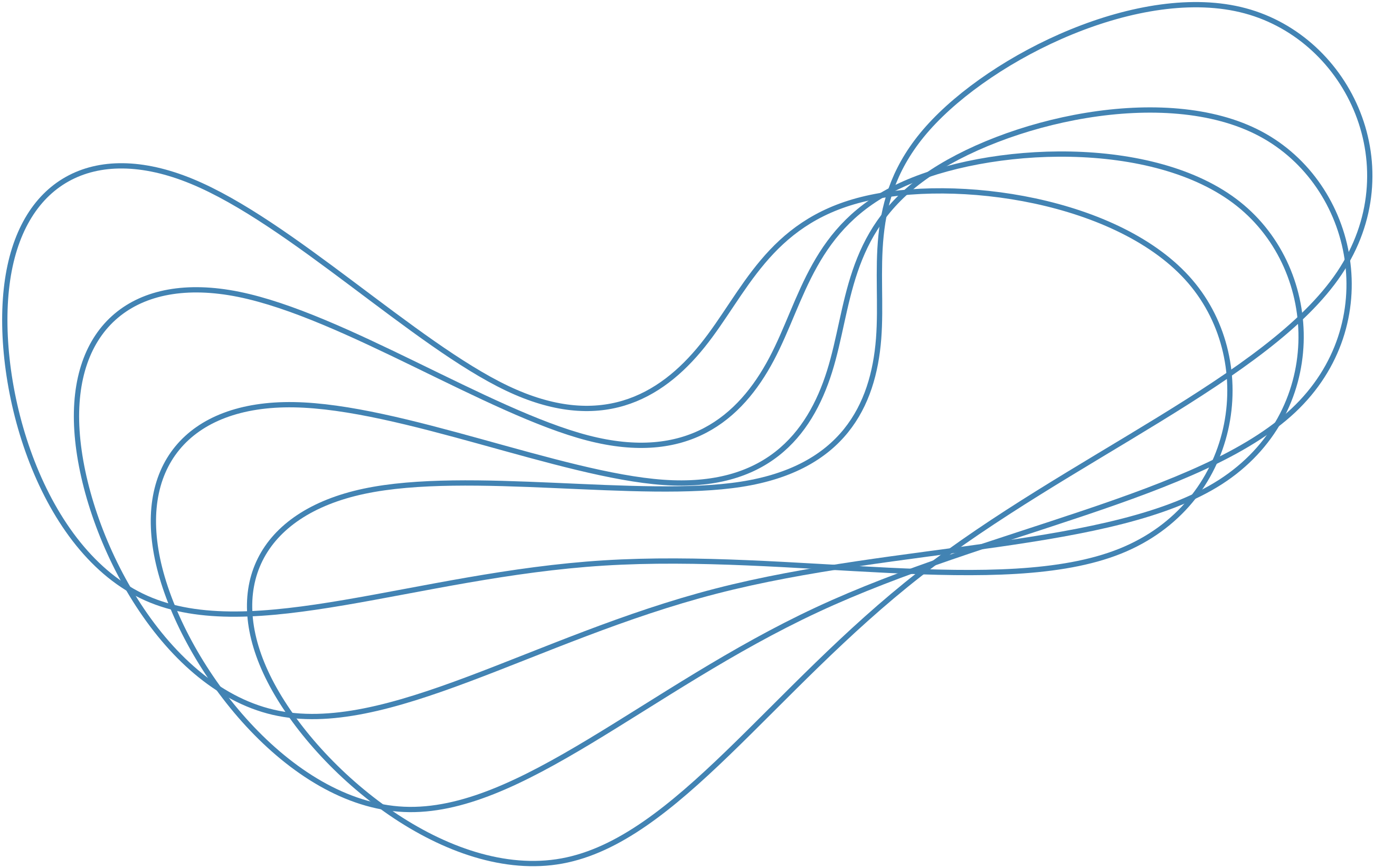}
		\caption{(Left) An example of smooth submanifolds with a reach constrained to be greater than $\tau$ and (Right) a finite union of such manifolds. Both subsets are admissible as (near) supports for a density $f_0$ satisfying the contraction rates displayed Proposition \ref{prop:mixture}.}
		\label{fig:Mi}
	\end{figure}

	\begin{rem} 
		The case where $\beta_\perp$ is infinite is particularly of interest.  Then, $\beta \to D\beta_0/d$, $\alpha_0 \to D/d$, $\alpha_\perp\to 0$ and the rate $\ve_n$ becomes, up to $\log n$ terms
		$$
		\frac{ 1}{\sqrt{n \delta^{d/2}}} \vee  n^{-\frac{\beta_0}{2\beta_0 + d}},
		$$
		which is, when $\delta$ is not too small (i.e  $\delta \gtrsim n^{-2/(2\beta_0+d)}$), the minimax rate for estimating a $\beta_0$ H\"older density in $\bbR^d$. Here, again, the strength of our result lies in that  the manifold (and thus the support of $f_0$)  is unknown and the prior depends neither on $\mathbf{\beta}$ nor on $\delta$ or $d$ (or $M$). 
	\end{rem}

	Assumptions \eqref{expdec} and \eqref{assint} are common assumptions in density estimation based on mixtures of Gaussians, see for instance \cite{kruijer2010adaptive} or \cite{shen2013adaptive} for the Bayesian approaches and \cite{maugis2013adaptive} for the frequentist  approaches.  They are rather weak asssumptions. The difficulty with \eqref{assint} is that it is expressed on $\bar f_{x_0,\delta}$, which is natural in our context  since the smoothness assumption on $f_0$ is expressed in terms of   $\bar f_{x_0,\delta}$, but is not so intuitive. However, a careful examination of \eqref{assint} shows that this assumption is for instance implied by the stronger, but chart-independent assumption: 
		$$
		\int_{\bbR^D} \{\frac{L(x)}{f_0(x)}\}^{\omega_*} f_0(x) \diff x \leq C_0,
		$$
		for some large $\omega^*$. Moreover, to understand better what \eqref{assint}   means in terms of $f_0$, we  illustrate it in  our archetypal model where $f_0$ is the density of $X = Y +\delta \mathcal E$ as in Proposition \ref{prp:model}. We then have the following result:  
		
	\begin{lem}\label{lem:moments}  Under the conditions of Proposition \ref{prp:model} and if
		$$
		\int_{M} \{\frac{L_0(x)}{f_*(x)} \}^{\omega_* } f_*(x) \diff\mu_{M}(x) < \infty ~~~\text{and}~~~ \int_{\ball(0,1)} \{\frac{L_\perp(\eta)}{K(\eta)} \}^{\omega_*}K(\eta) \diff \eta < \infty,
		$$
		for some $\omega_*$ large. Then \eqref{assint} is satisfied in both the orthonormal noise model and in the isotropic noise model.
	\end{lem}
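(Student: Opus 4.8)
The plan is to reduce \eqref{assint} to the single chart-independent bound $\int_{\bbR^D}(L/f_0)^{q_0}f_0 < \infty$ --- essentially the one noted in the remark preceding the statement --- where $q_0\geq 1$ is a finite upper bound for every exponent occurring in \eqref{assint} (that is, for $\omega/\beta$ and for the $\omega/\inner{k}{\bm\alpha}$ with $0 < \inner{k}{\bm\alpha} < \beta$; such a $q_0$ exists since $\beta_\perp < \infty$ forces $\inner{k}{\bm\alpha}$ to stay above $\min(\alpha_0,\alpha_\perp)>0$ on that range), and then to verify this bound in each of the two models of Proposition~\ref{prp:model}; we take $\omega_* := q_0$. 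For the reduction, note first that by Proposition~\ref{prp:model} we have $f_0\in\hbm$ in both models, so $\bar f_{x_0,\delta}\in\cH^{\bm\beta_{0,\perp}}_{\an}(\cW_{x_0,\delta},L_{x_0,\delta})$ and hence $|\Diff^k\bar f_{x_0,\delta}|\leq L_{x_0,\delta}$ on $\cW_{x_0,\delta}$ whenever $\inner{k}{\bm\alpha}<\beta$. Next, $\bar\Psi_{x_0,\delta}$ is a diffeomorphism from $\cW_{x_0,\delta}$ onto an open subset of $M^{\tau/2}$ whose Jacobian is $\simeq\delta^{D-d}$ uniformly in $x_0$ and $\delta$ (by the reach lower bound and \eqref{barpsi}, see \appref{A}); since by definition $L_{x_0,\delta}/\bar f_{x_0,\delta}=(L/f_0)\circ\bar\Psi_{x_0,\delta}$ and $\bar f_{x_0,\delta}=\delta^{D-d}f_0\circ\bar\Psi_{x_0,\delta}$, the substitution $z=\bar\Psi_{x_0,\delta}(v,\eta)$ yields $\int_{\cW_{x_0,\delta}}(L_{x_0,\delta}/\bar f_{x_0,\delta})^{q}\bar f_{x_0,\delta}\simeq\int_{\bar\Psi_{x_0,\delta}(\cW_{x_0,\delta})}(L/f_0)^{q}f_0\leq\int_{\bbR^D}(L/f_0)^{q}f_0$ for all $q\geq 0$ (integrand read as $0$ on $\{f_0=0\}$), in particular $\int_{\cW_{x_0,\delta}}\bar f_{x_0,\delta}\lesssim 1$. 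Combining this with $|\Diff^k\bar f_{x_0,\delta}|\leq L_{x_0,\delta}$ and with $t^q\leq 1+t^{q_0}$ for $0\leq q\leq q_0$ bounds every integral in \eqref{assint} by $1+\int_{\bbR^D}(L/f_0)^{q_0}f_0$, uniformly in $x_0\in M$ and in $\delta$ small.

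\emph{Orthonormal noise model.} The offset parametrization gives $f_0(z)=f_*(\pr_M z)\,\delta^{-(D-d)}c_\perp K\!\big((z-\pr_M z)/\delta\big)\,J(z)$ with $J\simeq 1$ uniformly on $M^{\tau/2}$, and, from the explicit expression of $L$ in Proposition~\ref{prp:model}, $L(z)/f_0(z)\simeq (L_0/f_*)(\pr_M z)\cdot(L_\perp/K)\!\big((z-\pr_M z)/\delta\big)$. The tube change of variables $z\mapsto(y,w):=\big(\pr_M z,(z-\pr_M z)/\delta\big)\in M\times\ball(0,1)$ sends $f_0(z)\,\diff z$ to $f_*(y)\,c_\perp K(w)\,\rho(y,w)\,\mu_M(\diff y)\,\diff w$ with $\rho\simeq 1$, so that
\[
\int_{\bbR^D}\Big(\frac{L}{f_0}\Big)^{q_0}f_0 \;\lesssim\; \bigg(\int_M\Big(\frac{L_0}{f_*}\Big)^{q_0}f_*\,\diff\mu_M\bigg)\bigg(\int_{\ball(0,1)}\Big(\frac{L_\perp}{K}\Big)^{q_0}K\bigg)\;<\;\infty
\]
by the two hypotheses (recall $\int_M f_*\,\diff\mu_M=1$).

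\emph{Isotropic noise model.} Here $f_0(z)=\delta^{-D}\int_M K\!\big((z-y)/\delta\big)f_*(y)\,\mu_M(\diff y)$ and $L(z)=C\delta^{-D}\int_M L_\perp\!\big((z-y)/\delta\big)L_0(y)\,\mu_M(\diff y)$. Put $g_z(y):=(L_\perp/K)\!\big((z-y)/\delta\big)\cdot(L_0/f_*)(y)$ and let $\nu_z$ be the probability measure on $M$ with $\nu_z(\diff y)\propto K\!\big((z-y)/\delta\big)f_*(y)\,\mu_M(\diff y)$, so that $L(z)/f_0(z)=C\int_M g_z\,\diff\nu_z$. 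Since $q_0\geq 1$, Jensen's inequality gives $\big(L(z)/f_0(z)\big)^{q_0}\leq C^{q_0}\int_M g_z^{q_0}\,\diff\nu_z$; multiplying by $f_0(z)$, using the identity $\nu_z(\diff y)\,f_0(z)=\delta^{-D}K\!\big((z-y)/\delta\big)f_*(y)\,\mu_M(\diff y)$, then Tonelli and the substitution $w=(z-y)/\delta$, we obtain
\[
\int_{\bbR^D}\Big(\frac{L}{f_0}\Big)^{q_0}f_0 \;\leq\; C^{q_0}\bigg(\int_M\Big(\frac{L_0}{f_*}\Big)^{q_0}f_*\,\diff\mu_M\bigg)\bigg(\int_{\ball(0,1)}\Big(\frac{L_\perp}{K}\Big)^{q_0}K\bigg)\;<\;\infty ,
\]
again by the hypotheses ($K$ being supported on $\ball(0,1)$). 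Together with Step~1 this proves the lemma.

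\emph{Main obstacle.} Step~1 and the orthonormal case are essentially bookkeeping around Jacobians that the reach bound makes uniformly comparable to $\delta^{D-d}$, respectively to $1$; the genuinely delicate point is the isotropic model, where $L/f_0$ is a ratio of two convolutions rather than a product, and where it is the change-of-measure identity $\nu_z(\diff y)\,f_0(z)=\delta^{-D}K\big((z-y)/\delta\big)f_*(y)\,\mu_M(\diff y)$ --- combined with Jensen's inequality, which is why we need $q_0\geq 1$ --- that disentangles the tangential and normal contributions and makes all powers of $\delta$ cancel; one must also be careful with the conventions at points where $K$ or $f_0$ vanishes.
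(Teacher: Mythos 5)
Your proof is correct and follows essentially the same route as the paper's: reduce \eqref{assint} to a chart-independent bound on $\int (L/f_0)^{q}f_0$, then verify that bound in each model by factoring the ratio $L/f_0$ (orthonormal case) and by a Jensen/H\"older step against the measure proportional to $K\big((z-y)/\delta\big)f_*(y)\mu_M(\diff y)$ (isotropic case). The difference is only in bookkeeping: you spell out the reduction (Step~1) that the paper dispatches with the remark ``a careful examination of \eqref{assint} shows \ldots'' preceding the lemma, including the correct observation that the exponents $\omega/\inner{k}{\bm\alpha}$ are uniformly bounded because $\alpha_\perp>0$, and that $|\Diff^k\bar f_{x_0,\delta}|\leq L_{x_0,\delta}$ collapses both families of integrals to a single one; and in the isotropic case you note that the integrand $g_z$ already factors across $M\times\ball(0,1)$, so the application of Tonelli after Jensen separates the two hypotheses directly, whereas the paper inserts an extra Cauchy--Schwarz (both there and in the orthonormal case) that halves the available exponent but is otherwise harmless. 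In short: same strategy, slightly tighter exponent accounting, and an explicit version of a step the paper leaves implicit.
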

	
	The proof of \lemref{moments} can be found in Section \ref{app:prpmodel}. Note that the conditions in \lemref{moments} are fulfilled by a number of natural kernels or densities such as $K(\eta) \approx (1-\|\eta\|^2)_+^p$ and $L_\perp \approx (1-\|\eta\|^2)_+^{p-\beta_\perp}$ with large $p$ or $K(\eta) \approx \exp(-(1-\|x\|^2)^{-1}_+)$ and $L_\perp(\eta) \approx (1-\|\eta\|^2)^{-\beta_M} K(\eta)$, see \lemref{chixj} for further details.
	
	\subsection{Approximating M-anisotropic densities}  \label{subsec:approx}
	
	Theorem \ref{thm:main} is proved using the approach of \cite{ghosal2007posterior}, with a control on the prior mass of Kullback-Leibler neighbourhoods of $f_0$ and on the entropy of the support of the prior. The main difficulty in our setup is in proving the Kullback-Leibler prior mass condition. To do so, we need to construct an \textit{efficient} approximation of $f_0$ by mixtures of Gaussian. This construction is of interest in its own as it sheds light on the behaviour of such mixtures and on the geometry of M-anisotropic densities. \\

	To explain the construction, we denote, for any $x \in M^\tau$, $T_x = T_{\pr_M(x)} M$ and $N_x = N_{\pr_M(x)} M$. \newclem{We set $\cB^0_x$ an orthonormal basis of $T_x$, $\cB^\perp_x$ an orthonormal basis of $N_x$, and let $O_x$ be the matrix of the linear map $z \mapsto (\pr_{T_x} z, \pr_{N_x} z)$ from the canonical basis to the concatenated basis $(\cB^0_x,\cB^\perp_x)$}. We finally write $\Sigma(x) = O_x^\top \Delta_{\sigma,\delta}^2 O_x$ where
	$$
	\Delta_{\sigma,\delta} = \begin{pmatrix} \sigma^{\alpha_0} \Id_{d} & 0 \\ 0 & \delta \sigma^{\alpha_\perp} \Id_{D-d} \end{pmatrix}.
	$$
	Note that $\Sigma(x)$ does not depends on the choice of the bases $\cB_x^0$ and $\cB_x^\perp$ since for any orthonormal basis change $P$ that preserves $T_x$ (whence $N_x$), one has $P^\top  \Delta_{\sigma,\delta} P =  \Delta_{\sigma,\delta}$.
	For any function $f : \bbR^D \to \bbR$, we define
	\begin{align*} 
		K_\Sigma f(x) &:= \int_{M^\tau} \varphi_{\Sigma(y)} (x-y) f(y) \diff y,
	\end{align*} 
	where we recall that $\varphi_{\Sigma(y)}$ is the density of a centered Gaussian with variance $\Sigma(y)$. The idea behind the construction of the approximation of $f_0$ by a mixture of gaussians is to show that $K_\Sigma f_0(x) $ is close to $f_0$ and then to define a perturbation $f_1$ of $f_0$ such that $K_\Sigma f_1(x) -f_0(x) = O(L(x) \sigma^{\beta})$. Compared to the construction proposed by \cite{kruijer2010adaptive} in the univariate case  or \cite{shen2013adaptive} in the multivariate case where $K_\Sigma f = \varphi_\Sigma \ast f$, in our construction $\Sigma$ varies with the location $y$. Note that in particular $\int \varphi_{\Sigma(y)}(x-y) \diff y$ may be different from 1.  This dependence in $y$ is crucial to adapt to the geometry of the manifold but considerably complicates the proof as the underlying kernel integral operator can no longer be written as a convolution. We first show that $f_0$ can be efficiently approximated pointwise.
	\begin{thm}\label{thm:approx} Assume that $\sigma,\delta \leq 1$, that $f_0 \in \hbm$ satisfies \eqref{expdec} and \eqref{assint}, that $\sigma^{2\alpha_0-\alpha_\perp} =o(\delta)$ and that the manifold satisfies \eqref{betam} with $\beta_0 \leq \beta_\perp \leq \beta_M-4$.
		Then there exists a function $g : \bbR^D \to \bbR$ such that, for any $H > 0$,
		$$
		| K_\Sigma g(x) - f_0(x) | \lesssim \sigma^\beta L(x)\1_{M^\tau} + (H\log(1/\sigma))^{D/\kappa} \sigma^H \| L \|_\infty ~~~\forall x \in \bbR^D.
		$$
		The function $g$ has the form 
		$$g(x) =d_0(x,\sigma,\delta)f_0(x) + \frac{ 1}{ \delta^{D-d} }\sum_{0 < \inner{k}{\bm\alpha} < \beta} \sigma^{\inner{k}{\bm\alpha}}\sum_{j=1}^J d_{j,k}(x,\sigma,\delta) \Diff_z^{k} \overline{(\chi_j f_0)}_{x_j,\delta}(z_{j,x}),$$
		where $z_{j,x} := \Delta_{1,\delta}^{-1} \bar\Psi_{x_j}^{-1}(x)$, where $(\chi_j)_{j\leq J}$ is a partition of unity, defined in Section \ref{app:thmapprox2}, of the set $M^\tau \cap \ball( 0, R_0(\log (1/\sigma))^{1/\kappa})$ associated with a $\tau/64$-packing $(x_j)_{j\leq J}$ of $M^\tau \cap \ball( 0, R_0(\log (1/\sigma))^{1/\kappa})$ and where $d_{j,k}(x,\sigma,\delta) $ are smooth and bounded functions depending on $\chi_j$ and $M$.
	\end{thm}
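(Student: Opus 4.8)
\textbf{Proof plan for Theorem~\ref{thm:approx}.}

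The plan is to follow the strategy of \cite{kruijer2010adaptive} and \cite{shen2013adaptive}, but adapted to the fact that $K_\Sigma$ is not a convolution and that the relevant smoothness lives in the twisted charts $\bar\Psi_{x_0,\delta}$. First I would localize: fix $x \in M^\tau$ (outside $M^\tau$ the Gaussian tails together with \eqref{expdec} give the $\sigma^H$ error term, after choosing $R_0$ large enough that $M^\tau \setminus \ball(0, R_0(\log(1/\sigma))^{1/\kappa})$ contributes at most $\sigma^H\|L\|_\infty$; this is the easy part). Near $x$, the Gaussian $\varphi_{\Sigma(y)}(x-y)$ is effectively supported on $\|x-y\| \lesssim \sigma^{\alpha_\perp}\log^{1/2}(1/\sigma)$ in the normal direction and $\sigma^{\alpha_0}\log^{1/2}(1/\sigma)$ in the tangential direction, so only the charts $\bar\Psi_{x_j}$ with $x_j$ in a $\tau/64$-neighborhood of $x$ matter, and on that neighborhood one can work in a single chart. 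Pulling back through $\bar\Psi_{x_j,\delta}$ and changing variables $z = z_{j,x}$, the operator $K_\Sigma$ acting on $\chi_j f_0$ becomes, up to the Jacobian of $\bar\Psi_{x_j,\delta}$ and the normalization $\delta^{D-d}$, approximately a convolution on $\cW_{x_j,\delta}$ against a standard Gaussian with the anisotropic scaling $\Delta_{\sigma,\delta}$ --- the condition $\sigma^{\alpha_0-\alpha_\perp}\leq\delta$ is exactly what guarantees the rescaled normal bandwidth $\delta\sigma^{\alpha_\perp}$ dominates $\sigma^{\alpha_0}$ so the ellipsoid is genuinely flatter along $N_x$, matching the anisotropy $\bm\alpha$ of $\bar f_{x_j,\delta} \in \cH^{\bm\beta_{0,\perp}}_{\an}$.

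Next I would carry out the bias-correction Taylor expansion in each chart. Writing $\bar f_{x_j,\delta}$ for the pulled-back density and Taylor-expanding it at $z$ inside the (almost-)convolution integral, the zeroth-order term reproduces $f_0(x)$ up to a normalization defect $\int \varphi_{\Sigma(y)}(x-y)\diff y - 1$ which itself has to be expanded (here the $y$-dependence of $\Sigma$ enters, and the curvature terms of $M$ of size $O(1/\tau)$ appear --- this is where $\beta_M \geq \beta_\perp + 3$ is used, to have enough derivatives of the chart available). The higher-order terms produce, schematically, $\sum_{0<\inner{k}{\bm\alpha}<\beta}\sigma^{\inner{k}{\bm\alpha}} c_k \Diff_z^k \bar f_{x_j,\delta}(z)$ with the anisotropic weights coming from the moments of the Gaussian under the scaling $\Delta_{\sigma,\delta}$, plus a remainder controlled by the H\"older condition \eqref{eq:hold} on $\bar f_{x_j,\delta}$, which is of order $\sigma^\beta L_{x_j,\delta}(z) = \delta^{D-d}\sigma^\beta L(x)$; after the $\delta^{-(D-d)}$ normalization this is the claimed $\sigma^\beta L(x)$. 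The perturbation $g$ is then \emph{defined} to cancel precisely the terms $\sigma^{\inner{k}{\bm\alpha}}$ for $0<\inner{k}{\bm\alpha}<\beta$: one sets $g = f_0 + \delta^{-(D-d)}\sum_{0<\inner{k}{\bm\alpha}<\beta}\sigma^{\inner{k}{\bm\alpha}}\sum_j d_{j,k}\Diff_z^k\overline{(\chi_j f_0)}_{x_j,\delta}(z_{j,x})$, the $d_{j,k}$ being read off from the Gaussian moments and from the chart Jacobians, hence smooth and bounded. Applying $K_\Sigma$ to this $g$ and re-running the expansion, the designed terms telescope away and only the $\sigma^\beta L(x)$ remainder (plus the far-field $\sigma^H$ term) survives. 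The partition of unity $(\chi_j)$ must be handled with care: $\sum_j \chi_j = 1$ on a neighborhood of the truncated $M^\tau$, and the overlaps are uniformly bounded thanks to the $\tau/64$-packing, so summing the chartwise estimates only costs a constant; one also needs that $\Diff^k(\chi_j f_0)$ stays inside the H\"older scale, which follows from the product/Leibniz rule and smoothness of $\chi_j$ (properties of $\cH^{\bm\beta}_{\an}$ from Section~\ref{app:auxhold}).

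The main obstacle, and where most of the work goes, is controlling the \emph{chart-mismatch error}: $K_\Sigma$ integrates against $\varphi_{\Sigma(y)}(x-y)$ with $y$ ranging over $M^\tau$, but the anisotropic Taylor expansion is natural only inside a single chart $\bar\Psi_{x_j,\delta}$, and $\Sigma(y)$ itself rotates with $y$ (through $O_y$). I would quantify how far $\bar\Psi_{x_j,\delta}^{-1}\circ\bar\Psi_{x_{j'},\delta}$ is from an affine map and how much $\Sigma(y)$ deviates from $\Sigma(x)$ over the effective support $\|x-y\|\lesssim \sigma^{\alpha_\perp}\log^{1/2}(1/\sigma)$, using the reach bound (Appendix~\ref{app:A}) to get that the tangent space varies by $O(\|x-y\|/\tau)$ and the chart by $O(\|x-y\|^2/\tau)$ in $\cC^1$; the point is that these deviations, measured in the anisotropic metric $\|\cdot\|_{\Delta_{\sigma,\delta}^{-2}}$, are of smaller order than $\sigma^\beta$ once $\sigma^{\alpha_0-\alpha_\perp}\leq\delta$ and $\beta_\perp \leq \beta_M - 3$, so they can be absorbed into the remainder. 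Keeping the bookkeeping of all these nested expansions (normalization defect, curvature, chart mismatch, partition overlaps) uniform in $x$, $\delta$ and $\sigma$ is the delicate part; the $\delta^{D-d}$ normalization has to be tracked consistently throughout so that the final bound is genuinely in terms of $L(x)$ and not $L_{x_j,\delta}$.
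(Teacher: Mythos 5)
Your skeleton is on the right track --- localize with the partition of unity, pull back to a chart $\bar\Psi_{x_j,\delta}$, Taylor-expand the pulled-back density, define $g$ to cancel the sub-$\beta$ orders, and use Gaussian tails together with \eqref{expdec} for the far-field $\sigma^H$ term --- and this is indeed the paper's overall strategy. But there is a genuine gap in the step you single out as "the main obstacle," the chart-mismatch error. You propose to control the $y$-dependence of $\Sigma(y)$ and the chart nonlinearity by showing they are "of smaller order than $\sigma^\beta$" over the effective support, so they can be dumped into the remainder. This does not hold. After pulling back, the kernel becomes $e^{-B_\sigma(x,z)}$ with $B_\sigma(x,z)=\tfrac12\|x-\bar\Psi(\Delta_{\sigma,\delta}z+w_x)\|^2_{\Sigma^{-1}(\bar\Psi(\Delta_{\sigma,\delta}z+w_x))}$, and expanding this around $\tfrac12\|A(w_x)z\|^2$ produces corrections whose leading piece has prefactor $\sigma^{\alpha_0-\alpha_\perp}\delta^{-1}\lesssim 1$ --- order one, not $O(\sigma^\beta)$. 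A blunt bound on the deviation would therefore not close the argument.

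What the paper actually uses is a structural cancellation plus a bootstrap that your sketch is missing. First, there is the crucial observation that every correction term arising from the metric rotation and the chart Jacobian carries at least one \emph{tangential} multi-index ($|k_0|\geq 1$): this comes from the block-lower-triangular form of $\diff\bar\Psi$, from $\bar\Psi$ being affine in $\eta$ so $\Diff^{(k_0,k_\perp)}\bar\Psi=0$ for $|k_\perp|\geq 2$, and from $\pr_M$ (hence $\Sigma$) being constant along normal fibers. This is exactly what produces the extra $\sigma^{\alpha_0}$ factors (rather than $\sigma^{\alpha_\perp}$) that make the bookkeeping consistent with $\sigma^{\alpha_0-\alpha_\perp}\leq\delta$. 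Second, even with this structure, the resulting development \eqref{finaldev} has coefficients $\xi_{k,\ell,m}$ that are not small: they lie in a manifold-H\"older class of degraded regularity $\wt\beta$, and one has to \emph{iterate} the whole construction, re-applying the expansion to each $\xi_{k,\ell,m}$; the combinatorial verification \eqref{betat}, which requires $\beta_\perp\leq\beta_M-3$, guarantees at each step that $\sigma$-prefactor plus degraded regularity still reaches $\beta$. Your "normalization defect to be expanded" is also a slightly off mental picture: the zeroth-order term is \emph{exactly} $f(x)$, because $|\det\diff\bar\Psi(w_x)|$ cancels $|\det A(w_x)|$; there is no separate normalization defect to track. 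The real content is the $|k_0|\geq1$ structure together with the inductive absorption, and without those the quantitative step does not go through.
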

	We then establish that the previous bound translates to a control in terms of Hellinger distance. 
	\begin{cor} \label{cor:approx}
		In the context of \thmref{approx}, if $f_0$ also satisfies \eqref{assint} and if $\sigma^{\omega - 6\beta} = o(\delta^{D-d})$
		and  $\delta \sigma^{\alpha_\perp} = o(|\log \sigma|^{-1/2})$,
		the probability density $\tilde h \propto g \1_{g\geq f_0/2} +  f_0/2 \1_{g<f_0/2}$ verifies 
		\begin{equation}\label{eq:approx}
			\hel(K_\Sigma \tilde h,g)^2 \lesssim \sigma^{2\beta} |\log \sigma |^{16\beta + D/\kappa}.
		\end{equation}
	\end{cor}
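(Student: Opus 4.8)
The plan is to move from $K_\Sigma\tilde h$ to $g$ in two steps: first replace $\tilde h$ by $g/Z$ up to a negligible correction (where $Z$ is the normalising constant), and then convert the \emph{pointwise} control of $K_\Sigma g$ coming from \thmref{approx} into a Hellinger estimate by means of the moment bound \eqref{assint}.

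Write $u:= g\1_{\{g\ge f_0/2\}}+(f_0/2)\1_{\{g<f_0/2\}}=g+D$ with $D:=(f_0/2-g)_+\ge 0$, so that $\tilde h=u/Z$ with $Z=\int u=\int g+\int D$. From the explicit expression for $g$ in \thmref{approx} and the defining bounds of $\hbm$ (Definition~\ref{def:defmanihold}: $|\Diff^k\bar f_{x_j,\delta}|\le L_{x_j,\delta}$ whenever $\inner{k}{\bm\alpha}<\beta$, together with the bounded overlap of the partition $(\chi_j)$), one first gets a crude pointwise bound $|g(x)-f_0(x)|\lesssim \sigma^{\rho}L(x)\1_{M^\delta}(x)$, where $\rho:=\min\{\inner{k}{\bm\alpha}:0<\inner{k}{\bm\alpha}<\beta\}>0$ (if this index set is empty then $g=f_0$ and the claim is immediate from \thmref{approx}). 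Hence $\{g<f_0/2\}\subseteq\{L/f_0\gtrsim\sigma^{-\rho}\}$ and, by Markov's inequality and \eqref{assint}, $P_0(g<f_0/2)\lesssim\sigma^{\rho\omega/\beta}$, while Hölder's inequality gives $\int D\le\int_{\{g<f_0/2\}}|f_0-g|\lesssim\sigma^{\rho}\int_{\{g<f_0/2\}}(L/f_0)\,f_0\lesssim\sigma^{\rho\omega/\beta}$; using $\omega>6\beta$ and $\sigma^{\omega-6\beta}=o(\delta^{D-d})$ to absorb the $1/\delta^{D-d}$ scale of $L$, this is $o(\sigma^{2\beta})$, so in particular $\|K_\Sigma D\|_1=\int D$ is negligible. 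Next, integrating the pointwise bound of \thmref{approx} over $\bbR^D$ with $H\asymp\beta+D/\kappa$, using $\int_{\bbR^D}K_\Sigma g=\int g$ (Fubini, each $\varphi_{\Sigma(y)}$ being a probability density) and $\|L\|_1\lesssim\int(L/f_0)f_0<\infty$, one gets $|\int g-1|\lesssim\sigma^{\beta}|\log\sigma|^{D/\kappa}$; combined with the bound on $\int D$ this yields $|Z-1|\lesssim\sigma^{\beta}|\log\sigma|^{D/\kappa}$ and $|Z^{-1}-1|\lesssim\sigma^{\beta}|\log\sigma|^{D/\kappa}$.

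Now apply the elementary inequality $(\sqrt a-\sqrt b)^2\le(a-b)^2/\max(a,b)$ to get $\hel(K_\Sigma\tilde h,g)^2\le\int(K_\Sigma\tilde h-g)^2/\max(K_\Sigma\tilde h,g)$, and decompose
\[
K_\Sigma\tilde h-g=\underbrace{(f_0-g)}_{(\mathrm I)}+\underbrace{(Z^{-1}-1)f_0}_{(\mathrm{II})}+\underbrace{Z^{-1}(K_\Sigma g-f_0)}_{(\mathrm{III})}+\underbrace{Z^{-1}K_\Sigma D}_{(\mathrm{IV})} .
\]
Term $(\mathrm{II})$ contributes $(Z^{-1}-1)^2\int f_0\lesssim\sigma^{2\beta}|\log\sigma|^{2D/\kappa}$. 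For $(\mathrm{III})$, split $M^\tau$ from its complement: on $M^\tau$, \thmref{approx} gives $|K_\Sigma g-f_0|\lesssim\sigma^\beta L$, so $\int(\sigma^\beta L)^2/f_0=\sigma^{2\beta}\int(L/f_0)^2f_0\lesssim\sigma^{2\beta}$ by \eqref{assint} (as $\omega>6\beta>2\beta$); the ``tail'' contribution $(H\log(1/\sigma))^{D/\kappa}\sigma^H\|L\|_\infty$ is controlled using \eqref{expdec} for the Lebesgue volume of the relevant region and $\sigma^{\omega-6\beta}=o(\delta^{D-d})$ for $\|L\|_\infty$, yielding $\lesssim\sigma^{2\beta}$ once $H$ is large. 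Term $(\mathrm{IV})$ is negligible since $\int K_\Sigma D$ is, the only subtlety being where $g$ is small, where $\max(K_\Sigma\tilde h,g)\ge K_\Sigma\tilde h\gtrsim K_\Sigma D$, so its contribution is $\lesssim\int K_\Sigma D=\int D=o(\sigma^{2\beta})$. The hypothesis $\delta\sigma^{\alpha_\perp}=o(|\log\sigma|^{-1/2})$ is used throughout to ensure the kernels $\varphi_{\Sigma(y)}$ are concentrated at a scale small compared with the normal extent of $f_0$, so that $K_\Sigma g$ stays within an $O(\delta\sigma^{\alpha_\perp}\sqrt{\log(1/\sigma)})$-neighbourhood of $M^\delta$ and the mass leaking outside $\supp g$ is absorbed by the high-order vanishing of $f_0$ (hence of $g$) at $\partial M^\delta$ forced by $f_0\in\hbm$.

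The main obstacle is term $(\mathrm I)=f_0-g$: naively only $|f_0-g|\lesssim\sigma^{\rho}L$, so one must show that $\int(f_0-g)^2/\max(K_\Sigma\tilde h,g)$ is nonetheless $O(\sigma^{2\beta}|\log\sigma|^{16\beta+D/\kappa})$ by exploiting the specific structure of the corrections $d_{j,k}\Diff^k_z\overline{(\chi_jf_0)}_{x_j,\delta}$ — namely that they are precisely the Taylor/Neumann terms engineered so that $K_\Sigma$ carries $g$ onto $f_0$; the same cancellations that produce $|K_\Sigma g-f_0|\lesssim\sigma^\beta L$ must be shown to force the net contribution of $(\mathrm I)$ to the Hellinger functional down to the order $\sigma^\beta$. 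This is the step that genuinely uses \thmref{approx} beyond its bare pointwise statement, and it is where the power $16\beta$ of the logarithm accumulates — through the nested sums over $k$, the partition size $J\lesssim(\log(1/\sigma))^{d/\kappa}$, and the Hölder losses in \eqref{assint}. Once $(\mathrm I)$–$(\mathrm{IV})$ are each bounded by $\sigma^{2\beta}$ times a power of $|\log\sigma|$, summing (with the factor $4$ from $(\sum_{i}a_i)^2\le 4\sum_i a_i^2$) gives \eqref{eq:approx}.
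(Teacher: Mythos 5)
Your proposal has a genuine gap, and in fact two. First, the term you yourself flag as ``the main obstacle,'' $(\mathrm I)=f_0-g$, is never bounded: you only assert that the cancellations built into $g$ ``must be shown'' to push its contribution down to $O(\sigma^{2\beta})$. No such argument exists for the quantity you set up. The difference $g-f_0$ consists of the correction terms $\sigma^{\inner{k}{\bm\alpha}}d_{j,k}\,\Diff^k\overline{(\chi_jf_0)}_{x_j,\delta}/\delta^{D-d}$, which are genuinely of pointwise size $\sigma^{\rho}L$ with $\rho=\min\{\inner{k}{\bm\alpha}:0<\inner{k}{\bm\alpha}<\beta\}<\beta$ (e.g.\ $\rho=\alpha_\perp$), so $\int (f_0-g)^2/\max(K_\Sigma\tilde h,g)\asymp\sigma^{2\rho}\gg\sigma^{2\beta}$ in general: the cancellation that produces $\sigma^{\beta}$ occurs only \emph{after} applying $K_\Sigma$, and does not make $g$ itself close to $f_0$. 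The way out is that the ``$g$'' in \eqref{eq:approx} is to be read as $f_0$ (this is how \corref{approx} is invoked in the proof of \lemref{thickness}, and it is what the paper actually proves), in which case your term $(\mathrm I)$ never appears. The paper moreover localizes first: it works with $f_j=c_j^{-1}\chi_jf_0$, the local approximants $g_j$, and $h_j\propto g_j\ind_{g_j>f_j/2}+\tfrac12 f_j\ind_{g_j\le f_j/2}$, uses convexity of $\dhed(\cdot,f_0)$, discards clusters with $c_j<\sigma^{2\beta}$, and splits each $\dhed(K_\Sigma h_j,f_j)$ into five pieces (Gaussian tails outside $\cO_{x_j}^1$, the low-density set $\{f_j<\sigma^{H_1}\}$, the normalization $(1-I_j)^2$, the truncation set $\{g_j\le f_j/2\}$, and the main term $\int\{(K_\Sigma g_j-f_j)/f_j\}^2f_j$); only the last piece is the pointwise-to-Hellinger conversion that your sketch carries out.

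Second, your control of the truncation set and of the normalizing constant is too crude to reach $\sigma^{2\beta}$. From $\{g<f_0/2\}\subset\{L/f_0\gtrsim\sigma^{-\rho}\}$ and the single moment $\int(L/f_0)^{\omega/\beta}f_0\le C_0$ you obtain $P_0(g<f_0/2)\lesssim\sigma^{\rho\omega/\beta}$, which under the stated hypothesis $\omega>6\beta$ is only of order $\sigma^{6\rho}$ and need not be $o(\sigma^{2\beta})$ when $\rho$ is small. The paper's \lemref{ajs} avoids exactly this loss by exploiting the $k$-indexed moments in \eqref{assint}: Markov with exponent $(2\beta+\ve)/\inner{k}{\bm\alpha}$ applied to the event $\{|\Diff^k\bar f_{j,\delta}|/\bar f_{j,\delta}>\sigma^{-\inner{k}{\bm\alpha}}\log^{-\inner{k}{\bm\alpha}/2}(1/\sigma)\}$ yields $\sigma^{2\beta+\ve}$ for every $k$, no matter how small $\inner{k}{\bm\alpha}$ is, together with the $L^1$ bounds $\int_{\{g_j\le f_j/2\}}|\Diff^k\bar f_{j,\delta}(z_x)|\,\diff x\lesssim c_j^{-1}\delta^{D-d}(\sigma\log(1/\sigma))^{2\beta+\ve-\inner{k}{\bm\alpha}}$. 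These are then divided by $f_j\gtrsim\sigma^{H_1}$ on the retained region, and the choices $H_1=4\beta$, $\ve\ge 6\beta+\ve_1$ with $\sigma^{\ve_1}\le\delta^{D-d}$ are precisely where $\omega>6\beta$ and $\sigma^{\omega-6\beta}=o(\delta^{D-d})$ enter, and where the $\log^{16\beta}$ factor accumulates. None of this machinery is present in your sketch, so the proposal does not constitute a proof of the corollary.
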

	
	Theorem \ref{thm:approx} is proven in Section \ref{app:thmapprox1} while the proof of \corref{approx}, which appears to be a non-trivial consequence of \thmref{approx}, is delayed to Section \ref{app:thmapprox2}.

	\section{Numerical experiments} \label{sec:num}
	In this section we present some implementations and visual results of the location-scale mixtures of Subsection \ref{sec:prior}. 
	The aim of this section is to illustrate the ability of the aforementioned priors to describe in a relevant way data that can be very singular. 
	More precisely, we consider the hierarchical model:
	\beq 
	\label{prior:1}
	\begin{split}
			(y_i)_{i=1}^n ~|~ (\mu_i,O_i)_{i=1}^n, \Lambda &\sim \bigotimes_{i=1}^n \mathcal{N}(\cdot|\mu_i,O_i \Lambda O_i^T)~~~\text{with}~~~\Lambda = \diag(\lambda_1,\dots,\lambda_D),\\
			(\mu_i,O_i)_{i=1}^n ~|~  P &\sim  P^{\otimes n}, \quad 
			P \sim \DP(\alpha P_0) , \, P_0 = \mathcal{N}(\cdot|\mu_0,\Sigma_0) \otimes \unif(\cO(D))\\
			 (\lambda_j)_{j=1}^D |( b_j)_{j=1}^D &\sim  \bigotimes_{j=1}^D \Invg(a_j,b_j),   \quad
			(b_j)_{j=1}^D \sim \bigotimes_{j=1}^D \Exp(\kappa_j)
	\end{split}
	\eeq
	We set in our experiments the value of the hyperparameters as $\alpha=1$, $a_j = \kappa_j =1$ for all $1 \leq j \leq D$ and $\mu_0 = 0$, $\Sigma_0 = I_D$. Alternatively, we also consider the previous model with the $b_j$'s being fixed ahead.\\
	
	We present two different inference approaches : Gibbs sampling (using \cite[Algo 8]{neal2000markov} and example 5.1 in \cite{Ghosal_van_der_Vaart_2017}) and stochastic approximation  of the  Maximum A Posteriori estimator (MAP) using the {\tt python} package {\tt pyro} \cite{bingham2019pyro}. The former is an asymptotically exact MCMC sampler while the latter is an approximate (but way easier to compute) point estimator. We refer the reader to the Section \ref{app:num} for further details of the implementation.\\
	
	The manifolds we use for the experiments are a 2D spiral, two intersecting circles, a 3D spiral and a torus, see Figures \ref{fig:2Dspiral} to \ref{fig:torus}. Precise parametrizations are given in Section \ref{app:numdetail}.
As for the data generating distributions, we use the additive isotropic noise model (as described in \prpref{model}), with noise kernel 
	$
	K_{\beta_\perp}(x) \propto (1 - \|x\|^2)_+^{\beta_\perp},
	$
	and base densities chosen as follows: we take the uniform distribution for the two-circles and for the torus, whereas for the 2D and 3D spiral, we push-forward through the embeddings (described in Section \ref{app:numdetail}) the probability distribution $g_{\beta_0}$ on $[0,1]$ given by
	$
	g_{\beta_0}(t) \propto (1-(1-2t)^{\beta_0}) \ind_{t \in [0,1/2]} + (1-(2t-1)^{{\beta_0}+1})\ind_{t \in [1/2,1]}. 
	$
	In the experiments, the regularities were set to $\beta_0 = 2$ and $\beta_\perp = 6$.\\
	
	We test different values for $\delta$ and challenge the model \eqref{prior:1} for various choices of $b_j$'s against the conjugate Normal Inverse-Wishart prior on these datasets (for which we choose to use the {\tt R} package {\tt dirichletprocess} \cite{dirichletprocess}). Since the Gibbs sampling algorithm is computationally very intensive, we compare this algorithm with the much lighter approximate MAP on the 2-dimensional datasets, with a sample of size $n=300$, see Figures \ref{fig:2Dspiral} and \ref{fig:circles}.\\

	As seen in these figures, the hierarchical prior \eqref{prior:1} seem to perform much better than the prior with fixed $b_j$ (unless these are chosen very carefully) and much better than the Normal Inverse-Wishart, which is also consistent with the empirical results of \cite{mukhopadhyay2020estimating}. 
	Our theoretical results together with our empirical results strongly suggest that this conjuguate DP mixture is suboptimal is this case. \\
	
	Thanks to the low computational cost of approximating the MAP, we were able to conduct experiments on 3D datasets, the 3D-spiral and the torus. See Figures \ref{fig:3Dspiral} and \ref{fig:torus}.

\begin{figure}[t!]
\centering
\includegraphics[width = 2cm]{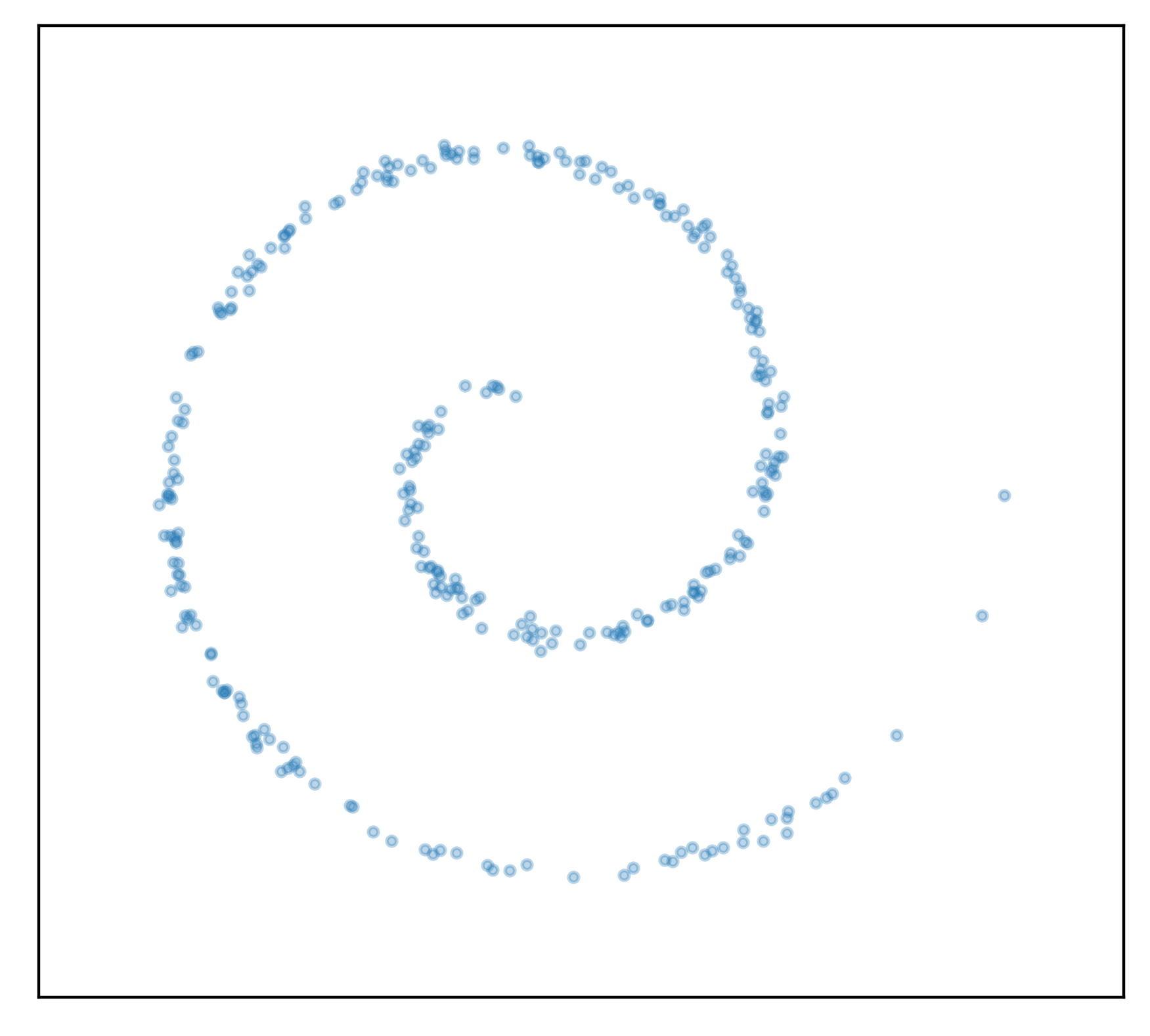}
\includegraphics[width = 2cm]{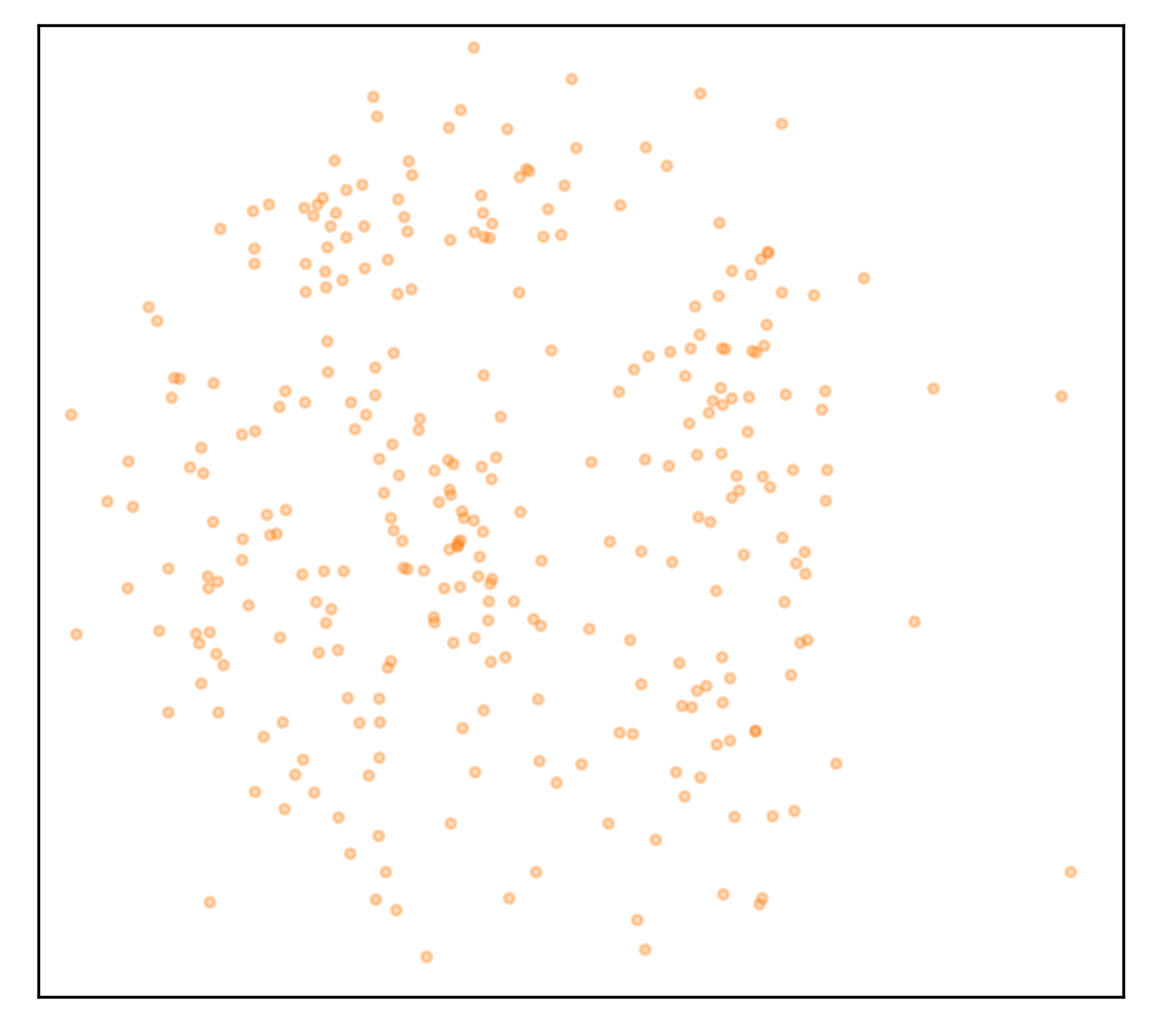}
\includegraphics[width = 2cm]{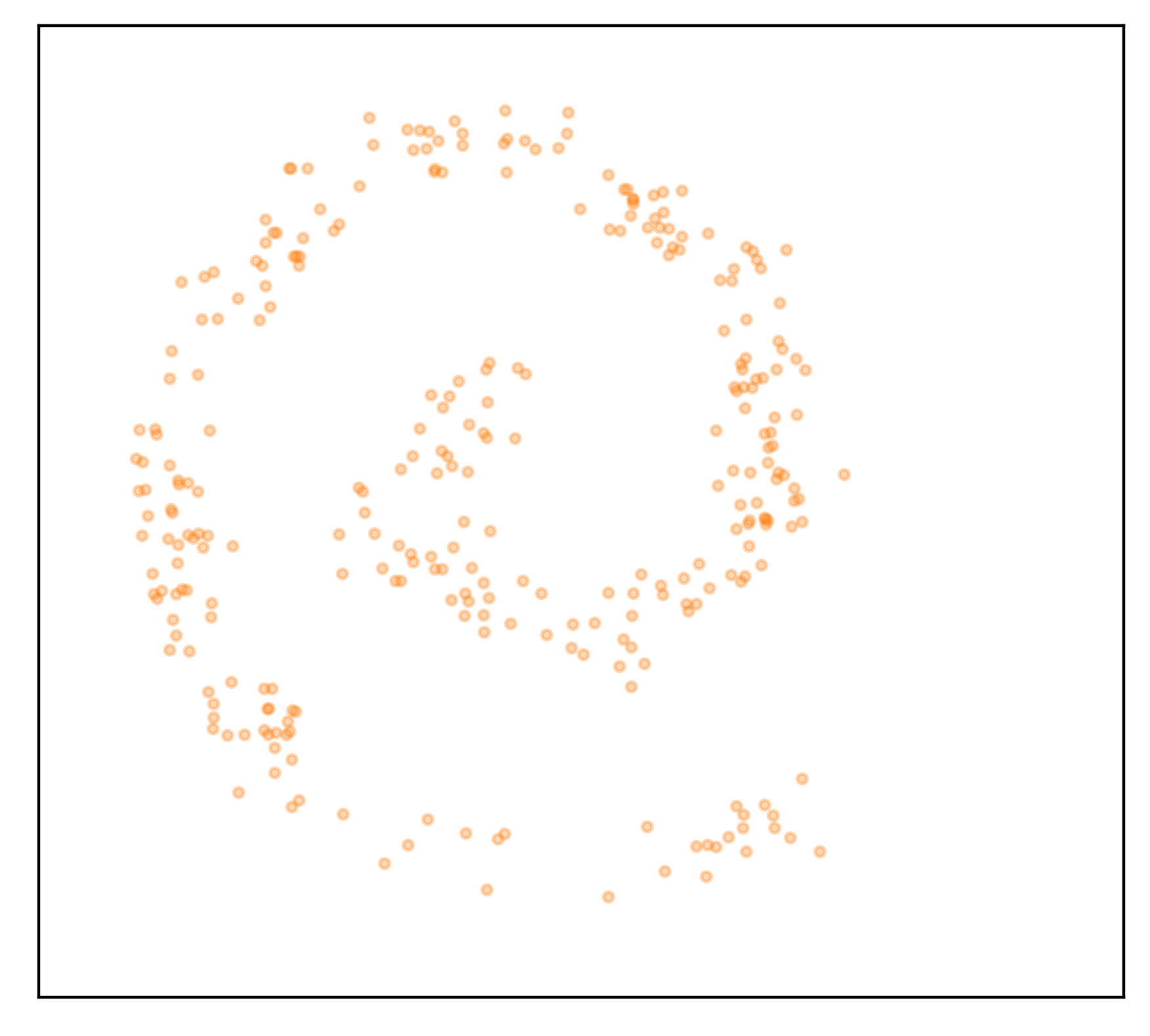}
\includegraphics[width = 2cm]{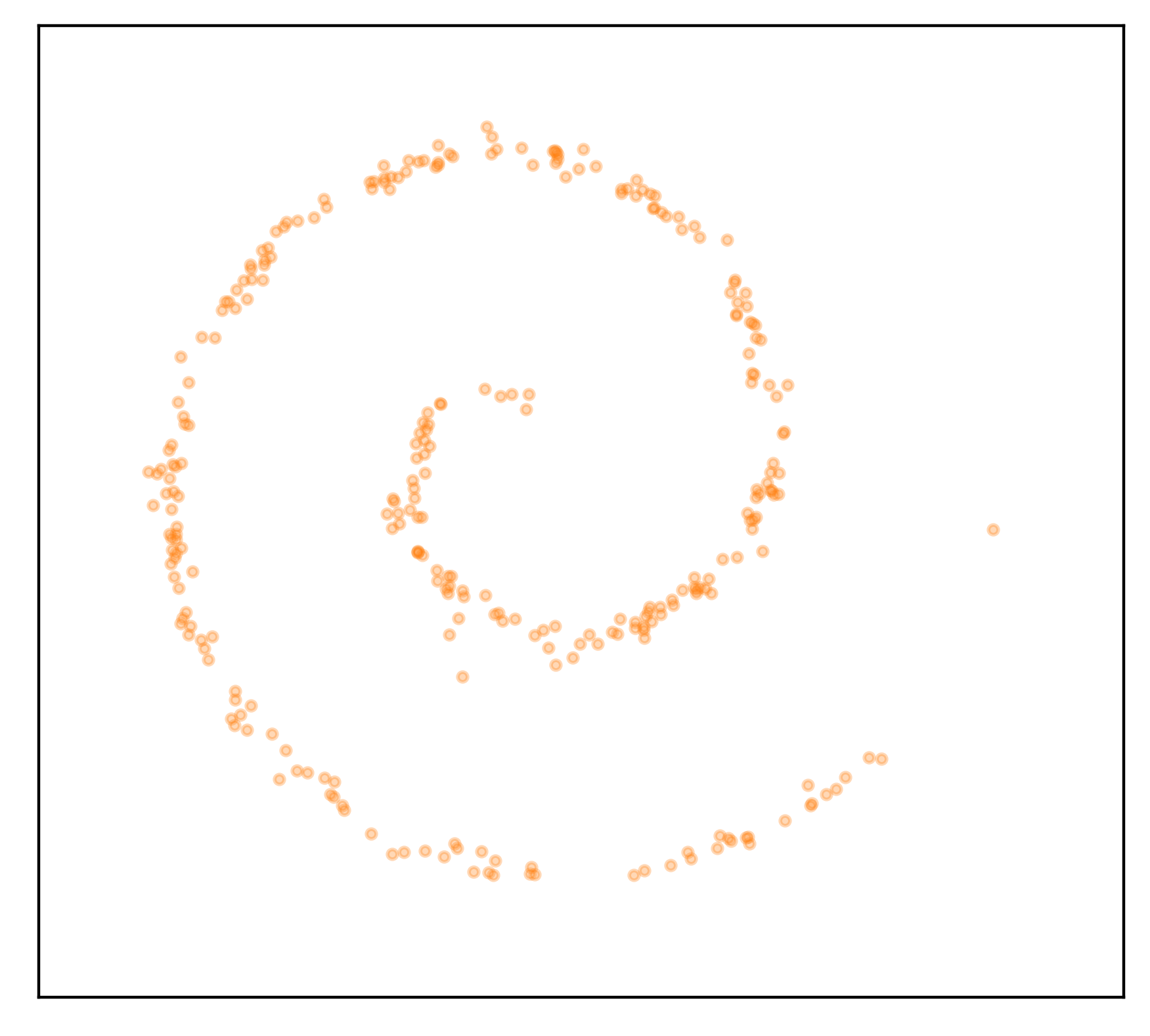}
\includegraphics[width = 2cm]{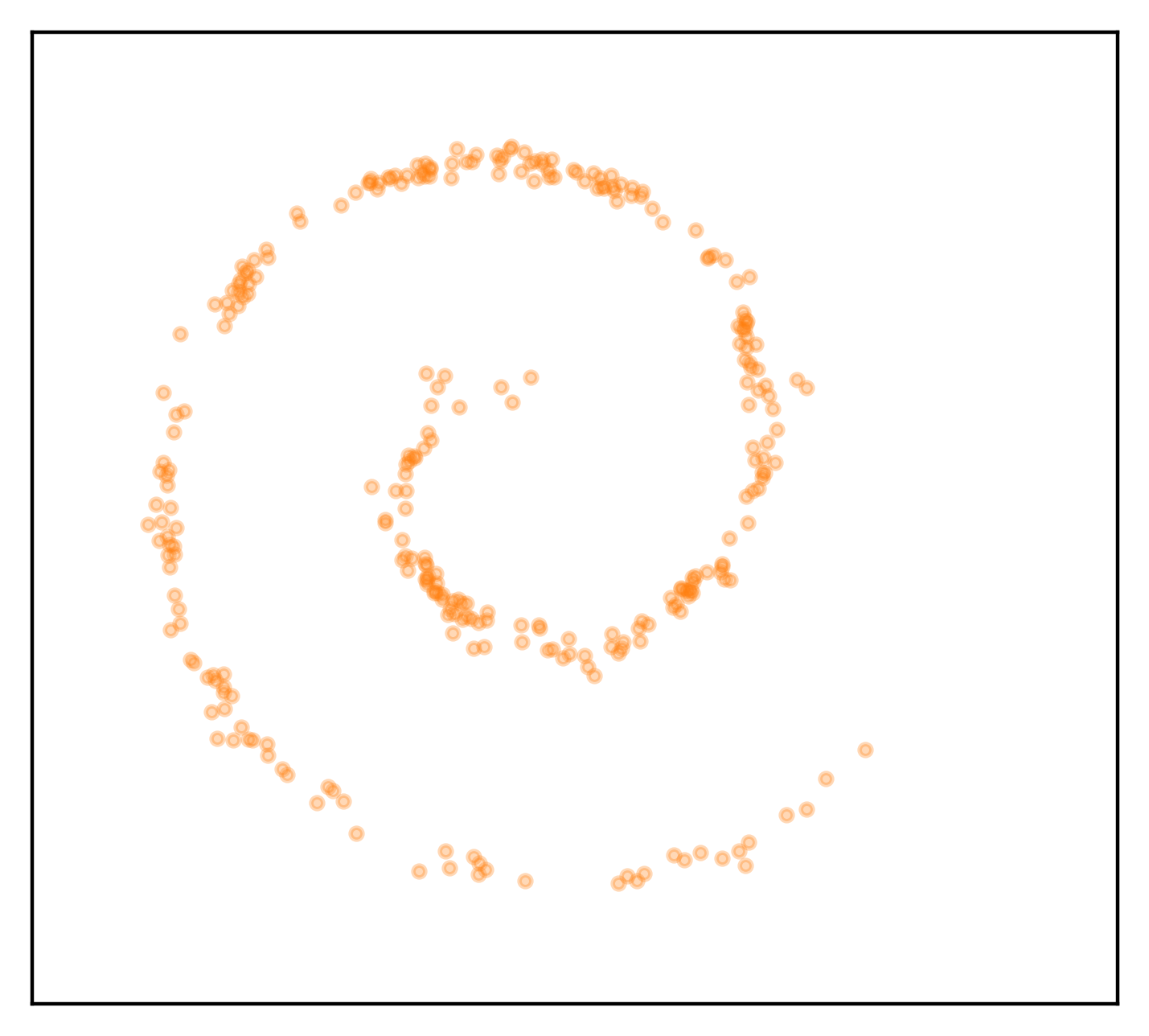}
\includegraphics[width = 2cm]{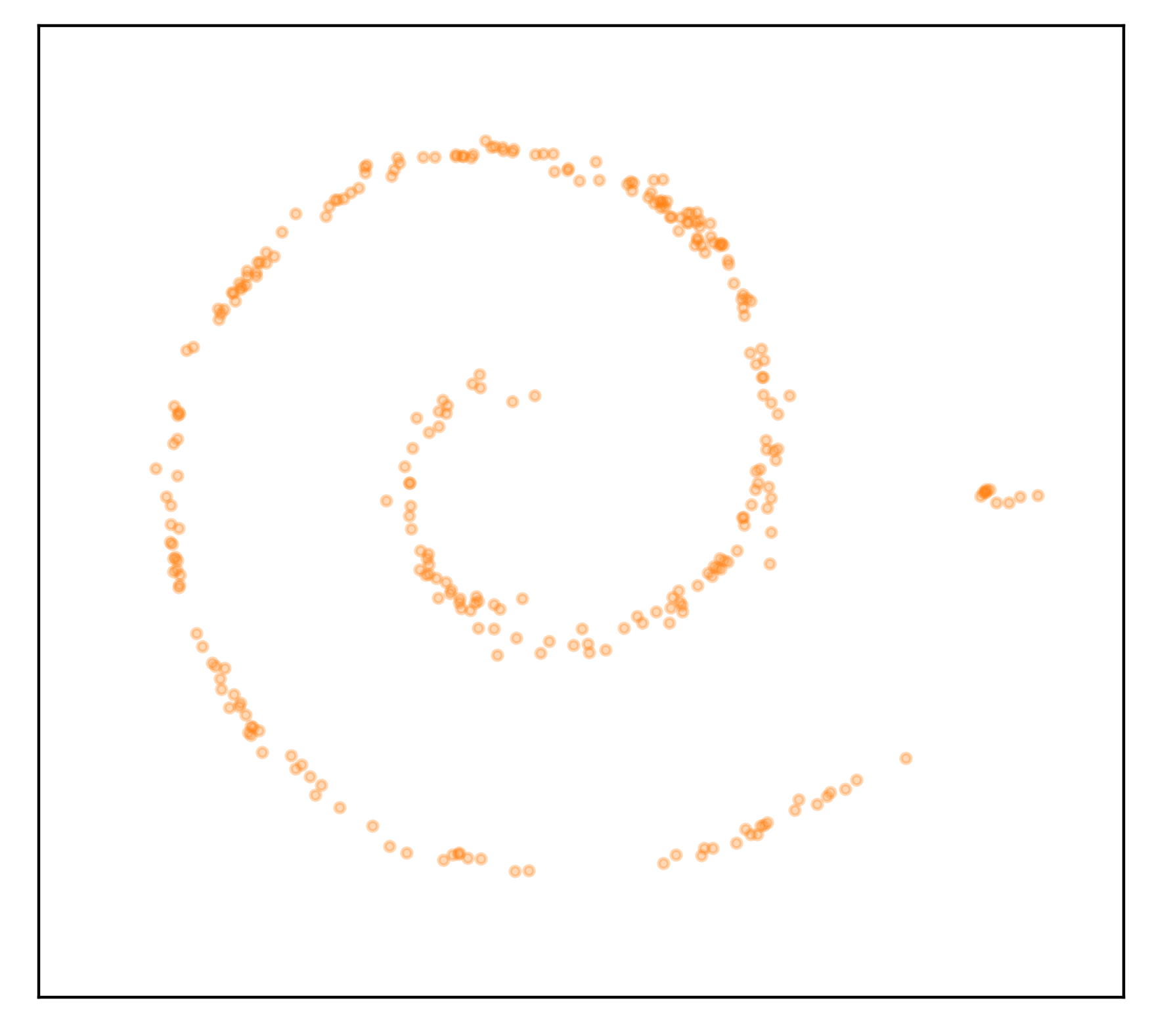} \\
\includegraphics[width = 2cm]{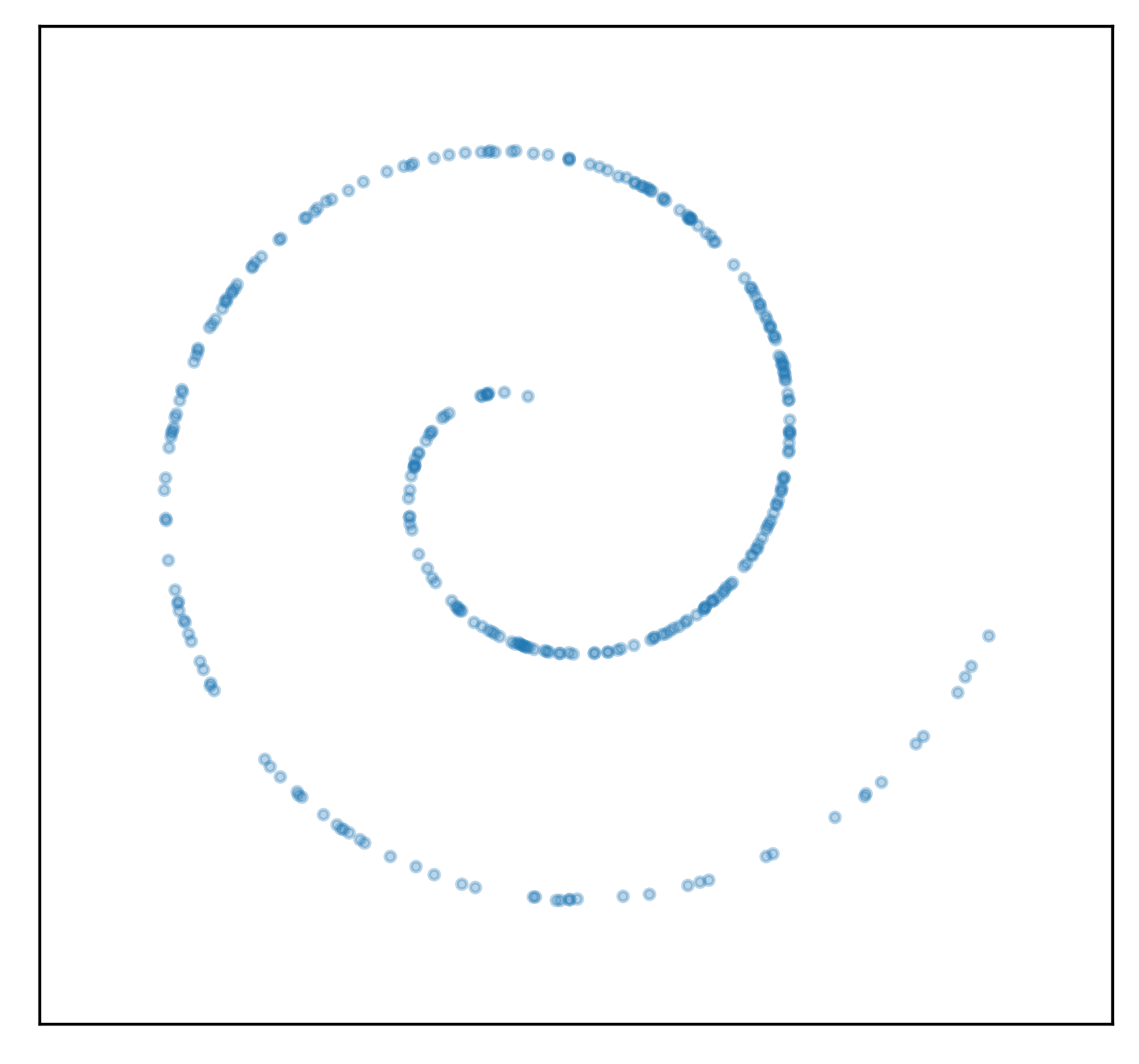}
\includegraphics[width = 2cm]{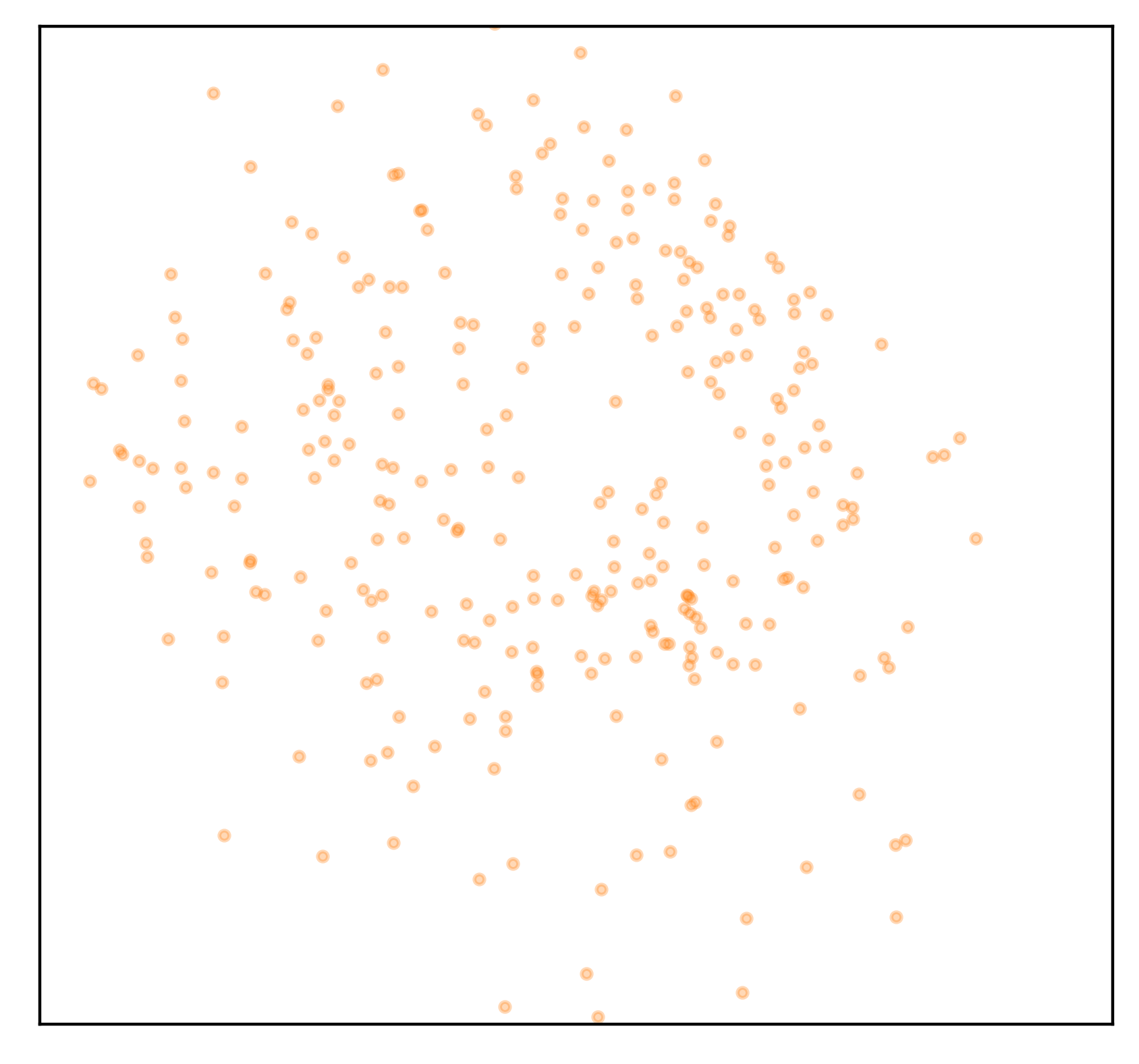}
\includegraphics[width = 2cm]{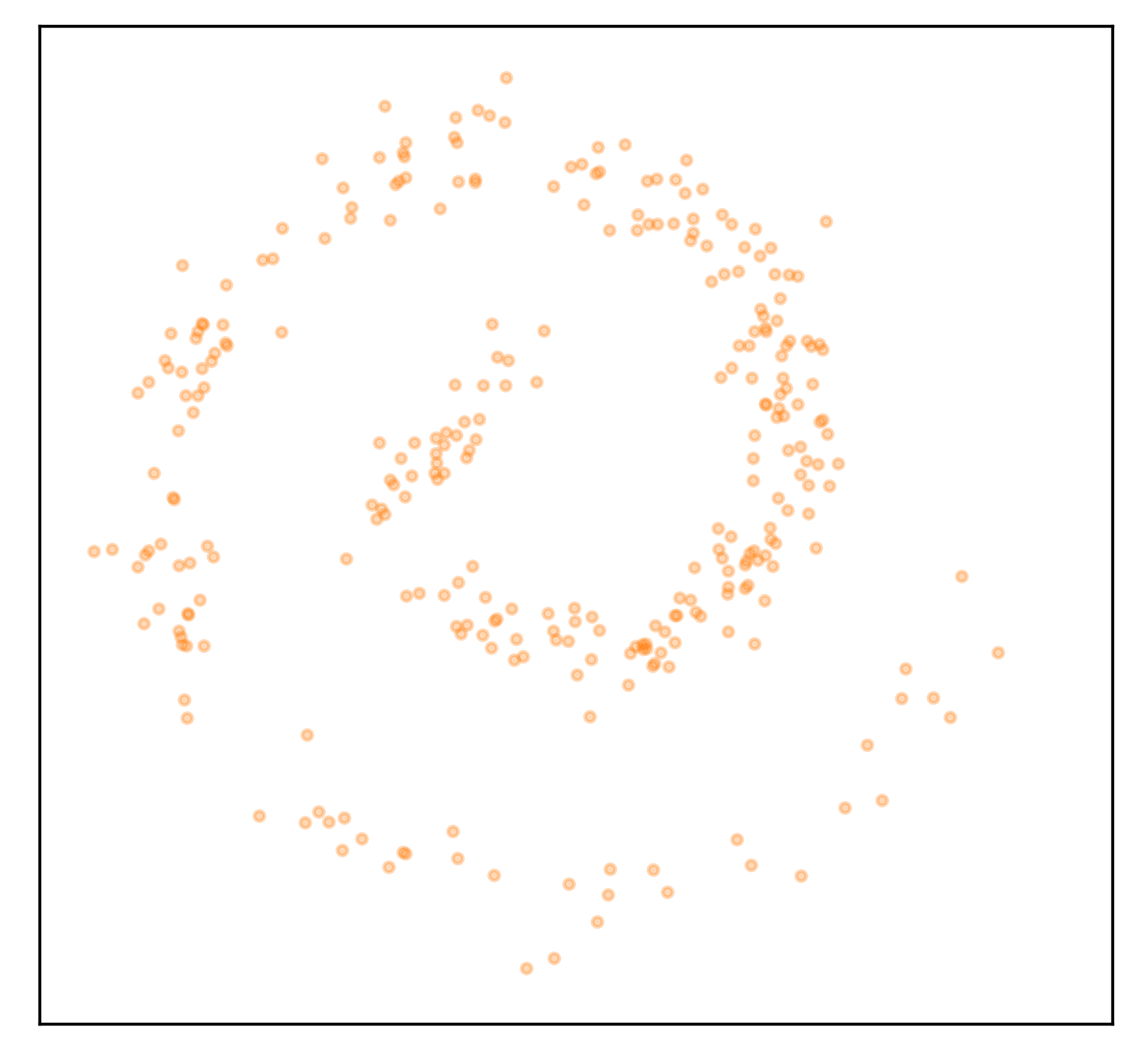}
\includegraphics[width = 2cm]{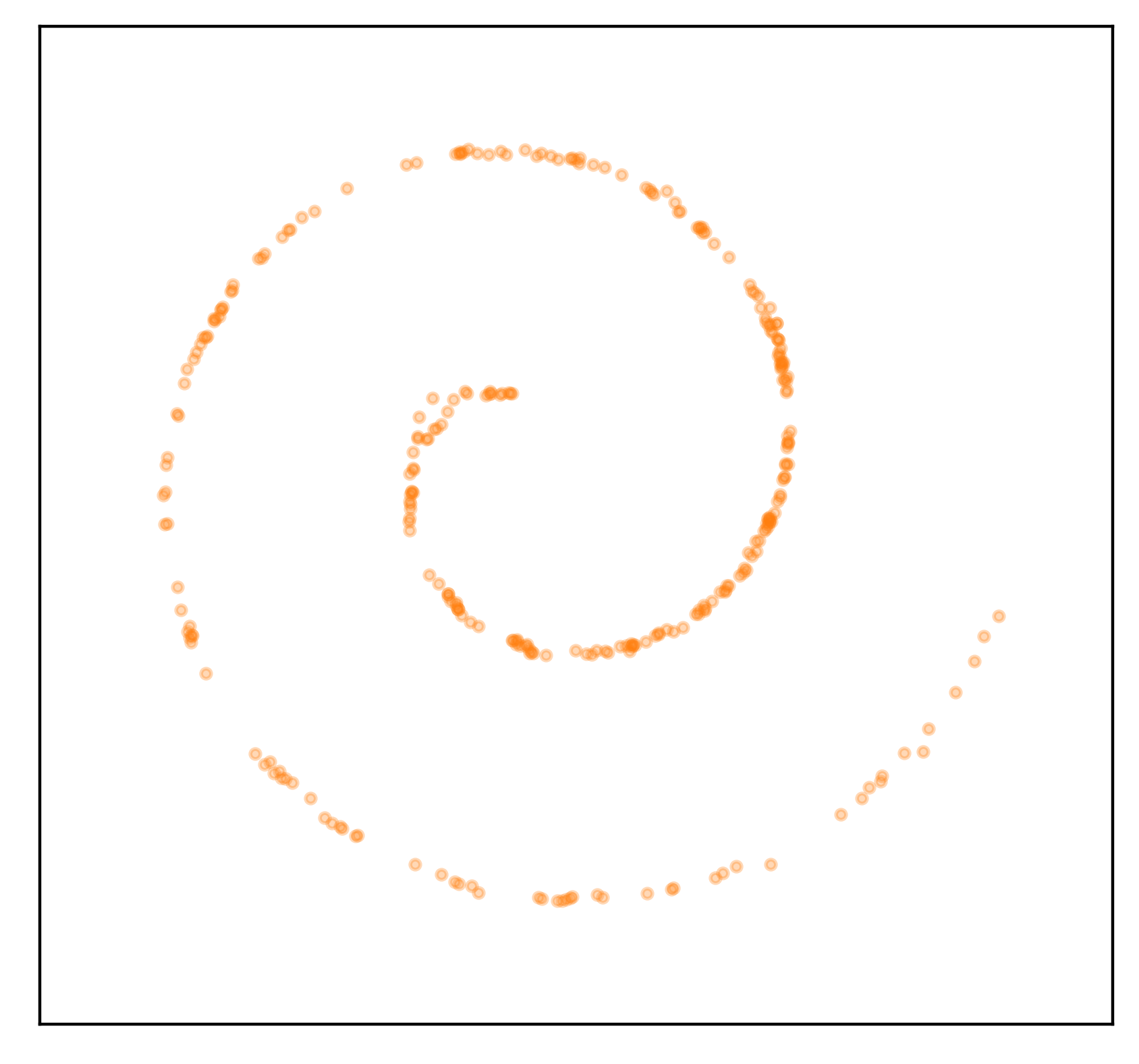}
\includegraphics[width = 2cm]{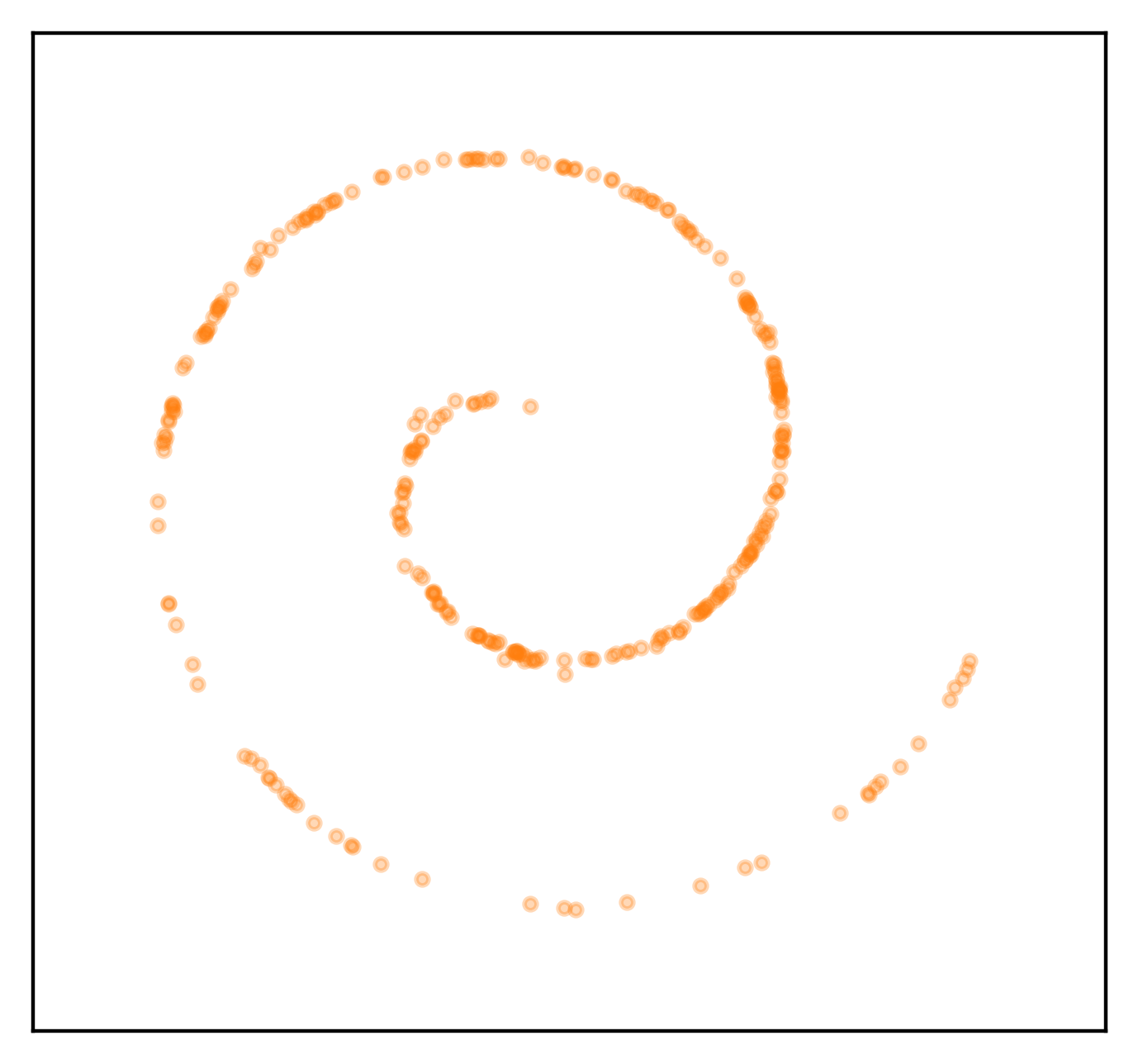}
\includegraphics[width = 2cm]{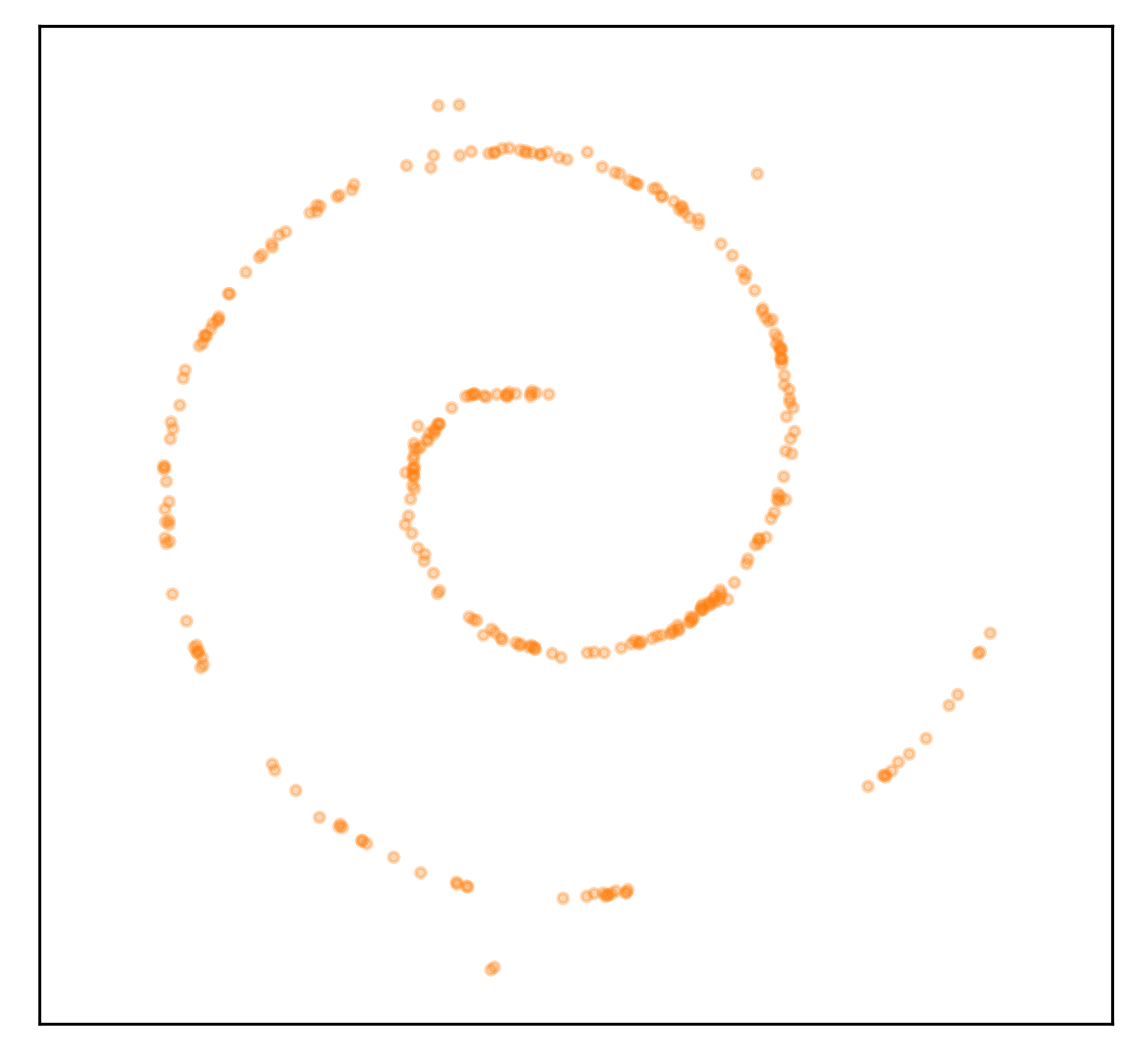}
\caption{(First column) We observe $n = 300$ points drawn under $P_0$ for the 2D-spiral with $\delta = 0.1$ (Top row) and $\delta = 0.01$ (Bottom row). We then sample the same amount of points using the Gibbs sampler with (Second column) Inverse-Wishart prior, (Third column) model \eqref{prior:1} with $b_j = 1$,  (Forth column) model \eqref{prior:1} with $b_j = 0.001$ and (Fifth column) model \eqref{prior:1} with the exponential hyperprior on $b_j$. Finally, we used in (Sixth column) the approximate MAP distribution.} 
\label{fig:2Dspiral}
\end{figure} 
\begin{figure}[h!]
\centering
\includegraphics[width = 2cm]{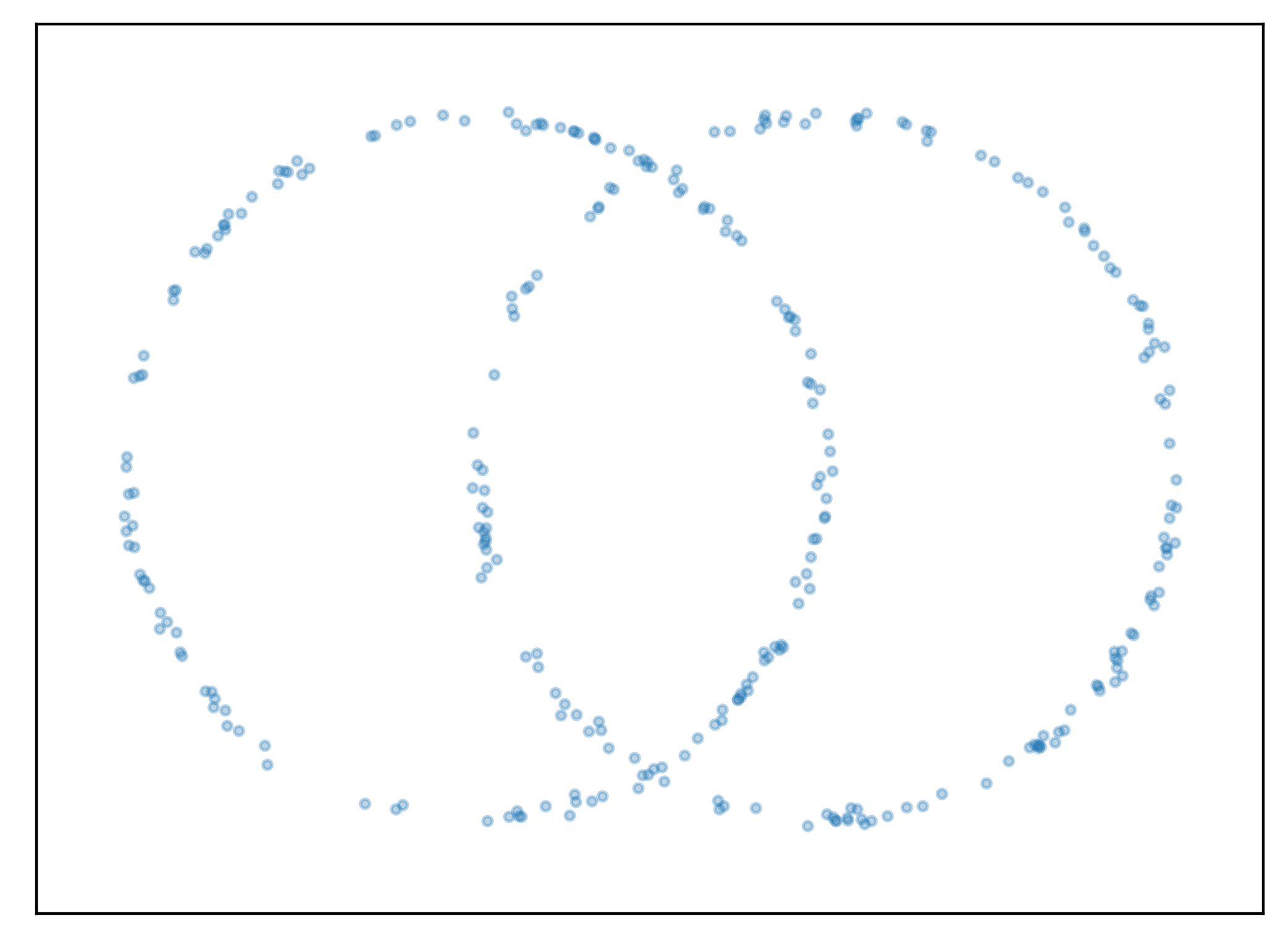}
\includegraphics[width = 2cm]{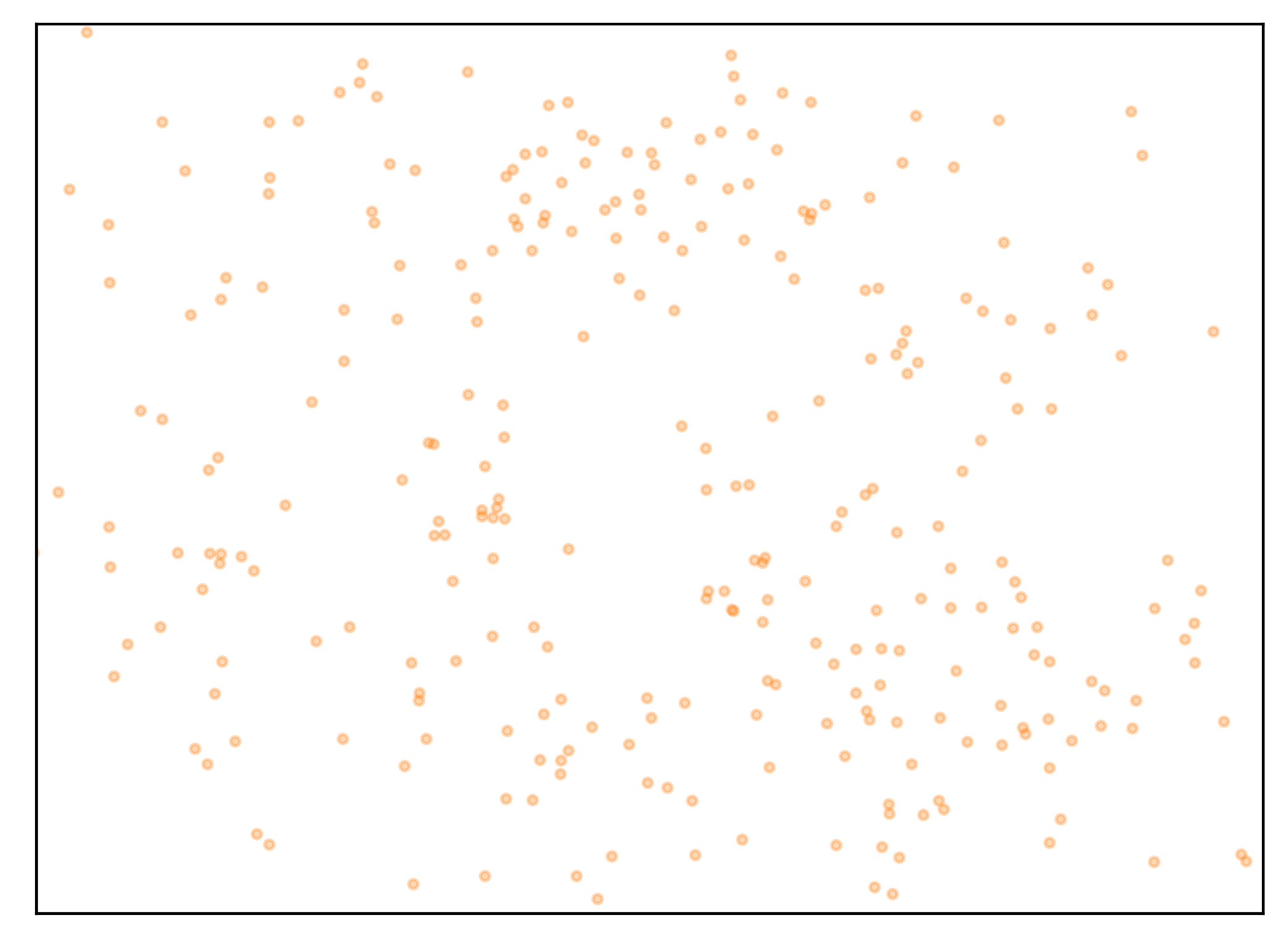}
\includegraphics[width = 2cm]{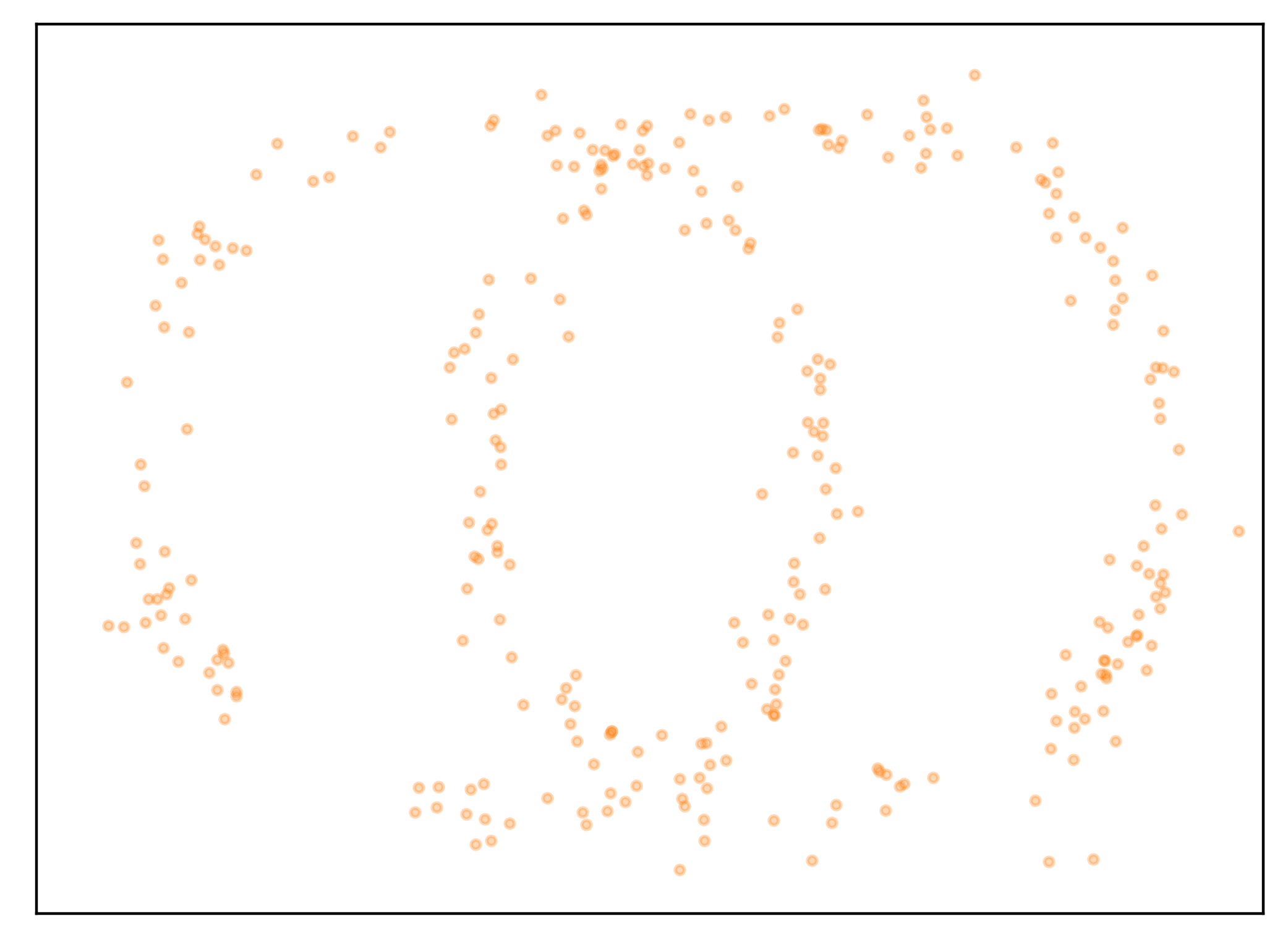}
\includegraphics[width = 2cm]{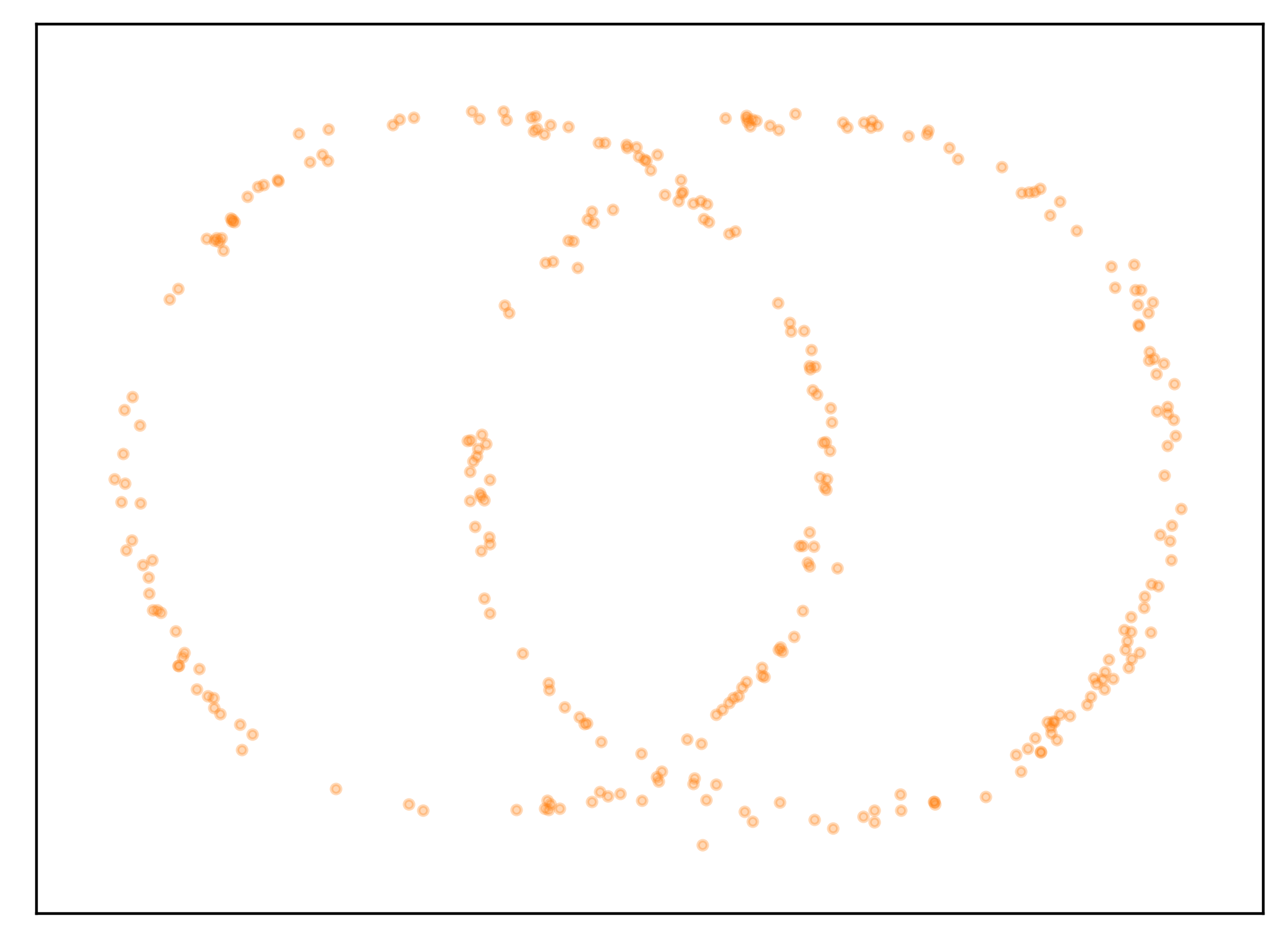}
\includegraphics[width = 2cm]{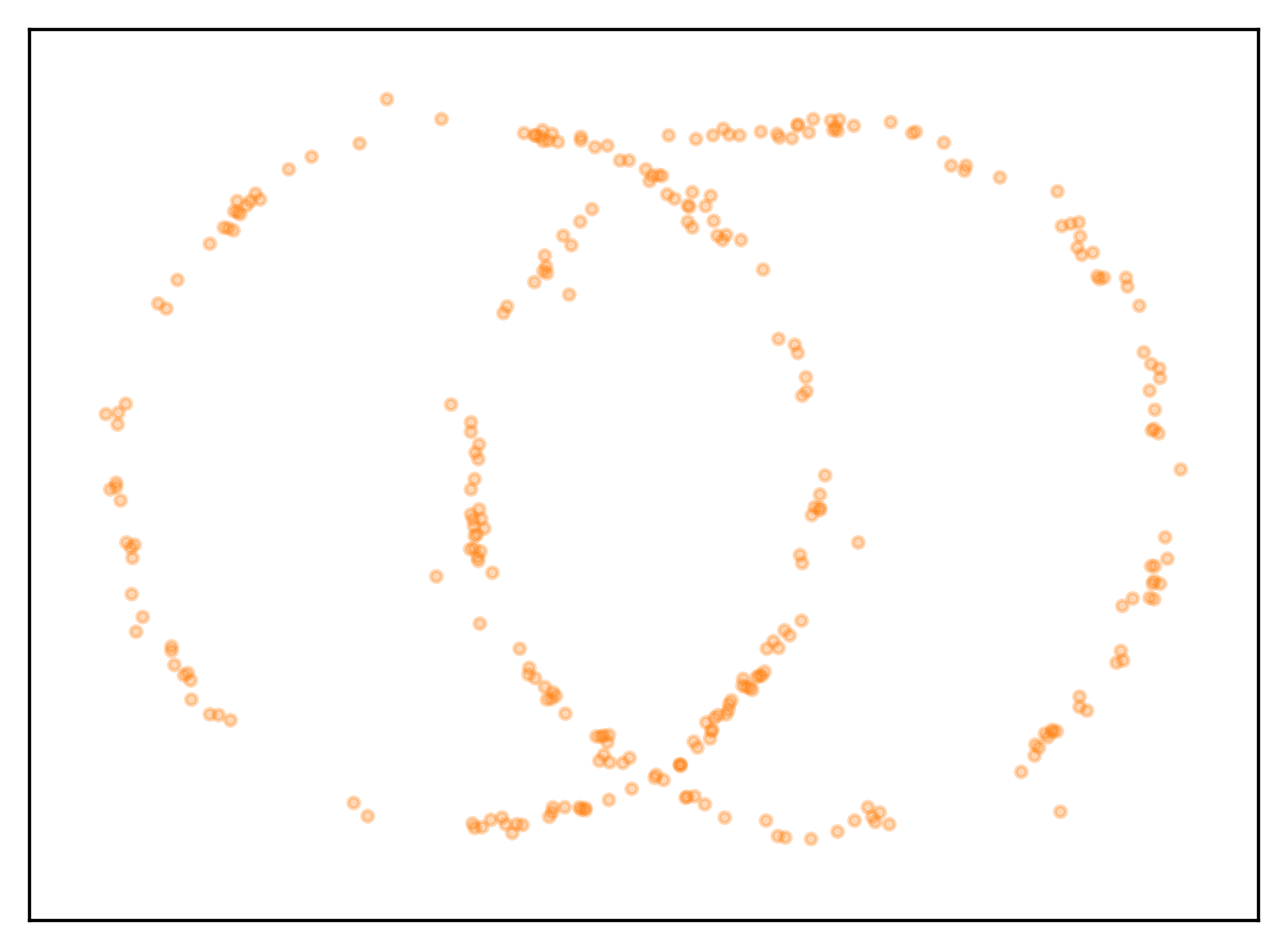}
\includegraphics[width = 2cm]{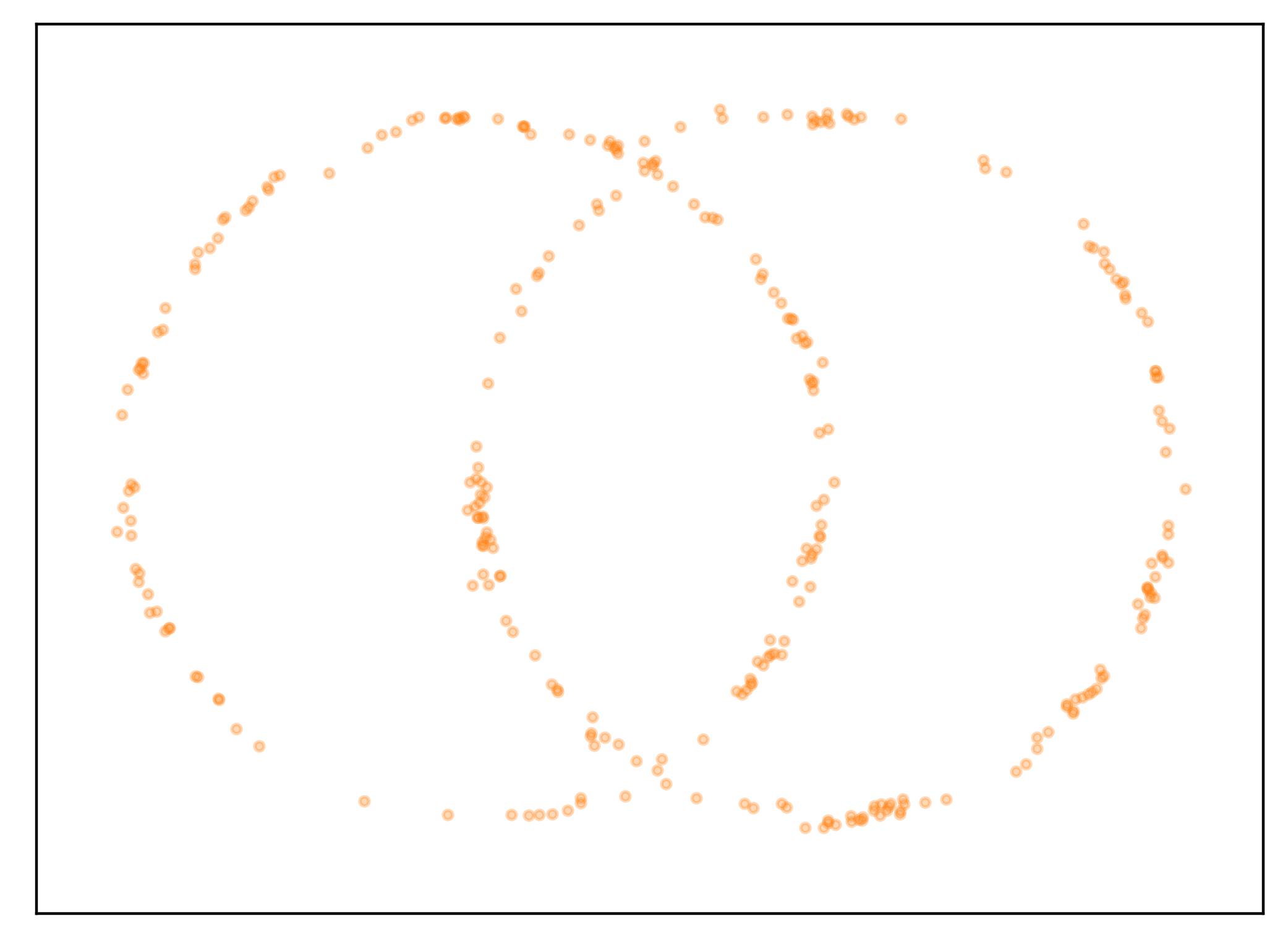} \\
\includegraphics[width = 2cm]{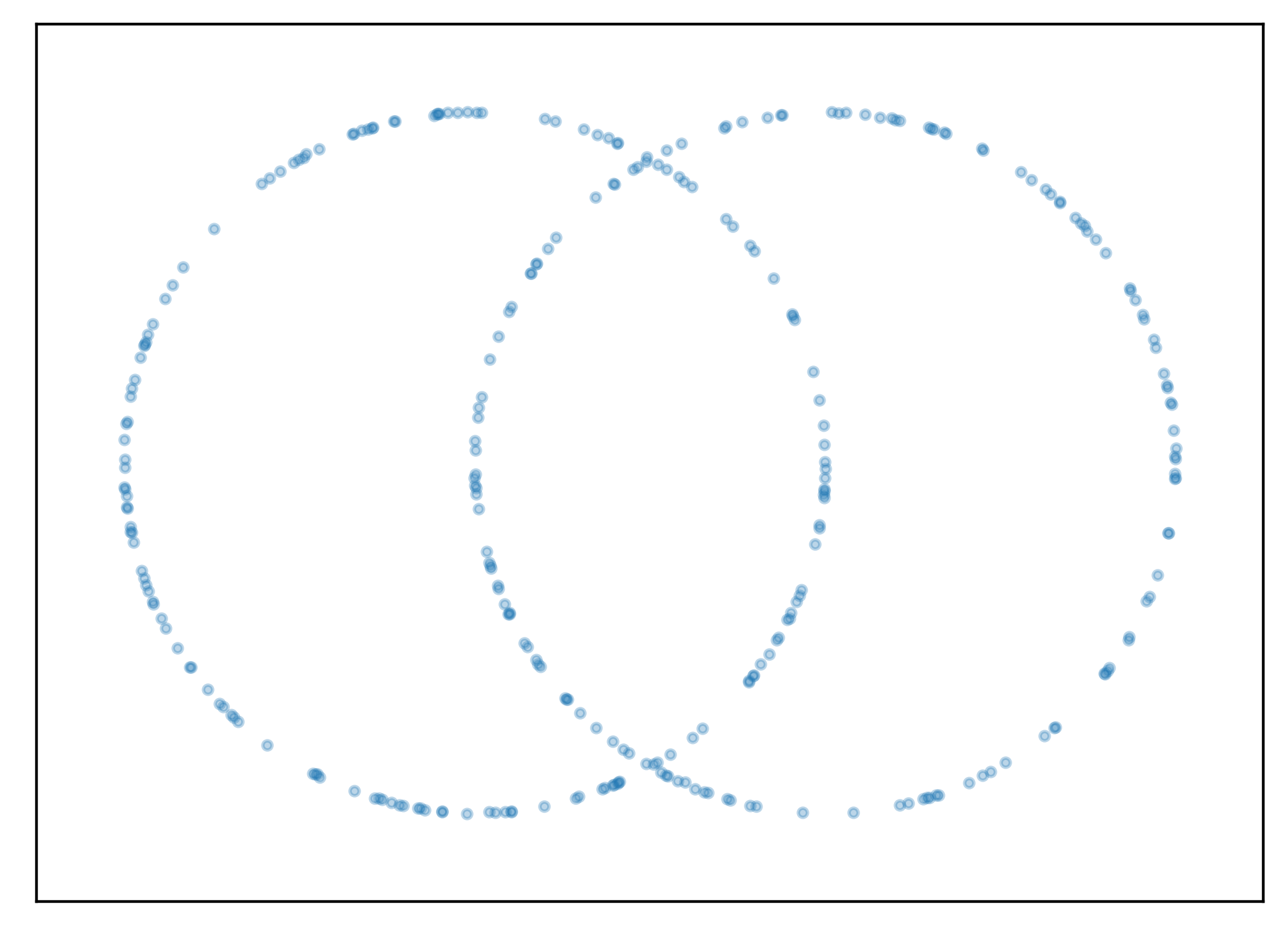}
\includegraphics[width = 2cm]{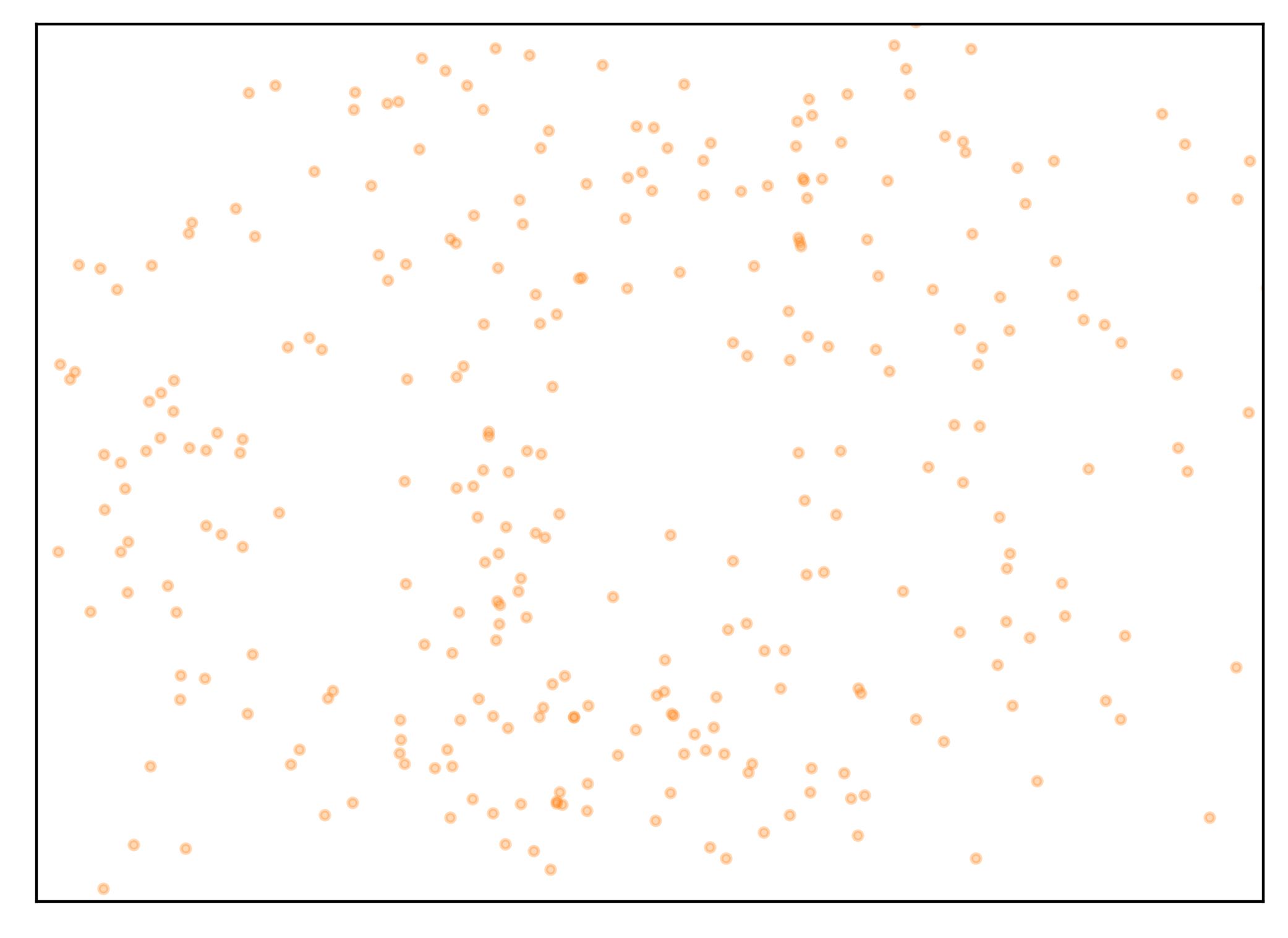}
\includegraphics[width = 2cm]{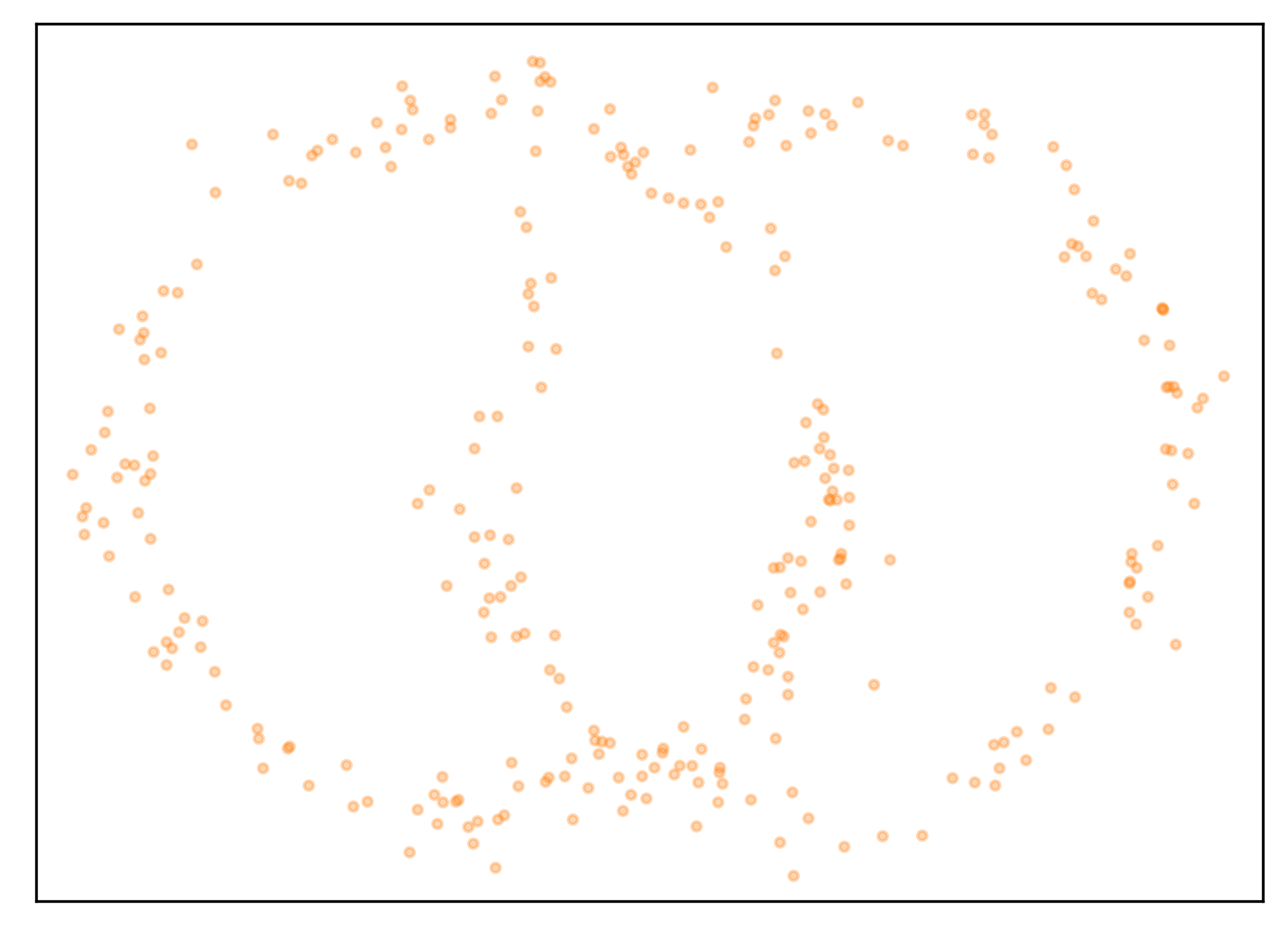}
\includegraphics[width = 2cm]{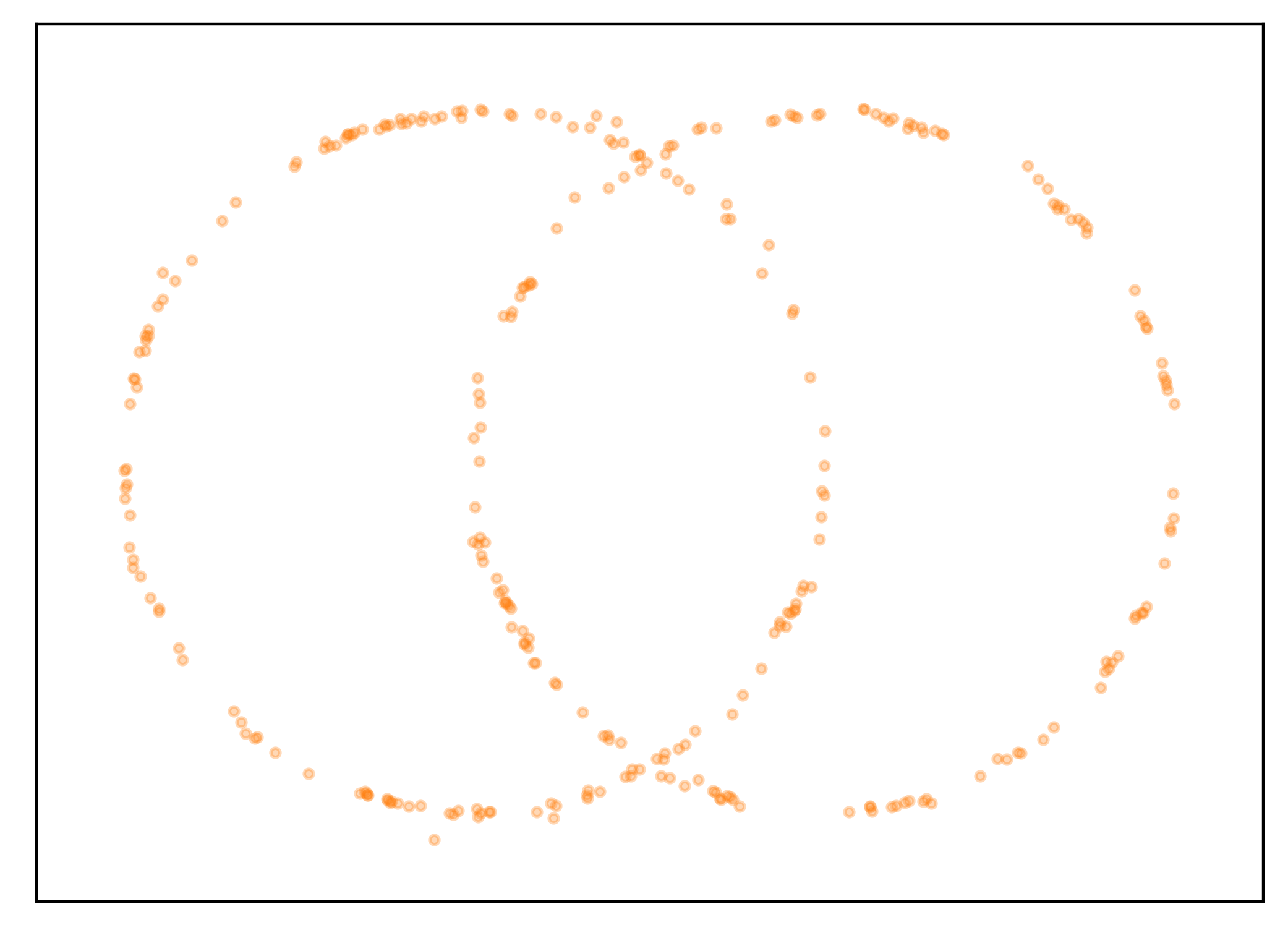}
\includegraphics[width = 2cm]{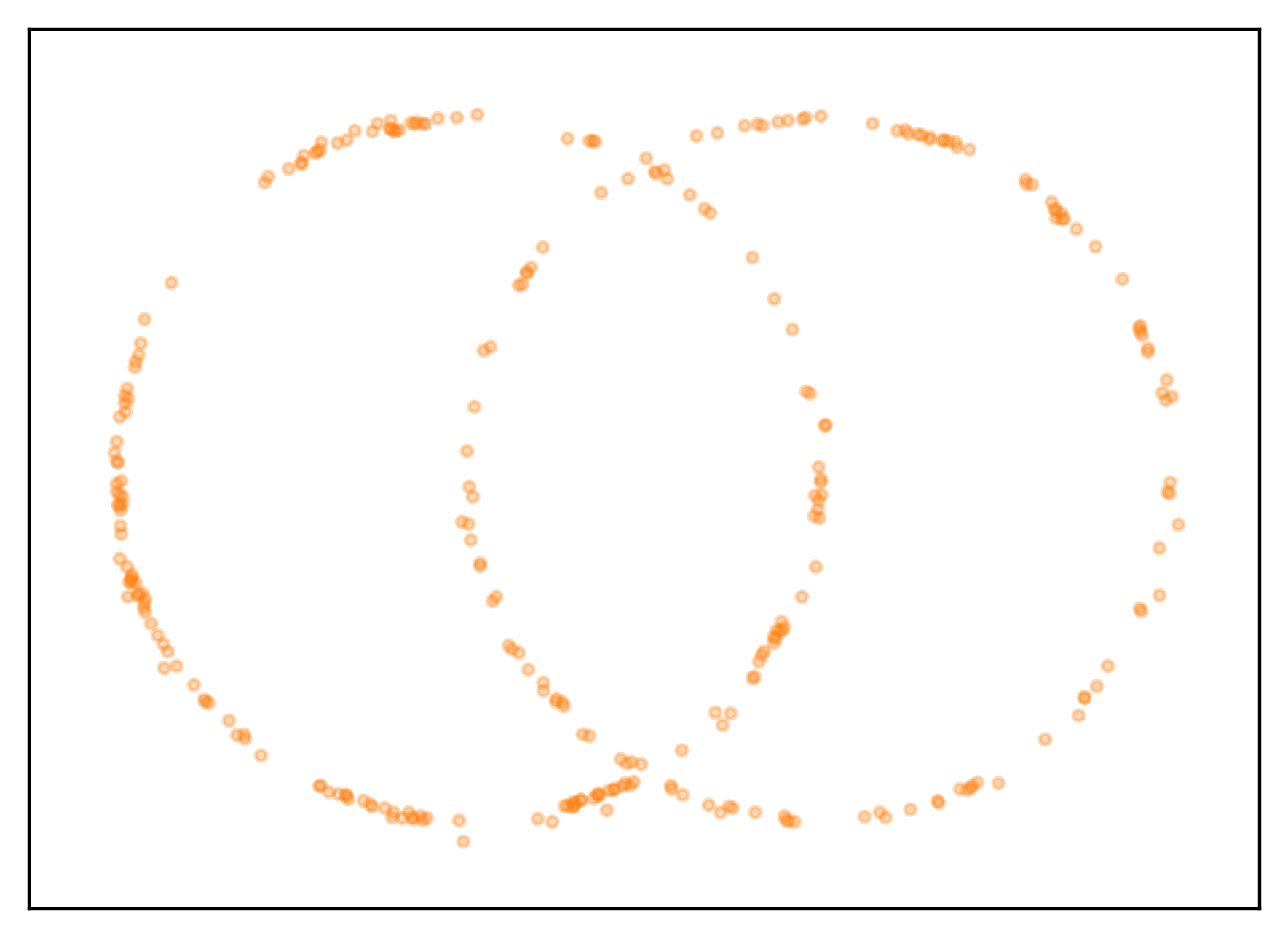}
\includegraphics[width = 2cm]{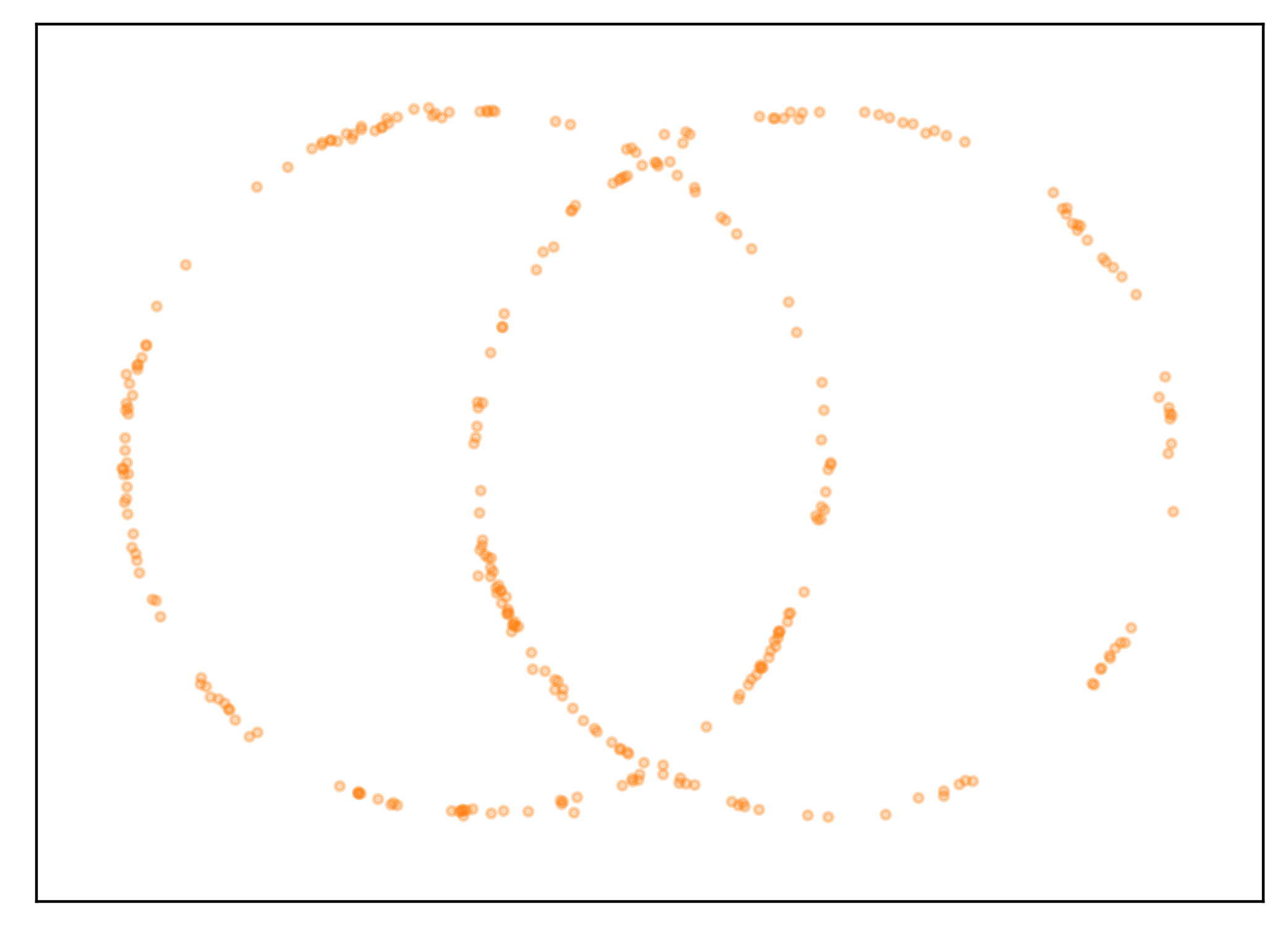}
\caption{Same experiment as in \figref{2Dspiral} but for the two overlapping circles.}
\label{fig:circles}
\end{figure}

\begin{figure}[h!]
\centering
\includegraphics[width = 3cm]{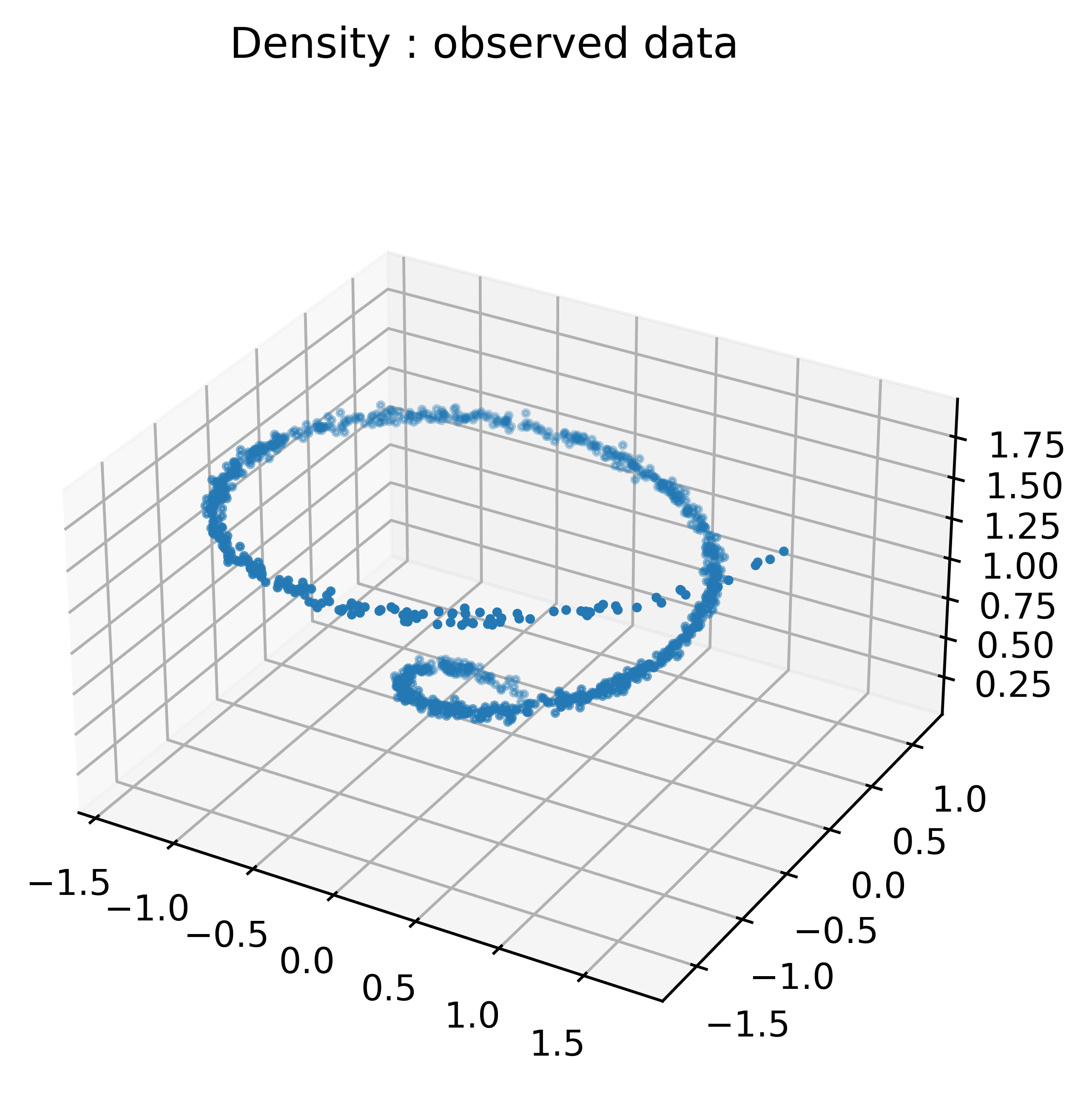}
\includegraphics[width = 3cm]{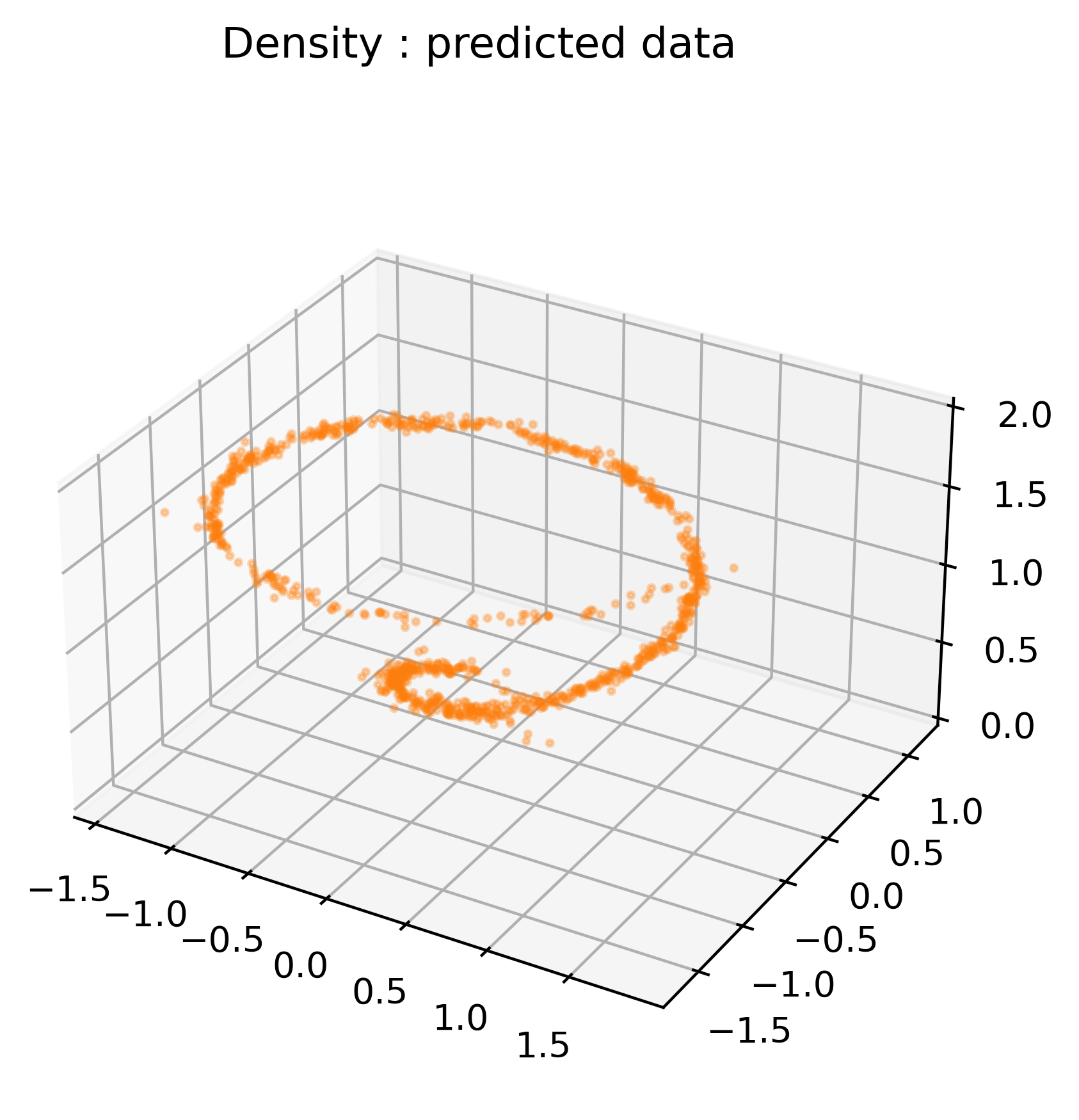}
\includegraphics[width = 3cm]{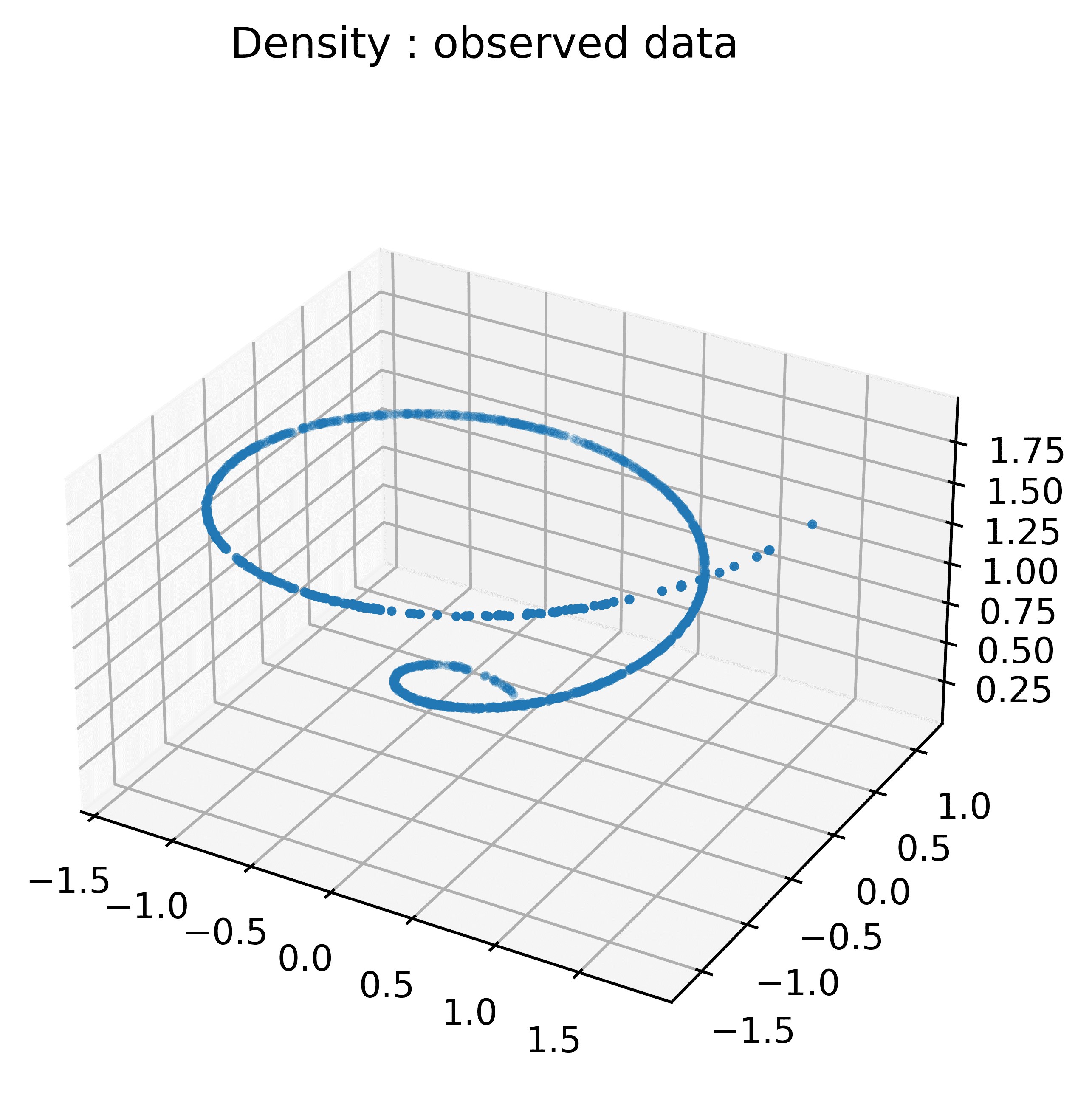}
\includegraphics[width = 3cm]{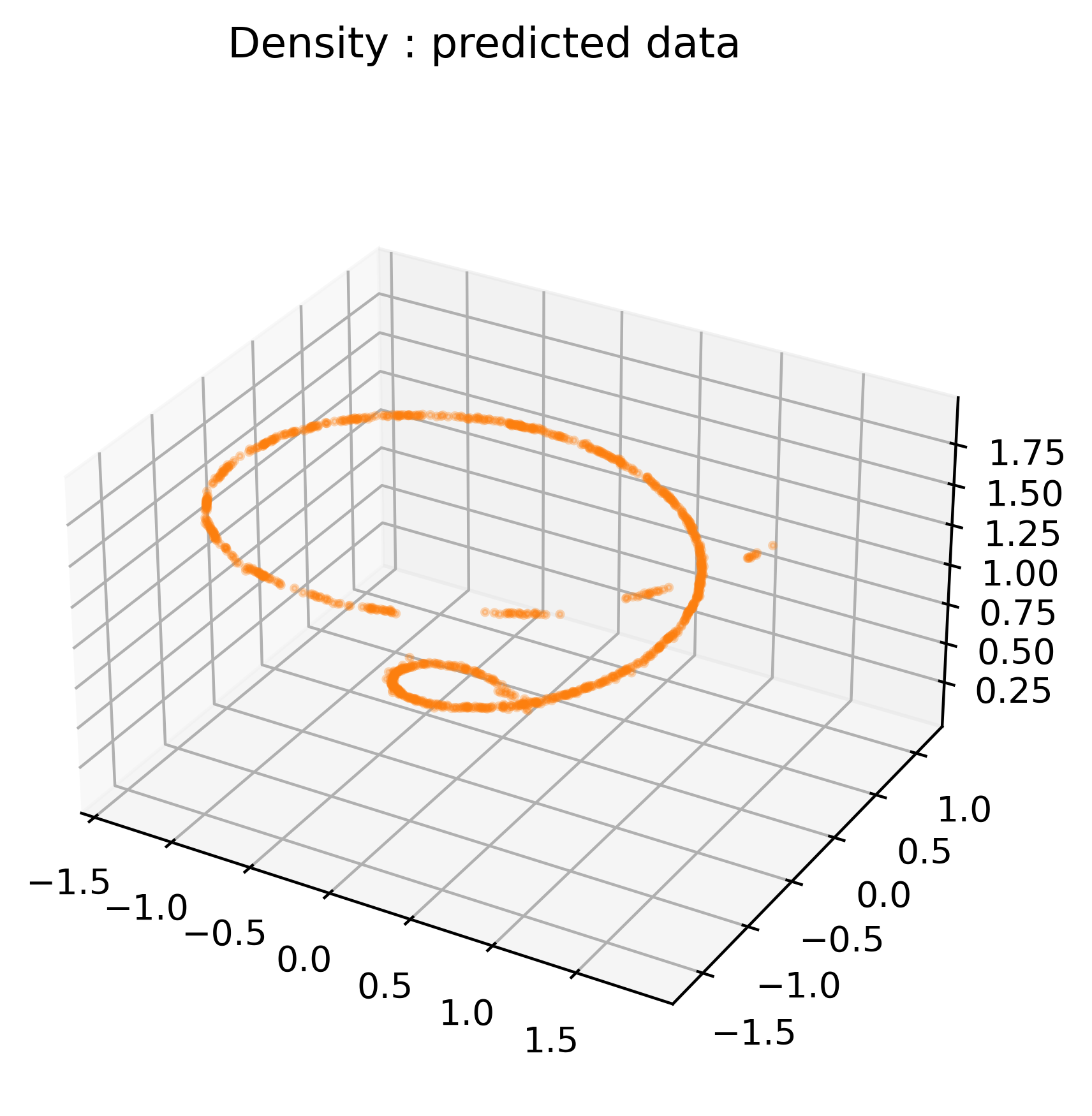}
\caption{We observe $n = 1000$ points (blue plots) and sample the same amount through the approximate MAP distribution (orange plots). The width $\delta$ is set to (Left two plots) $0.1$ and (Right two plots) to $0.01$.}
\label{fig:3Dspiral}
\end{figure}

\begin{figure}[h!]
\centering
\includegraphics[width = 3cm]{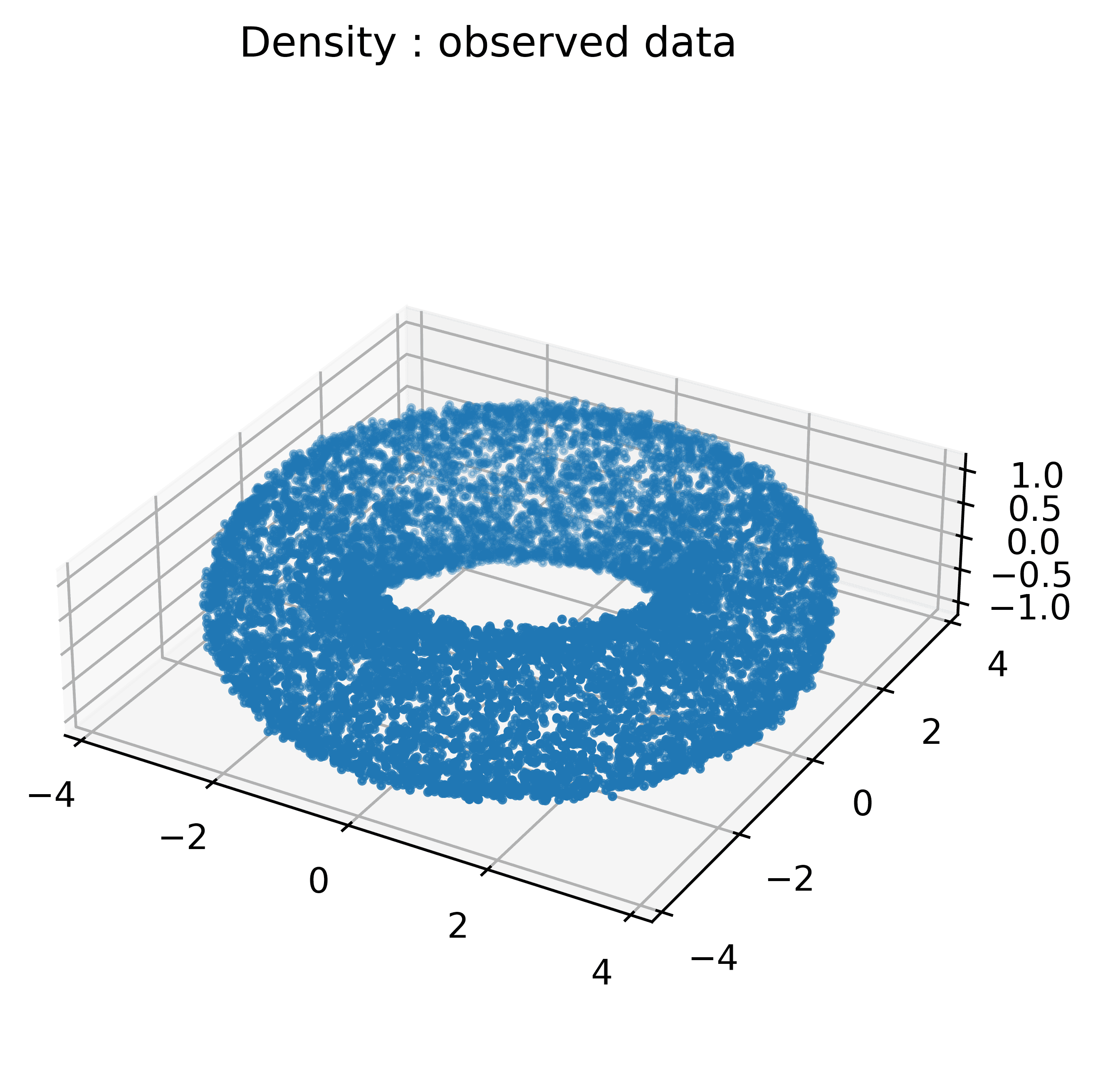}
\includegraphics[width = 3cm]{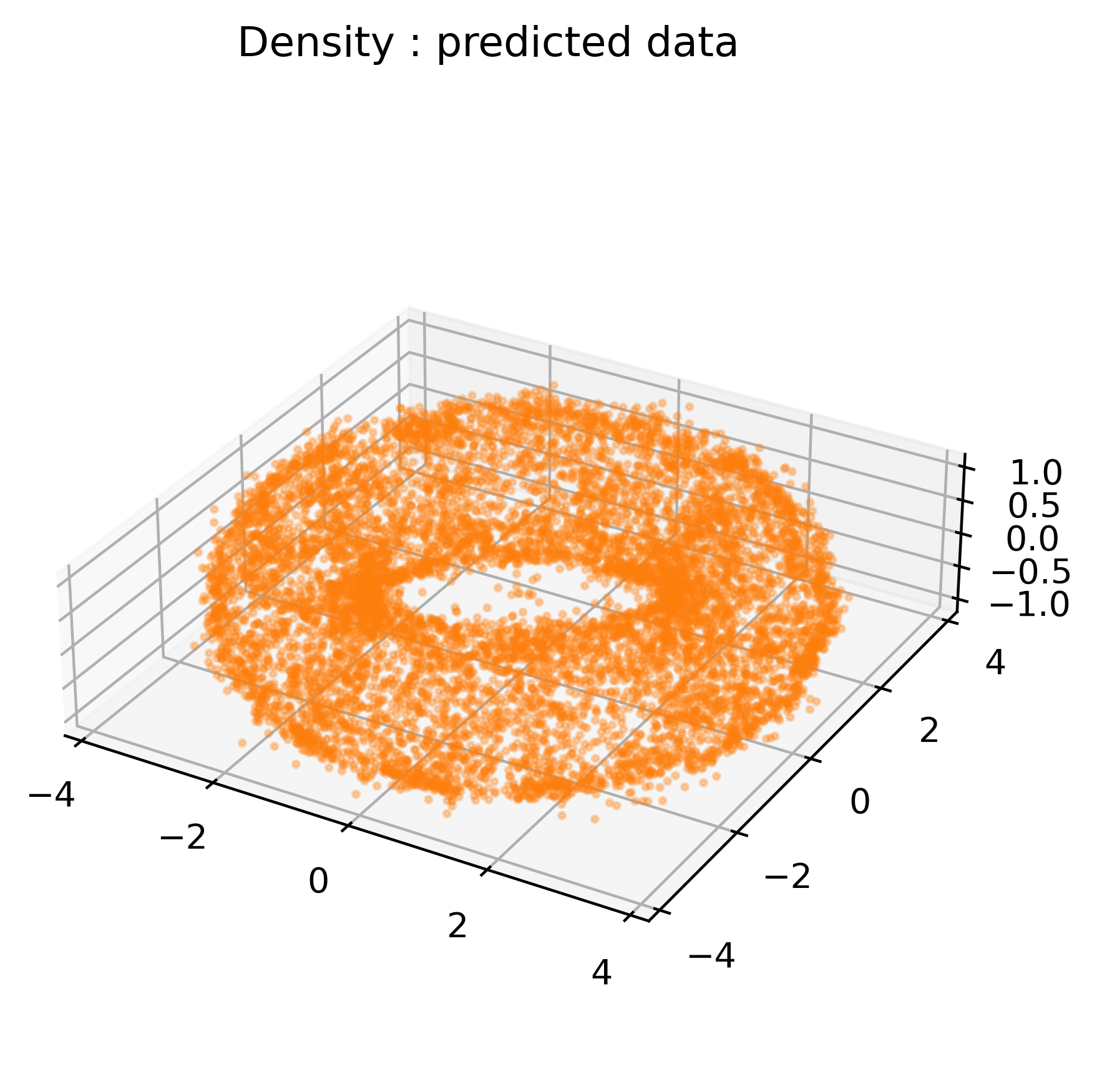}
\includegraphics[width = 3cm]{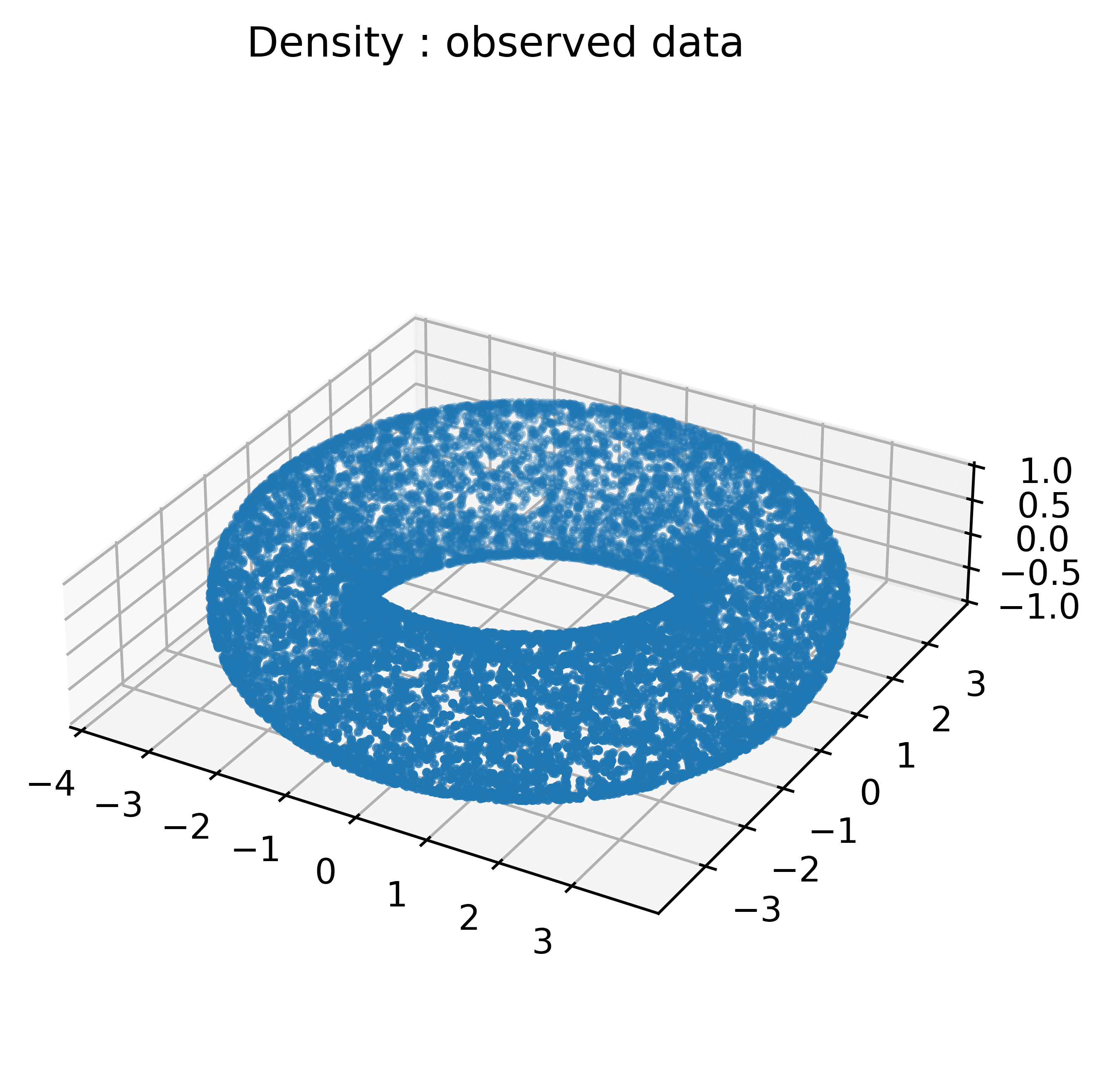}
\includegraphics[width = 3cm]{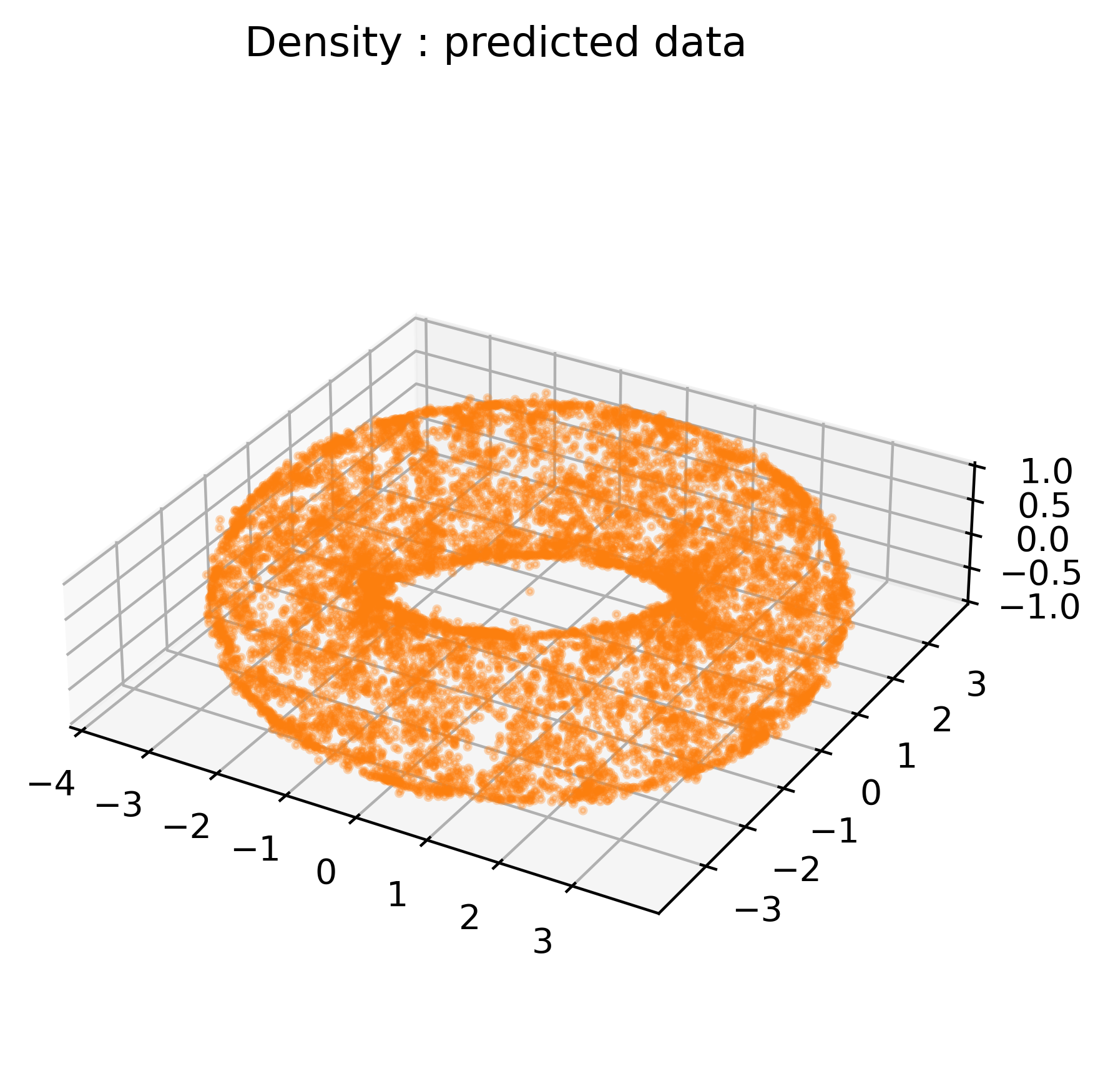}
\caption{Same experiment as in Figure \ref{fig:3Dspiral} for the torus with $n=10000$ points observed and sampled and with respectively (Left two plots) $\delta = 0.5$ and (Right two plots) $\delta = 0.05$.}
\label{fig:torus}
\end{figure}


\section{Proofs of the main results} \label{sec:mainproof}
		
		\subsection{Proof of \thmref{main}} \label{app:pr:thmain}
	Theorem \ref{thm:main} is proved using \cite[Thm 5]{ghosal2007posterior}, which relies on two things: making sure that the prior probability distribution puts enough mass around the true density $f_0$ (Kullback-Leibler condition), and ensuring that most of its probability mass is concentrated on a subset of manageable entropy (entropy condition). \newclem{Notice that \cite[Thm 5]{ghosal2007posterior} is expressed for a fixed true distribution $P_0$, but a careful look at the statement and at the proof of the theorem shows that this results extends straighforwardly to a true distribution $P_{0,n}$ depending on $n$, as their result is non-asymptotic in nature.}

\begin{proof}[Proof of Theorem \ref{thm:main}] 

The most challenging part of the proof is the Kullback-Leibler condition. To verify it we define for any $\ve > 0$,  
\beq \label{eq:defball}
\ball(f_0,\ve)= \{f : \bbR^D \to \bbR~\middle|~\bbP_0 \left(\log \frac{f_0}{f}\right) \leq \ve^2~~~\text{and}~~~\bbP_0\left(\log^2 \frac{f_0}f\right) \leq \ve^2\},
\eeq
and introduce
		\begin{align*} 
			\tilde\ve_n := \delta^{\frac{\beta}{2\alpha_0 - \alpha_\perp}} \wedge n^{-\frac{\beta}{2\beta+D}} ~~~~\text{and}~~~~
			\ve_n := \{\frac{C^{1/2}}{\sqrt{n}} \tilde\ve_n^{-D/2\beta} \log^{t/2}(1/\tilde\ve_n)\} \vee \{\tilde\ve_n \log^s(1/\tilde\ve_n)\} 
		\end{align*} 
		where $s$, $t$ and $C$ are introduced in \lemref{thickness}.
		The sequence $\tilde \ve_n$ goes to $0$ and is such that $\tilde \ve_n^{2\alpha_0} \leq \delta^\beta \tilde \ve_n^{\alpha_\perp}$ so that	$\ball(f_0,\tilde\ve_n \log^{s}(1/\tilde \ve_n)) \subset  \ball(f_0,\ve_n)$. The control of $\Pi\left(f_{P} \in \ball(f_0,\ve_n)\right)$ relies first on Theorem \ref{thm:approx} which constructs explicitely an approximation of $f_0$ by a continuous location scale mixture of Gaussians $K_\Sigma f_1$ with $f_1$ close to $f_0$ and then on a discrete approximation of  $K_\Sigma f_1$; it is formally stated in  \lemref{thickness} below and implies that  
		$	\Pi\left(f_{P} \in \ball(f_0,\tilde\ve_n \log^{s}(1/\tilde \ve_n)\right) \gtrsim \exp\(- C \tilde\ve_n^{-D/\beta} \log^{t}(1/\tilde\ve_n)\right)
		\gtrsim \exp(-n\ve_n^2).
		$.

To verify the entropy condition of \cite[Thm 5]{ghosal2007posterior}, we define, for any sequence $\ve_n$ going to $0$,  
	$\mathcal F_n = \mathcal F_n(\ve_n,R_0,H_0,\sigma_0,\sigma_1)$ to be the set of all probability density function $f_P$ with $P = \sum_{h \geq 1} \pi_h \delta_{\mu_h,U_h,\Lambda_h}$ such that
	\begin{align*} 
		\sum_{h > H_n} \pi_h \leq \ve_n,~~~~\forall h \leq H_n, \mu_h \in B(0,R_n)~~~~\text{and}~~~~ \forall h \leq H_n, \Lambda_h \in \mathcal Q_n= [\underline \sigma_n^2, \bar \sigma_n^2]^D, U_h \in \cO(D)
	\end{align*} 
	where $R_n =  \exp(R_0 n\ve_n^2 ) $, $H_n =  \lfloor H_0 (n\ve_n^2)/\log n \rfloor$,  $\underline{\sigma}_n^2 = 	\sigma_0^2 (n\ve_n^2)^{-1/b_3}$ for some positive constant $R_0,H_0$ and $\sigma_0$ and where, for some $\sigma_1 > 0$,
	\begin{itemize}
		\item $\bar \sigma_n^2 = \exp( \sigma_1^2 n\ve_n^2 ) $ in the case of the Partial location-scale mixture
		\item $\bar \sigma_n^2 = \sigma_1^2 (n\ve_n^2)^{1/b_4} $ in the case of the Hybrid location-scale mixture.
	\end{itemize} 
	We then show in \lemref{sieve} below that $\Pi(\mathcal F_n^c) \lesssim \exp\left(- c_1n\ve_n^2\right)$ as soon as $n \ve_n^2 \geq n^\omega$ for some $\omega > 0$.

 We now define, for  $\mathbf j = (j_h, h\leq H_n) \in \bbN^{H_n}$,
	$$ \mathcal F_{n, \mathbf j}  := \{f_P \in \cF_n~|~  \forall h\leq h_n,~  j_h  \sqrt{n} < \|\mu_h\| \leq ( j_h+1)\sqrt{n}\} $$
	along with the following refinement
	\begin{align*} 
	\mathcal F_{n, \mathbf j, 0}  &:= \{f_P \in \mathcal F_{n, \mathbf j}  ~\middle|~ \frac{ \max_i\lambda_{i} }{ \min_i\lambda_{i}} \leq n \}\\
	\text{and}~~~\forall \ell \geq 1,~ \mathcal F_{n, \mathbf j, \ell}  &:= \{f_P \in \mathcal F_{n, \mathbf j}  ~\middle|~ \, n^{ 2^{\ell-1} } < \frac{ \max_i\lambda_{i} }{ \min_i\lambda_{i}} \leq n^{ 2^{\ell}} \}. 
	\end{align*} 	
In Lemma \ref{lem:sieve} below we show that for the partial location scale prior
$$\sum_{\mathbf j, \ell} \sqrt{\Pi(\mathcal F_{n, \mathbf j, \ell} )N(\ve_n, \mathcal F_{n, \mathbf j,  \ell}, \| \cdot \|_1 ) } e^{-M_0 n \ve_n^2 } = o(1);$$ 		
		while for the hybrid one the same holds true with $\mathcal F_{n, \mathbf j}$.

Hence \cite[Thm 5]{ghosal2007posterior} implies that  the posterior contracts at rate $\ve_n$, so long as $n\ve_n^2 \geq n^{t_0}$ for some $t_0 > 0$. We now distinguish three cases. Denoting $p = s\vee t/2$, there holds:
		\bitem
		\item[1.] If first $\delta^{\frac{D}{2\alpha_0-\alpha_\perp}} \leq n^{-1} $ then the results of \thmref{main} is trivial and there is nothing to show (because the contraction rate goes to $\infty$ instead of $0$); 
		\item[2.] If $\delta^{\frac{D}{2\alpha_0-\alpha_\perp}} \geq n^{-\frac{D}{2\beta+D}}$, then $\tilde\ve_n = n^{-\frac{\beta}{2\beta+D}}$ and one easily gets that $\ve_n \simeq n^{-\frac{\beta}{2\beta+D}} \log^p n$. In particular, $n\ve_n^2 \gg n^{t_0}$ for $t_0=  D/(2\beta+D)$.
		\item[3.] If finally $n^{-1} < \delta^{\frac{D}{2\alpha_0-\alpha_\perp}}  < n^{-\frac{D}{2\beta+D}}$, then $\tilde\ve_n = \delta^{\frac{\beta}{2\alpha_0 - \alpha_\perp}}$ and $\log(1/\delta) \gtrsim \log n$ so that
		$$n \ve_n^2 \geq C \tilde\ve_n^{-D/\beta} \log^{t}(1/\tilde\ve_n) \gtrsim  \delta^{-\frac{D}{2\alpha_0 - \alpha_\perp}} \log^t(n) \gg  n^{\frac{D}{2\beta+D}}.
		$$
Finally to understand how $\ve_n$ depends on $\delta$ in this last case, note that  by assumption,  $\log(1/\tilde\ve_n) \lesssim \log(1/\delta) \lesssim \log(n)$ and $\tilde\ve_n < n^{-\beta/(2\beta+D)}$, so that
		$$
		\ve_n \lesssim \log^p n \times \{\frac{\tilde\ve_n^{-D/2\beta}}{\sqrt{n}} \vee \tilde\ve_n\} = \log^p n \times \frac1{\sqrt{n \delta^{D/(2\alpha_0 - \alpha_\perp)}}}.
		$$
		\eitem
\end{proof}	

We end this section with the presentations of the technical lemmata that are keys to the proof of \thmref{main}.

	\begin{lem}\label{lem:sieve} Under assumptions of Table \ref{tab:condtab}, for any sequence $\ve_n \to 0$ such that $n\ve^2_n \geq n^{t_0}$, for some $\omega >0$, and for all $c_1>0$, if $R_0, H_0, \sigma_1^2 $ are large enough and $\sigma_0^2$ is small enough,  there exists $M_0>0$ such that, 
		\bitem 	
		\item[i)] $\Pi(\mathcal F_n^c) \lesssim \exp\left(- c_1n\ve_n^2\right)$; 
		\item[ii)] In the case of the partial location-scale prior 
		$$\sum_{\mathbf j, \ell} \sqrt{\Pi(\mathcal F_{n, \mathbf j, \ell} )N(\ve_n, \mathcal F_{n, \mathbf j,  \ell}, \| \cdot \|_1 ) } e^{-M_0 n \ve_n^2 } = o(1);$$ 		
		\item[iii)] In the case of the hybrid location-scale prior 
		$$\sum_{\mathbf j} \sqrt{\Pi(\mathcal F_{n, \mathbf j} )N( \ve_n, \mathcal F_{n, \mathbf j}, \| \cdot \|_1 ) } e^{-M_0 n \ve_n^2 } = o(1).$$ 		
		\eitem
	\end{lem}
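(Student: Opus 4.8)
The three claims follow from the usual sieve and metric-entropy bookkeeping for Gaussian mixture priors, in the spirit of \cite{ghosal2007posterior,shen2013adaptive}, with extra care for the anisotropic covariances and, in the partial case, for the exponentially large truncation levels $R_n$ and $\bar\sigma_n^2$. For (i) I would bound $\Pi(\mathcal F_n^c)$ by a union bound over the constraints defining $\mathcal F_n$, each being a prior tail estimate. The event $\{\sum_{h>H_n}\pi_h>\ve_n\}$ has probability $\lesssim e^{-cH_n}$, from the stick-breaking representation for the Dirichlet process, or from $\Pi_K(K\ge H)\lesssim e^{-cH(\log H)^r}$ (see \eqref{eq:finitemixt}) for mixtures of finite mixtures; this is $\lesssim e^{-c_1 n\ve_n^2}$ since $H_n\simeq n\ve_n^2/\log n$ and $H_0$ is large. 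The event $\{\exists h\le H_n:\|\mu_h\|>R_n\}$ has probability $\lesssim H_n R_n^{D-b_2}$ by \eqref{condH:partial}, which is $\lesssim e^{-c_1 n\ve_n^2}$ since $b_2>2D-1>D$ and $\log R_n=R_0 n\ve_n^2$. The event $\{\min_{h\le H_n,\,i}\lambda_{h,i}<\underline\sigma_n^2\}$ has probability $\lesssim H_n e^{-c_3\underline\sigma_n^{-2b_3}}=H_n e^{-c_3\sigma_0^{-2b_3}n\ve_n^2}$ by the lower tail in \eqref{cond:piL} (resp.\ the first line of \eqref{cond:piH:UB}, in its $\bbE_{\tilde\Pi_\Lambda}[Q_2(\cdot)]$ form), small when $\sigma_0$ is small; and $\{\max_{h\le H_n,\,i}\lambda_{h,i}>\bar\sigma_n^2\}$ has probability $\lesssim e^{-c_1 n\ve_n^2}$ by the upper tail in \eqref{cond:piL}, which is only polynomial and hence forces $\bar\sigma_n^2$ to be exponential in the partial case, or by the second line of \eqref{cond:piH:UB}, which is stretched exponential and allows $\bar\sigma_n^2$ polynomial in the hybrid case. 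Summing gives (i).

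\noindent For (ii) and (iii) the plan is the classical trade-off: on each piece $\mathcal F_{n,\mathbf j,\ell}$ (resp.\ $\mathcal F_{n,\mathbf j}$) I would bound both the $L^1$ metric entropy and the prior mass, and verify that $\sqrt{\Pi\cdot N}\,e^{-M_0 n\ve_n^2}$ is summable over $\mathbf j$ (and $\ell$). The entropy bound rests on uniform Lipschitz estimates for $(\mu,O,\Lambda)\mapsto\varphi_{O^T\Lambda O}(\cdot-\mu)$ in $\|\cdot\|_1$ over the relevant parameter ranges, together with the fact that an $\ve_n$-net of the single-Gaussian class induces an $\ve_n$-net of $H_n$-fold mixtures of log-cardinality $\lesssim H_n(\log N_{\mathrm{single}}+\log(1/\ve_n))$. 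Discretizing locations on grids of the shells $\{j_h\sqrt n<\|\mu\|\le(j_h+1)\sqrt n\}$ at resolution $\underline\sigma_n\ve_n$, scales on a multiplicative grid of $[\underline\sigma_n^2,\bar\sigma_n^2]$ (whose log-cardinality is $\lesssim\log\big(\ve_n^{-1}\log\bar\sigma_n^2\big)\lesssim\log n$, even when $\bar\sigma_n^2$ is exponential), and orientations on grids of $\cO(D)$ at an angular resolution governed by the condition number, one obtains
$$\log N(\ve_n,\mathcal F_{n,\mathbf j,\ell},\|\cdot\|_1)\ \lesssim\ H_n\Big(2^{\ell}\log n+\textstyle\sum_{h\le H_n}\log\big((1+j_h)\sqrt n\big)+\log n+\log(1/\ve_n)\Big).$$
Slicing by $\ell$ is needed because resolving two anisotropic Gaussians whose covariance has condition number up to $n^{2^\ell}$ costs a net of log-cardinality of order $2^\ell\log n$ per component (a small rotation of a thin pancake produces an $L^1$ change amplified by the square root of the condition number), so these configurations must be confined to pieces of small prior mass. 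On the prior side, the shell constraint $\|\mu_h\|\in(j_h\sqrt n,(j_h+1)\sqrt n]$ has probability $\lesssim((1+j_h)\sqrt n)^{D-b_2}$ by \eqref{condH:partial}, while $\{\max_i\lambda_i/\min_i\lambda_i>n^{2^{\ell-1}}\}$ forces $\min_i\lambda_i<\bar\sigma_n^2\,n^{-2^{\ell-1}}$ and hence has probability $\lesssim\exp(-c_3(\bar\sigma_n^2 n^{-2^{\ell-1}})^{-b_3})$ by \eqref{cond:piL} (resp.\ the first line of \eqref{cond:piH:UB}), which decays super-exponentially in $2^\ell$. Multiplying the two and taking the square root, the contribution of $\mathcal F_{n,\mathbf j,\ell}$ factorizes, for $M_0$ large, into a product over $h$ of geometric series in $j_h$ (summable precisely because $b_2>2D-1$) times a summable series in $\ell$; this gives (ii). For the hybrid prior $\bar\sigma_n^2=\sigma_1^2(n\ve_n^2)^{1/b_4}$ is only polynomial, so $\log(\bar\sigma_n/\underline\sigma_n)\lesssim\log n$ and the condition-number slicing is unnecessary: the same computation run directly on $\mathcal F_{n,\mathbf j}$, using \eqref{cond:piH:UB} for the eigenvalue tails, gives (iii).

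\noindent The main obstacle is precisely this entropy/prior-mass balance in the partial case, where the covariances range over $[\underline\sigma_n^2,\bar\sigma_n^2]^D$ with $\bar\sigma_n^2$ exponential in $n\ve_n^2$: a single $L^1$-net of all such Gaussians is not affordable, and one must exploit that the configurations that are genuinely expensive to cover (ill-conditioned covariances, far-out locations) carry exponentially small prior mass, with the two effects compensating uniformly over the whole range of $\ell$ and $\mathbf j$. Making this quantitative requires sharp $L^1$ moduli of continuity for anisotropic Gaussian densities under perturbations of $O$ and $\Lambda$ (shrinking an eigenvalue makes the density blow up; rotating a flat pancake is amplified by the condition number), matched against the tails in \eqref{condH:partial}--\eqref{cond:piH:UB}.
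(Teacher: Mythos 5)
Your overall structure --- a union bound for (i) over the three constraints defining $\cF_n$, and an entropy/prior-mass trade-off sliced by the covariance condition number for (ii), with the slicing rendered unnecessary in the hybrid case because $\bar\sigma_n^2$ is only polynomial in $n\ve_n^2$ --- is exactly the paper's plan, and your diagnosis of why a single uniform $L^1$-net cannot cover $\cF_n$ in the partial case is the right intuition. However, there is a concrete error in your prior mass estimate for $\cF_{n,\mathbf j,\ell}$ in the partial case. You argue that $\max_i\lambda_i/\min_i\lambda_i>n^{2^{\ell-1}}$ together with $\max_i\lambda_i\leq\bar\sigma_n^2$ forces $\min_i\lambda_i<\bar\sigma_n^2\, n^{-2^{\ell-1}}$ and then invoke the stretched-exponential lower tail $\Pi_\Lambda(\min\lambda<x)\lesssim e^{-c_3 x^{-b_3}}$ of \eqref{cond:piL}. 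This is vacuous for every $\ell$ with $2^{\ell-1}\log n\lesssim\log\bar\sigma_n^2=\sigma_1^2 n\ve_n^2$, since then $\bar\sigma_n^2\, n^{-2^{\ell-1}}\geq1$ and the tail bound is of order one, whereas the covering number already contributes $\exp\{D(D-1)2^{\ell-2}H_n\log n\}$. Because $\bar\sigma_n^2$ is exponential in $n\ve_n^2$ precisely in the partial case, this bad regime covers the bulk of the relevant $\ell$, and the sum over $\ell$ does not converge with your estimate.

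The fix is to go through the upper tail of $\Lambda$, using the other sieve constraint already in hand: on $\cF_n$, $\min_i\lambda_i\geq\underline\sigma_n^2=\sigma_0^2(n\ve_n^2)^{-1/b_3}$, so that $\max_i\lambda_i/\min_i\lambda_i>n^{2^{\ell-1}}$ forces $\max_i\lambda_i>\sigma_0^2\, n^{2^{\ell-1}-2t_0/b_3}$. The polynomial tail in \eqref{cond:piL}, $\Pi_\Lambda(\max_i\lambda_i>x)\lesssim x^{-b_4}$, then gives $\Pi(\cF_{n,\mathbf j,\ell})\lesssim\prod_h(j_h\sqrt n)^{-(b_2-D)\1_{j_h\geq1}}\,n^{-b_4(2^{\ell-1}-2t_0/b_3)}$, which is exactly the decay rate needed against the covering number $\exp\{D(D-1)2^{\ell-2}H_n\log n\}$: after taking square roots, summability over $\ell$ reduces to $b_4>D(D-1)/2$, which is the stated assumption. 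As a minor point, your entropy display multiplies $\sum_{h\le H_n}\log((1+j_h)\sqrt n)$ by an extra factor $H_n$; that sum already runs over the $H_n$ components, and with the extra prefactor the $\mathbf j$-summability would require a stronger condition on $b_2$ than $b_2>2D-1$, so the multiplier should be dropped.
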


The proof of \lemref{sieve} can be found in Section \ref{app:sieve}. The last elementary brick in the proof of \thmref{main} is the control of the probability of small balls around $\bbP_0$, which is stated below.

	\begin{lem} \label{lem:thickness} Let $\ve > 0$ and assume that it is small enough so that $\ve^{2\alpha_0} \leq \delta^\beta \ve^{\alpha_\perp}$ and that $\ve^{\alpha_\perp} \ll \log^{-1}(1/\ve)$. Then, in the context of \thmref{main}, there holds
		$$
		\Pi\left(f_{P} \in \ball(f_0,\tilde\ve)\right) \gtrsim \exp\{- C \ve^{-D/\beta} \log^{t}\left(1/\ve\right)\},
		$$
		where $\ball(f_0,\cdot)$ is defined in \eqref{eq:defball}, where the constant $C$ depends on the parameters, where $\tilde\ve \simeq  \ve  \log^s (1/\ve)$ and with $s,t > 0$ depending on $D$, $\beta$ and $\kappa$.
	\end{lem}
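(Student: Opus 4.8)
The plan is to follow the standard programme for location--scale Gaussian mixtures (in the spirit of \cite{kruijer2010adaptive,shen2013adaptive}), but with the continuous mixture approximating $f_0$ being the manifold-adapted operator $K_\Sigma$ of \thmref{approx} rather than a plain convolution. First I would fix $\ve$ small and choose a bandwidth $\sigma=\sigma(\ve)$ with $\sigma^\beta\log^{s_0}(1/\sigma)\simeq\ve$ for a suitable $s_0>0$; the hypotheses $\ve^{\alpha_0}\leq\delta^\beta\ve^{\alpha_\perp}$ and $\ve^{\alpha_\perp}\ll\log^{-1}(1/\ve)$ then translate exactly into $\sigma^{\alpha_0-\alpha_\perp}\leq\delta$ and $\delta\sigma^{\alpha_\perp}=o(|\log\sigma|^{-1/2})$, while the lower bound on $\omega$ built into \thmref{main} yields $\sigma^{\omega-6\beta}=o(\delta^{D-d})$, so that \thmref{approx} and \corref{approx} are available at scale $\sigma$. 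Together they produce a genuine probability density $\tilde h$, bounded below by $f_0/2$ on $M^\delta$ and supported on $M^\tau\cap\ball(0,R_0(\log 1/\sigma)^{1/\kappa})$, for which the continuous location--scale mixture $f^\star:=K_\Sigma\tilde h$ satisfies $\hel(f^\star,f_0)^2\lesssim\sigma^{2\beta}\,\mathrm{polylog}(1/\sigma)$ (the cross term $\hel(K_\Sigma g,f_0)$ being bounded to the same order by \thmref{approx} together with \eqref{assint}) and has a polynomially controlled likelihood ratio $f_0/f^\star$ on $M^\delta$.

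Next I would discretise $f^\star$ into a finite mixture. I would cover $M^\tau\cap\ball(0,R_0(\log 1/\sigma)^{1/\kappa})$ by $N$ cells built from the charts $\bar\Psi_{x_0}$, partitioning the tangential directions at scale $\sigma^{\alpha_0}$ and the normal directions at scale $\delta\sigma^{\alpha_\perp}$; since $d\alpha_0+(D-d)\alpha_\perp=D$ by the very definition of $\beta$, this uses only $N\lesssim\sigma^{-D}\,\mathrm{polylog}(1/\sigma)$ cells. In cell $k$ I would pick a point $\mu_k$, let $O_k\in\cO(D)$ be the matrix sending the canonical basis to an orthonormal basis of $T_{\pr_M\mu_k}M\oplus N_{\pr_M\mu_k}M$, set $\Lambda_k=\Delta_{\sigma,\delta}^2$ and $p_k=\int_{\mathrm{cell}_k}\tilde h$, discarding the cells with $p_k$ below a large power of $\sigma$ (a negligible perturbation). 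A first-order Taylor expansion of $y\mapsto\varphi_{\Sigma(y)}(x-y)$ across each cell, with the variation of $\Sigma(y)$ controlled through \eqref{barpsi}, would show that $\bar P:=\sum_{k\leq N}p_k\delta_{(\mu_k,O_k,\Lambda_k)}$ gives a mixture $f_{\bar P}$ with $\hel(f_{\bar P},f_0)\lesssim\tilde\ve$, $\tilde\ve\simeq\ve\log^s(1/\ve)$, and a still-controlled ratio $f_0/f_{\bar P}$ on $M^\delta$; combined with \eqref{expdec}, \eqref{assint} and the usual passage from Hellinger distance to Kullback--Leibler-type divergences (see e.g.\ \cite{ghosal2007posterior}), this puts $f_{\bar P}$ in $\ball(f_0,C'\tilde\ve)$.

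It then remains to lower bound the prior mass of a neighbourhood $\mathcal N$ of $\bar P$ sitting inside $\ball(f_0,\tilde\ve)$ (after rescaling $\tilde\ve$). I would take $\mathcal N$ to be the mixing measures with exactly $N$ atoms, weights in a relative box around $(p_k)_k$, locations within $\sigma^{\alpha_0}$ of $\mu_k$ tangentially and $\delta\sigma^{\alpha_\perp}$ normally, orthogonal parts within $\cO(D)$-distance $\sigma^{\alpha_0-\alpha_\perp}$ of $O_k$, and eigenvalues in a fixed multiplicative box around $\Delta_{\sigma,\delta}^2$; Gaussian perturbation estimates keep every such $f_P$ in $\ball(f_0,\tilde\ve)$. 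The lower bound on $\Pi(\mathcal N)$ is then a product of: the mass $\Pi_K(K=N)\gtrsim e^{-cN\log^rN}$ from \eqref{eq:finitemixt} (or, for the Dirichlet process, the analogous weight-driven cost); a Dirichlet / stick-breaking mass $\gtrsim e^{-cN\log N}$ for the box around $(p_k)_k$; the location factors, each $\gtrsim\sigma^D e^{-c_1(\log 1/\sigma)^{b_1/\kappa}}$ by \eqref{condH:partial}; the orientation factors, each $\gtrsim\sigma^{(\alpha_0-\alpha_\perp)D(D-1)/2}$ by the Haar volume of small balls in $\cO(D)$; and a scale factor $\gtrsim e^{-c_2\sigma^{-d\alpha_0}}$, coming from \eqref{cond:piL} in the partial case and from \eqref{cond:piH:LB} (times a per-component weight cost) in the hybrid case, the upper-tail conditions \eqref{cond:piL}/\eqref{cond:piH:UB} costing only constants. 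Since $d\alpha_0=d\beta/\beta_0\leq D$, every exponent is $\lesssim N\log^{t}(1/\sigma)\lesssim\sigma^{-D}\log^t(1/\sigma)$, whence $\Pi(\mathcal N)\gtrsim\exp\{-C\sigma^{-D}\log^t(1/\sigma)\}\gtrsim\exp\{-C\ve^{-D/\beta}\log^t(1/\ve)\}$, which is the claim.

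The step I expect to be the main obstacle is the discretisation together with its coupling to the prior-mass estimate: unlike in the Euclidean convolutional setting, $\Sigma(y)$ in $K_\Sigma$ varies with $y$, so the piecewise-constant replacements $\Sigma(\mu_k)$, the orientations $O_k$ tied to the local frame of $M$, and the anisotropic ``pancake'' scales $\Delta_{\sigma,\delta}^2$ must all be coordinated so that the discretisation error stays within the $O(\sigma^\beta)$ budget while the neighbourhood $\mathcal N$ remains large enough for the conditions of Table~\ref{tab:condtab} to deliver the stated lower bound.
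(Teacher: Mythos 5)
The overall architecture of your proposal --- (i) use \thmref{approx}/\corref{approx} at bandwidth $\sigma\simeq\ve^{1/\beta}$ to produce a continuous location--scale mixture $K_\Sigma\tilde h$ within Hellinger distance $\ve\,\mathrm{polylog}$ of $f_0$; (ii) replace it with a finite mixture of $N\lesssim\sigma^{-D}\,\mathrm{polylog}$ Gaussian components whose frames $O_k$ and eigenvalues $\Delta^2_{\sigma,\delta}$ are tied to the local geometry of $M$; (iii) lower-bound the prior mass of a mixing-measure neighbourhood --- is exactly the paper's.

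The gap, which you yourself flag as the most delicate step, is in (ii). You cover $M^\tau\cap\ball(0,R)$ by cells of tangential size $\sigma^{\alpha_0}$ and normal size $\delta\sigma^{\alpha_\perp}$, assign a single atom per cell, and invoke a first-order Taylor expansion of $y\mapsto\varphi_{\Sigma(y)}(x-y)$ to claim $\|f_{\bar P}-K_\Sigma\tilde h\|_1\lesssim\ve^2\,\mathrm{polylog}$. This cannot hold: the Gaussian kernel in $K_\Sigma$ has \emph{the same} tangential and normal scales as the cells, so replacing the conditional mixing law on a cell by a single point mass changes each component by a fraction of its own mass that is $O(1)$, not $o(1)$ --- already the second-order Taylor term contributes $\int\frac12(y-\mu_k)^\top\nabla^2\varphi(y-\mu_k)\,\tilde h\,dy\simeq p_k\varphi$, a full unit of relative error. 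A fixed-order Taylor expansion with one centroid atom per cell therefore gives $\|f_{\bar P}-K_\Sigma\tilde h\|_1=O(1)$; and shrinking the cells until the error becomes $\ve^2$ inflates $N$ far beyond $\sigma^{-D}\,\mathrm{polylog}$, destroying the prior-mass bound. The only known way to reach $L^1$-precision $\ve^2$ while keeping $N\lesssim\sigma^{-D}\,\mathrm{polylog}$ is the Ghosal--Ghosh--van der Vaart moment-matching device: expand $\exp\{-\frac12\|x-\bar\Psi(w)\|^2_{\bar\Sigma^{-1}(w)}\}$ as a polynomial in the chart variable $w$ up to degree $T\simeq\log(1/\ve)$, then pick $T^{\mathrm{poly}(D)}$ atoms per cell so that the first $T$ moments of the discrete measure match those of the continuous one. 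This is exactly what the paper's \lemref{disc} does, and the adaptation is non-trivial precisely because $\Sigma(y)$ depends on $y$ (the operator $K_\Sigma$ is not a convolution), which is the obstruction you correctly anticipate but do not resolve.

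A secondary issue: you take orthogonal parts within $\cO(D)$-distance $\sigma^{\alpha_0-\alpha_\perp}$ of $O_k$, but the paper needs the much smaller radius $\sigma^{2\alpha_0}\ve^2$. The constraint is $\|\Sigma(z_j)^{-1}\widehat\Sigma_h-\Id\|_{\op}\lesssim\ve^2$, and the anisotropy factor $\|\Delta_{\sigma,\delta}^{-2}\|_{\op}\|\Lambda_h\|_{\op}\simeq\delta^2\sigma^{-2(\alpha_0-\alpha_\perp)}\geq 1$ amplifies perturbations of $O$; at your radius the corresponding $f_P$ need not lie in $\ball(f_0,\tilde\ve)$. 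This is a polynomial-in-$\sigma$ correction that does not change the leading exponential rate in the final bound, but it does matter for the correctness of the inclusion $\mathcal N\subset\ball(f_0,\tilde\ve)$, and has to be fixed before the prior-mass computation can be run.
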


	The complete proof is given in Section \ref{app:thickness} and is sketched as follow: the first step is the Hellinger approximation of $f_0$  by $K_\Sigma \tilde h$ as expressed in Corollary \ref{cor:approx}. Then we exhibit an $\ve$-approximation of $K_\Sigma \tilde h$ by a discrete location scale mixture with a controlled number of atoms through the use of Lemma \ref{lem:disc} below. The result then follows from similar arguments as in \cite{shen2013adaptive,kruijer2010adaptive} or \cite{naulet2017posterior}.
	
	
	\begin{lem} \label{lem:disc} Let $\ve > 0$ such that $\delta\sigma^{\alpha_\perp}\log(1/\ve) \ll 1$. For any density $g$ on $M^\delta$ satisfying \eqref{expdec}, there exists a discrete probability measure $G$ on $\bbR^D$ with at most $N\simeq \sigma^{-D} \log^{D}(1/\ve)$ atoms such that 
			$$\|K_\Sigma G - K_\Sigma g \|_\infty \lesssim \frac{\ve}{\sigma^D \delta^{D-d}}~~\text{and}~~\|K_\Sigma G - K_\Sigma g \|_1 \lesssim \ve \log^{D/2}(1/\ve).
		$$  
	The atoms of $G$ are in $M^\delta$ and are $\sigma^{2\alpha_0}\ve$-apart.
	\end{lem}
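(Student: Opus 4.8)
The plan is to view $g\,\diff y$ (supported on $M^\delta$) as a mixing measure and to quantize it, following the discretization scheme used for Gaussian mixtures in \cite{kruijer2010adaptive,shen2013adaptive}, while coping with the feature --- absent from those works --- that $K_\Sigma$ is \emph{not} a convolution, since $\Sigma(y)$ depends on $y$. I would proceed in three stages: (a) truncate $g$ to a large Euclidean ball; (b) tile the truncated support, expressed in the local charts $\bar\Psi_{x_0,\delta}$, at the scale of the local bandwidths of $\Sigma(\cdot)$; (c) on each tile replace $g$ by a finitely supported measure carrying the right mass and matching enough moments that the induced error, once the $y$-dependence of $\Sigma$ is reinstated, is of order $\ve$. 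A concluding step merges atoms that lie too close together, to produce the stated separation.

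For (a): take $R = R(\ve)\simeq(\log(1/\ve))^{1/\kappa}$, so that $\int_{\{\|y\|>R\}}g\lesssim\ve$ by \eqref{expdec}. Each $\varphi_{\Sigma(y)}$ is a probability density, so $\|K_\Sigma h\|_1 = \|h\|_1$ for $h\ge 0$; and $\det\Sigma(y) = \det\Delta_{\sigma,\delta}^2 = \sigma^{2D}\delta^{2(D-d)}$ (using $d\alpha_0+(D-d)\alpha_\perp = D$), so $\|\varphi_{\Sigma(y)}\|_\infty\simeq\sigma^{-D}\delta^{-(D-d)}$ uniformly in $y$. Hence the part of $K_\Sigma g$ produced by $g$ on $\{\|y\|>R\}$ is $\lesssim\ve$ in $L^1$ and $\lesssim\ve\,\sigma^{-D}\delta^{-(D-d)}$ in $L^\infty$, which is within the targets, and it suffices to quantize $g$ supported on $M^\delta\cap\ball(0,R)$.

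For (b)--(c): cover $M\cap\ball(0,R)$ by $\simeq R^d$ charts $\bar\Psi_{x_0,\delta}$ and, in each, partition the slab $\cV_{x_0}\times\ball_{N_{x_0}M}(0,1)$ (the part of $\cW_{x_0,\delta}$ carrying mass, since $g$ lives on $M^\delta$) into coordinate boxes of side $\simeq\sigma^{\alpha_0}$ in the tangential variables and $\simeq\sigma^{\alpha_\perp}$ in the rescaled normal variables --- in the chart, the factor $\delta$ of the normal bandwidth $\delta\sigma^{\alpha_\perp}$ is absorbed by the rescaling $\eta\mapsto\delta\eta$. This yields $\simeq\sigma^{-D}$ boxes per chart, hence of order $\sigma^{-D}$ tiles $A_k\subset M^\delta$ up to a logarithmic factor, each of diameter $\lesssim\sigma^{\alpha_0}$. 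On a tile $A_k$ with centre $y_k$ I would use a Tchakaloff-type quadrature, as in \cite{shen2013adaptive}, to build a discrete $G_k$ with $G_k(\bbR^D)=\int_{A_k}g$, supported on at most $\binom{L+D}{D}\simeq\log^D(1/\ve)$ points inside $A_k$ (the quadrature run in the chart coordinates so these points stay in $M^\delta$), whose moments relative to $y_k$ match those of $g\,\diff y$ on $A_k$ up to order $L\simeq\log(1/\ve)$. Then $G:=\sum_k G_k$, renormalized to a probability measure with negligible additional error, has $N\simeq\sigma^{-D}\log^D(1/\ve)$ atoms. The error $K_\Sigma G - K_\Sigma g$ splits, on each tile, into a quadrature error --- small because $\varphi_{\Sigma(y_k)}$ is analytic, $A_k$ is of bandwidth size and $L$ moments are matched --- and an error from the variation of $\Sigma(\cdot)$ across $A_k$; summing over tiles and using $\|\varphi_{\Sigma(y)}\|_1=1$, $\|\varphi_{\Sigma(y)}\|_\infty\simeq\sigma^{-D}\delta^{-(D-d)}$ gives the two claimed bounds, the $\log^{D/2}(1/\ve)$ factor in the $L^1$ bound stemming from the overlap of tiles near a given point and from the truncation radius. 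Finally, replacing the support of $G$ by a maximal $\sigma^{2\alpha_0}\ve$-separated subset and transporting each atom's mass to the nearest retained point moves a total mass $\le 1$ by $\le\sigma^{2\alpha_0}\ve$, perturbing $K_\Sigma G$ by $\lesssim\sigma^{2\alpha_0}\ve\,\sup_y\|\nabla\varphi_{\Sigma(y)}\|$, i.e.\ by $\lesssim\sigma^{\alpha_0}\ve$ in $L^1$ and $\lesssim\sigma^{\alpha_0}\ve\,\sigma^{-D}\delta^{-(D-d)}$ in $L^\infty$ --- negligible against the targets --- and enforces the separation.

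The hard part will be the $\Sigma$-variation error in step (c). The covariances $\Sigma(y)=O_y^\top\Delta_{\sigma,\delta}^2 O_y$ all share the eigenvalues of $\Delta_{\sigma,\delta}^2$ but sit in frames $O_y$ that tilt as $y$ moves, and the anisotropy ratio $\delta\sigma^{\alpha_\perp-\alpha_0}\ge 1$ (because $\sigma^{\alpha_0-\alpha_\perp}\le\delta$) amplifies this tilt when comparing $\varphi_{\Sigma(y)}$ with $\varphi_{\Sigma(y_k)}$, so that simply freezing $\Sigma$ on a bandwidth-sized tile is too crude to be summable, while Taylor-expanding $y\mapsto\Sigma(y)$ is limited by the finite smoothness $\beta_M$ of $M$. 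Two structural facts should rescue the estimate and must be exploited carefully: first, $O_y$ depends on $y$ only through $\pr_M(y)$ --- so $\Sigma(y)$ is literally constant along the normal fibres, and in the chart $\bar\Psi_{x_0,\delta}$ the map $\varphi_{\Sigma(y)}(x-y)$ depends on $\eta$ only through the affine map $\eta\mapsto x-\bar\Psi_{x_0,\delta}(v,\eta)$, hence analytically, so a high-order moment match in $\eta$ is harmless; second, the hypothesis $\delta\sigma^{\alpha_\perp}\log(1/\ve)\ll 1$ confines the effective support of each $\varphi_{\Sigma(y)}$ to a thin tube ($|x-y|=o(\tau)$ where the kernel is non-negligible), keeping the rescaled charts genuine diffeomorphisms on the enlarged tiles and the geometric tilt under control. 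The delicate point is then to handle the tangential directions --- where the variation of $\Sigma$ is genuinely non-smooth --- so that the error stays $O(\ve)$ while the number of tiles, hence $N$, remains polylogarithmic in $1/\ve$; carrying the whole construction out inside the charts $\bar\Psi_{x_0,\delta}$, where the $\Sigma$-variation is transferred onto the controlled derivatives of the chart map (via \eqref{betam}--\eqref{barpsi}) rather than onto the Gaussian factor, seems the cleanest route.
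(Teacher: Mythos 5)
Your scaffolding is the paper's: truncate with $R\simeq\log(1/\ve)^{1/\kappa}$ using \eqref{expdec} and $\|K_\Sigma h\|_1=\|h\|_1$, $\|\varphi_{\Sigma(y)}\|_\infty\simeq\sigma^{-D}\delta^{-(D-d)}$; pass to the charts $\bar\Psi_{x_i}$ via a $\tau/64$-packing and partition each $\cW^0_{x_i}$ into $\simeq\sigma^{-D}$ bandwidth-size boxes; match moments on each box with a Carath\'eodory/Tchakaloff-type device; finish with an $L^1/L^\infty$ split over $\cH_A$/$\cH_A^c$ and a final merging of nearby atoms. You also correctly identify the crux --- that $K_\Sigma$ is not a convolution because $\Sigma(y)$ depends on $y$, that Taylor-expanding $\Sigma(\cdot)$ is blocked by the finite smoothness of $M$, and that freezing $\Sigma$ at a tile center is too crude. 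Where the proposal stops short is precisely at this crux: you flag it, offer two ``structural facts'' that should help (constancy of $\Sigma$ along normal fibres, thin-tube confinement), and conclude with a direction of attack rather than a mechanism. That is a genuine gap, because your proposed quadrature --- matching polynomial moments of $w$ against a frozen $\varphi_{\Sigma(y_k)}$ --- would still have to absorb the variation of $O_y$ across a tangential box of side $\sigma^{\alpha_0}$, and you give no quantitative argument that the resulting error sums to $O(\ve)$.

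The paper's resolving idea is a \emph{degenerate-kernel expansion in the chart variable}, which you do not produce. Inside the chart, one truncates the exponential Taylor series $e^{-u}=\sum_{t<T}(-u)^t/t!+R_T$ at $T\simeq\log(1/\ve)$ applied to $u=\tfrac12\|x-\bar\Psi(w)\|^2_{\bar\Sigma^{-1}(w)}$, and then observes that each power $\|x-\bar\Psi(w)\|^{2t}_{\bar\Sigma^{-1}(w)}$ factors as a finite sum $\sum_{a}P_a(x)\,Q_a(w)$, where the $Q_a$ are built from the coefficient functions $A_{i,j}(w)=\langle e_i,\bar\Sigma^{-1}(w)e_j\rangle$, $B_i(w)=\langle e_i,\bar\Sigma^{-1}(w)\bar\Psi(w)\rangle$, $C(w)=\|\bar\Psi(w)\|^2_{\bar\Sigma^{-1}(w)}$, and the number of terms is $\leq T^{D^2+D+1}$. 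One then invokes \cite[Lem 3.1]{ghosal2001entropies} to build, on each box, an atomic measure with $\leq T^{D^2+D+1}$ atoms matching the moments of the \emph{functions} $Q_a(w)$ --- not of $w$ itself. This finesses the whole $\Sigma$-variation question: all the $w$-dependence of the kernel (including through $\bar\Psi$ and $\bar\Sigma^{-1}$) is absorbed into the $Q_a$, no smoothness of $w\mapsto\Sigma(y)$ beyond continuity is used, and the only quantities that need control are $\sup\|x-\bar\Psi(w)\|^2_{\bar\Sigma^{-1}(w)}\lesssim\log(1/\ve)$ on the ``near'' boxes (so that $R_T$ is $\lesssim\ve$) and a crude $\ve^H$ bound on the ``far'' boxes. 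You should replace the classical ``match moments of $w$ against a frozen-centre Gaussian'' plan by this separable-kernel / $Q_a$-moment plan; as written, the proposal leaves the decisive step open. (A smaller remark: your argument for the $\sigma^{2\alpha_0}\ve$-separation only accounts for moving atom locations via $\nabla\varphi$; the paper additionally controls the change of covariance, bounding $\sqrt{\tr(\Sigma(z)^{-1}\Sigma(y)-\Id)^2}\lesssim\sigma^{-2\alpha_0}\|z-y\|$ via \eqref{lem:angles}, which is what makes $\sigma^{2\alpha_0}\ve$ the right separation scale.)
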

	The proof of \lemref{disc} can be found in Section \ref{app:disc}. We underline that although it uses similar ideas to \cite[Lem 3.1]{ghosal2001entropies}, it is not a straightforward adaptation of it, since in $K_\Sigma$ the covariances depend  on the locations of the mixture in a complicated way.

 		\subsection{Proof of \thmref{approx}} \label{app:thmapprox1}
		
As explained in Section \ref{subsec:approx} a key ingredient of the proof of \thmref{main} is the pointwise approximation of $f_0$ by $K_\Sigma \tilde g$ where $g$ is close to $f_0$ and is explicited in the proof of \thmref{approx} below.

\begin{proof}[Proof of \thmref{approx}]
		
Let  $x_0 \in M$ and define 
	$$
		\cW_{x_0}^j := \ball_{T_{x_0}}\(0,\frac{2+j}{16}\tau\) \times \ball_{N_{x_0}}\(0,\frac{6+j}{8}\tau\)~~~\text{for}~~j \in\{0,1,2\},
		$$
		and $\cO_{x_0}^j = \bar\Psi_{x_0}(\cO_{x_0}^j)$ for $j \in \{0,1,2\}$. We have
		$
		\cW_{x_0}^0 \subset \cW_{x_0}^1 \subset \cW_{x_0}^2$ and$\cO_{x_0}^0 \subset \cO_{x_0}^1 \subset \cO^2_{x_0}.
		$
		Furthermore, the sets $\cO_{x_0}^0$ for $x_0 \in M$ forms a covering of $M^{3\tau/4}$, see Section \ref{sec:partition} for more details. 
	
	We now drop the $x_0$ from the notation. Let $f : \bbR^D \to \bbR$ be in $\hbm$ and supported on $\cO^0 $ --- it is to be thought of as $f_0$ multiplied by a smooth function supported on $\cO^0$.  Take $x \in \cO^1$ and compute
			$$
			K_\Sigma f(x) := \int_{\bbR^D} \varphi_{\Sigma(u)} (x - u) f(u) \diff u = \int_{M^\delta \cap \cO^0} \varphi_{\Sigma(u)} (x - u) f(u) \diff u .
			$$
			We first prove that we can construct a function $g$ such that  $K_\Sigma g$ is close to $f$ and we then apply this result to $f = f_0 \chi_j$ with $\chi_j$ the partition of unity defined in Lemma \ref{lem:chixj}.

			The idea is to use the fact that $\Sigma(u) = o(1)$ and the smoothness of $u \mapsto \Sigma(u)$ so that
			$$
			K_\Sigma f(x) \approx  \int_{M^\delta \cap \cO^0} \varphi_{\Sigma(x)} (x - u) f(u) \diff u \approx f(x). 
			$$
			We now write down the approximation rigorously and quantify the error, taking into accound the geometry of the manifold $M$. In all that follows, we use the notation $z = (v,\eta)$ for points belonging to $ \ball_{T_{x_0} M}(0,\tau/16) \times \ball_{N_{x_0 M}}(0,\tau/2)$, while throughout $w = (v, \delta \eta) \in  \ball_{T_{x_0} M}(0,\tau/16) \times \ball_{N_{x_0} M}(0,\delta \tau/2)$.
			We first make the change of variable $w = \bar\Psi^{-1}(u)$, yielding
			\begin{align*} 
				K_\Sigma f(x) &= \int_{\bar\Psi^{-1}(M^\delta \cap \cO^{0})} \varphi_{\Sigma(\bar \Psi(w))} (x - \bar \Psi(w)) f(\bar \Psi(w)) |\det \diff \bar \Psi(w)| \diff w.
			\end{align*} 
			Then, denoting by $w_x = \bar\Psi^{-1}(x)$, we write
			$$w = \Delta_{\sigma, \delta} z + w_x =  \Delta_{\sigma, \delta} z  +  \Delta_{1, \delta}z_x, \quad w_x = (v_x, \delta \eta_x), \quad z_x =  (v_x,  \eta_x),$$
			in the integral above, giving
			\begin{align} \label{Ksigma:decomp1}
				K_\Sigma f(x) =  &\frac{1}{(2\pi)^{D/2}\delta^{D-d}} 
				\int_{\Delta^{-1}_{\sigma,1}(\cW^0-z_x)} e^{- B_\sigma(z_x,z)} \bar f_\delta(\Delta_{\sigma,1} z + z_x) \zeta(\Delta_{\sigma,\delta}z + w_x)  \diff z,
			\end{align} 
			with
			\begin{align*} 
			B_\sigma(z_x,z) &:= \frac12 \| x - \bar \Psi(\Delta_{\sigma,\delta}z + w_x)\|^2_{\Sigma^{-1}(\bar \Psi(\Delta_{\sigma,\delta}z + w_x))} \\
			\text{and}~~~\zeta(\Delta_{\sigma,\delta}z + w_x) &:=  |\det \diff  \bar \Psi(\Delta_{\sigma,\delta}z + w_x)|,  
			\end{align*} 
			and where we used the fact that $|\det \Sigma(u)|$ is constantly $\sigma^D \delta^{D-d}$ for $u \in M^\tau$. Notice that $\bar f_\delta$ is zero outside of $\Delta^{-1}_{\sigma,1}(\cW^0-z_x)$ so we can replace the latter by $T_{x_0} M \times N_{x_0} M \simeq \bbR^D$. We denote 
			\begin{align*}
				\bar K_\Sigma  \bar f_\delta(z_x)  =  &\frac{1}{(2\pi)^{D/2} } 
				\int e^{- B_\sigma(z_x,z)} \bar f_\delta(\Delta_{\sigma,1} z + z_x) \zeta(\Delta_{\sigma,\delta}z + w_x)  \diff z,
			\end{align*} 
			We now develop each term separately. First 
			\beq \label{devf}
			\bar f_\delta(\Delta_{\sigma,1}z+z_x) =  \underbrace{\bar f_\delta(z_x)}_{= \delta^{D-d} f(x)} + \sum_{0 < \inner{k}{\alpha} < \beta} \frac{z^{k}}{k!} \sigma^{\inner{k}{\alpha}} \Diff^k \bar f_\delta(z_x)  + R^f_\sigma(x,z)  
			\eeq 
			with $R^f_\sigma(z_x,z) \leq D L_{\delta}(z_x) \sigma^\beta \|1\vee z \|_1^{\beta_{\max}} = D \delta^{D-d} L(x) \sigma^\beta \|1\vee z \|_1^{\beta_{\max}} $ in application of \corref{taylor}. \newclem{We can thus write
\beq \label{exp:Ksigma}
\bar K_\Sigma \bar f_\delta(z_x) = \bar f_\delta(z_x) I_0(z_x) + \sum_{0 < \inner{k}{\alpha} < \beta} \frac{\Diff^k \bar f_\delta(z_x)}{k!} \sigma^{\inner{k}{\alpha}} I_k(z_x)  + R^K_\sigma(z_x),  
\eeq
with 
\begin{align*} 
I_k(z_x) &= \frac{1}{(2\pi)^{D/2}}\int e^{- B_\sigma(z_x,z)} \zeta(\Delta_{\sigma,\delta}z + w_x) z^k  \diff z\\
\text{and}~~~R^K_\sigma(z_x) &= \frac{1}{(2\pi)^{D/2} }\int e^{- B_\sigma(z_x,z)} \zeta(\Delta_{\sigma,\delta}z + w_x) R_\sigma^f(z_x,z)  \diff z.
\end{align*} 
We will show that
\bitem
\item[(i)] $R_\sigma^K(z_x) = O(\sigma^\beta \delta^{D-d}L(x))$ uniformly;
\item[(ii)] For all $k$, $I_k(z_x) \in \mathcal \cH_{\iso}^{\beta_M-3}(\cW_{x_0}^0, C)$ \newju{for some constant $C>0$}; 
\item[(iii)] $I_0(z_x) = 1 + o(1)$ uniformly.
\eitem
If these three assertions holds, then we can recursively define a function $g$ such that
$$
\bar K_\Sigma \bar g(x) = \bar f_\delta(x) + O(\sigma^\beta L_\delta(x))
$$
uniformly on $x$. The recursion goes as follows: we let $\cQ = \{0<\inner{k}{\alpha}~|~\inner{k}{\alpha} < \beta\} = \{ q_1 < \dots < q_N\}$ and let $q_0=0$ and define
\begin{align*} 
\bar g_0(z_x) &= \bar f_\delta (z_x)/I_0(z_x), \\
\bar g_{1}(z_x) &= \bar g_0(z_x) - \frac{\sigma^{q_1}}{ I_0(z_x)}\sum_{\inner{k}{\alpha} = q_1} \frac{1}{k!} \Diff^k(\bar g_0)(z_x) I_k(z_x),
\end{align*} 
then using Proposition \ref{prp:der}, for all $\inner{k}{\alpha}=q_1$, $\Diff^k(\bar g_0) \in \cH_{\an}^{\bbeta(1-q_1/\beta)}(\tilde C_kL)$ for some $\tilde C_k>0$ and we apply  \eqref{exp:Ksigma} to $\bar g_1$ which leads to 
\begin{align*}
\bar K_\Sigma \bar g_1(z_x) &= \bar f_\delta (z_x)- \sigma^{q_1}\sum_{\inner{k}{\alpha} = q_1} \frac{1}{k!} \Diff^k(\bar g_0)(z_x) I_k(z_x) +\sum_{q_1 \leq \inner{k}{\alpha} < \beta} \sigma^{\inner{k}{\alpha}} \frac{\Diff^k \bar g_0(z_x)}{k!} I_k(z_x)  \\
& - \sigma^{q_1}\sum_{\inner{k}{\alpha} = q_1} \frac{1}{k!}\sum_{\inner{k'}{\alpha}<\beta-q_1}\frac{\Diff^{k'} (\Diff^k(\bar g_0) I_k) (z_x)) }{k'!}  I_{k'}(z_x) + \tilde R^K_\sigma(z_x),  \\
& = \bar f_\delta (z_x) +\sum_{q_2 \leq \inner{k}{\alpha} < \beta} \sigma^{\inner{k}{\alpha}} \frac{\Diff^k \bar g_0(z_x)}{k!} I_k(z_x)  \\
& - \sigma^{q_1}\sum_{\inner{k}{\alpha} = q_1} \frac{1}{k!}\sum_{\inner{k'}{\alpha}<\beta-q_1}\sigma^{\inner{k'}{\alpha}}  \frac{\Diff^{k'} (\Diff^k(\bar g_0) I_k) (z_x)) }{k'!}  I_{k'}(z_x) + \tilde R^K_\sigma(z_x)  \\
& = \bar f_\delta (z_x) +\sum_{q_2 \leq \inner{k}{\alpha} < \beta} \sigma^{\inner{k}{\alpha}}  h_k(z_x)  I_k(z_x)  + \tilde R^K_\sigma(z_x) 
\end{align*}
where  $ \tilde R^K_\sigma(z_x)$ also satisfies (i) above and $h_k \in   \cH_{\an}^{\bbeta(1-\beta_k/\beta)}( CL)$, $\beta_k =\inner{k}{\alpha}$ and $C>0$.  
 All in all, $\bar g$ will be of the form
$$
\bar g(z_x) = d_0(z_x,\sigma,\delta)\bar f_\delta(z_x) + \frac1{\delta^{D-d}}\sum_{0 < \inner{k}{\bm\alpha} < \beta} \sigma^{\inner{k}{\bm\alpha}} d_k(z_x,\sigma,\delta) \Diff^k \bar f_\delta(z_x)
$$
with all $d_k$ uniformly bounded and $d_0(z_x,\sigma,\delta)  = 1 + O( \sigma^{2\alpha_0 - \alpha_{\perp}}/\delta)$. Let us now prove (i), (ii) and (iii). There holds
\begin{align*} 
\bar \Psi(\Delta_{\sigma,\delta}z + w_x) &= \Psi(v_x + \sigma^{\alpha_0} v) + \delta N(v_x + \sigma^{\alpha_0} v)[\eta_x+ \sigma^{\alpha_\perp} \eta] \\
&= x + \sigma^{\alpha_0} \diff \Psi(v_x)[v] + \delta \sigma^{\alpha_\perp} N(v_x)\eta + \delta \sigma^{\alpha_0} \diff N(v_x)[v]\eta_x + R^{\bar \Psi}_\sigma(z_x,z).
\end{align*} 
where, with \newju{for $s\in (0,1)$ $v_s = v_x + s\sigma^{\alpha_0}v$, 
\begin{equation*}
\begin{split}
R^{\bar \Psi}_\sigma(z_x,z) &=
\sigma^{2\alpha_0} \int_0^1 (1-s) [\diff^2 \Psi(v_s)[v^{\otimes 2}] + \delta \diff^2_v N(v_s)[v^{\otimes 2}](\eta_x+\sigma^{\alpha_\perp}\eta)]\diff s \\
&\quad \quad + \delta\sigma^{\alpha_0+ \alpha_\perp}\diff N(v_x)[v]\eta \\
&= O(r_\sigma) \quad \text{with} \quad  r_\sigma := \sigma^{2\alpha_0} + \delta \sigma^{\alpha_0+\alpha_\perp}.
\end{split}
\end{equation*}
The above equation shows that $r_\sigma^{-1}  R^{\bar \Psi}_\sigma(z_x,z)\in \cH_{\iso}^{\beta_M-3}(\cW^0,C)$ as a function of $z_x$, where $C$ depends on $\|\bar\Psi\|_{\cC^{\beta_M-1}}$, uniformly on $z$.} Now notice that in appropriate orthogonal basis, the differential of $\bar\Psi$ in $w_x$ writes
$$
\begin{pmatrix}
\diff \Psi(v_x) + \pr_{T_x} \diff N(v_x)[\cdot](\delta \eta_x) & 0 \\
 \pr_{N_x} \diff N(v_x)[\cdot](\delta \eta_x) & N(v_x)
\end{pmatrix}
$$
so that in particular, $|\det \diff \Psi(w_x)| = |\det \{\diff \Psi(v_x) + \pr_{T_x} \diff N(v_x)[\cdot](\delta \eta_x)\}|$.
Let now $u = \bar \Psi(\Delta_{\sigma,\delta}z + w_x)$. The projection $\pr_{T_u}$ only depends smoothly on $\diff \Psi(v_x + \sigma^{\alpha_0}v)$ so that there $R_\sigma^T(z_x,v)$ such that
$$
\pr_{T_u}[y] = \pr_{T_x}[y] + R_\sigma^T(z_x,v)(y)
$$
with
\begin{equation*}
\begin{split}
R_\sigma^T(z_x,v) (y)  = \sigma^{\alpha_0} \int_0^1(1-s)\diff_v\pr_{T_{v_s}}[v](y) \diff s \lesssim \sigma^{\alpha_0} \|y\|,
 \end{split}
 \end{equation*}
  and that for all $y\neq 0$, $\|y\|^{-1}\sigma^{-\alpha_0}R_\sigma^T(\cdot,v)(y) \in \cH_{\iso}^{\beta_M-2}(\cW^0,C)$. This also implies that $\pr_{N_u}[y] = \pr_{N_x}[y] - R_\sigma^T(x,v)[y]$. As a results, we find that
$$
\pr_{T_u}(u-x) = \sigma^{\alpha_0} \diff \Psi(v_x) v + \delta \sigma^{\alpha_0} \pr_{T_x} \diff N(v_x)[v]\eta_x + \wt R_\sigma^T(z_x,z)
$$
with $r_\sigma^{-1} \wt R_\sigma^T(x,z) \in \cH_{\iso}^{\beta_M-3}(\cW^0,C)$ as a function of $z_x$, uniformly in $z$. Likewise, 
$$
\pr_{N_u}(u-x) = \delta \sigma^{\alpha_0} \pr_{N_x} \diff N(v_x)[v]\eta_x + \delta \sigma^{\alpha_\perp} N(v_x) \eta + \wt R_\sigma^N(z_x,z)
$$
with $r_\sigma\wt R_\sigma^N(x,z) \in \cH_{\iso}^{\beta_M-3}(\cW^0,C)$ as a function of $z_x$, uniformly in $z$. All in all, we get that
\begin{align*} 
B_\sigma(z_x,z) &= \frac1{2\sigma^{2\alpha_0}}\|\pr_{T_u}(u-x)\|^2 + \frac1{2\delta^2\sigma^{2\alpha_\perp}}\|\pr_{N_u}(u-x)\|^2  \\
&= \frac12 \|A_\sigma(z_x,z)\|^2 + R^B_\sigma(z_x,z)
\end{align*} 
where 
\beq\label{eq:As}A_\sigma(z_x,z) = \diff \Psi(v_x) v + \delta \pr_{T_x} \diff N(v_x)[v]\eta_x + \sigma^{\alpha_0-\alpha_\perp} \pr_{N_x} \diff N(v_x)[v]\eta_x + N(v_x)\eta,
\eeq
and where $R^B_\sigma(z_x,z) =O( \tilde r_\sigma) $ with $\tilde r_\sigma = \sigma^{\alpha_0} + \delta\sigma^{\alpha_\perp} + \sigma^{2\alpha_0}/(\delta \sigma^{\alpha_\perp}) = o(1)$, together with  $\tilde r_\sigma^{-1} R^B_\sigma(z_x,z) \in \cH_{\iso}^{\beta_M-3}(\cW^0,C)$ as a function of $z_x$, uniformly in $z$. Finally, let us write that 
$$
\zeta(w_x + \Delta_{\sigma,\delta} z) = |\det \diff\bar\Psi(w_x)| + R^\zeta_\sigma(z_x,z)
$$
 and where $R^\zeta_\sigma(z_x,z)$ is $(\beta_M-2)$-H\"older as a function of $z_x$, uniformly in $z$. 
Let us now show that (i) and (ii) holds. We show that if $|F(z_x,z)| \leq A \|z\|^r$, then
\beq
\int e^{- \frac{1}{2} \|A_\sigma(z_x,z)\|^2} |F(z_x,z)| \diff z \leq A C_r \label{eq:IF}
\eeq
for some constant $C_r > 0$ depending on $r$ and $C_M$. To do so, let us notice that $A_\sigma(x,\cdot)$ is an invertible linear map such that $\mathrm{spec}(A_\sigma(z_x,\cdot)) = \mathrm{spec}(\diff \bar \Psi(w_x))$ and thus
\begin{align*} 
\int e^{- \frac{1}{2} \|A_\sigma(z_x,z)\|^2} |F(x,z)| \diff z 
&= A |\det A_\sigma(z_x,\cdot)|^{-1} \int_{\bbR^D} e^{- \frac{1}{2} \|z\|^2} \|A_\sigma(z_x,\cdot)^{-1} z\|^r \diff z \\
&= A |\det \diff \bar \Psi(w_x)|^{-1} \|\diff \bar \Psi(w_x)^{-1}\|^r_{\op} \int_{\bbR^D} e^{- \frac{1}{2} \|z\|^2} \|z\|^r \diff z \\
&\leq A C_r,
\end{align*}
where $C_r$ depends only on $r$ and $C_M$. 
Using this newly proven \eqref{eq:IF}, we have immediately that (i) and (ii) holds. For (iii), notice that, 
\begin{align*} 
I_0(x) &= \frac{1}{(2\pi)^{D/2}} \int e^{- \frac{1}{2} \|A_\sigma(z_x,z)\|^2} e^{- \frac{1}{2} R^B_\sigma(z_x,z)} (|\det \diff\bar\Psi(w_x)| + R^\zeta_\sigma(z_x,z))  \diff z \\
&= \frac{|\det \diff\bar\Psi(w_x)|}{(2\pi)^{D/2}} \int e^{- \frac12 \|A_\sigma(z_x,z)\|^2}  \diff z + \frac{1}{(2\pi)^{D/2}} \int e^{- \frac12 \|A_\sigma(z_x,z)\|^2} R^I_\sigma(z_x,z)  \diff z
\end{align*} 
where $R^I_\sigma(z_x,z) = \(e^{- \frac{1}{2} R^B_\sigma(z_x,z)}-1\) |\det \diff\bar\Psi(w_x)| +e^{- \frac{1}{2} R^B_\sigma(z_x,z)} R^\zeta_\sigma(z_x,z) = o(1)$ uniformly. Now there holds
\begin{align*} 
\int e^{- \frac12 \|A_\sigma(z_x,z)\|^2}  \diff z = (2\pi)^{D/2} |\det(\diff \bar \Psi(w_x))|^{-1} + o(1),
\end{align*} 
and using again \eqref{eq:IF}
$$
 \frac{1}{(2\pi)^{D/2}} \int e^{- \frac12 \|A_\sigma(z_x,z)\|^2} R^I_\sigma(z_x,z)  \diff z = o(1)
$$
so that in total $I_0(x) = 1 + o(1)$ as expected.}

			We are now ready to prove \thmref{approx}. We let 
			$$R = \{ H \log(1/\sigma) \}^{1/\kappa}$$
			and $\chi_1,\dots,\chi_J$ be the functions defined at \lemref{chixj} from a $\tau/64$-packing of $M \cap \ball(0,R)$. Recall that we can always chose $J$ of order less than $R^D$. In light of the point (iv) of \lemref{chixj}, the function $f_j := \chi_j f_0$ is still in $\cH_\delta^{\beta_0,\beta_\perp}(M,CL)$ for some constant $C$ depending on $\tau$ and $\beta_\perp$. Since $\supp f_j \subset \cO_{x_j}^0$ (point (i) of \lemref{chixj}), the first part of this proof yields that there exists some functions $g_j$ supported on $\cO_{x_j}^0$, such that
			\beq
			|K_\Sigma g_j(x) - f_j(x)| \lesssim L(x) \sigma^\beta \label{gj}
			\eeq
			uniformly on $\cO_{x_j}^1$. Now notice that for $x$ outside of $\cO_{x_j}^1$, we have $d(x,\cO_{x_j}^0) > (\sigma^{\alpha_0} \vee \delta\sigma^{\alpha_\perp})  \sqrt{(H+D)\log(\sigma)}$ so that 
			$$
			|K_\Sigma g_j(x)| \leq \int_{\cO_{x_j}^0} |\varphi_{\Sigma(u)}(x-u) g_j(u)| \diff u \leq \frac{\sigma^H}{(2\pi)^{D/2}\delta^{D-d}} \int_{\cO_{x_j}^0} |g_j(u)|\diff u\lesssim \sigma^H \sup_{\cO^0_{x_j}} L
			$$ 
			and the equality \eqref{gj} extends to the whole set $\bbR^D$ with the bound $\sigma^H\| L \|_\infty $ on $\bbR^D \setminus \cO_{x_j}^1$.
			Using the linearity of $K_\Sigma$, we thus find that for $g = \sum_{j=1}^J g_j$, and for any $x \in \bbR^D$, there holds,
			\begin{align*} 
				| K_{\Sigma} g(x) -  f_0(x)| &\leq \sum_{j=1}^J |K_\Sigma g_j(x) - f_j(x) | \leq \sum_{j \in J(x)} |K_\Sigma g_j(x) - f_j(x) |  + \sum_{j \notin J(x)} |K_\Sigma g_j(x)| \\
				&\lesssim |J(x)| \times L(x) \sigma^\beta + (J - |J(x)|)\times \sigma^H\| L \|_\infty  \\
				&\lesssim  \sigma^{\beta} L(x) + \{ H \log(1/\sigma) \}^{D/\kappa} \sigma^H\| L \|_\infty\
			\end{align*} 
			where we denoted $J(x) = \{1 \leq j \leq J~|~x \in \cO_{x_j}^1\}$, and used the fact that $J(x)$ is bounded from above by something depending on $D$  and $\tau$ only, ending the proof.
	\end{proof}

		\section{Discussion} \label{sec:discussion}

	With the aim of developing Bayesian procedures in the framework of manifold learning, we exhibited a new family of priors based on location-scales of Dirichlet mixture of Gaussians, and described a general setting for studying density supported near a submanifold. The latter relies on two things: first, a parametrization of the offset of the manifold and second, an anisotropic class of H\"older functions. In this model, we obtained concentration rates in \thmref{main} for the associated posterior distribution that are adaptive to the regularity of the underlying density while being totally agnostic of the underlying submanifolds and their main features.  Our procedure is therefore fully adaptive. \\
	
	An interesting feature of our theoretical framework is that it allows to express the rate in terms of the smoothnesses of the density and the manifold  together with the thickness $\delta$ of the support around the manifold. When $\delta$ is fixed, our results can be viewed as an extension  of minimax rates for regular anisotropic densities to manifold driven anisotropic densities. But we are also considering the regime where $\delta = o(1)$ which corresponds to the manifold learning problem. The rates obtained in Theorem \ref{thm:main} have two regimes: one when $\delta$ is not too small with rate $n^{-\beta/(2\beta+D)} $ (up to $\log n $ terms) and the concentrated regime where $\delta$ is very small ($\delta = o( n^{-2/(2\beta+D)})$ ) where we obtain the rate $(n \delta^{D/(2\alpha_0-\alpha_\perp)})^{-1/2}$. It is not clear if this latter rate is optimal or not.  The case $\delta=0$ would correspond to the observations belonging to the manifold $M$ (and for which we would expect the rate $n^{-\beta_0/(2\beta_0+d)}$) but cannot be thought as a limiting case of our problem since then the distribution has density with respect to the Hausdorff measure on $M$ and not with respect to the Lebesgue measure on $\bbR^D$. When $M$ is unknown the model is not dominated and our approach is not applicable. The problem of posterior contraction rates  when the distribution lives on an unknown manifold remains open, although some interesting ideas in  \cite{tang2022minimax} or \cite{camerlenghi2022wasserstein} could be used to address it. 
	\\
	
\newju{An important special case of our framework is the deconvolution model as defined in (ii) of Proposition \ref{prp:model}. For such a model, our results address the problem of the predictive density of the observations, which is the direct problem. Other quantities are also of interest in deconvolution models, such as the distribution of the unobserved signal or the underlying manifold $M$. These are important question and we believe that our framework can be used as a first step to answer these questions, possibly using inversion inequalities as in \cite{rousseau:scricciolo23}} \\
	
Another interesting output of our results is that if nonparametric mixtures of normal densities define a versatile and flexible model for smooth densities, the structure of the prior on the mixing distribution is crucial. In this paper we propose two classes of priors which we believe enjoy many strong theoretical properties while remaining reasonnably simple to implement. Moreover variational Bayes algorithms can be easily implemented and for which the same theoretical guarantees hold. 
Moreover,  in order to make the MCMC algorithm more  scalable to the dimension $D$ and to the number of observations $n$, we could use a variant based on parallel computing for Dirichlet process mixtures (see for instance \cite{meguelati2019dirichlet}).
It is quite possible that other nonparametric mixture models such as the Fisher-Gaussian kernels of \cite{mukhopadhyay2020estimating} would enjoy the same theoretical guarantees and we believe that our approximation result can be useful to study the theoretical properties of mixtures of  Fisher-Gaussian kernels  which are stronly related to Gaussian kernels. 

	\paragraph{Acknowledgments}
		We are very thankful to Marc Hoffmann and to Yann Chaubet for helpful discussions, as well as to Leo Zhang for spotting an initial mistake in the Gibbs sampling algorithm. \newclem{We would also like to express our gratitude to the referees and the associate editor for the great care and attention shown to the manuscript, which helped us improve the results and their exposition.}
	
	\paragraph{funding}
		This work has received   funding from the European Research Council
		(ERC) under the European Union’s Horizon 2020 research and innovation programme (grant agreement No 834175).

	\bibliographystyle{abbrv}
	\bibliography{aos}       

	\begin{appendix}
		\section{Some facts on submanifolds with bounded reach} \label{app:A}

		\subsection{The geometry of submanifolds with bounded reach} \label{app:A1}
		The reach $\tau_K$ of a closed subset $K \subset \bbR^D$, initially introduced by \cite[Def 4.1 p.432]{Fed59}, is the supremum of all the $r \geq 0$ such that the orthogonal projection from the $r$-offset $K^r = \{x \in \bbR^D ~|~d(x,K)\leq r\}$ to $K$ is well-defined, namely
		$$
		\tau_K := \sup\{r \geq 0~|~\forall x \in K^r, \forall y,z \in K,~ d(x,K) = \|x-y\| = \|x-z\| \implies y=z  \}.
		$$
		When the reach of a closed submanifold $M \subset \bbR^D$ is bounded away from zero, $M$ enjoys a number useful properties, that we list and prove below. In all the results stated hereafter, the reach of $M$ is bounded from below by some $\tau > 0$. Lemma \ref{lem:geolemma} provide already existing results from the literature, sometimes slightly rephrased to better suit our needs. We start off with a control of the exponential map over submanifold with bounded curvature together with a comparison between the intrinsic distance on $M$, denoted by $d_M(.,.)$, and the  ambient Euclidean distance. Recall that for any $x,y \in M$, $d_M(x,y)$ is the infimum of the length of all continuous path between $x$ and $y$ in $M$ and if $x$ and $y$ are in two separate path-connected components, then $d_M(x,y) = \infty$.
	
		\begin{lem}\label{lem:geolemma}
		The following facts hold true
		\bitem
		\item[i)]
		For any $x \in M$, the exponential map $\exp_{x}$ is a diffeomorphism from $B_{T_{x} M}(0,\pi\tau)$ to $\exp_{x}\{ B_{T_{x_0} M}(0,\pi\tau)\}$.
		\item[ii)] It is double Lipschitz from $\ball_{T_x M}(0,\tau/4)$ to its image with
			\beq \label{lem:doublelip}
			\forall v,w\in\ball_{T_x M}(0,\tau/4),~~~~\frac{11}{16} \|v-w\| \leq \|\exp_x(v) - \exp_x(w) \| \leq  \frac{21}{16} \|v-w\|.
			\eeq
		\item[iii)]
Letting $\kappa : \gamma \mapsto 2(1-\sqrt{1-\gamma})/\gamma$. If $\|x-y\|\leq \gamma\tau/2$ with $\gamma \leq 1$, then
			\beq \label{lem:useful}
			\|x-y\| \leq d_M(x,y) \leq \kappa(\gamma) \|x-y\|.
			\eeq			
		\item[iv)] Finally, if $\|x-y\| \leq \tau/2$, there holds
		\beq \label{lem:angles}
			\|\pr_{T_x M} - \pr_{T_y M}  \|_{\op}  \leq \frac{d_M(x,y)}{\tau} \leq \frac{2}{\tau}\|x-y\|.
			\eeq	
		\eitem
		\end{lem}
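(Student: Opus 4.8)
All four statements are classical facts about $C^2$ submanifolds with reach bounded below, and the plan is to extract each of them from the literature, refining numerical constants only where the precise form above requires it. The common starting point will be that a reach lower bound $\tau$ forces a curvature bound: at every $x \in M$ the second fundamental form satisfies $\|\II_x\|_{\op} \leq 1/\tau$ (see \cite{Fed59} or \cite{aamari2019nonasymptotic}), so by the Gauss equation all sectional curvatures of $M$ lie in $[-2/\tau^2, 1/\tau^2]$. Together with the fact, also a consequence of positive reach, that $M$ carries no geodesic loop of length $< 2\pi\tau$ (its normal curvature as a curve in $\bbR^D$ is $\leq 1/\tau$, so the total curvature bound $\geq 2\pi$ forces this), standard injectivity-radius estimates (Klingenberg's lemma) then give $\inj_M \geq \pi\tau$, which is exactly item (i).

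For (ii) I would invoke the Rauch comparison theorem on the ball $\ball_{T_x M}(0,\tau/4)$: the two-sided curvature bound translates into two-sided bounds on $\|\diff(\exp_x)_v\|_{\op}$ for $\|v\| \leq \tau/4$, of the form $\sin(\|v\|/\tau)/(\|v\|/\tau)$ from below and a $\sinh$-type expression from above. Integrating these along the radial segments $[0,v]$ and $[0,w]$ and comparing chords then yields \eqref{lem:doublelip}; the constants $11/16$ and $21/16$ are conservative roundings of the exact comparison constants evaluated at the worst radius $\tau/4$, and one checks $\tau/4 < \pi\tau$ so that $\exp_x$ is indeed a diffeomorphism there (this is where the factor $1/8$ in the definition of $\cV_{x_0}$ leaves room to spare). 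Alternatively, inequalities of exactly this type are recorded in \cite{aamari2019nonasymptotic} or \cite{divol2021reconstructing}, and one may simply cite them.

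For (iii), the left inequality $\|x-y\| \leq d_M(x,y)$ is immediate, since the Euclidean chord length is no larger than the length of any path in $M \subset \bbR^D$ joining $x$ to $y$. For the right inequality, I would appeal to the standard geodesic--chord comparison under a reach bound (e.g.\ \cite{Genovese_2012} or \cite{aamari2019nonasymptotic}): for $\|x-y\| \leq \tau/2$ one has $d_M(x,y) \leq \tau\bigl(1 - \sqrt{1 - 2\|x-y\|/\tau}\,\bigr)$, and substituting $\|x-y\| = \gamma\tau/2$ with $\gamma \leq 1$ turns the right-hand side into exactly $\tau(1-\sqrt{1-\gamma}) = \kappa(\gamma)\|x-y\|$, which is \eqref{lem:useful}.

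For (iv), the bound $\|\pr_{T_x M} - \pr_{T_y M}\|_{\op} \leq d_M(x,y)/\tau$ is the usual Lipschitz dependence of the tangent bundle on its base point: integrating $\bigl\|\tfrac{\diff}{\diff t}\pr_{T_{\gamma(t)} M}\bigr\|_{\op} \leq \|\II_{\gamma(t)}\|_{\op} \leq 1/\tau$ along a unit-speed minimizing geodesic $\gamma$ from $x$ to $y$ gives it (again this can be found in \cite{aamari2019nonasymptotic}), and the second inequality of (iv) is then just (iii) applied with $\gamma = 1$, since $\kappa(1) = 2$. The only mildly delicate point in the whole argument is pinning down the explicit numerical constants in (ii); beyond that I expect no genuine obstacle, as everything reduces to standard comparison geometry for sets of positive reach.
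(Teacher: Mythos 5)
Your proposal is correct in its claimed facts and in the references it leans on, and for items (iii) and (iv) it essentially matches the paper (left inequality of (iii) is trivial, right inequality is the geodesic--chord comparison of \cite[Prp 6.3]{niyogi2008finding}, and (iv) is the $1/\tau$-Lipschitz dependence of the tangent plane on the base point combined with (iii) at $\gamma=1$). For (i) and especially (ii), however, you take a genuinely different and heavier route than the paper, and in (ii) the sketch does not actually produce the stated constants.

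For (i), the paper simply cites \cite[Thm 1.3]{alexander2006gauss}, which gives $\inj_M \geq \pi\tau$ directly from the reach bound. Your route through sectional-curvature bounds, the absence of short geodesic loops, and Klingenberg's lemma is a legitimate alternative, but it requires assembling several facts (in particular justifying the loop-length claim) where a one-line citation suffices. For (ii), the paper's argument is much more elementary: set $R_x(v) = \exp_x(v) - x - v$ and use \cite[Lem 1]{aamari2019nonasymptotic}, which gives $\|R_x(v) - R_x(w)\| \leq \tfrac{5}{16}\|v-w\|$ on $\ball_{T_xM}(0,\tau/4)$; the triangle inequality then yields
$\bigl| \|\exp_x(v)-\exp_x(w)\| - \|v-w\| \bigr| \leq \tfrac{5}{16}\|v-w\|$,
which is exactly $\tfrac{11}{16} = 1 - \tfrac{5}{16}$ and $\tfrac{21}{16} = 1 + \tfrac{5}{16}$. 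Your Rauch-comparison approach bounds $\|\diff(\exp_x)_v\|_{\op}$ and thereby compares $\|v-w\|$ to the \emph{intrinsic} distance $d_M(\exp_x v,\exp_x w)$, not the chord $\|\exp_x v - \exp_x w\|$; converting to the chord requires a further geodesic--chord estimate (item (iii)), and after composing those two bounds it is not at all clear you land on $11/16$ and $21/16$ --- the claim that these are ``conservative roundings of the exact comparison constants'' is not substantiated and I do not think it is straightforwardly true. You do note that one may instead cite \cite{aamari2019nonasymptotic} directly, which is in fact what the paper does, so the gap is repairable; but as written the Rauch route is both longer and does not deliver the precise statement.
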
	
\begin{proof}
	The first result on $\exp_{x}$ is an application of \cite[Thm 1.3]{alexander2006gauss} . For ii), denoting $R_x(v) = \exp_x(v)-x-v$, there holds that, 
			$$
			\left| \|\exp_x(v) - \exp_x(w) \| -\|v-w\| \right | \leq \|R_x(v) - R_x(w) \| \leq \frac{5}{16} \|v-w\|
			$$
			where we used \cite[Lem 1]{aamari2019nonasymptotic}.
Finally iii) comes from the monotonicity of $\kappa$ and \cite[Prp 6.3]{niyogi2008finding}, and 
using \cite[Lem 6]{boissonnat2019reach}, there holds $\|\pr_{T_x M} - \pr_{T_y M}  \|_{\op}  \leq d_M(x,y)/\tau$, which, together with iii) for $\gamma = 1$, leads to iv). 
\end{proof}

		We now wish to define the parametrization of the $\tau/2$-offset of $M$ that we introduced in \subsecref{manihold}. This requires to identify in a non-ambiguous way every normal fiber $N_x M$ to the base fiber $N_{x_0} M$ for $x$ in the vicinity of $x_0$. A natural way to do that would be to use parallel transport, and define
		$$
		N_{x_0}(v,\eta) := t_{\gamma}(\eta)
		$$ 
		where $t_{\gamma} : N_{x_0} M \to N_{\exp_{x_0}(v)} M$ is the parallel transport along the path $\gamma(s) := \exp_{x_0}(sv)$. We refer to \cite[Sec 4]{lee2006riemannian} for a formal introduction to parallel transport. 
		\newcommand{\Norm}{\mathrm{Norm}}
		
		In order to make things more comprehensible for the reader who is unfamiliar with parallel transports, and in order to have clear and quantitative controls and the quantity at stake, we suggest another, more elementary approach. We assert that the two approaches yields similar regularity classes as introduced in \subsecref{manihold}.  We start off with a few notations. For a matrix $A \in \bbR^{D\times D}$ and $1 \leq k \leq D$, we let $V_k(A)$ be the vector space spanned by the first $k$ columns of $A$. We denote by $\Norm : x \mapsto x/\|x\|$. We let
		$$
		\cG : GL(D,\bbR) \to O(D,\bbR)
		$$
		be the Gram-Schmidt process, defined recursively on the columns of any invertible matrix $A = (A_1,\dots,A_D)$ as $\cG(A) = (\cG_1(A),\dots,\cG_D(A))$ with 
		\begin{align*} 
			\cG_1(A) := \Norm(A_1)~~~\text{and}~~~\forall {1\leq j \leq D-1},~~&\cG_{j+1}(A) := \Norm\left(\bar\cG_{j+1}(A)\right)~~
			\\\text{where}~~&\bar\cG_{j+1}(A) := A_{j+1} - \sum_{1\leq i \leq j} \inner{A_{j+1}}{\cG_i(A)}\cG_i(A).
		\end{align*} 
		Because $\cG$ is such that $V_k(A) = V_k(\cG(A))$ for every $1\leq k \leq D$, there holds that $\bar\cG_{k+1}(A) = \pr_{V_k(A)^\perp}(A_{k+1})$, and that $\bar \cG_j(A)$ is thus non zero everywhere, so that $\cG$ is a  well-defined, smooth application. In order to bound its derivatives, we need to control how $\bar \cG_k$ is far away from zero. We introduce
		$$
		GL_\ve(D,\bbR) := \{A \in GL(D,\bbR)~|~d(A_{k+1},V_k(A)) \geq \ve ~~\forall 1\leq k \leq D\},
		$$ 
		so that $\|\bar\cG_k(A)\| \geq \ve$ for every $k$ and any $A \in GL_\ve(D,\bbR)$, and thus straightforwardly all the derivatives, up to any order, of $\cG$ are bounded on $GL_\ve(D,\bbR)$.
		We let $B_0$ be an arbitrary basis of $T_{x_0} M$, $B_\perp$ be an arbitrary basis of $N_{x_0} M$ and let $B = (B_0,B_\perp)$. We define, for $v \in \ball_{T_{x_0}M}(0,\tau/4)$, $A_{x_0}(B,v) := (\diff \Psi_{x_0}(v)[B_0], B_\perp)$. 	Note that since $\Psi_{x_0}$ is a diffeomorphism, there holds
		\beq \label{tanspan}
		V_d(A_{x_0}(B,v)) = \Vect(\diff \Psi_{x_0}(v)[B_0]) = T_{\Psi_{x_0}(v)} M.
		\eeq
		Set 
		$$
		N_{x_0}^B(v,\cdot) := \cG(A_{x_0}(B,v))[\cdot], \quad 	\bar \Psi^B_{x_0}(v,\eta) := \Psi_{x_0}(v) + N^B_{x_0}(v,\eta). 
		$$
		We show in Lemma \ref{lem:defNB} that $N_{x_0}^B$ and $\bar\Psi^B_{x_0}$  are well defined and smooth, which combined with \eqref{tanspan} yields in particular  that $N_{x_0}^B(v,\cdot)$ is an isometry between $N_{x_0} M$ and $N_{\Psi_{x_0}(v)} M$.
	
		\begin{lem} \label{lem:defNB} For any $v \in \ball_{T_{x_0}M}(0,\tau/4)$, there holds that $A_{x_0}(B,v) \in GL_{\ve}(D,\bbR)$ with $\ve = 1/2^d$. Consequently, the map $v \mapsto N_{x_0}^B(v,\cdot)$ is in $\cH_{\iso}^{\beta_M-1}(\ball_{T_{x_0}M}(0,\tau), C)$ for some constant $C_M$, $\tau$, $\beta_M$ and $D$ (and not on $B$).
		
		Moreover for any basis $B$, it holds that 
			$|\det \diff\bar \Psi_{x_0}^B(v,\eta) | \geq (3/16)^d$ for any $v \in \ball_{T_{x_0} M}(0,\tau/4)$ and $\eta \in  \ball_{N_{x_0} M}(0,\tau/2)$.
		\end{lem}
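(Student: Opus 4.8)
The plan is to extract the structure of $A_{x_0}(B,v)$ and of $\bar\Psi_{x_0}^B$ from the first-order behaviour of $\Psi_{x_0}=\exp_{x_0}$ recorded in \lemref{geolemma}, to transfer regularity through the Gram--Schmidt map $\cG$ (which is smooth with bounded derivatives away from the locus where the $\bar\cG_i$ vanish), and finally to compute the Jacobian of $\bar\Psi_{x_0}^B$ using that $\pr_M\circ\bar\Psi_{x_0}^B=\Psi_{x_0}$. Throughout $B=(B_0,B_\perp)$ is an orthonormal basis adapted to $T_{x_0}M\oplus N_{x_0}M$.

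First I would write $R_{x_0}(v):=\exp_{x_0}(v)-x_0-v$. The estimate used to prove \lemref{geolemma}(ii) gives $\|R_{x_0}(v)-R_{x_0}(w)\|\le\tfrac5{16}\|v-w\|$ on $\ball_{T_{x_0}M}(0,\tau/4)$, and since $\Psi_{x_0}\in\cH_{\iso}^{\beta_M}$ with $\beta_M>2$ is in particular $\cC^2$, this yields $\|\diff R_{x_0}(v)\|_{\op}\le\tfrac5{16}$ there; hence $\diff\Psi_{x_0}(v)=\iota+\diff R_{x_0}(v)$, with $\iota:T_{x_0}M\hookrightarrow\bbR^D$ the inclusion, and every singular value of $\diff\Psi_{x_0}(v)|_{T_{x_0}M}$ lies in $[\tfrac{11}{16},\tfrac{21}{16}]$. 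In the basis $B$, the matrix $A_{x_0}(B,v)$ is block-lower-triangular, with upper left $d\times d$ block $P=\pr_{T_{x_0}M}\diff\Psi_{x_0}(v)|_{T_{x_0}M}$ and lower right block $\Id_{D-d}$; thus $\sigma_{\min}(P)\ge\tfrac{11}{16}$, $\det A_{x_0}(B,v)=\det P$ with $|\det P|\ge(11/16)^d$, and every column of $A_{x_0}(B,v)$ has norm at most $\tfrac{21}{16}$. Since $\|\bar\cG_{k+1}(A)\|=d(A_{k+1},V_k(A))$, $\prod_i\|\bar\cG_i(A)\|=|\det A|$, and $\|\bar\cG_i(A)\|\le\|A_i\|$, I obtain
$$
d(A_{k+1},V_k(A))\ \ge\ \frac{|\det P|}{\prod_{i\ne k+1}\|A_i\|}\ \ge\ \frac{(11/16)^d}{(21/16)^d}=\Big(\tfrac{11}{21}\Big)^{d}>\frac1{2^d},
$$
i.e.\ $A_{x_0}(B,v)\in GL_{1/2^d}(D,\bbR)$, uniformly in $v$. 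All derivatives of $\cG$ are bounded on $GL_{1/2^d}(D,\bbR)$ by constants depending only on $d,D$ and the order, and $v\mapsto A_{x_0}(B,v)=(\diff\Psi_{x_0}(v)[B_0],B_\perp)$ lies in $\cH_{\iso}^{\beta_M-1}$ with norm controlled by $C_M$ (apply \prpref{der} to $\Psi_{x_0}$; the columns $B_\perp$ are constant, hence contribute nothing to the derivatives). So the chain rule together with \prpref{mult} gives $v\mapsto N_{x_0}^B(v,\cdot)=\cG(A_{x_0}(B,v))\in\cH_{\iso}^{\beta_M-1}(\ball_{T_{x_0}M}(0,\tau/4),C)$ with $C$ depending only on $C_M,\tau,\beta_M,D$ --- in particular not on $B$, the $B$-dependence entering only through the constant columns $B_\perp$.

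For the Jacobian bound I would use that $N_{x_0}^B(v,\cdot)$ is an isometry onto $N_{\Psi_{x_0}(v)}M$, so $\pr_M(\bar\Psi_{x_0}^B(v,\eta))=\Psi_{x_0}(v)$; differentiating and expressing $\diff\bar\Psi_{x_0}^B(v,\eta)$ in orthonormal bases of $T_{x_0}M\oplus N_{x_0}M$ and $T_{\Psi_{x_0}(v)}M\oplus N_{\Psi_{x_0}(v)}M$ makes it block-lower-triangular: the $\eta$-block is the isometry $N_{x_0}^B(v,\cdot)$, of absolute determinant $1$, and the $v$-block is $T:=\diff\Psi_{x_0}(v)|_{T_{x_0}M}+\pr_{T_{\Psi_{x_0}(v)}M}\diff_vN_{x_0}^B(v,\eta)$. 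By the tubular-neighbourhood (shape-operator) estimates of \appref{A}, $\|\pr_{T_{\Psi_{x_0}(v)}M}\diff_vN_{x_0}^B(v,\eta)\|_{\op}\le\|\eta\|/\tau\le\tfrac12$ whenever $\eta\in\ball_{N_{x_0}M}(0,\tau/2)$, so $\sigma_{\min}(T)\ge\tfrac{11}{16}-\tfrac12=\tfrac3{16}$ and hence $|\det\diff\bar\Psi_{x_0}^B(v,\eta)|=|\det T|\ge\sigma_{\min}(T)^d\ge(3/16)^d$.

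The step I expect to be the main obstacle is this last one: the material before it is bookkeeping with the Gram--Schmidt identities, the double-Lipschitz bound of \lemref{geolemma}, and Hölder composition, whereas the clean constant $(3/16)^d$ rests on the block-triangular decomposition of $\diff\bar\Psi_{x_0}^B$ --- itself a consequence of $\pr_M\circ\bar\Psi_{x_0}^B=\Psi_{x_0}$ --- together with the refined estimate $\|\pr_{T}\diff_vN_{x_0}^B(v,\eta)\|_{\op}\le\|\eta\|/\tau$, which is sharper than what $N_{x_0}^B\in\cH_{\iso}^{\beta_M-1}$ alone yields and must be imported from the tubular-neighbourhood analysis of \appref{A}.
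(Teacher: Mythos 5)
Your proof follows the same route as the paper's: lower-bound the Gram--Schmidt gaps $d(A_{k+1},V_k(A))$ by determinant and column-norm estimates, transfer $\cH_{\iso}^{\beta_M-1}$ regularity through the smoothness of $\cG$ on $GL_{1/2^d}(D,\bbR)$, and exploit the block-lower-triangular Jacobian together with the refined bound $\|\pr_{T_{\Psi_{x_0}(v)}M}\circ\diff_v N_{x_0}^B(v,\eta)\|_{\op}\le\|\eta\|/\tau$. Two remarks. First, your unified treatment of the Gram--Schmidt gaps via $\prod_i\|\bar\cG_i(A)\|=|\det A|=|\det P|$ and $\|\bar\cG_i(A)\|\le\|A_i\|$ is in fact slightly cleaner than the paper's argument, which splits into $k<d$ (handled by $\det Q^\top Q$ for the first $d$ columns) and $k\ge d$ (handled by an angle estimate in which the asserted equality $d(A_{k+1},V_k(A))=d(A_{k+1},T_{\Psi_{x_0}(v)}M)$ is really only an inequality); your block-triangular observation $\det A=\det P$ sidesteps that case entirely. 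The reference to \prpref{mult} should be to \prpref{comp}, since the relevant operation is composition $\cG\circ A_{x_0}(B,\cdot)$, but the underlying Faa di Bruno machinery is the same. Second, the one step you defer to ``tubular-neighbourhood estimates'' --- namely $\|\pr_{T_{\Psi_{x_0}(v)}M}\circ\diff_v N_{x_0}^B(v,\eta)\|_{\op}\le\|\eta\|/\tau$ --- is in fact proved inside this very lemma in the paper, by writing $\pr_{T_{\Psi_{x_0}(v)}M}\bigl(N_{x_0}^B(v+w,\eta)-N_{x_0}^B(v,\eta)\bigr)=\bigl(\pr_{T_{\Psi_{x_0}(v+w)}M}-\pr_{T_{\Psi_{x_0}(v)}M}\bigr)N_{x_0}^B(v+w,\eta)$ (each $N_{x_0}^B(\cdot,\eta)$ has vanishing tangent projection at its own base point) and then invoking \eqref{lem:angles} together with the isometry $\|N_{x_0}^B(v+w,\eta)\|=\|\eta\|$; since this is the crux of the Jacobian bound, it should be written out rather than cited.
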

		
		The proof of \lemref{defNB} can be found in Section \ref{pr:lem:defNB}. An important feature of the parametrizations $\bar\Psi^B_{x_0}$ is that the subsequent H\"older classes as defined in Definition \ref{def:defmanihold} do not depend, up to a universal constant, on the choice of the  collection of basis $(B_{x_0})_{x_0 \in M}$, nor on the choice of the parametrisation.  This is shown in Section \ref{sec:stability-basis} 

		\subsection{Taylor expansion of  M-anisotropic H\"older functions} \label{app:manihold}

		In this section, we  derive a Taylor expansion for manifold-anisotropic H\"older functions. Recall from \secref{theoretical} that for any $\sigma,\delta > 0$,
		$$
		\Delta_{\sigma,\delta} = \begin{pmatrix} \sigma^{\alpha_0} \Id_{d} & 0 \\ 0 & \delta \sigma^{\alpha_\perp} \Id_{D-d} \end{pmatrix}.
		$$
		
		\begin{cor} \label{cor:taylor} Let $f \in \hbm$. Then, for any $x_0 \in M$, any $w \in \cW_{x_0,\delta}$, and any $z \in T_{x_0} M \times N_{x_0} M$ such that $w+\Delta_{\sigma,1} z \in \cW_{x_0,\delta}$, there holds  
			$$
			\bar f_{x_0,\delta}(w+\Delta_{\sigma,1} z ) = \bar f_{x_0,\delta}(w) + \sum_{0 < \inner{k}{\bm \alpha} < \beta} \sigma^{\inner{k}{\alpha}} \frac{z^k}{k!} \Diff^k \bar f_{\delta,x_0}(w) + R(w,z), 
			$$
			where the remainder $R$ satisfies the following bound
			\begin{align*} 
				|R(w,z)| \leq D \sigma^\beta \|1 \vee z\|_1^{\beta_{\max}}  L_{x_0,\delta}(w).
			\end{align*} 
		\end{cor}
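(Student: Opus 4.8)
The plan is to obtain Corollary~\ref{cor:taylor} as a direct specialization of the anisotropic Taylor expansion of Proposition~\ref{prp:taylor} to the local chart representation $\bar f_{x_0,\delta}$, followed by a bookkeeping of the powers of $\sigma$ and an elementary estimate of the remainder.

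First I would unfold the definitions. By Definition~\ref{def:defmanihold}, the assumption $f \in \hbm$ means precisely that $\bar f_{x_0,\delta} \in \cH^{\bm\beta_{0,\perp}}_{\an}(\cW_{x_0,\delta}, L_{x_0,\delta})$, a genuine anisotropic H\"older function on the bounded open set $\cW_{x_0,\delta} = \cV_{x_0}\times\ball_{N_{x_0} M}(0,\tau/2\delta)$, with regularity vector $\bm\beta_{0,\perp}$ whose effective smoothness is $\beta$ and whose anisotropy vector is $\bm\alpha = (\alpha_0,\dots,\alpha_0,\alpha_\perp,\dots,\alpha_\perp)$ with $\alpha_i = \beta/\beta_i$. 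Because $\cW_{x_0,\delta}$ is bounded, this class coincides with the $\zeta$-version $\cH^{\bm\beta_{0,\perp}}_{\an}(\cW_{x_0,\delta}, L_{x_0,\delta},\zeta)$ for $\zeta := \diam\cW_{x_0,\delta}$, so Proposition~\ref{prp:taylor} applies to $g = \bar f_{x_0,\delta}$, the proximity constraint $\|x-y\|\leq\zeta$ being automatic for $x,y \in \cW_{x_0,\delta}$. I would invoke it with $x = w$ and $y = w + \Delta_{\sigma,1}z$, both of which lie in $\cW_{x_0,\delta}$ by hypothesis (no ordering of $\beta_0,\beta_\perp$ is needed here).

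The second step is the change-of-variables bookkeeping. The matrix $\Delta_{\sigma,1}$ is diagonal with entries exactly $\sigma^{\alpha_i}$ ($\sigma^{\alpha_0}$ on the first $d$ coordinates, $\sigma^{\alpha_\perp}$ on the remaining $D-d$), so for every multi-index $k$ one has the identities $(y-x)^k = (\Delta_{\sigma,1}z)^k = \sigma^{\inner{k}{\bm\alpha}}z^k$ and $|y_i - x_i|^{(\beta-\inner{k}{\bm\alpha})/\alpha_i} = \sigma^{\beta-\inner{k}{\bm\alpha}}|z_i|^{(\beta-\inner{k}{\bm\alpha})/\alpha_i}$. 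Plugging the first identity into the polynomial part of Proposition~\ref{prp:taylor} produces exactly the sum $\sum_{0<\inner{k}{\bm\alpha}<\beta}\sigma^{\inner{k}{\bm\alpha}}\frac{z^k}{k!}\Diff^k\bar f_{x_0,\delta}(w)$ appearing in the statement, while plugging both identities into the remainder bound factors out an overall $\sigma^\beta$, leaving $|R(w,z)| \leq \sigma^\beta L_{x_0,\delta}(w)\sum_{\beta-\alpha_{\max}\leq\inner{k}{\bm\alpha}<\beta}\frac{|z|^k}{k!}\sum_{i=1}^D|z_i|^{(\beta-\inner{k}{\bm\alpha})/\alpha_i}$.

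The remaining work, and the only point requiring care, is to bound this $z$-dependent sum by $\|1\vee z\|_1^{\beta_{\max}}$ up to the combinatorial constant absorbed into the factor $D$ of the statement. The key elementary fact is that $|k| + (\beta-\inner{k}{\bm\alpha})/\alpha_i \leq \beta_{\max}$ for every coordinate $i$ and every $k$ with $\inner{k}{\bm\alpha}\leq\beta$: written out, this quantity is affine in $k$ on the simplex $\{k\geq 0 : \sum_j k_j/\beta_j \leq 1\}$, equals $\beta_i \leq \beta_{\max}$ at the origin and $\beta_j \leq \beta_{\max}$ at the vertex $\beta_j e_j$, hence is $\leq \beta_{\max}$ on the whole simplex, in particular at every admissible $k$. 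Combining this with $1\vee|z_j| \leq \|1\vee z\|_1$ and $\|1\vee z\|_1\geq 1$, each term $\frac{|z|^k}{k!}|z_i|^{(\beta-\inner{k}{\bm\alpha})/\alpha_i}$ is at most $\frac1{k!}\|1\vee z\|_1^{\beta_{\max}}$; since $\inner{k}{\bm\alpha}<\beta$ forces $|k|<\beta/\alpha_{\min}$, the sum over $k$ has finitely many terms, and the $\sum_{i=1}^D$ yields the factor $D$, giving the claimed bound. The main obstacle is thus precisely this last estimate — ensuring the exponent of $\|1\vee z\|_1$ never exceeds $\beta_{\max}$ despite the product of the full monomial $|z|^k$ with the extra factor $|z_i|^{(\beta-\inner{k}{\bm\alpha})/\alpha_i}$, which is exactly what the affine/simplex argument secures; everything else is routine substitution.
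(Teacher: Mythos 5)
Your proof follows essentially the same route as the paper's: apply Proposition~\ref{prp:taylor} to $\bar f_{x_0,\delta}$, factor out the powers of $\sigma$ coming from the diagonal rescaling $\Delta_{\sigma,1}$, and bound the residual polynomial in $z$ by $\|1\vee z\|_1^{\beta_{\max}}$. The one place where you improve on the paper's write-up is the exponent estimate: the paper's proof records only the per-coordinate fact $k_j + (\beta-\inner{k}{\bm\alpha})/\alpha_j \leq \beta_j$, which, taken literally, does not by itself control the full monomial $\frac{|z|^k}{k!}\,|z_j|^{(\beta-\inner{k}{\bm\alpha})/\alpha_j}$, since after crudely bounding every $|z_i|$ by $\|1\vee z\|_1$ the relevant exponent is $|k| + (\beta-\inner{k}{\bm\alpha})/\alpha_j$. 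Your affine-on-the-simplex argument establishes the stronger inequality $|k| + (\beta-\inner{k}{\bm\alpha})/\alpha_i \leq \beta_{\max}$ for every admissible $k$ and every $i$, which is exactly what is needed to close the estimate. (One can also see it by writing $|k| + (\beta-\inner{k}{\bm\alpha})/\alpha_i = \beta_i + \sum_j u_j(\beta_j-\beta_i)$ with $u_j = k_j/\beta_j\geq 0$, $\sum_j u_j\leq 1$, and maximizing over $u$.) As you rightly flag, the resulting constant is $D$ times the number of multi-indices in the range $\beta-\alpha_{\max}\leq\inner{k}{\bm\alpha}<\beta$ rather than exactly $D$; this is a harmless discrepancy with the paper's displayed constant since the corollary is only ever used up to multiplicative constants depending on $D$ and $\bm\beta$.
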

		
		\begin{proof}[Proof of \corref{taylor}] Simply applying \prpref{taylor} to $\bar f_{x_0,\delta}$ yields
			$$
			\bar f_{x_0,\delta}(w+\Delta_{\sigma,1} z ) = \bar f_{x_0,\delta}(w) + \sum_{0 < \inner{k}{\bm \alpha} < \beta} \frac{(\Delta_{\sigma,1} z)^k}{k!} \Diff^k \bar f_{\delta,x_0}(w) + R_{x_0,\delta}(w,w+\Delta_{\sigma,1}z)
			$$
			where $R_{x_0,\delta}(w,\Delta_{\sigma,1}z)$ satisfies
			\begin{align*} 
				|R_{x_0,\delta}(w,w+\Delta_{\sigma,1}z)| &\leq L_{x_0,\delta}(w)  \sum_{\beta-\alpha_{\max} \leq \inner{k}{\alpha} <\beta} \frac{|\Delta_{\sigma,1}z|^k}{k!}\sum_{j=1}^D  |(\Delta_{\sigma,1}z)_j|^{\frac{\beta- \inner{k}{\bm\alpha}}{\alpha_j} }\\
				&= L_{x_0,\delta}(w) \sum_{\beta-\alpha_{\max} \leq \inner{k}{\bm\alpha} <\beta} \sigma^{\inner{k}{\bm\alpha}}\frac{|z|^k}{k!}\sum_{j=1}^D  \sigma^{\beta- \inner{k}{\bm\alpha}} |z_j|^{\frac{\beta- \inner{k}{\bm\alpha}}{\alpha_j}} \\
				&\leq D \sigma^\beta L_{x_0,\delta}(w) \|1 \vee z \|_1^{\beta_{\max}}
			\end{align*} 
			where we used the fact that $k_j + \frac{\beta-\inner{k}{\bm\alpha}}{\alpha_j} \leq \beta_j \leq \beta_{\max}$ for any $1 \leq j \leq D$. 
		\end{proof}

		\section{Additional results and proofs on the geometry of submanifolds} \label{appS:A}

%

		\subsection{Proof of \lemref{defNB}} \label{pr:lem:defNB}
		
		\begin{proof}[Proof of \lemref{defNB}]
		First notice that for $d \leq k \leq D-1$ and $v \in \ball_{T_{x_0}M}(0,\tau/4)$, there holds, letting $A = A_{x_0}(B,v)$, and because of \eqref{tanspan} and the fact that $B_\perp$ is an orthonormal frame of $N_{x_0} M$,
			$$
			d(A_{k+1},V_k(A)) = d(A_{k+1}, T_{\Psi_{x_0}(v)} M) \geq 1 - \| \pr_{T_{x_0} M}-\pr_{T_{\Psi_{x_0}(v)} M}\|_{\op} \geq  3/4
			$$ 
			where we used \eqref{lem:angles}. Now we let $1 \leq k \leq d-1$ and let $V_0(A) = \{0\}$. Letting $Q = (A_1,\dots,A_d)$, here holds that 
			$$\prod_{k=0}^{d-1} d(A_{k+1}, V_k(A))^2 = \det Q^\top Q = \det \diff \Psi_{x_0}(v)^\top \diff \Psi_{x_0}(v).$$
			Using \cite[Lem 1]{aamari2019nonasymptotic}, it holds $\|\diff \Psi_{x_0}(v)-\iota \|_{\op} \leq 5/16$ for $v \in \ball_{T_{x_0}M}(0,\tau/4)$ where $\iota : T_{x_0} M \to \bbR^D$ is the inclusion so that for any $h \in T_{x_0} M$, $\|\diff \Psi_{x_0}(v)[h]\| \geq (1-5/16)\|h\| = 11/16\|h\|$. In particular $\det  \Psi_{x_0}(v)^\top \diff \Psi_{x_0}(v) \geq (11/16)^{2d}$. Since now
			$$
			d(A_{k+1}, V_k(A))^2 \leq \|A_{k+1}\|^2 \leq \|\diff \Psi_{x_0}(v)\|_{\op}^2 \leq (21/16)^2 
			$$
			there holds that for any $1 \leq k \leq d$,
			$$
			d(A_{k+1}, V_k(A))^2 \geq (16/21)^{2d} (11/16)^{2d} \geq (11/21)^{2d} \geq 1/2^{2d}. 
			$$
Using that the Gram-Schmidt transform is smooth on $GL_\ve(D,\bbR)$ with $\ve = 1/2^d$ and \prpref{comp}, we obtain that the map $v \mapsto N_{x_0}^B(v,\cdot)$ is in $\cH_{\iso}^{\beta_M-1}(\ball_{T_{x_0}M}(0,\tau), C)$.

Also, in $B$ and in orthonormal bases of $T_{\Psi_{x_0}(v)} M$ and $N_{\Psi_{x_0}(v)}$, the Jacobian of $\bar \Psi_{x_0}^B(v,\eta)$ write
			$$
			\begin{pmatrix}
				\diff \Psi_{x_0}(v) + \pr_{T_{\Psi_{x_0}(v)} M} \circ \diff_v N_{x_0}^B(v,\eta)  & 0 \\
				\pr_{N_{\Psi_{x_0}(v)} M} \circ \diff_v N_{x_0}^B(v,\eta) & N_{x_0}^B(v,\cdot)
			\end{pmatrix}
			$$ 
			so that $|\det \diff\bar \Psi_{x_0}^B(v,\eta) | = \left|\det\{\diff \Psi_{x_0}(v) + \pr_{T_{\Psi_{x_0}(v)} M} \circ \diff_v N(v,\eta)\}\right|$. We saw earlier in the proof that $\|\diff \Psi_{x_0}(v)\|_{\op} \geq 11/16$. Furthermore, using \eqref{lem:angles}, we find that for any small $w \in T_{x_0}M$,
			\begin{align*} 
				\|\pr_{T_{\Psi_{x_0}(v)}}\{N_{x_0}^B(v+w,\eta)-N_{x_0}^B(v,\eta)\}\| &= \|(\pr_{T_{\Psi_{x_0}(v+w)}}- \pr_{T_{\Psi_{x_0}(v)}}) N_{x_0}^B(v+w,\eta) \| \\
				&\leq \frac{\|w\|}{\tau} \|\eta\|,
			\end{align*} 
			and consequently $\|\pr_{T_{\Psi_{x_0}(v)}} \circ \diff_v N_{x_0}^B(v,\eta)\|_{\op} \leq \|\eta\|/\tau \leq 1/2$. Thus, for any $h \in T_{x_0}M$, $\| \diff \Psi_{x_0}(v)[h] + \pr_{T_{\Psi_{x_0}(v)} M} \circ \diff_v N(v,\eta)[h]\|_{\op} \geq (11/16-1/2)\|h\| \geq 3/16 \|h\|$ and thus $$|\det \diff\bar \Psi_{x_0}^B(v,\eta) | \geq (3/16)^d.$$
			\end{proof}

\subsection{Stability by a change of basis} \label{sec:stability-basis}		
		We now show  that a change of basis does not interfere with the anisotropic regularity of a map seen through $\bar\Psi^B_{x_0}$.  
		\begin{lem} \label{lem:basis} For any orthonormal basis $B' = (B_0',B_\perp')$ subordinated to $T_{x_0} M$ and $N_{x_0} M$, and for any $\delta > 0$, it holds that
			\beq\label{basis}
			\left(\bar\Psi_{x_0,\delta}^B\right)^{-1} \circ \bar\Psi_{x_0,\delta}^{B'}(v,\eta) = \left(v, C_{B,B'}(v)\eta\right) 
			\eeq
			where $C_{B,B'}$ is independent of $\delta$ and is in $\cH_{\iso}^{\beta_M-1}(\ball_{T_{x_0}M}(0,\tau), C)$ for some constant $C$ depending on $C_M$, $\tau$, $\beta_M$ and $D$
			(and not on $B$ and $B'$).
		\end{lem}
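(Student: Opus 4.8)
The plan is to first pin down $C_{B,B'}$ completely explicitly, deduce \eqref{basis} and the independence on $\delta$ as a formality, and then read off the regularity directly from \lemref{defNB}. The starting point is that, by construction and by linearity of $N^B_{x_0}(v,\cdot)$, we have $\bar\Psi_{x_0,\delta}^B(v,\eta)=\bar\Psi^B_{x_0}(v,\delta\eta)=\Psi_{x_0}(v)+\delta\,N^B_{x_0}(v,\eta)$, and likewise for $B'$. Since $\|\delta\eta\|\leq\tau/2$ on $\cW_{x_0,\delta}$ and $M$ has reach at least $\tau$, the point $p:=\bar\Psi_{x_0,\delta}^{B'}(v,\eta)=\Psi_{x_0}(v)+\delta\,N^{B'}_{x_0}(v,\eta)$ is the sum of $\Psi_{x_0}(v)\in M$ and a normal vector of length $<\tau$ at $\Psi_{x_0}(v)$, so $\pr_M(p)=\Psi_{x_0}(v)$. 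Consequently the tangential coordinate of $(\bar\Psi^B_{x_0,\delta})^{-1}(p)$ is $v$, and the normal coordinate $\eta'$ must satisfy $\delta\,N^B_{x_0}(v,\eta')=p-\Psi_{x_0}(v)=\delta\,N^{B'}_{x_0}(v,\eta)$. Because $N^B_{x_0}(v,\cdot)$ is a linear isometry from $N_{x_0}M$ onto $N_{\Psi_{x_0}(v)}M$ (\lemref{defNB}), this gives $\eta'=\bigl(N^B_{x_0}(v,\cdot)\bigr)^{-1}\circ N^{B'}_{x_0}(v,\cdot)\,\eta$. Thus \eqref{basis} holds with $C_{B,B'}(v):=\bigl(N^B_{x_0}(v,\cdot)\bigr)^{-1}\circ N^{B'}_{x_0}(v,\cdot)$, an orthogonal transformation of $N_{x_0}M$, and since the factor $\delta$ has cancelled, $C_{B,B'}$ does not depend on $\delta$. (The very same computation shows $\bar\Psi_{x_0,\delta}^B$ is injective, so the left-hand side of \eqref{basis} is well defined.)

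For the regularity I would exploit that the inverse of an isometry is its adjoint, which turns the inversion into a harmless transposition. Fixing orthonormal bases of $T_{x_0}M$ and $N_{x_0}M$ and writing $\mathsf N^B(v)$, $\mathsf N^{B'}(v)$ for the matrices of $N^B_{x_0}(v,\cdot)$, $N^{B'}_{x_0}(v,\cdot)$, the isometry property reads $\mathsf N^B(v)^\top\mathsf N^B(v)=\Id$, and from $\mathsf N^B(v)\,C_{B,B'}(v)=\mathsf N^{B'}(v)$ (the previous paragraph) we obtain $C_{B,B'}(v)=\mathsf N^B(v)^\top\mathsf N^{B'}(v)$. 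By \lemref{defNB}, each of $v\mapsto\mathsf N^B(v)$ and $v\mapsto\mathsf N^{B'}(v)$ lies in $\cH^{\beta_M-1}_{\iso}(\ball_{T_{x_0}M}(0,\tau),C_1)$ with $C_1$ depending only on $C_M,\tau,\beta_M,D$ and not on the basis; moreover their columns are unit vectors, so all their entries are bounded by $1$. Each entry of $C_{B,B'}(v)$ is then a sum of $D$ products of two such scalar functions, so \prpref{mult} gives $C_{B,B'}\in\cH^{\beta_M-1}_{\iso}(\ball_{T_{x_0}M}(0,\tau),C)$ with $C$ depending on $C_1$ and $D$ only, hence on $C_M,\tau,\beta_M,D$ and not on $B,B'$, which is the claim. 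I would also note the minor point that \eqref{basis} only needs to be checked on $\cW_{x_0,\delta}$, i.e.\ for $v\in\cV_{x_0}$, whereas $C_{B,B'}(v)=\mathsf N^B(v)^\top\mathsf N^{B'}(v)$ is defined and controlled on the larger ball $\ball_{T_{x_0}M}(0,\tau)$ directly via \lemref{defNB}.

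The lemma is essentially a bookkeeping exercise, but if I had to isolate the one step that genuinely matters it is the regularity argument, and specifically the decision to replace $\bigl(N^B_{x_0}(v,\cdot)\bigr)^{-1}$ by $\mathsf N^B(v)^\top$ rather than invoking an inverse/quotient rule for anisotropic H\"older classes: a naive inversion would produce a constant depending on the modulus of invertibility, which we would then still need to bound uniformly in $B$. Using the isometry property sidesteps this entirely and, together with the uniform-in-$B$ bounds of \lemref{defNB} and the uniform sup-bound $1$ on the matrix entries, is exactly what makes the final constant independent of $B$ and $B'$.
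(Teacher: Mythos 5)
Your proof is correct and takes essentially the same route as the paper: the paper's one-line proof states $C_{B,B'}(v) = N_{x_0}^B(v,\cdot)^{\top} N_{x_0}^{B'}(v,\cdot)$ as the result of "short and simple computations," then cites \lemref{defNB} and \prpref{mult}, exactly as you do. Your derivation simply fills in those computations explicitly (projection onto $M$, cancellation of $\delta$, isometry $\Rightarrow$ inverse $=$ transpose), and your closing remark on why the transpose is preferable to a generic inversion rule articulates a point the paper leaves implicit in its choice of formula.
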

		\begin{proof} Short and simple computations shows that $C_{B,B'}(v) := N_{x_0}^B(v,\cdot)^{\top} N_{x_0}^{B'}(v,\cdot)$ so that an application of Lemma \ref{lem:defNB} with \prpref{mult} immediately yields the result.
		\end{proof}
		\begin{cor}  \label{cor:basis} In the context of \subsecref{manihold},  assume that $\beta_0 \leq \beta_\perp \leq \beta_M-1$. 
			Then, there exists a constant $C$ depending on $C_M$, $\tau$, $\beta_M$ and $D$ such that, if there exists a basis $B$ such that $f \circ \bar \Psi_{x_0,\delta}^B \in \cH^{\bm\beta}_{\an}(\cW_{x_0,\delta},L^B_{x_0,\delta})$, then, for any other orthonormal basis $B'$, $f \circ  \bar\Psi_{x_0,\delta}^{B'} \in \cH^{\bm\beta}_{\an}(\cW_{x_0,\delta},CL_{x_0,\delta}^{B'})$.
		\end{cor}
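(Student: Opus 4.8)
The plan is to reduce the statement to a stability property of the anisotropic H\"older class under post‑composition with the explicit change of parametrization given by Lemma~\ref{lem:basis}. Set $g := f\circ\bar\Psi^{B}_{x_0,\delta}$ and $\Theta := (\bar\Psi^B_{x_0,\delta})^{-1}\circ\bar\Psi^{B'}_{x_0,\delta}$. Lemma~\ref{lem:basis} gives $\Theta(v,\eta) = (v, C_{B,B'}(v)\eta)$ with $C_{B,B'}\in\cH^{\beta_M-1}_{\iso}(\ball_{T_{x_0}M}(0,\tau),C)$ independent of $\delta$ and orthogonal‑matrix‑valued, so that $f\circ\bar\Psi^{B'}_{x_0,\delta} = g\circ\Theta$; moreover, since $\bar\Psi^{B'}_{x_0,\delta} = \bar\Psi^B_{x_0,\delta}\circ\Theta$, the weights satisfy $L^{B'}_{x_0,\delta} = \delta^{D-d}L\circ\bar\Psi^{B'}_{x_0,\delta} = L^B_{x_0,\delta}\circ\Theta$. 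Thus the target is exactly $g\circ\Theta\in\cH^{\bm\beta}_{\an}(\cW_{x_0,\delta},\, C\,L^B_{x_0,\delta}\circ\Theta)$ given $g\in\cH^{\bm\beta}_{\an}(\cW_{x_0,\delta},L^B_{x_0,\delta})$; that is, the class is preserved by post‑composition with $\Theta$, with the weight pulled back.

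One structural remark neutralizes the apparent $\delta$‑dependence of the domain $\cW_{x_0,\delta}=\cV_{x_0}\times\ball_{N_{x_0}M}(0,\tau/2\delta)$. Since $f$ is supported on $M^\delta$ and $\dist(\bar\Psi^B_{x_0}(v,w),M) = \|w\|$ for $\|w\|<\tau$, the function $g$ and all its partial derivatives vanish on $\{\|\eta\|>1\}\subset\cW_{x_0,\delta}$, and so do those of $g\circ\Theta$ because $\Theta$ preserves $\|\eta\|$. Hence every estimate below really takes place on the $\delta$‑free set $\cV_{x_0}\times\ball_{N_{x_0}M}(0,2)$, on which the factors $\eta_j$ are $O(1)$; this is what keeps the final constant independent of $\delta$.

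The stability itself I would obtain either by invoking the composition/product/derivative rules for anisotropic H\"older functions of Section~\ref{app:auxhold} (\prpref{comp}, \prpref{mult}, \prpref{der}) for the block‑triangular diffeomorphism $\Theta$, or directly by a Fa\`a di Bruno computation. Differentiating $g(v,C_{B,B'}(v)\eta)$ for a multi‑index $k = (k_0,k_\perp)$ with $\inner{k}{\bm\alpha}<\beta$ produces a finite sum of terms $\Diff^{q}g(\Theta(v,\eta))\,P_{k,q}(v,\eta)$, where $P_{k,q}$ is a product of partial derivatives of $C_{B,B'}$ of total order $\leq|k_0|$ times at most $|k_0|$ factors $\eta_j$, and where $q = (q_0,q_\perp)$ satisfies $q_0\leq k_0$ and $|q_\perp|\leq|k_\perp|+(|k_0|-|q_0|)$. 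The key arithmetic point is that $\alpha_\perp\leq\alpha_0$ (equivalently $\beta_0\leq\beta_\perp$) forces $\inner{q}{\bm\alpha} = \alpha_0|q_0|+\alpha_\perp|q_\perp|\leq\alpha_0|k_0|+\alpha_\perp|k_\perp| = \inner{k}{\bm\alpha}<\beta$, so $\Diff^q g$ is controlled by $L^B_{x_0,\delta}$ via $g\in\cH^{\bm\beta}_{\an}$; and the derivatives of $C_{B,B'}$ that occur have order $\leq|k_0|<\beta_0\leq\beta_M-1$, hence exist and are bounded by $C$. Combined with the $O(1)$ bound on the $\eta_j$, this yields part (i) of Definition~\ref{dfn:anihold} for $g\circ\Theta$, with weight $C'\,L^B_{x_0,\delta}\circ\Theta = C'\,L^{B'}_{x_0,\delta}$. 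For part (ii), with $\beta-\alpha_{\max}\leq\inner{k}{\bm\alpha}<\beta$, I would estimate the increment of each product term by telescoping over its factors: the increment of $\Diff^q g\circ\Theta$ uses the H\"older estimate for $\Diff^q g$ together with the componentwise bounds $|v'-v|$ and $|C_{B,B'}(v')\eta'-C_{B,B'}(v)\eta|\lesssim|\eta'-\eta|+|v'-v|$ (again using $\|\eta\|=O(1)$ on the support), and since $\tfrac{\beta-\inner q{\bm\alpha}}{\alpha_\perp}\geq\tfrac{\beta-\inner q{\bm\alpha}}{\alpha_0}$ and the tangential domain is bounded, the $|v'-v|$ contribution arising from the normal increment is absorbed into the tangential term $|v'-v|^{\frac{\beta-\inner q{\bm\alpha}}{\alpha_0}\wedge1}$; the increments of the order‑$\leq|k_0|$ derivatives of $C_{B,B'}$ and of the $\eta_j$ are controlled by $|v'-v|^{\frac{\beta-\inner k{\bm\alpha}}{\alpha_0}\wedge 1}$ up to constants, which is where $\beta_\perp\leq\beta_M-1$ is used again (and suffices since $\tfrac{\beta-\inner k{\bm\alpha}}{\alpha_0}<\tfrac{\alpha_{\max}}{\alpha_0}=1$ on this range). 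Assembling the terms gives the increment bound with weight $C'\,L^{B'}_{x_0,\delta}$, hence $f\circ\bar\Psi^{B'}_{x_0,\delta}\in\cH^{\bm\beta}_{\an}(\cW_{x_0,\delta},C'L^{B'}_{x_0,\delta})$.

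The main obstacle is the combinatorial bookkeeping in the Fa\`a di Bruno expansion: certifying $\inner{q}{\bm\alpha}\leq\inner{k}{\bm\alpha}$ in every term (which is precisely where $\beta_0\leq\beta_\perp$ is indispensable) and verifying that $C_{B,B'}$, of isotropic regularity $\beta_M-1\geq\beta_\perp$, carries enough derivatives and H\"older continuity for the worst‑case multi‑indices $k$. No genuinely deep difficulty appears to be hidden here; if Section~\ref{app:auxhold} already contains a composition lemma tailored to such triangular diffeomorphisms, the argument collapses to citing it together with the support observation of the second paragraph.
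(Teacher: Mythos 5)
Your proposal follows essentially the same route as the paper's proof: invoke \lemref{basis} to write $f\circ\bar\Psi^{B'}_{x_0,\delta}$ as $f\circ\bar\Psi^{B}_{x_0,\delta}$ post-composed with the block-triangular diffeomorphism $(v,\eta)\mapsto(v,C_{B,B'}(v)\eta)$, expand via the multivariate Fa\`a di Bruno formula, and exploit that the tangential components of this diffeomorphism are independent of $\eta$ so that every surviving multi-index $\ell$ satisfies $|\ell_0|\leq|k_0|$ and $|\ell|\leq|k|$, whence $\inner{\ell}{\bm\alpha}\leq\inner{k}{\bm\alpha}$ using $\alpha_0\geq\alpha_\perp$. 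Your explicit observation that the support of $g$ confines everything to $\{\|\eta\|\leq 1\}$, so the $\eta$-factors produced by differentiating $C_{B,B'}(v)\eta$ stay $O(1)$ uniformly in $\delta$, makes transparent a point the paper leaves implicit in its closing ``telescopic argument,'' but it is a refinement rather than a different method.
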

		
		\begin{proof} Using the lemma above, there holds
			$$
			f \circ  \bar\Psi_{x_0,\delta}^{B'}  = f \circ \bar \Psi_{x_0,\delta}^B(v, C_{B,B'}(v)\eta) =  f \circ \bar \Psi_{x_0,\delta}^B \circ J (v,\eta)
			$$
			 where $J$ is defined through \eqref{basis}. We denote for short $f^{B'} = f \circ  \bar\Psi_{x_0,\delta}^{B'}$, $f^{B} = f \circ  \bar\Psi_{x_0,\delta}^{B'}$ so that $f^{B'} = f^B \circ J$. Taking $\inner{k}{\bm\alpha} < \beta$ and using the multivariate Faa di Bruno formula \cite{constantine1996multivariate}, we find that $\Diff^k f^{B'}$ is a sum of products of the form
			$$
			\Diff^{\ell} (f^B) \circ J \times \prod (\Diff^{k^{(j)}} J)^{\ell^{(j)}}
			$$
			subject to $|\ell|\leq |k|$, $\sum_{j} \ell^{(j)} = \ell$ and $\sum_j |\ell^{(j)} | k^{(j)} = k$ with $k^{(j)} \neq 0$. Now notice that $(\Diff^{k^{(j)}} J)_i = 0$ for $1 \leq i \leq d$ as soon as $k^{(j)}_\perp \neq 0$. For a configuration of $\ell, \ell^{(j)}$ and $k^{(j)}$ such that the above product is not zero, there thus holds
			\begin{align*} 
				|\ell_0| = \sum_j |\ell_0^{(j)}| = \sum_{k^{(j)}_\perp = 0} |\ell_0^{(j)}| \leq  \sum_{k^{(j)}_\perp = 0} |\ell_0^{(j)}| |k^{(j)}_0 | \leq |k_0|
			\end{align*} 
			which, together with $|\ell | \leq |k$ and $\alpha_0 \geq \alpha_\perp$, yields that $\inner{\ell}{\bm\alpha} \leq \inner{k}{\bm\alpha}$ whenever the above product is non zero. We conclude with a telescopic argument with \lemref{basis}. 
		\end{proof}
	
More generally, as noted in Remark \ref{stabi:def:main}	  the definition of $\cH_{\delta}^{\beta_0,\beta_\perp}(M,L)$ through the construction given in Appendix \ref{app:A} does not depend on the choice of $\Psi_{x_0}$ as long as the parametrization $\Psi_{x_0}$ is $\beta_M$ regular.

		\newpaul{
  \begin{prp} \label{prop:stabilityPsi}
      Let $(\Psi_{x_0}')_{x_0 \in M}$ be a set of parametrizations of $M$ satisfying $\Psi'_{x_0} \in \cH_{\iso}^{\beta_M}\left( \mathcal{V}_{x_0},C_M\right)$ for all $x_0 \in M$. Let $\beta_0 \leq \beta_\perp \leq \beta_M - 3$. We let $\bar\Psi$ and $\bar\Psi'$ be the  extensions of $\Psi$ and $\Psi'$ constructed in  Appendix \ref{app:A}. Then $\cH_\delta^{\beta_0,\beta_\perp}(M,L,\bar\Psi) \subset \cH_\delta^{\beta_0,\beta_\perp}(M,CL,\bar\Psi')$ for some $C>0$ depending on $\beta_0,\beta_\perp,D$ and $C_M$.
  \end{prp}
  \begin{proof}
      Let  $f \in \cH_\delta^{\beta_0,\beta_\perp}(M,L,\bar\Psi) $ and $x_0 \in M$. We need to prove that $\delta^{D-d} f \circ \bar\Psi_{x_0}' \in \cH_{\an} (\mathcal{W}_{x_0,\delta},\delta^{D-d} L \circ \bar\Psi_{x_0,\delta}')$. We have $\delta^{D-d} f \circ \bar\Psi_{x_0,\delta}' = \bar f_{x_0,\delta} \circ \left[ \bar\Psi_{x_0,\delta}^{-1} \circ \bar\Psi_{x_0,\delta}' \right] = \bar f_{x_0,\delta} \circ \tau_{x_0,\delta}$ where we have defined $\tau_{x_0,\delta} = \bar\Psi_{x_0,\delta}^{-1} \circ \bar\Psi_{x_0,\delta}'$. By proposition \ref{prp:comp} we find that $\tau_{x_0,\delta} \in \cH_{\iso}^{\beta_M - 1}(\mathcal{W}_{x_0,\delta},C_M^*)$ for some $C_M^*>0$. Moreover by assumption we have $\bar f_{x_0,\delta} \in \cH_{\an}^{\bm\beta_{0,\perp}}(W_{x_0,\delta},L_{x_0,\delta})$, and $\tau_{x_0,\delta}$ can be expressed as
      \begin{align*}
      \tau_{x_0,\delta}(v,\eta) = & \left( \Psi_{x_0}^{-1} \circ \Psi_{x_0}'(v), \delta N_{x_0}(\Psi_{x_0}^{-1} \circ \Psi_{x_0}'(v), \cdot)^{-1} (\delta N_{x_0}'(\Psi_{x_0}^{-1} \circ \Psi_{x_0'}(v),\eta)) \right) \\
      = & \left( \tau^{(1)}(v), \tau^{(2)}(v,\eta) \right)
      \end{align*}
      With $\tau^{(1)} \in \cH_{\iso}^{\beta_M} \left( \mathcal{V}_{x_0},C \right)$ and $\tau^{(2)} \in \cH_{\iso}^{\beta_M-1} \left(\ball_{N_{x_0}M} \left(0, \tau/2\delta \right), C \right)$ for some $C>0$. What is crucial here is that $\tau^{(1)}$ takes only $v$ as argument, which implies that for every multi-index $k$ such that $\inner{k}{\bm\alpha} < \beta$, the partial derivative $\Diff^k \left( \bar f_{x_0,\delta} \circ \tau_{x_0,\delta} \right)$ exists and is a linear combination of terms of the form
      \[
     \Diff^{\ell}\bar f_{x_0,\delta} (\tau_{x_0,\delta}) \prod_{i=1}^{|\ell|} \left( \Diff^{r_i} \tau_{x_0,\delta} \right)^{q_i}
      \]
      where $|\ell| \leq |k|$, $\inner{\ell}{\bm\alpha} < \beta$, $r_1,\dots,r_{|\ell|} \in \mathbb{N}^D$, $q_1,\ldots,q_{|\ell|} \in \mathbb{N}$, and $\sum_{i=1}^D |r_i|q_i = |\ell|$. Since the derivatives of $\tau_{x_0,\delta}$ are bounded this implies that for some $C>0$ we have
      \[
      \left| \Diff^k \left( \bar f_{x_0,\delta} \circ \tau_{x_0,\delta} \right) \right| \leq CL_{x_0,\delta} \circ \tau_{x_0,\delta} = C\delta^{D-d} L \circ \bar\Psi_{x_0,\delta}'
      \]
      Moreover, for every $\beta - \alpha_{\max} \leq \< k,\bm\alpha \> < \beta$ and every $x,y \in \mathcal{W}_{x_0,\delta}$, as above we can upper bound $\left| \Diff^k \left( \bar f_{x_0,\delta} \circ \tau_{x_0,\delta} \right)(x) - \Diff^k \left( \bar f_{x_0,\delta} \circ \tau_{x_0,\delta} \right)(y) \right|$ by a linear combination of terms of the forms
      \beq \label{eq:ft}
      \left| \Diff^\ell \bar f_{x_0,\delta}(\tau_{x_0,\delta}(x)) - \Diff^\ell \bar f_{x_0,\delta}(\tau_{x_0,\delta}(y)) \right| \times \prod_{i=1}^{|\ell|} \left| (\Diff^{r_i} \tau_{x_0,\delta})^{q_i}(y) \right|
      \eeq
      and
      \beq \label{eq:st}
       \left|\prod_{i=1}^{|\ell|} (\Diff^{r_i} \tau_{x_0,\delta})^{q_i}(x) - \prod_{i=1}^{|\ell|} (\Diff^{r_i} \tau_{x_0,\delta})^{q_i}(y) \right| \times \left| \Diff^{\ell} \bar f_{x_0,\delta}(\tau_{x_0,\delta})(x) \right|
      \eeq
      The functions $(\Diff^{r_i} \tau_{x_0,\delta})^{q_i}$ are in $\cH_{\iso}^{\beta_M-1} \left(\ball_{N_{x_0}M} \left(0, \tau/2\delta \right), C \right)$ by Proposition \ref{prp:mult} for some $C>0$. Hence, as $\beta_M - 1 \geq \beta_\perp + 2 \geq 1$ and 
      $
      |\Diff^\ell \bar f_{x_0,\delta}(\tau_{x_0,\delta}(x)) | \leq L_{x_0,\delta}(\tau_{x_0,\delta}(x))
      $
      the term in \eqref{eq:st} is bounded by a multiple of 
      $$
     \delta^{D-d} L \circ \bar\Psi_{x_0,\delta}'(x) \times \sum_{i=1}^D \left| x_i-y_i \right| \leq \delta^{D-d} L \circ \bar\Psi_{x_0,\delta}'(x) \times \sum_{i=1}^D \left| x_i-y_i \right|^{1 \wedge \frac{\beta - \<k, \bm \alpha\>}{\alpha_i}}
      $$
      Whereas since $(\Diff^{r_i} \tau_{x_0,\delta})^{q_i}$ are bounded and $\tau_{x_0,\delta}$ is Lipschitz, the term in \eqref{eq:ft} is upper bounded by a multiple of
      $$
      \left| \Diff^\ell \bar f_{x_0,\delta}(\tau_{x_0,\delta}(x)) - \Diff^\ell \bar f_{x_0,\delta}(\tau_{x_0,\delta}(y)) \right| \leq L_{x_0,\delta}(\tau_{x_0,\delta}(x)) \sum_{i=1}^D \left|\tau_i(x) - \tau_i(y) \right|^{1 \wedge \frac{\beta - \inner{k}{\bm\alpha}}{\alpha_i}}
      $$
      As noticed above, only $(x_i)_{i \leq d}$ are involved in the computation of $(\tau_i(x))_{i \leq d}$. Hence, using in addition that $\tau_{x_0,\delta}$ is Lipschitz and that $\alpha_i = \alpha_0$ for every $i \leq d$, we have
      \[
      \sum_{i=1}^d \left|\tau_i(x) - \tau_i(y) \right|^{1 \wedge \frac{\beta - \<k,\bm\alpha \>}{\alpha_i}} \lesssim \sum_{i=1}^d \left|x_i - y_i \right|^{1 \wedge \frac{\beta - \<k,\bm\alpha \>}{\alpha_i}}
      \]
      Moreover since for every $i > d$ we have $\alpha_i = \alpha_\perp \leq \alpha_0$, there holds
      \[
      \sum_{i>d} \left|g_i(x) - g_i(y) \right|^{1 \wedge \frac{\beta - \<k,\bm\alpha \>}{\alpha_i}} \lesssim \sum_{i=1}^D \left|x_i - y_i \right|^{1 \wedge \frac{\beta - \<k,\bm\alpha \>}{\alpha_i}}
      \]
   which ends the proof.
  \end{proof}
  }

	\subsection{Partitions of unity and packings} \label{sec:partition}	
The approximation result uses a particular covering of an offset of the manifold $M$, which we describe here.	 Take $x_0 \in M$ and define
		$$
		\cW_{x_0}^j := \ball_{T_{x_0}}\(0,\frac{2+j}{16}\tau\) \times \ball_{N_{x_0}}\(0,\frac{6+j}{8}\tau\)~~~\text{for}~~j \in\{0,1,2\},
		$$
		and $\cO_{x_0}^j = \bar\Psi_{x_0}(\cO_{x_0}^j)$ for $j \in \{0,1,2\}$. We have
		$$
		\cW_{x_0}^0 \subset \cW_{x_0}^1 \subset \cW_{x_0}^2~~~\text{and} ~~~\cO_{x_0}^0 \subset \cO_{x_0}^1 \subset \cO^2_{x_0}.
		$$ 
		Furthermore, the sets $\cO_{x_0}^0$ for $x_0 \in M$ forms a covering of $M^{3\tau/4}$. See \figref{wox0} for an illustration of these open sets. 
		
		\begin{figure}[h] 
			\centering
			\includegraphics[width=6cm]{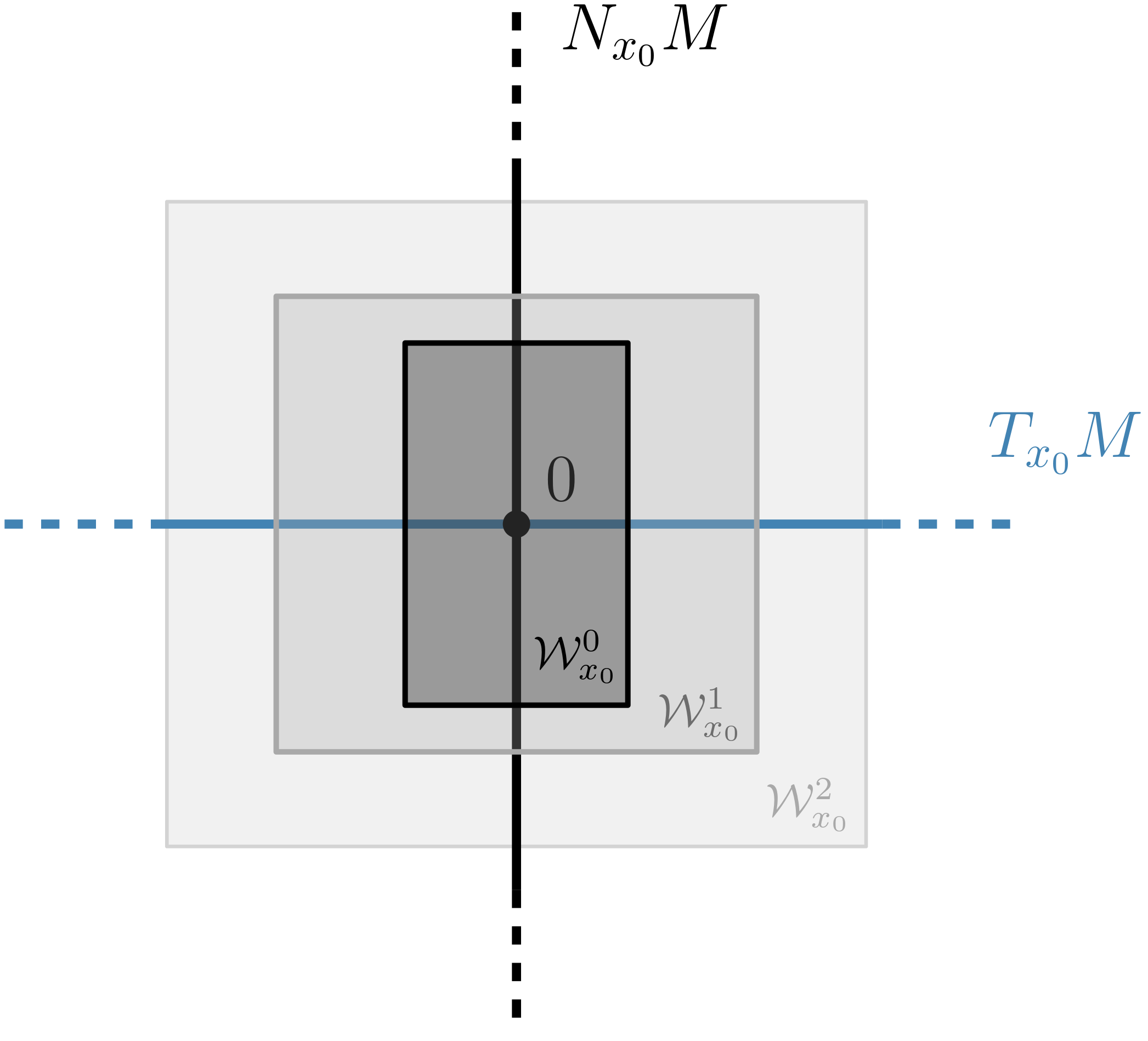}  \hspace{20pt}
			\includegraphics[width=6cm]{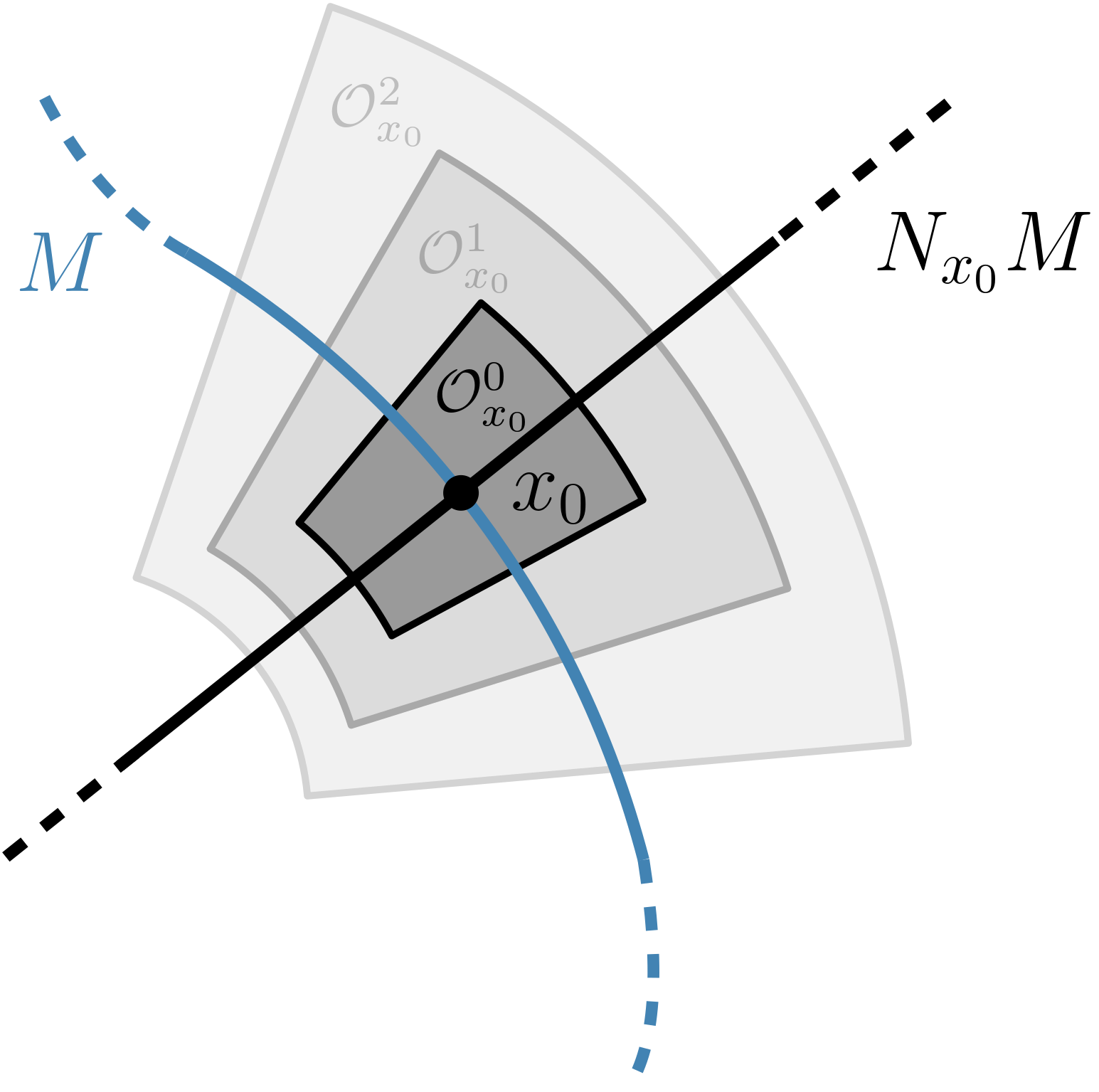} 
			\caption{A visual representation of the sets used in the proof of \secref{theoretical}.}
			\label{fig:wox0}
		\end{figure}
		
		In what follows, we will need the notion of packing. An $\ve$-packing of a subset $\cA \subset \bbR^D$ is a set $\{y_1,\dots,y_J\}$ of points of $\cA$ such that $\|x_j - x_k\| > \ve$ for any $1 \leq i \neq j \leq J$ and such that no set of $J+1$ points has this property. We denote by $\pk(\cA,\ve) := J$ the $\ve$-packing number of $\cA$. By maximality of $J$, it is straightforward to see that a $\cA$ is covered by the union of the balls $\ball(x_j,\ve)$. Furthermore, the balls $\ball(x_j,\ve/2)$ must be disjoint by definition of a packing so that
		$$
		\pk(\cA,\ve) \times \min_{x \in \cA} \vol(\cA \cap \ball(x,\ve/2)) \leq \vol \{\cA \cap \bigcup_{1 \leq j \leq J} \ball(x_j,\ve/2)\} \leq \vol \cA.  
		$$
		In the case where $\cA$ is a ball of radius $R$ with $R$ large before $\ve$, it is straightforward to see that $\vol(\cA \cap \ball(x,\ve/2)) \gtrsim \ve^D$ for any $x \in \cA$ so that $\pk(\cA,\ve) \lesssim (R/\ve)^D$. We now let 
		$$\rho(x) := \exp\{-\frac{1}{(1-\|x\|^2)_+}\}
		$$
		which is an infinitely differentiable, radially symmetric function from $\bbR^D$ to $[0,1]$ supported on $\ball(0,1)$. For any $x_0 \in M$, we define
		$$
		\rho_{x_0}(x) := \rho\left(32\frac{x-x_0}{\tau}\right)
		$$
		For any large $R > 0$, one can take a $\tau/64$-packing of $M \cap \ball(0,R)$, say $\{x_1,\dots,x_J\}$ with $J$ of order less than $\pk(\ball(0,R), \tau/64) \lesssim R^D$. We can define
		$$
		\chi_{j}(x) := \frac{\rho_{x_j}(x)
		}{\sum_{i=1}^J \rho_{x_i}(x)}.
		$$
		In a similar fashion as what is done in \cite{divol2021reconstructing}, we first review a few properties satisfied by the maps $\{\chi_{j}\}_{1\leq j \leq J}$, which forms a partition of unity associated with the covering $\{\ball(x_j,\tau/32)\}_{1 \leq j \leq J}$ of $M \cap \ball(0,R)$. See \figref{chix0} for a geometric interpretation of the situation.
		
		\begin{lem} \label{lem:chixj} The following assertions hold true: 
			\bitem
			\item[i)] For any $1 \leq j\leq J$, $\supp \chi_{j} \subset \cO_{x_j}^0$;
			\item[ii)] There exists a numeric constant $\gamma > 0$ such that $M^{\gamma\tau}\cap \ball(0,R) \subset \supp \sum_{j} \chi_{j}$;
			\item[iii)] There exists a numeric constant $\nu > 0$, such that for any $x \in M^{\gamma \tau} \cap \ball(0,R)$, there holds $\sum_{j=1}^J \rho_{x_j}(x) \geq \nu$;
			\item[iv)] For any $|k| \leq K$, there holds that $\|\Diff^{k} \chi_{j} \|_\infty \leq C < \infty$ with $C$ depending on $K$ and $\tau$;
			\item[v)] For any $|k| \leq K$, there exists a non-negative function $I_K $ such that, for any $1 \leq j \leq J$, 
			$$|\Diff^k \chi_{j}(x)| \leq I_K(x-x_j) \chi_{j}(x),~~~~\forall x \in M^{\gamma \tau} \cap \ball(0,R)$$ 
			with $I_K$ being such that for any $\omega > 0$
			$$\sup_{ x\in M^{\gamma \tau}} \left(I_K(x-x_j)\right)^\omega \chi_{j}(x) \leq C < \infty$$
			where $C$ depends on $K$, $\tau$ and $\omega$.
			\eitem
		\end{lem}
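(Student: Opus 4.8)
I would treat the five assertions separately: (i)--(iii) are of a geometric/combinatorial nature, while (iv)--(v) both follow from one and the same induction on $|k|$.

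\emph{Assertion (i).} Since $\{\chi_j>0\}=\{\rho_{x_j}>0\}=\ball(x_j,\tau/32)$, it suffices to show $\overline{\ball(x_j,\tau/32)}\subset\cO_{x_j}^0$. Take $x$ with $\|x-x_j\|\leq\tau/32<\tau$; then $p:=\pr_M(x)$ is well defined, $\|x-p\|=d(x,M)\leq\|x-x_j\|$, so $\|p-x_j\|\leq 2\|x-x_j\|\leq\tau/16$. Writing $p=\exp_{x_j}(v)$ and using the lower Lipschitz bound \eqref{lem:doublelip} of \lemref{geolemma}, $\|v\|\leq\tfrac{16}{11}\|p-x_j\|\leq\tau/11<\tau/8$, so $v\in\cV_{x_j}$; moreover $x-p\in N_pM=N_{\Psi_{x_j}(v)}M$ and, as $N_{x_j}(v,\cdot)$ is an isometry onto $N_pM$, there is $\eta\in N_{x_j}M$ with $N_{x_j}(v,\eta)=x-p$ and $\|\eta\|=\|x-p\|\leq\tau/32<3\tau/4$. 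Hence $x=\Psi_{x_j}(v)+N_{x_j}(v,\eta)=\bar\Psi_{x_j}(v,\eta)\in\bar\Psi_{x_j}(\cW_{x_j}^0)=\cO_{x_j}^0$.

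\emph{Assertions (ii)--(iii).} Fix a small numeric constant $\gamma$ (say $\gamma=1/128$) and let $x\in M^{\gamma\tau}\cap\ball(0,R)$, $p:=\pr_M(x)$. By maximality of the $\tau/64$-packing, $p$ lies within $\tau/64$ of some $x_j$ (after harmlessly enlarging the ball used to define the packing by $\gamma\tau$, which changes $J$ only by a constant factor), so $\|x-x_j\|\leq d(x,M)+\|p-x_j\|\leq\gamma\tau+\tau/64=3\tau/128$, whence $\|32(x-x_j)/\tau\|\leq 3/4$ and $\rho_{x_j}(x)\geq\exp(-16/7)=:\nu>0$. Since every $\rho_{x_i}\geq 0$, $\sum_i\rho_{x_i}(x)\geq\rho_{x_j}(x)\geq\nu$, which is (iii); in particular the denominator of $\chi_j$ is positive there, which gives (ii).

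\emph{Assertions (iv)--(v).} Set $S:=\sum_i\rho_{x_i}$, so $\chi_j=\rho_{x_j}/S$ with $S\geq\rho_{x_j}$ everywhere and, by (iii), $S\geq\nu$ on $\cA:=M^{\gamma\tau}\cap\ball(0,R)$ (the region on which the $\chi_j$ are used). Two elementary facts: first, \emph{bounded overlap} --- the indices $i$ with $\rho_{x_i}(x)>0$ satisfy $x_i\in\ball(x,\tau/32)$ and are pairwise $\tau/64$-separated, so comparing the volumes of the disjoint balls $\ball(x_i,\tau/128)$ caps their number by $N_0:=5^D$, whence $|\Diff^mS(x)|\leq N_0(32/\tau)^{|m|}\|\Diff^m\rho\|_\infty$; second, with $J_m:=\Diff^m\rho/\rho$ (a rational function of its argument and of $(1-\|\cdot\|^2)^{-1}$, with poles only on $\{\|\cdot\|=1\}$), one has $\Diff^m\rho_{x_j}(x)=(32/\tau)^{|m|}J_m\!\big(32(x-x_j)/\tau\big)\,\rho_{x_j}(x)=(32/\tau)^{|m|}J_m\!\big(32(x-x_j)/\tau\big)\,S(x)\chi_j(x)$. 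Differentiating $\chi_jS=\rho_{x_j}$ by Leibniz and solving,
\[
\Diff^k\chi_j=S^{-1}\Big(\Diff^k\rho_{x_j}-\sum_{k'<k}\binom{k}{k'}\Diff^{k'}\chi_j\,\Diff^{k-k'}S\Big),
\]
one inducts on $|k|$ over $\cA$. For (iv): $S\geq\nu$, the first fact bounds $\Diff^{k-k'}S$, and the base case $\|\chi_j\|_\infty\leq1$ closes the induction, giving $\|\Diff^k\chi_j\|_\infty\leq C(K,\tau,D)$. For (v): use the second identity to write the first term as $(32/\tau)^{|k|}J_k(32(x-x_j)/\tau)\chi_j(x)$, and inductively $|\Diff^{k'}\chi_j|\leq I_{k'}(x-x_j)\chi_j$ with $I_0\equiv1$; each summand then carries a factor $\chi_j(x)$ and a coefficient that is a finite nonnegative combination of the $|J_m(32\cdot/\tau)|$ ($|m|\leq|k|$) and of constants depending on $\tau,D,K,\nu$ --- call it $I_k(x-x_j)$ and set $I_K:=\max_{|k|\leq K}I_k$, which has at most polynomial growth as $\|x-x_j\|\uparrow\tau/32$. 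Finally, since $\chi_j\leq\rho_{x_j}/\nu$ on $\cA$, $I_K(x-x_j)^\omega\chi_j(x)\leq\nu^{-1}I_K(y)^\omega\rho(32y/\tau)$ with $y=x-x_j$, and the super-exponential vanishing of $\rho(32y/\tau)$ as $\|y\|\uparrow\tau/32$ dominates any power of the polynomially growing $I_K$, so the supremum is a finite $C(K,\tau,\omega,D)$.

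\emph{Main obstacle.} The delicate point is (ii)--(iii): one must ensure that the $\tau/64$-``cores'' of the bumps genuinely cover the region on which $\chi_j$ is needed, so that $S$ is bounded below there by a $\nu$ that is independent of $R$ and of the particular packing. That lower bound is exactly what makes the normalization by $S$ harmless and what drives the inductions for (iv)--(v); the bounded-overlap count and the polynomial-versus-super-exponential balance at the end of (v) are then routine.
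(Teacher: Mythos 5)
Your proof is correct and takes essentially the same route as the paper's: the same choice of $\gamma$, $\nu$, the same projection argument for (i)--(iii), and the same Leibniz-plus-lower-bound-on-$S$ induction for (iv)--(v), with only bookkeeping differences. Two small points worth flagging: for (i) you invoke the bi-Lipschitz bound \eqref{lem:doublelip} on $v\mapsto\exp_{x_j}(v)$ without first checking that $v$ lies in $\ball_{T_{x_j}M}(0,\tau/4)$ — this is fine once you note that $\|p-x_j\|\leq\tau/16$ together with \eqref{lem:useful} puts $\|v\|=d_M(p,x_j)$ well inside that ball, but the order of the estimates should be made explicit (the paper instead gets a sharper bound on $\|\pr_Mx-x_j\|$ directly from \cite[Thm 4.7 (8)]{Fed59}); and your remark about enlarging the packing ball by $\gamma\tau$ so that $\pr_M(x)$ is guaranteed to be near some packing point is a legitimate fix of an edge effect the paper glosses over. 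For (iv)--(v), your implicit recursion from $\chi_jS=\rho_{x_j}$ and the explicit bounded-overlap count $N_0\leq 5^D$ are a slightly cleaner packaging of what the paper does via Faa di Bruno applied to $R=S^{-1}$ and $\rho_{x_j}=\rho\circ(32(\cdot-x_j)/\tau)$, but the underlying mechanism (the lower bound $S\geq\nu$, the polynomial singularity of $I_K$ in $(1-(32\|y\|/\tau)^2)^{-1}$ beaten by the super-exponential decay of $\rho$) is identical.
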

		We stress out that the constants appearing in \lemref{chixj} do not depend on $R$ or $J$ at all, so that it can be applied for a family of covering $\{x_1,\dots,x_J\}$ indexed by $R \to \infty$, as it will be the case in the proofs below.

		\begin{figure}[h]
			\centering
			\includegraphics[scale=.10]{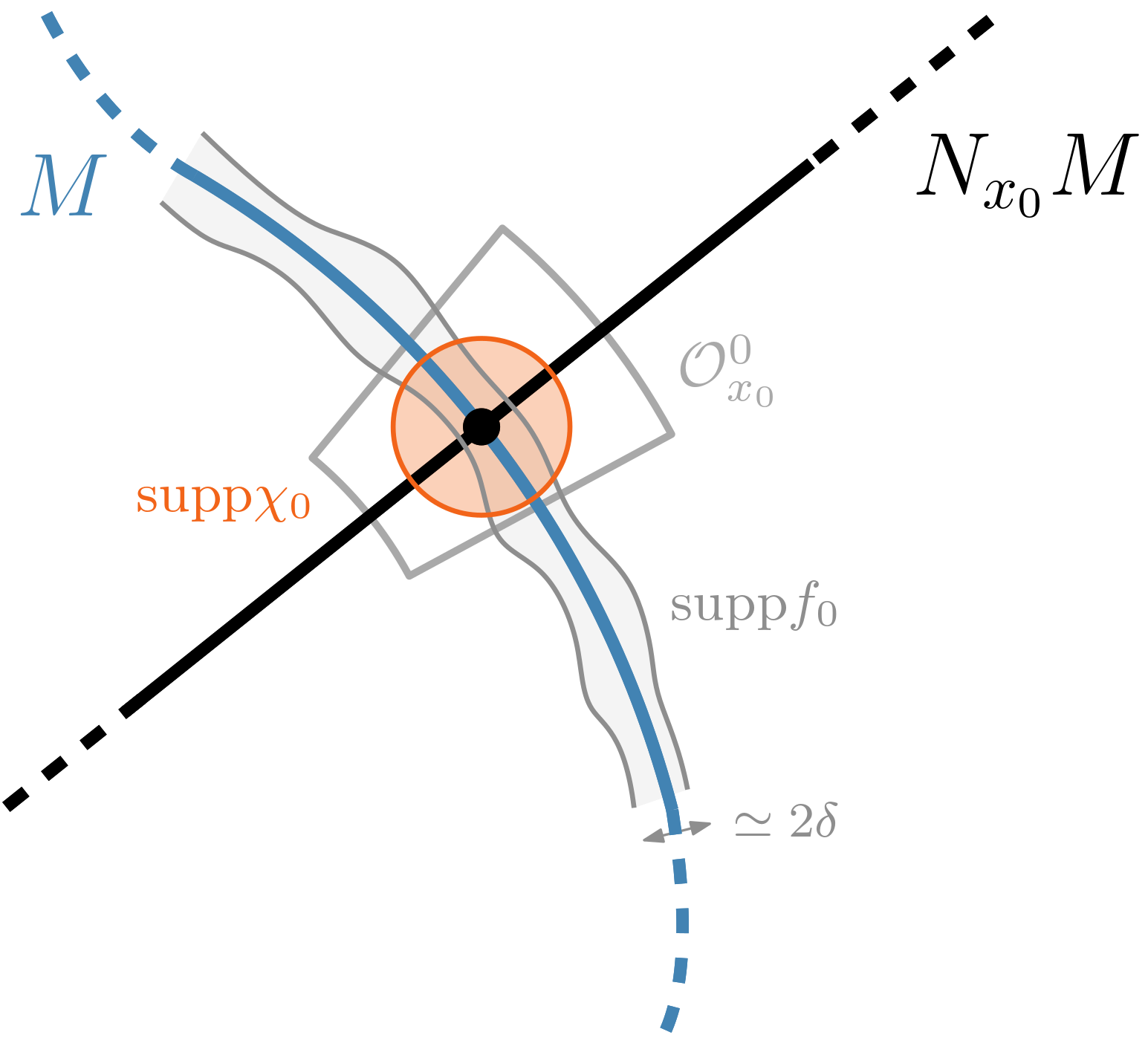}  \hspace{.5in}
			\includegraphics[scale=.10]{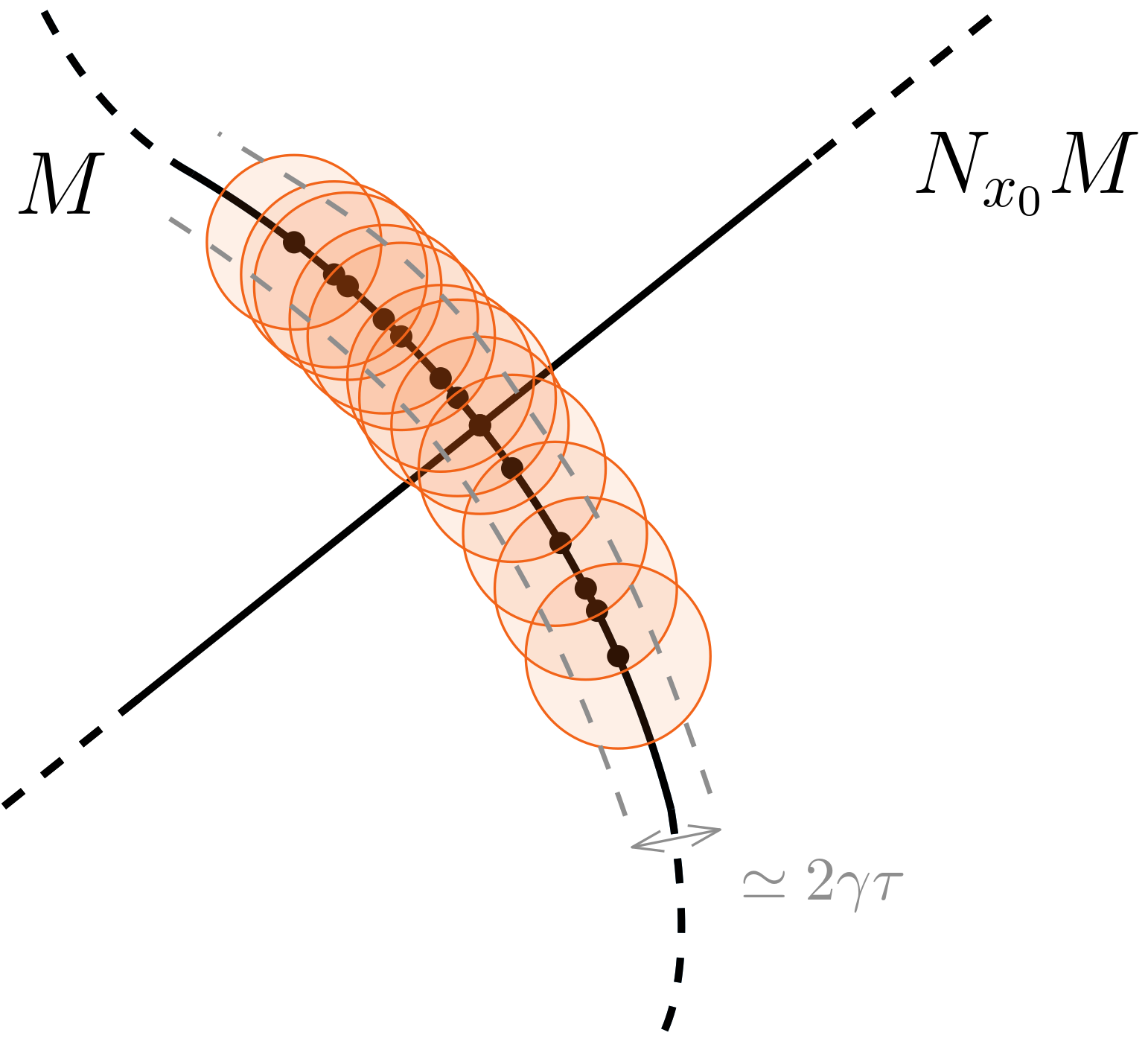} 
			\caption{A visual interpretation of the framework and results of \lemref{chixj}.}
			\label{fig:chix0}
		\end{figure}
		
		\begin{proof}[Proof of \lemref{chixj}] We start with proving i). Let $x \in \ball(x_j,\tau/32)$ and let $z = \pr_M x$. There holds that
			$$
			\|z - x_j\|^2 = \|z - x\|^2  +\|x - x_j\|^2  + 2\inner{z - x}{x-x_j} =  \|x - x_j\|^2 - \|z - x\|^2 + 2\inner{z - x}{z-x_j}.
			$$
			But now using \cite[Thm 4.7 (8)]{Fed59}, there holds that $2\inner{z - x}{z-x_j}\leq \|z - x_j\|^2 \|z -x\|/\tau$ so that in the end
			$$
			\|z - x_j\|^2 \leq \frac{\|z-x\|^2}{1 - \|z-x\|/\tau} \leq \{\sqrt{\frac{32}{31}} \frac{\tau}{32}\}^2,
			$$
			where we used the fact that $\|z-x\| \leq \|x_j-x\| \leq \tau/32$. In particular, using \eqref{lem:useful} in Lemma \ref{lem:geolemma}, we obtain 
			$$
			d_M(z,x_j) \leq \kappa\left(\sqrt{\frac{32}{31}}\frac{1}{16}\right) \|z - x_j\| \leq \kappa\left(\sqrt{\frac{32}{31}}\frac{1}{16}\right) \sqrt{\frac{32}{31}} \frac{\tau}{32} < \tau/16
			$$
			where the last inequality was checked with a calculator, so that $z = \Psi_{x_j}(v)$ with $v \in \ball_{T_{x_j} M}(0,\tau/16)$. Finally $\|x-z\| \leq \tau/32$ so that $x-z = N_{x_j}(v,\eta)$ for some $\eta \in \ball_{N_{x_j} M}(0,\tau/32)$. In the, end $x =\bar\Psi_{x_j}(v,\eta)$ with $(v,\eta)\in\cW_{x_j}^0$ so $x \in \cO_{x_j}^0$, ending the proof of the assertion i).
			
			We continue with proving ii) and iii) with $\gamma = 1/128$ and $\nu = \exp\{-16/7\}$. Take $x \in M^{\gamma \tau} \cap \ball(0,R)$ and $z = \pr_M(x)$. There exists $x_j$ such that $\|z - x_j\| \leq \tau /64$, leading to $\|x - x_j \| \leq 3 \tau / 128$ and thus $x \in \supp \chi_{x_j}$. Furthermore, 
			$$\rho_{x_j}(x)  =   \exp\{-\frac{1}{(1- (32\|x - x_j\|/\tau)^2)_+}\} \geq \exp\{-\frac{1}{(1- (3/4)^2)_+}\} = \nu,
			$$
			so that points ii) and iii) are proven.
			
			We finish with the proof of iv) and v). We let $R(x) = (\sum_{i=1}^J \rho_{x_i}(x))^{-1}$, which is well defined on $M^{\gamma\tau} \cap \ball(0,R)$ and bounded from above by $\nu^{-1}$ so that $\chi_{j}(x) = R(x) \rho_{x_j}(x)$ and 
			$$
			\Diff^k \chi_{j}(x) = \sum_{|\ell|\leq k} {k \choose \ell} \Diff^{k-\ell} R(x)  \Diff^\ell \rho_{x_j}(x).
			$$
			Outside of $\ball(x_j,\tau/32)$ one can take $I_K = 0$. Now if $x \in \ball(x_j,\tau/32)$, one get that for any $|\ell| < k$, using the Faa Di Bruno formula yields that $\Diff^\ell \rho_{x_j}(x)$ is a sum of term of the form
			$$
			(-1)^{s} \prod_{i=1}^s \Diff^{r_i} \Upsilon(x) \times \rho_{x_j}(x)~~~\text{with}~~~\Upsilon(x) := \frac{1}{1 - \frac{32^2}{\tau^2} \|x-x_j\|^2}~~\text{and}~~\sum_{i=1}^s r_i = \ell.
			$$
			In particular, we get that $\Diff^{\ell} \rho_{x_j}(x)$ is bounded from above by some constant depending on $K$ and $\tau$ on $\ball(x_j,\tau/32)$. Furthermore, for any $|\ell| < k$, the derivative $\Diff^\ell R(x)$ is a sum of terms of the form
			$$
			R(x)^{r} \prod_{i=1}^s \Diff^{r_i} R^{-1}(x)~~~\text{with}~~~\sum_{i=1}^s r_i \leq \ell~~\text{and}~~ r + \sum_{i=1}^s |r_i| = |\ell|+2,
			$$ 
			and $ \Diff^{r_i} R^{-1}(x) = \sum_q \Diff^{r_i} \rho_{x_q}(x)$ which is bounded from above by something depending on $\tau$ and $K$. All in all, we get the uniform boundedness of $\Diff^{k} \chi_{j}$ and that
			\begin{align*} 
			|\Diff^k \chi_{j}(x) | &\leq I_K(x-x_j) \chi_{j}(x) \\
			~~~\text{with}~~~ I_K(x) &:= \begin{cases} \displaystyle C_{K,\tau}\{1 - \frac{32^2}{\tau^2} \|x\|^2\}^{-(K+1)} ~&\text{if}~~x \in \ball(0,\tau/32), \\ 0 ~~ &\text{otherwise,} \end{cases}
			\end{align*}
			where $C_{K,\tau}$ depends on $K$ and $\tau$, ending the proof.
		\end{proof}

		\section{Appendix to \secref{model}} \label{app:anis}
		
		\subsection{Auxiliary results on general anisotropic H\"older functions}\label{app:auxhold}
		
		We first extend \dfnref{anihold} to functions defined on general open sets of $\bbR^D$. For any open set $\cU \subset \bbR^D$, any function $L : \cU \to \bbR_+$ and any positive real number $\zeta > 0$, we define the anisotropic H\"older spaces $\hbu$ as the set of functions $f : \cU \to \bbR^D$ satisfying that 
		\bitem
		\item[i)] For any multi-index $k \in \bbN^D$ such that $\inner{k}{\bm\alpha} < \beta$ the partial derivative $\Diff^k f$ is well defined on $\cU$ and
		$|\Diff^k f(x)| \leq L(x)$ for all $x \in \cU$; 
		\item[ii)] For any multi-index $k \in \bbN^D$ such that $\beta - \alpha_{\max} \leq \inner{k}{\bm\alpha} < \beta$, there holds
		\beq
		\label{eq:hold} 
		|\Diff^k f(y) - \Diff^k f(x)| \leq L(x) \sum_{i=1}^D |y_i - x_i|^{\frac{\beta- \inner{k}{\bm\alpha}}{\alpha_i} \wedge 1}~~~ \forall x,y \in \cU,~ \|x-y\| \leq \zeta.
		\eeq
		\eitem
		Like in \subsecref{anis}, we define in a similar fashion $\cH_{\iso}^{\beta}(\cU,L,\zeta)$, $\cH_{\an}^{\bm\beta}(\cU,C,\zeta)$ and $\cH_{\iso}^{\beta}(\cU,C,\zeta)$ for some constant $C > 0$. We now list a number of useful results which hold for  our definition of anisotropy. The proofs of these results are provided below.

		\begin{prp} \label{prp:der} Let $f\in \hbu$. Then for any $k \in \bbN^{D}$ such that $\inner{k}{\bm\alpha} < \beta$, the partially differentiated function $\Diff^k f$ is in $\cH^{\bm\beta^{(k)}}_{\an}(\cU,L,\zeta)$ with 
			$$
			\bm \beta^{(k)} = \{1 - \frac{\inner{k}{\bm\alpha}}{\beta}\} \bm\beta.
			$$
		\end{prp}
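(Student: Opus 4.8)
The plan is to exploit the fact that the passage from $\bm\beta$ to $\bm\beta^{(k)} = (1 - \inner{k}{\bm\alpha}/\beta)\,\bm\beta$ is a \emph{uniform} rescaling of all the regularity indices. Writing $c := 1 - \inner{k}{\bm\alpha}/\beta$, which lies in $(0,1]$ since $0 \le \inner{k}{\bm\alpha} < \beta$, one has $\bm\beta^{(k)} = c\,\bm\beta$ with positive entries, and I would first record the two elementary consequences: the effective smoothness attached to $\bm\beta^{(k)}$ is $\beta^{(k)} = c\beta = \beta - \inner{k}{\bm\alpha}$, while its anisotropy vector is unchanged, $\alpha^{(k)}_i = \beta^{(k)}/\beta^{(k)}_i = (c\beta)/(c\beta_i) = \alpha_i$, so in particular $\alpha_{\max}$ is the same for $\bm\beta$ and for $\bm\beta^{(k)}$. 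The upshot is the bookkeeping identity $\beta^{(k)} - \inner{m}{\bm\alpha^{(k)}} = \beta - \inner{m+k}{\bm\alpha}$, valid for every multi-index $m$, which is exactly what makes the two H\"older exponents line up.

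Next I would use the identification $\Diff^m(\Diff^k f) = \Diff^{m+k} f$ and verify the two defining conditions of $\cH^{\bm\beta^{(k)}}_{\an}(\cU,L,\zeta)$ for $\Diff^k f$ by reindexing those of $\hbu$ for $f$. For condition (i): if $\inner{m}{\bm\alpha^{(k)}} < \beta^{(k)}$ then $\inner{m+k}{\bm\alpha} < \beta$, so $\Diff^{m+k}f$ exists and satisfies $|\Diff^{m+k}f(x)| \le L(x)$ by condition (i) for $f$. For condition (ii): if $\beta^{(k)} - \alpha_{\max} \le \inner{m}{\bm\alpha^{(k)}} < \beta^{(k)}$, then, setting $\ell = m+k$, one gets $\beta - \alpha_{\max} \le \inner{\ell}{\bm\alpha} < \beta$, and the H\"older estimate from condition (ii) applied to $\Diff^\ell f$, rewritten through the displayed identity, is precisely the H\"older estimate required of $\Diff^m(\Diff^k f)$ — here the localisation constraint $\|x-y\| \le \zeta$ carries over verbatim, which is why the statement also holds in the general (possibly unbounded) open-set version.

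The argument is essentially bookkeeping, so I do not expect a genuine obstacle; the one point that deserves an explicit word is the commutation $\Diff^m(\Diff^k f) = \Diff^{m+k} f$, i.e. that the iterated partials may be taken in any order. This follows from the class definition itself: condition (i) for $f$ forces every $\Diff^j f$ with $\inner{j}{\bm\alpha} < \beta$ to exist and be (locally) bounded, hence each $\Diff^j f$ with $\inner{j}{\bm\alpha} < \beta - \alpha_{\max}$ has all its own first-order partials again of this controlled type, so it is continuous and, iterating, all the mixed partial derivatives appearing are continuous and Schwarz's theorem applies. With this in place the two matchings above give membership of $\Diff^k f$ in $\cH^{\bm\beta^{(k)}}_{\an}(\cU,L,\zeta)$, completing the proof.
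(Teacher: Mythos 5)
Your proof is correct and follows essentially the same route as the paper's: observe that $\bm\alpha$ is unchanged under the rescaling so that $\beta^{(k)} - \inner{m}{\bm\alpha^{(k)}} = \beta - \inner{m+k}{\bm\alpha}$, then reindex multi-indices by $m \mapsto m+k$ to transfer conditions (i) and (ii) from $f$ to $\Diff^k f$. The remark justifying $\Diff^m(\Diff^k f) = \Diff^{m+k} f$ via Schwarz's theorem is a small extra touch of rigor that the paper leaves implicit, but it is the same argument.
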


		Anisotropic H\"older functions enjoy the same convenient Taylor expansion as usual H\"older function. 
		\begin{prp} \label{prp:taylor} Let $f\in \hbu$. Then, for any $x,y \in \cU$ with $\|x-y\| \leq \zeta$, 
			\begin{align*} 
				f(y) = f(x) + \sum_{0 < \inner{k}{\bm \alpha} < \beta} \frac{(y-x)^k}{k!} \Diff^k f(x) + R(x,y), 
			\end{align*} 
			where the remainder $R$ satisfies the following bound
			\begin{align*} 
				|R(x,y)| \leq L(x)  \sum_{\beta-\alpha_{\max} \leq \inner{k}{\alpha} <\beta} \frac{|y-x|^k}{k!}\sum_{i=1}^D  |y_i - x_i|^{\frac{\beta- \inner{k}{\bm\alpha}}{\alpha_i} }.
			\end{align*} 
		\end{prp}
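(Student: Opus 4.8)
The plan is to prove this by induction on the number of variables, peeling off one variable at a time by the classical one–dimensional Taylor expansion with integral remainder. Two preliminary reductions are convenient. First, since removing a variable leaves a function on a coordinate slice whose induced weight vector no longer sums to the ambient dimension, I will prove by induction the slightly more general statement in which $\bm\alpha$ is replaced by any weight vector $\bm a=(a_1,\dots,a_m)\in(\bbR_+^*)^m$, $\beta$ by any threshold $b>0$, and $\alpha_{\max}$ by $\max_i a_i$ (this quantity only fixes the range of multi‑indices in condition ii)); conditions i)--ii) and the conclusion transcribe verbatim, and \prpref{der} has an evident counterpart (differentiating by $\Diff^{je_1}$ keeps the weight vector and lowers the threshold by $ja_1$). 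Second, I order the variables so that $a_1\le\dots\le a_m$, i.e.\ smoothest first; this is the choice that forces the remainder bound to carry $L$ at the base point $x$ rather than a supremum. As in $D=1$ one assumes the coordinate‑wise segments between $x$ and $y$ lie in $\cU$, which holds for the product domains $\cW_{x_0,\delta}$ relevant in the paper. The base case $m=1$ is Taylor's theorem: if $a_1\ge b$ there is no derivative and the claim is exactly condition ii) for $k=0$ (where $b/a_1\wedge1=b/a_1$); otherwise, with $m_1$ the largest integer with $m_1a_1<b$, one writes $f(y_1)=\sum_{j<m_1}\tfrac{(y_1-x_1)^j}{j!}\Diff^{je_1}f(x_1)+\int_{x_1}^{y_1}\tfrac{(y_1-t)^{m_1-1}}{(m_1-1)!}\Diff^{m_1e_1}f(t)\,dt$ and replaces $\Diff^{m_1e_1}f(t)$ by $\Diff^{m_1e_1}f(x_1)+(\Diff^{m_1e_1}f(t)-\Diff^{m_1e_1}f(x_1))$: the first piece completes the last monomial, the second is bounded via condition ii) for $k=m_1e_1\in[b-\max_i a_i,b)$ and $\int_{x_1}^{y_1}|y_1-t|^{m_1-1}|t-x_1|^{(b-m_1a_1)/a_1}\,dt\lesssim|y_1-x_1|^{b/a_1}$, and $b/a_1=m_1+(b-m_1a_1)/a_1$ with $(b-m_1a_1)/a_1\le1$ is the advertised exponent.

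For the inductive step one may assume $m_1\ge1$, where $m_1$ is again the largest integer with $m_1a_1<b$ (otherwise all $a_i\ge b$ and the claim is once more condition ii) for $k=0$). Expand in the first variable only, freezing the others at $y_{2:m}$: $f(y)=\sum_{j<m_1}\tfrac{(y_1-x_1)^j}{j!}\Diff^{je_1}f(x_1,y_{2:m})+\int_{x_1}^{y_1}\tfrac{(y_1-t)^{m_1-1}}{(m_1-1)!}\Diff^{m_1e_1}f(t,y_{2:m})\,dt$. To each coefficient $\Diff^{je_1}f(x_1,\cdot)$, $j<m_1$ — which, by the counterpart of \prpref{der}, satisfies the hypotheses in the $m-1$ remaining variables with weight $a_{2:m}$ and threshold $b-ja_1$, the relevant bounding function being $L(x_1,\cdot)$ — apply the induction hypothesis around $x_{2:m}$: this yields polynomial monomials indexed by $\langle(j,k_{2:m}),\bm a\rangle<b$ with coefficients exactly $\Diff^{(j,k_{2:m})}f(x)$ (no re‑centering, the slice already sitting at $x_1$) and remainder controlled by $L(x_1,x_{2:m})=L(x)$; multiplying by $\tfrac{(y_1-x_1)^j}{j!}$ produces monomials of the stated shape and remainder terms $\lesssim L(x)\tfrac{|y-x|^{(j,k_{2:m})}}{(j,k_{2:m})!}|y_i-x_i|^{(b-\langle(j,k_{2:m}),\bm a\rangle)/a_i}$ for $i\ge2$. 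The integral remainder is where the ordering pays off: $\Diff^{m_1e_1}f$ has residual smoothness $b-m_1a_1\le a_1=\min_i a_i$, so it has no partial derivative in any variable and is merely Hölder with all exponents $(b-m_1a_1)/a_i\le1$; writing $\Diff^{m_1e_1}f(t,y_{2:m})=\Diff^{m_1e_1}f(x)+\big(\Diff^{m_1e_1}f(t,y_{2:m})-\Diff^{m_1e_1}f(x)\big)$ — the difference bounded by condition ii) for $k=m_1e_1$ applied to the pair $\{x,(t,y_{2:m})\}$, which is admissible since $\|x-(t,y_{2:m})\|\le\|x-y\|\le\zeta$ — gives the last monomial $\tfrac{(y-x)^{m_1e_1}}{(m_1e_1)!}\Diff^{m_1e_1}f(x)$ and a term $\lesssim L(x)\tfrac{|y-x|^{m_1e_1}}{(m_1e_1)!}\sum_i|y_i-x_i|^{(b-m_1a_1)/a_i}$. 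Finally, $\langle(m_1,k_{2:m}),\bm a\rangle<b$ forces $k_{2:m}=0$ (else $\langle k_{2:m},a_{2:m}\rangle\ge a_2\ge a_1\ge b-m_1a_1$, so $\langle(m_1,k_{2:m}),\bm a\rangle\ge b$), so the cases $k_1<m_1$ and $k_1=m_1$ account for every multi‑index with $\langle k,\bm a\rangle<b$ exactly once; collecting, the polynomial is $\sum_{\langle k,\bm a\rangle<b}\tfrac{(y-x)^k}{k!}\Diff^kf(x)$ and every remainder term has the asserted form with $k\in[b-\max_i a_i,b)$ (constants absorbed into $\lesssim$, or tracked through the Beta function for the sharp constant $1$).

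The only analytic ingredients are the one–dimensional Taylor theorem, \prpref{der}, and the elementary bound $\int_a^b|b-t|^p|t-a|^q\,dt\lesssim|b-a|^{p+q+1}$; the substance is organizational, and the one genuinely load‑bearing decision is the peeling order. Peeling the smoothest variable first guarantees that the function reaching the outermost integral remainder has residual smoothness below every weight $a_i$, which simultaneously (i) closes the staircase — it carries no further derivatives, so there is nothing left to expand and no monomial is missed — and (ii) makes its Hölder exponents automatically $\le1$, which is precisely why the stated bound has the uncapped exponents $(\beta-\langle k,\bm\alpha\rangle)/\alpha_i$ rather than their minimum with $1$. And the reason $L$ appears at the fixed point $x$ is that every use of the Hölder estimate ii) is arranged to have $x$ as one of its two arguments — the recursion provides this for the Taylor coefficients, while the explicit splitting ``$\Diff^{m_1e_1}f(t,y_{2:m})=\Diff^{m_1e_1}f(x)+\cdots$'' provides it for the integral remainder — which is legitimate because every intermediate point that occurs lies in the box $\prod_i[x_i\wedge y_i,\,x_i\vee y_i]$ and hence within distance $\zeta$ of $x$. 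I expect this bookkeeping (getting the staircase and the base point simultaneously right) to be the main, and really the only, obstacle.
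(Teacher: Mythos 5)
Your proof is correct, and it takes a genuinely different route from the paper's. The paper keeps everything $D$-dimensional and inducts on the total derivative count $N = |\ceil{\bm\beta-1}|$: after ordering the coordinates by increasing $\beta_i$ (least smooth first, so $\alpha_1\geq\cdots\geq\alpha_D$), it telescopes $f(y)-f(x)$ along $z^{(i)}=(y_1,\dots,y_i,x_{i+1},\dots,x_D)$, does one-dimensional Taylor in coordinate $i$ with coefficients at $z^{(i-1)}$, and applies the $D$-variable inductive hypothesis to $\Diff^{\ell e_i}f$ at points differing from $x$ only in the first $i-1$ coordinates. You instead induct on dimension, which is precisely what forces — and rewards — the generalization to an arbitrary weight vector $\bm a$ and threshold $b$, a clean reformulation that decouples the statement from the normalization $\sum_i\alpha_i=D$; you then expand in one variable with the remaining coordinates frozen at $y$ and recurse on the Taylor coefficients. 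The two orderings are opposite because the load-bearing exponent bound is met dually: the paper's ordering makes the \emph{moving} coordinates $j\leq i$ satisfy $\alpha_j\geq\alpha_i$ (so the exponents $(\beta-L_i\alpha_i)/\alpha_j\leq\alpha_i/\alpha_j\leq1$), whereas your ordering makes the \emph{expanded} variable have the smallest $\alpha_1$, so that the residual smoothness $\beta-m_1\alpha_1\leq\alpha_1\leq\alpha_i$ keeps every exponent at the integral-remainder point $(t,y_{2:m})$ below $1$. Both proofs share the same tacit domain assumption — coordinate-segment paths must lie in $\cU$, which you flag explicitly and the paper leaves implicit — and both arrange every H\"older evaluation to have $x$ as one endpoint so that $L(x)$ (rather than a supremum) factors out. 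Your remark that the Beta-function constant $\Gamma(p+1)/\Gamma(m_1+p+1)\leq 1/m_1!$ gives the sharp constant $1$ is correct and matches the paper's (silent) accounting.
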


		\begin{prp} \label{prp:der2} Let $f \in \hbu$. Then, for any $k \in \bbN^D$ such that $\inner{k}{\bm\alpha} < \beta$, and any $x,y \in \cU$ with $\|x-y\|\leq\zeta$,
			\begin{align*} 
				|\Diff^k f(y) - \Diff^k f(x) | \leq C L(x) \{\sum_{i=1}^D  |y_i - x_i|^{\frac{\beta- \inner{k}{\bm\alpha}}{\alpha_i} \wedge 1} \}.
			\end{align*} 
			with $C$ depending on $k$,$\zeta$ and $D$.
		\end{prp}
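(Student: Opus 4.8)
The plan is to bootstrap the Hölder estimate on $\Diff^k f$ from the estimates already available for its higher-order partial derivatives, by means of the anisotropic Taylor formula. By Proposition~\ref{prp:der}, $\Diff^k f \in \cH^{\bm\beta^{(k)}}_{\an}(\cU,L,\zeta)$ with $\bm\beta^{(k)} = (1-\inner{k}{\bm\alpha}/\beta)\,\bm\beta$; a short computation shows that the effective smoothness of $\bm\beta^{(k)}$ is $\eta := \beta - \inner{k}{\bm\alpha} > 0$, that its anisotropy vector is again $\bm\alpha$, and hence that $\alpha_{\max}$ is unchanged. Throughout I would write $t_i := |y_i - x_i|$, so that $t_i \le \|x-y\| \le \zeta$ for every $i$.

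First I would dispose of the case $\eta \le \alpha_{\max}$: then $\eta - \alpha_{\max} \le 0 = \inner{0}{\bm\alpha} < \eta$, so clause ii) of the definition of $\cH^{\bm\beta^{(k)}}_{\an}(\cU,L,\zeta)$ applied to the multi-index $0$ already gives
$$
|\Diff^k f(y) - \Diff^k f(x)| \le L(x)\sum_{i=1}^D t_i^{\frac{\eta}{\alpha_i}\wedge 1},
$$
which is the claim with $C=1$.

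In the remaining case $\eta > \alpha_{\max}$, I would apply Proposition~\ref{prp:taylor} to $\Diff^k f$ inside $\cH^{\bm\beta^{(k)}}_{\an}(\cU,L,\zeta)$ (legitimate since $\|x-y\|\le\zeta$). This expresses $\Diff^k f(y) - \Diff^k f(x)$ as a finite sum of terms, each of the form $L(x)\frac{1}{m!}\prod_j t_j^{a_j}$, where either $\bm a = m$ is an integer multi-index with $0 < \inner{m}{\bm\alpha} < \eta$ (the polynomial part, using clause i) to bound $|\Diff^{k+m}f(x)| \le L(x)$), or $\bm a = m + \frac{\eta - \inner{m}{\bm\alpha}}{\alpha_{i_0}}e_{i_0}$ with $\eta - \alpha_{\max} \le \inner{m}{\bm\alpha} < \eta$ (the remainder). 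The crucial point is that in both cases $m$ is a \emph{nonzero} integer vector --- nonzero in the remainder case precisely because $\eta - \alpha_{\max} > 0$ --- so there is an index $l$ with $a_l \ge 1$. I would then bound each factor $t_j^{a_j}$ with $j \ne l$ by $\max(1,\zeta)^{\sum_j a_j}$, and use the elementary inequality $t^b \le \max(1,\zeta)^{b-c}t^c$ (valid for $t\in[0,\zeta]$, $b\ge c\ge 0$) once more to replace $t_l^{a_l}$ by $t_l^{\frac{\eta}{\alpha_l}\wedge 1}$, which is legitimate since $a_l \ge 1 \ge \frac{\eta}{\alpha_l}\wedge 1$. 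Each term is thus $\lesssim L(x)\,t_l^{\frac{\eta}{\alpha_l}\wedge 1} \le L(x)\sum_i t_i^{\frac{\eta}{\alpha_i}\wedge 1}$, and summing over the finitely many terms (whose number is controlled by $k$, $D$ and $\bm\beta$) gives the result, with $C$ depending only on $k$, $D$, $\zeta$ and the fixed $\bm\beta$.

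The step that I expect to require the most care is the bookkeeping in this last case: checking that \emph{every} monomial produced by the anisotropic Taylor formula genuinely carries an exponent $\ge 1$ on at least one coordinate --- this is exactly what permits absorbing the truncation $\wedge 1$ present in the target bound --- together with keeping track of the dependence of the constant on $\zeta$ without assuming $\zeta \le 1$ or $\zeta \ge 1$. Everything else is a routine manipulation of the two already-established propositions.
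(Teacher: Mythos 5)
Your proof is correct, but it follows a genuinely different route from the paper's. The paper handles the nontrivial case $\inner{k}{\bm\alpha} < \beta - \alpha_{\max}$ by the mean value theorem applied directly to $\Diff^k f$ (legitimate there because $\Diff^{k+e_i}f$ exists), then bounds $|\Diff^{k+e_i}f(z)|$ by $L(x)$ plus the increment $|\Diff^{k+e_i}f(z)-\Diff^{k+e_i}f(x)|$, and estimates the latter by invoking the proposition itself for $k+e_i$ --- an explicit self-referential induction on the level set $\inner{k}{\bm\alpha}$, which terminates because each step increases $\inner{k}{\bm\alpha}$ by at least $\alpha_{\min}>0$. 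You instead bootstrap from \prpref{der} and \prpref{taylor}: you regard $\Diff^k f$ as an element of the reduced class $\cH^{\bm\beta^{(k)}}_{\an}(\cU,L,\zeta)$ with effective smoothness $\eta=\beta-\inner{k}{\bm\alpha}$ and unchanged anisotropy vector $\bm\alpha$, then use the anisotropic Taylor formula and observe that every monomial it produces (polynomial part and remainder alike) carries an exponent $\geq 1$ on some coordinate precisely when $\eta>\alpha_{\max}$, allowing the $\wedge 1$ truncation in the target to be absorbed via $t^b\le(1\vee\zeta)^{b-c}t^c$ for $b\ge c$. The two proofs agree verbatim on the base case $\eta\le\alpha_{\max}$ (clause ii) with the zero multi-index). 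Your route trades the paper's explicit mean-value-plus-induction for the already-available Taylor expansion, which packages the same induction inside \prpref{taylor}; the paper's argument is more self-contained and elementary, while yours is shorter given the surrounding machinery and makes the mechanism by which the truncated exponent appears more transparent. One small point of bookkeeping that both versions glide over but you make explicit: the constant really does depend on $\bm\beta$ (through the number of Taylor terms and the exponents), not just on $k,\zeta,D$ as the proposition's wording suggests; since $\bm\beta$ is fixed throughout the paper this is harmless.
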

		
			\begin{prp} \label{prp:less} Let $f \in \hbu$. Then, for any $\bm\beta' \leq \bm\beta$, $f \in \cH^{\bm\beta'}_{\an}(\cU, C L,\zeta)$ with $C$ depending on $\bm\beta$, $\zeta$ and $D$.
		\end{prp}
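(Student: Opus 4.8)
The plan is to derive Proposition \prpref{less} directly from the interpolation estimate of Proposition \prpref{der2}. The key elementary fact is that lowering the regularity vector only shrinks the set of admissible multi-indices and relaxes the H\"older exponents. Indeed, writing $\inner{k}{\bm\alpha}/\beta = \sum_i k_i/\beta_i$ and $\inner{k}{\bm\alpha'}/\beta' = \sum_i k_i/\beta_i'$, the hypothesis $\bm\beta' \leq \bm\beta$ gives $1/\beta_i' \geq 1/\beta_i$ for each $i$, whence
$$\frac{\inner{k}{\bm\alpha'}}{\beta'} \;\geq\; \frac{\inner{k}{\bm\alpha}}{\beta} \qquad \text{for every } k \in \bbN^D.$$
In particular, any $k$ with $\inner{k}{\bm\alpha'} < \beta'$ satisfies $\inner{k}{\bm\alpha} < \beta$; and since the $\alpha_i'$ are positive, only finitely many such $k$ exist, all lying in the (also finite) set $\{k : \inner{k}{\bm\alpha} < \beta\}$, which depends only on $\bm\beta$. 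Point (i) of Definition \dfnref{anihold} for $\bm\beta'$ is then immediate with constant $1$: for such $k$, $\Diff^k f$ is well defined on $\cU$ and $|\Diff^k f(x)| \leq L(x)$ because $f \in \cH^{\bm\beta}_{\an}(\cU,L,\zeta)$.

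For point (ii), I would fix $k$ with $\beta' - \alpha'_{\max} \leq \inner{k}{\bm\alpha'} < \beta'$; then $\inner{k}{\bm\alpha} < \beta$, so Proposition \prpref{der2} provides a constant $C_k$ (depending only on $k$, $\zeta$, $D$) with
$$|\Diff^k f(y) - \Diff^k f(x)| \;\leq\; C_k\, L(x) \sum_{i=1}^D |y_i - x_i|^{e_i}, \qquad e_i := \frac{\beta - \inner{k}{\bm\alpha}}{\alpha_i} \wedge 1,$$
for all $x,y \in \cU$ with $\|x-y\| \leq \zeta$. It remains to dominate $e_i$ by the target exponent $e_i' := \frac{\beta' - \inner{k}{\bm\alpha'}}{\alpha_i'} \wedge 1$. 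Using $\alpha_i = \beta/\beta_i$ and $\alpha_i' = \beta'/\beta_i'$, one computes $\frac{\beta - \inner{k}{\bm\alpha}}{\alpha_i} = \beta_i\bigl(1 - \sum_j k_j/\beta_j\bigr)$ and $\frac{\beta' - \inner{k}{\bm\alpha'}}{\alpha_i'} = \beta_i'\bigl(1 - \sum_j k_j/\beta_j'\bigr)$; both bracketed quantities are strictly positive (the second by the constraint $\inner{k}{\bm\alpha'} < \beta'$, the first because it is $\geq$ the second by the monotonicity above) and $\beta_i \geq \beta_i' > 0$, so $\frac{\beta - \inner{k}{\bm\alpha}}{\alpha_i} \geq \frac{\beta' - \inner{k}{\bm\alpha'}}{\alpha_i'}$ and therefore $e_i \geq e_i'$. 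Since $|y_i - x_i| \leq \|x-y\| \leq \zeta$ and $0 \leq e_i - e_i' \leq 1$, we get $|y_i - x_i|^{e_i} \leq \max(1,\zeta)\,|y_i - x_i|^{e_i'}$, so the displayed bound becomes the required H\"older estimate for $\bm\beta'$ with constant $\max(1,\zeta)\,C_k$. Taking $C$ to be $\max(1,\zeta)$ times the maximum of the $C_k$ over the finite set $\{k : \inner{k}{\bm\alpha} < \beta\}$ yields a single constant depending only on $\bm\beta$, $\zeta$ and $D$, as claimed.

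The argument is mostly bookkeeping. The one subtlety to watch is that a multi-index $k$ lying in the H\"older range of $\bm\beta'$ (i.e. $\inner{k}{\bm\alpha'} \geq \beta' - \alpha'_{\max}$) need not lie in the H\"older range of $\bm\beta$ --- it may fall below the threshold $\beta - \alpha_{\max}$ --- which is exactly why one cannot use point (ii) of the $\bm\beta$-class directly and must invoke the interpolated estimate of Proposition \prpref{der2}. One should also quickly check the degenerate situation where the target exponent $e_i' = 1$ (i.e. the relevant fraction exceeds $1$): the chain of inequalities above then forces $\frac{\beta - \inner{k}{\bm\alpha}}{\alpha_i} \geq 1$, hence $e_i = 1 = e_i'$, so no inconsistency arises.
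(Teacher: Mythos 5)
Your proof is correct and follows essentially the same route as the paper's: reduce to the interpolated estimate of Proposition~\ref{prp:der2}, verify the exponent inequality $\frac{\beta-\inner{k}{\bm\alpha}}{\alpha_i} = \beta_i(1-\inner{k}{1/\bm\beta}) \geq \beta_i'(1-\inner{k}{1/\bm\beta'}) = \frac{\beta'-\inner{k}{\bm\alpha'}}{\alpha_i'}$, and absorb the exponent mismatch into a $(1\vee\zeta)$ factor. The extra remarks you add (why one cannot invoke point (ii) of the $\bm\beta$-class directly, and the check that the $\wedge 1$ truncation causes no issue) are sound but are handled implicitly in the paper's shorter writeup.
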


		\begin{prp}\label{prp:tens} Let $f \in \cH^{\bm \beta_1}_{\an}(\cU_1,L_1,\zeta_1)$ and $g\in \cH^{\bm\beta_2}_{\an}(\cU_2,L_2,\zeta_2)$ with $\cU_1 \subset \bbR^{D_1}$ and $\cU_2 \subset \bbR^{D_2}$. Then
			$$
			f\otimes g \in \cH^{(\bm\beta_1,\bm\beta_2)}_{\an}(\cU_1 \times\cU_2, C L_1\otimes L_2,\zeta)
			$$
			where $f\otimes g(x,y) = f(x)g(y)$ and $\zeta = \zeta_1\wedge\zeta_2$, and where $C$ depends on $D_1,D_2,\zeta,\bm\beta_1$ and $\bm\beta_2$.
		\end{prp}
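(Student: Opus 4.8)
The plan is to treat this as a bookkeeping exercise on \dfnref{anihold}, reducing everything to the single–factor estimates already available, chiefly \prpref{der2}, after setting up the correct scaling dictionary between the product class and the two factor classes. Write $D=D_1+D_2$, $\bm\beta=(\bm\beta_1,\bm\beta_2)$, let $\beta,\beta^{(1)},\beta^{(2)}$ denote the effective smoothnesses of $\bm\beta,\bm\beta_1,\bm\beta_2$ (so $D/\beta=D_1/\beta^{(1)}+D_2/\beta^{(2)}$), and let $\bm\alpha,\bm\alpha^{(1)},\bm\alpha^{(2)}$ be the associated anisotropy vectors. Since $\alpha_\ell=\beta/\beta_\ell$, one has $\bm\alpha=\bigl(\tfrac{\beta}{\beta^{(1)}}\bm\alpha^{(1)},\ \tfrac{\beta}{\beta^{(2)}}\bm\alpha^{(2)}\bigr)$, hence for $k=(k_1,k_2)\in\bbN^{D_1}\times\bbN^{D_2}$ we get $\inner{k}{\bm\alpha}=\tfrac{\beta}{\beta^{(1)}}\inner{k_1}{\bm\alpha^{(1)}}+\tfrac{\beta}{\beta^{(2)}}\inner{k_2}{\bm\alpha^{(2)}}$. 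Thus $\inner{k}{\bm\alpha}<\beta$ is equivalent to $\inner{k_1}{\bm\alpha^{(1)}}/\beta^{(1)}+\inner{k_2}{\bm\alpha^{(2)}}/\beta^{(2)}<1$, which forces $\inner{k_1}{\bm\alpha^{(1)}}<\beta^{(1)}$ and $\inner{k_2}{\bm\alpha^{(2)}}<\beta^{(2)}$, and for $\ell$ in the first block (symmetrically for the second) a one-line computation gives
\[
\frac{\beta-\inner{k}{\bm\alpha}}{\alpha_\ell}=\frac{\beta^{(1)}-\inner{k_1}{\bm\alpha^{(1)}}}{\alpha^{(1)}_\ell}-\frac{\beta^{(1)}}{\alpha^{(1)}_\ell}\cdot\frac{\inner{k_2}{\bm\alpha^{(2)}}}{\beta^{(2)}}\ \le\ \frac{\beta^{(1)}-\inner{k_1}{\bm\alpha^{(1)}}}{\alpha^{(1)}_\ell},
\]
i.e.\ the increment exponent demanded of the product never exceeds the one supplied by the corresponding factor. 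These three facts are the whole skeleton.

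With this in hand I would first observe that shrinking $\zeta$ only weakens condition (ii) of \dfnref{anihold}, so one may assume $\zeta_1=\zeta_2=\zeta$. Condition (i) for $f\otimes g$ is then immediate: whenever $\inner{k}{\bm\alpha}<\beta$, the implications above ensure that $\Diff^k(f\otimes g)=\Diff^{k_1}f\otimes\Diff^{k_2}g$ exists and is pointwise bounded by $L_1\otimes L_2$, with no constant needed.

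The substance is condition (ii). Fixing $k$ with $\inner{k}{\bm\alpha}<\beta$ and two points $(u,v),(u',v')$ of $\cU_1\times\cU_2$ at distance $\le\zeta$ (so $\|u-u'\|\le\zeta$ and $\|v-v'\|\le\zeta$), I would telescope
\[
\Diff^{k_1}f(u')\Diff^{k_2}g(v')-\Diff^{k_1}f(u)\Diff^{k_2}g(v)=\bigl(\Diff^{k_1}f(u')-\Diff^{k_1}f(u)\bigr)\Diff^{k_2}g(v)+\Diff^{k_1}f(u')\bigl(\Diff^{k_2}g(v')-\Diff^{k_2}g(v)\bigr),
\]
and bound each difference by \prpref{der2} applied separately to $f$ and to $g$. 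Two routine adjustments then close the estimate. First, since the increments are $\le\zeta$ and, by the displayed inequality, the exponents \prpref{der2} produces dominate those required of the product, each monomial can be traded down to the target exponent $\frac{\beta-\inner{k}{\bm\alpha}}{\alpha_\ell}\wedge1$ at the cost of a constant depending only on $\zeta$ and the $\bm\beta_i$. Second, the two telescoped terms must be expressed through $L_1(u)L_2(v)=(L_1\otimes L_2)(u,v)$ at the fixed base point, not at $(u',v')$; this is exactly why the telescoping is arranged as above, and the stray factor $|\Diff^{k_1}f(u')|$ is controlled by $|\Diff^{k_1}f(u')|\le|\Diff^{k_1}f(u)|+|\Diff^{k_1}f(u')-\Diff^{k_1}f(u)|\le C\,L_1(u)$, using condition (i) together with \prpref{der2} and the boundedness of the increments. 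Summing the two contributions gives the required bound with $C=C(D_1,D_2,\zeta,\bm\beta_1,\bm\beta_2)$, and restricting $k$ to the range $\beta-\alpha_{\max}\le\inner{k}{\bm\alpha}<\beta$ recovers condition (ii); with condition (i) this yields $f\otimes g\in\cH^{(\bm\beta_1,\bm\beta_2)}_{\an}(\cU_1\times\cU_2,C\,L_1\otimes L_2,\zeta)$.

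Overall the argument is light. The only point I expect to need care — the ``main obstacle'' if there is one — is the base-point accounting just described: an unguarded use of the single-factor bounds would produce $L_1(u')L_2(v)$ instead of $L_1(u)L_2(v)$ and thereby miss the definition, which is what dictates both the particular telescoping and the ``$|\Diff^{k_1}f(u')|\lesssim L_1(u)$'' step. The scaling dictionary of the first paragraph, and in particular the exponent inequality, is the conceptual heart but is a short computation once $\bm\alpha$ is recognised as a block-wise rescaled concatenation of $\bm\alpha^{(1)}$ and $\bm\alpha^{(2)}$.
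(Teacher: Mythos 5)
Your proposal is correct and follows essentially the same route as the paper's proof: you make the same reduction of $\inner{k}{\bm\alpha}<\beta$ to the two block conditions $\inner{k_j}{\bm\alpha_j}<\beta_j$, use the same telescoping of the product difference, apply \prpref{der2} to each factor and bound the stray off-base factor $|\Diff^{k_1}f(u')|$ by $L_1(u)$ via a triangle inequality, and invoke the same exponent comparison $\tfrac{\beta_j-\inner{k_j}{\bm\alpha_j}}{\alpha_{j,i}}\geq\tfrac{\beta-\inner{k}{\bm\alpha}}{\alpha_i}$ to trade exponents at a $\zeta$-dependent cost. The base-point accounting you flag as the main care point is indeed exactly the step the paper handles in the same way.
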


		\begin{prp} \label{prp:mult} Let $f \in \cH^{\bm\beta_1}_{\an}(\cU,L_1,\zeta)$ and $g \in \cH^{\bm \beta_2}_{\an}(\cU,L_2,\zeta)$. Then $f\times g \in  \cH^{\bm\beta}_{\an}(\cU,L,\zeta)$ with $\bm\beta = \bm\beta_1\wedge\bm\beta_2$, and where, for some constant $C$ depending on $\zeta$, $\bm\beta$ and $D$, 
			$$
			L(x) = C L_1(x)  L_2(x).
			$$
		\end{prp}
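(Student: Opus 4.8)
The plan is to reduce the two factors to a common anisotropy vector, then differentiate the product by the Leibniz rule, bounding each summand with the H\"older-difference estimate of \prpref{der2}. Set $\bm\beta := \bm\beta_1 \wedge \bm\beta_2$, with associated effective smoothness $\beta$ and exponent vector $\bm\alpha$, and recall that $\inner{k}{\bm\alpha}<\beta$ is equivalent to $\sum_j k_j/\beta_j < 1$. Since $\bm\beta \leq \bm\beta_1$ and $\bm\beta \leq \bm\beta_2$ coordinatewise, any $\ell \leq k$ with $\inner{k}{\bm\alpha}<\beta$ also satisfies $\inner{\ell}{\bm\alpha_1}<\beta_1$ and $\inner{\ell}{\bm\alpha_2}<\beta_2$, and a one-line computation gives $(\beta_1 - \inner{\ell}{\bm\alpha_1})/\alpha_{1,i} = \beta_{1,i}(1 - \sum_j \ell_j/\beta_{1,j}) \geq \beta_i(1-\sum_j k_j/\beta_j) = (\beta - \inner{k}{\bm\alpha})/\alpha_i$ for every $i$ (and likewise with index $2$); equivalently, one may simply invoke \prpref{less} to replace $f$ and $g$ by elements of $\cH^{\bm\beta}_{\an}(\cU,\cdot,\zeta)$. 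In either reading, condition i) of \dfnref{anihold} and \prpref{der2}, applied to $f$ and $g$ with their own parameters, are available for all partials $\Diff^\ell f$, $\Diff^{k-\ell}g$ with $\ell\leq k$, $\inner{k}{\bm\alpha}<\beta$, and furnish H\"older exponents that are at least the target exponents $(\beta-\inner{k}{\bm\alpha})/\alpha_i$. Also $\inner{k}{\bm\alpha}<\beta$ forces $|k|\leq\beta_{\max}$, so all multi-indices below have uniformly bounded length and every constant produced stays controlled by $\bm\beta$, $\zeta$, $D$.

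Next, for $\inner{k}{\bm\alpha}<\beta$, iterating the one-variable product rule (legitimate since all $\Diff^\ell f$, $\Diff^\ell g$ with $\ell\leq k$ exist and are continuous, the latter via \prpref{der2} and finiteness of $L_1, L_2$, so Schwarz's theorem applies) shows that $\Diff^k(fg)$ exists and equals $\sum_{\ell \leq k}\binom{k}{\ell}\Diff^\ell f \cdot \Diff^{k-\ell}g$. Condition i) for $fg$ is then immediate: bounding each factor by $L_1(x)$, resp.\ $L_2(x)$, via condition i) for $f$ and $g$ gives $|\Diff^k(fg)(x)|\leq 2^{|k|}L_1(x)L_2(x)$.

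For condition ii), fix $k$ with $\beta - \alpha_{\max}\leq\inner{k}{\bm\alpha}<\beta$ and $x,y\in\cU$ with $\|x-y\|\leq\zeta$, expand $\Diff^k(fg)(y) - \Diff^k(fg)(x)$ by Leibniz, and split each summand through $a(y)b(y)-a(x)b(x) = (a(y)-a(x))b(y) + a(x)(b(y)-b(x))$ with $a = \Diff^\ell f$, $b = \Diff^{k-\ell}g$. The difference factors $a(y)-a(x)$ and $b(y)-b(x)$ are controlled by \prpref{der2} --- which applies to all $\ell$ with $\inner{\ell}{\bm\alpha_1}<\beta_1$, not only those in the boundary band, and this is precisely why one needs \prpref{der2} rather than just \dfnref{anihold} --- yielding sums $\sum_i |y_i-x_i|^{e_i \wedge 1}$ with $e_i \geq (\beta-\inner{k}{\bm\alpha})/\alpha_i$; the non-difference factor $b(y)$ is bounded by $CL_2(x)$ via $|b(y)|\leq|b(x)| + |b(y)-b(x)|$, the same difference bound, and $|y_i-x_i|\leq\zeta$; and $a(x)$ is bounded by $L_1(x)$ through condition i). Since each $e_i\wedge 1 \geq (\beta-\inner{k}{\bm\alpha})/\alpha_i\wedge 1$ and $|y_i-x_i|\leq\zeta$, raising to the larger exponent costs at most a factor $\max(1,\zeta)$, so every term is dominated by $CL_1(x)L_2(x)\sum_i|y_i-x_i|^{(\beta-\inner{k}{\bm\alpha})/\alpha_i\wedge 1}$; summing the boundedly many terms gives condition ii) for $fg$ with $L = CL_1L_2$.

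The argument is entirely elementary; the only real care needed --- the main obstacle --- is the bookkeeping: the displacement $\|x-y\|$ is bounded by $\zeta$ but not necessarily small, so ``larger H\"older exponents'' dominate only up to a $\zeta$-dependent constant; and one must use the difference estimate to replace $\Diff^{k-\ell}g(y)$ by a multiple of $L_2(x)$ rather than $L_2(y)$, so that the final bound is anchored at the point $x$ as \dfnref{anihold} requires. The foundational point that $\Diff^k(fg)$ is a genuine, order-independent partial derivative (so that Leibniz is valid) also deserves a sentence, but is handled by the continuity of the lower-order partials noted above.
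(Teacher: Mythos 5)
Your proof is correct and takes essentially the same route as the paper: reduce implicitly (or via \prpref{less}) to the common anisotropy $\bm\beta_1\wedge\bm\beta_2$, apply the Leibniz rule, bound each factor by $L_1$ or $L_2$ through condition i), and control the differences by \prpref{der2}, paying a $(1\vee\zeta)$ factor to pass from the larger exponents $(\beta_j-\inner{\ell}{\bm\alpha_j})/\alpha_{j,i}$ down to the target $(\beta-\inner{k}{\bm\alpha})/\alpha_i$. You are in fact slightly more careful than the paper's own write-up at one point: the paper's displayed bound $|\Diff^{k}fg(y)-\Diff^{k}fg(x)| \leq L_2(x)\sum_\ell\binom{k}{\ell}|\Diff^\ell f(y)-\Diff^\ell f(x)| + L_1(x)\sum_\ell\binom{k}{\ell}|\Diff^\ell g(y)-\Diff^\ell g(x)|$ tacitly replaces the $L_2(y)$ or $L_1(y)$ that a direct product split produces by $L_2(x)$ or $L_1(x)$, and your step $|\Diff^{k-\ell}g(y)|\leq |\Diff^{k-\ell}g(x)|+|\Diff^{k-\ell}g(y)-\Diff^{k-\ell}g(x)|\lesssim L_2(x)$ is exactly the patch needed to anchor the bound at $x$ as \dfnref{anihold} requires.
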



	\begin{prp} \label{prp:comp} Let $f \in \cH_{\iso}^{\beta_1}(\cU_1,L_1)$ and $g  \in \cH_{\iso}^{\beta_0}(\cU_0,C_0)$  where  $g$ takes its value in $\cU_1$, and where $\cU_0$ is bounded. Assume furthermore that $\beta_0 \geq 1$. Then $f \circ g \in \cH^\beta_{\iso}(\cU_0,L_1 \circ g)$ where $\beta = \beta_0 \wedge \beta_1$ and where $C$ depends on $C_0, C_1$ and $\diam \cU_0$.
		\end{prp}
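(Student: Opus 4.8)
The plan is to reduce everything to the multivariate Faà di Bruno formula together with the Hölder-type control of partial derivatives furnished by \prpref{der2}; the assumption $\beta_0\ge 1$ enters only to make $g$ Lipschitz. Write $\beta=\beta_0\wedge\beta_1$. Since $\beta_0\ge 1$, the components $g_1,\dots,g_D$ of $g$ have bounded first-order partials ($|\partial_i g_m|\le C_0$), so $g$ is Lipschitz on the bounded set $\cU_0$ with some constant $\Lambda$ depending on $C_0$, $D$ (and, if $\cU_0$ is not convex, its geometry — in the applications $\cU_0$ is a ball): $\|g(y)-g(x)\|\le\Lambda\|y-x\|$, and $\|y-x\|\le\diam\cU_0=:\zeta_0$. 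For $k\in\bbN^D$ with $|k|<\beta$, the multivariate Faà di Bruno formula \cite{constantine1996multivariate} writes $\Diff^k(f\circ g)(x)$ as a finite sum — with combinatorial coefficients depending only on $k$ — of \emph{elementary terms}
$$
T(x)=\Diff^\ell f\big(g(x)\big)\,\prod_{j}\big(\Diff^{k^{(j)}}g(x)\big)^{\ell^{(j)}},\qquad |\ell|\le|k|,\ \ \textstyle\sum_j\ell^{(j)}=\ell,\ \ \sum_j|\ell^{(j)}|\,k^{(j)}=k,\ \ k^{(j)}\ne 0,
$$
where $(\Diff^{k^{(j)}}g)^{\ell^{(j)}}=\prod_{m=1}^D(\Diff^{k^{(j)}}g_m)^{\ell^{(j)}_m}$ is a product of $\sum_j|\ell^{(j)}|=|\ell|$ scalar factors of the form $\Diff^{k^{(j)}}g_m$ with $1\le|k^{(j)}|\le|k|$. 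All of these derivatives exist because $|\ell|\le|k|<\beta\le\beta_1$ and $|k^{(j)}|\le|k|<\beta\le\beta_0$. For condition~(i) of \dfnref{anihold} this already suffices: in each elementary term $|\Diff^\ell f(g(x))|\le L_1(g(x))$ (as $|\ell|<\beta_1$) and $|\Diff^{k^{(j)}}g_m(x)|\le C_0$ (as $|k^{(j)}|<\beta_0$), so $|T(x)|\le C_0^{|\ell|}L_1(g(x))$, and summing the finitely many terms gives $|\Diff^k(f\circ g)(x)|\le C\,L_1(g(x))$.

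For condition~(ii), fix $k$ with $\beta-1\le|k|<\beta$ (so $\beta-|k|\le 1$, hence $(\beta-|k|)\wedge1=\beta-|k|$) and $x,y\in\cU_0$, and estimate each elementary term via
$$
T(y)-T(x)=\Big[\Diff^\ell f(g(y))-\Diff^\ell f(g(x))\Big]\prod_j\big(\Diff^{k^{(j)}}g(y)\big)^{\ell^{(j)}}+\Diff^\ell f(g(x))\Big[\prod_j\big(\Diff^{k^{(j)}}g(y)\big)^{\ell^{(j)}}-\prod_j\big(\Diff^{k^{(j)}}g(x)\big)^{\ell^{(j)}}\Big].
$$
In the first summand the product is bounded by $C_0^{|\ell|}$, while \prpref{der2} applied to $f$ (legitimate since $|\ell|<\beta_1$) together with the Lipschitz bound on $g$ gives $|\Diff^\ell f(g(y))-\Diff^\ell f(g(x))|\le C\,L_1(g(x))\sum_i|g_i(y)-g_i(x)|^{(\beta_1-|\ell|)\wedge1}\le C\,L_1(g(x))\,\|y-x\|^{(\beta_1-|\ell|)\wedge1}$. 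In the second summand $|\Diff^\ell f(g(x))|\le L_1(g(x))$, and expanding the difference of products telescopically (each of the $|\ell|\le|k|$ factors bounded by $C_0$) and using \prpref{der2} on each $g_m$ (legitimate since $|k^{(j)}|<\beta_0$) yields a bound $C\sum_j\|y-x\|^{(\beta_0-|k^{(j)}|)\wedge1}$. The key point is that every exponent occurring is at least $\beta-|k|$: from $|\ell|\le|k|$ and $\beta\le\beta_1$ we get $(\beta_1-|\ell|)\wedge1\ge(\beta-|k|)\wedge1=\beta-|k|$, and from $|k^{(j)}|\le|k|$ and $\beta\le\beta_0$ we get $(\beta_0-|k^{(j)}|)\wedge1\ge\beta-|k|$. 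Since $\|y-x\|\le\zeta_0$, any power $\|y-x\|^a$ with $a\ge\beta-|k|$ is $\le\zeta_0^{\,a-(\beta-|k|)}\|y-x\|^{\beta-|k|}$, so $|T(y)-T(x)|\le C\,L_1(g(x))\,\|y-x\|^{\beta-|k|}$. Summing the finitely many elementary terms and using $\|y-x\|^{\beta-|k|}\le\sum_i|y_i-x_i|^{\beta-|k|}$ (subadditivity of $t\mapsto t^{\beta-|k|}$ since $\beta-|k|\le1$) gives condition~(ii) with weight $C\,L_1\circ g$. Hence $f\circ g\in\cH^\beta_{\iso}(\cU_0,C\,L_1\circ g)$ with $C$ depending on $\beta_0,\beta_1,D,C_0,\diam\cU_0$ — which is the assertion (the constant announced in the statement is the $C$ multiplying $L_1\circ g$).

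The main obstacle is not conceptual but organisational, and lives in the second-summand estimate: one must be careful that in the telescoping of each Faà di Bruno term exactly one scalar factor is "spent" to produce the Hölder gain while all remaining factors — and the vector-valued nature of $g$ — contribute only bounded constants, and that the spent factor always carries an exponent no smaller than $\beta-|k|$. Everything else — existence of the derivatives, uniform boundedness — follows at once from $|\ell|,|k^{(j)}|\le|k|<\beta\le\beta_0\wedge\beta_1$, and the hypothesis $\beta_0\ge1$ is used precisely and only to turn $g$ into a Lipschitz map, without which the composition would lose regularity (e.g. composing two sub-Lipschitz Hölder maps yields exponent $\beta_0\beta_1<\beta_0\wedge\beta_1$).
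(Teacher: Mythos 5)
Your proof is correct and follows essentially the same route as the paper's: the multivariate Fa\`a di Bruno formula to express $\Diff^k(f\circ g)$, followed by a telescoping decomposition of each Fa\`a di Bruno term to verify the H\"older condition, with \prpref{der2} supplying the intermediate-derivative H\"older bounds. You simply spell out the bookkeeping (existence of all the derivatives, the exponent comparison $(\beta_1-|\ell|)\wedge1\ge\beta-|k|$ and $(\beta_0-|k^{(j)}|)\wedge1\ge\beta-|k|$, the use of $\diam\cU_0$ to absorb excess powers, and the role of $\beta_0\ge1$ in making $g$ Lipschitz) that the paper compresses into ``a simple use of Fa\`a di Bruno'' and ``a telescopic argument.''
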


		\begin{proof}[Proof of Proposition \ref{prp:der}] 
		  Let  $k \in \bbN^D$ such that $\inner{k}{\alpha} < \beta$. Note that
			$$
			\bm \alpha = \beta \frac{1}{\bm \beta} =  \beta^{(k)} \frac{1}{\bm \beta^{(k)}}.
			$$ 
			Let $\ell \in \bbN^D$ such that $\inner{\ell}{\bm\alpha} < \beta^{(k)}$. Then $\inner{k+\ell}{\alpha} < \beta$ so that by definition, $\Diff^{\ell} \Diff^{k} f = \Diff^{k+\ell} f$ exists and is bounded from above by $L$. If now $\ell$ is such that $\beta^{(k)} - \alpha_{\max} \leq \inner{\ell}{\bm\alpha} < \beta^{(k)}$, then $\beta - \alpha_{\max} \leq \inner{k+\ell}{\bm\alpha} < \beta$ and for any $x,y \in \cU$ that are at most $\zeta$-apart, 
			$$
			|\Diff^\ell \Diff^{k} f(y) - \Diff^\ell \Diff^k f(x)| \leq L(x) \sum_{i=1}^D |y_i - x_i|^{\frac{\beta- \inner{k+\ell}{\bm\alpha}}{\alpha_i}\wedge1} = L(x) \sum_{i=1}^D |y_i - x_i|^{\frac{\beta^{(k)}- \inner{\ell}{\bm\alpha}}{\alpha_i}\wedge 1},
			$$
			ending the proof.
		\end{proof}

		\begin{proof}[ Proof of Proposition \ref{prp:taylor}]
		
			We let for any $0\leq i \leq D$,
			$$
			z^{(i)} = (y_1,\dots,y_i,x_{i+1},\dots,x_D) \in \bbR^D
			$$
			so that $z^{(0)} = x$ and $z^{(D)} = y$. We prove the results recursively on the integer $N = | \ceil{\bm\beta-1} |$. If $N = 0$, then every coefficient $\beta_i$ are strictly less than $1$, and the results follow immediately from the definition of being $\bm\beta$-H\"older (there are no $k$ that satisfies $\inner{k}{\bm\alpha} < \beta$ except $k = 0$). If $N \geq 1$, we can order without loss of generality and for ease of notations 
			$$\beta_1 \leq \cdots \leq \beta_k < 1 \leq \beta_{k+1} \leq ..  \leq \beta_D$$
			with $k \leq D-1$ because $N \geq 1$. We write 
			$$
			f(y) - f(x) = f(z^{(k)}) - f(x) +  \sum_{i=k}^{D-1} f(z^{(i+1)})-f(z^{(i)}).
			$$
			The first term is simply bounded from above by
			$$
			|f(z^{(k)}) - f(x)|  \leq L(x) \sum_{i=1}^k  |y_i - x_i|^{\beta/\alpha_i}.
			$$
			Recall we write $e_i = (0, \dots,0,1,0,\dots, 0)$ with $1$ at the $i$-th position.  For the other terms, we do the Taylor expansion with integral remaining term: 
			\begin{align}
			\begin{split}
				&f(z^{(i)})-f(z^{(i-1)})  = \sum_{\ell=1}^{L_i-1} \frac{ (y_i-x_i)^\ell}{ \ell!}
				\Diff^{\ell e_i} f(z^{(i-1)}) + R_i(y,x), \quad L_i = \ceil{\beta_i-1} \geq 1 \label{devi} \\
				&R_i(y,x)  = \frac{ (y_i-x_i)^{L_i} }{ (L_i-1)!} \int_0^1(1-t)^{L_i-1} \Diff^{L_ie_i}f(z^{(i)}(t))\diff t,\\
				& \quad \quad \quad \quad \quad \quad\quad\quad\quad\quad \text{with}\quad z^{(i)}(t) = t z^{(i)} + (1-t) z^{(i-1)}. 
				\end{split}
			\end{align}
			
			Note that $L_i\alpha_i = \frac{ \ceil{\beta_i - 1} }{ \beta_i } \beta $ and for all $ k = (k_{1:i},0) \neq 0$ , $L_i\alpha_i + \inner{k}{\bm\alpha} \geq \beta$.
			Indeed, because the coefficient of $\bm\beta$ are ordered, there holds
			$$ L_i\alpha_i + \inner{k}{\bm\alpha} = \frac{ \beta  }{\beta_i}  \left[ \ceil{\beta_i-1}  + \sum_{j\leq i} k_j \beta_i/\beta_j  \right]\geq  \frac{ \beta  }{\beta_i}  ( \ceil{\beta_i-1}+1 )\geq \beta.$$
			Therefore 
			\begin{align}\label{Ri}
				\left| \Diff^{L_ie_i} f(z^{(i)}(t) ) - \Diff^{L_ie_i} f(x )\right| &\leq L(x)\sum_{j=1}^i|y_j-x_j|^{ \frac{\beta - L_i\alpha_i}{\alpha_j}} 
			\end{align}
			and in particular
			\begin{align*} 
				R_i(x,y) &= \frac{ (y_i-x_i)^{L_i} }{ L_i!} \Diff^{L_ie_i} f(x) + \wt R_i(x,y) \\
				\text{where}~~~~|\wt R_i(x,y)| &\leq L(x) \frac{|y_i-x_i|^{L_i}}{L_i !} \sum_{j=1}^i|y_j-x_j|^{ \frac{\beta - L_i\alpha_i}{\alpha_j}}. 
			\end{align*} 
			
			Now we use the induction hypothesis on $f_{i,\ell} =\Diff^{\ell e_i} f $ which belongs to $ \mathcal H^{\bm\beta^{(\ell e_i)}}(\mathcal U, L , \zeta)$ (according to \prpref{der}): 
			$$
			f_{i,\ell}(z^{(i-1)}) - f_{i,\ell}(x) = \sum_{\substack{0 < \inner{k}{\bm \alpha} < \beta - \ell \alpha_i \\ k_{i:D} = 0}} \frac{(y-x)^k}{k!} \Diff^{\ell e_i + k} f(x) + R_{i,\ell}(x,y), 
			$$
			where the remainder $R$ satisfies the following bound
			\begin{align*} 
				|R_{i,\ell}(x,y)| \leq L(x)  \sum_{\substack{\beta-\alpha_{\max} \leq \inner{k+\ell e_i}{\alpha} <\beta \\ k_{i:D} = 0}} \frac{|y-x|^k}{k!}\sum_{i=1}^D  |y_j - x_j|^{\frac{\beta- \inner{k + \ell e_i}{\bm\alpha}}{\alpha_j} }.
			\end{align*} 
			All in all, gathering all the developments yields
			\begin{align*} 
				&f(z^{(i)}) - f(z^{(i-1)}) \\
				&= \sum_{\ell = 1}^{L_i-1} \frac{(y_i-x_i)^\ell}{\ell!} \{\Diff^{\ell e_i} f(x) + \sum_{\substack{0 < \inner{k}{\bm \alpha} < \beta - \ell \alpha_i \\ k_{i:D} = 0}} \frac{(y-x)^k}{k!} \Diff^{\ell e_i + k} f(x) + R_{i,\ell}(x,y)\} + R_i(y,x) \\
				&=  \sum_{\ell = 1}^{L_i-1} \{\sum_{\substack{0 < \inner{k+\ell e_i}{\bm \alpha} < \beta  \\  k_{i:D} = 0}} \frac{(y-x)^{k+\ell e_i}}{(k+\ell e_i)!} \Diff^{\ell e_i + k} f(x) + \frac{(y_i-x_i)^\ell}{\ell!} R_{i,\ell}(y,x)\}+ R_i(y,x) \\
				&=   \sum_{\substack{0 < \inner{k}{\bm \alpha} < \beta  \\  k_i \neq 0,~k_{(i+1):D} = 0}} \frac{(y-x)^{k}}{k!} \Diff^{k} f(x) + \{\wt R_i(y,x) + \sum_{\ell = 1}^{L_i-1} \frac{(y_i-x_i)^\ell}{\ell!} R_{i,\ell}(y,x)\}
			\end{align*} 
			so that
			\begin{align*} 
				f(y) - f(x) &=   \sum_{0 < \inner{k}{\bm \alpha} < \beta} \frac{(y-x)^{k}}{k!} \Diff^{k} f(x) \\
				&+ \underbrace{\sum_{i \geq k+1} \{\wt R_i(y,x) + \sum_{\ell = 1}^{L_i-1} \frac{(y_i-x_i)^\ell}{\ell!} R_{i,\ell}(y,x)\} + R_0(y,x)}_{:= R(y,x)}
			\end{align*} 
			with $R_0(y,x) = f(z^{(k)})-f(x)$, and with $R(y,x)$ being exactly bounded from above by
			$$
			|R(y,x)| \leq L(x)  \sum_{\beta-\alpha_{\max} \leq \inner{k}{\alpha} <\beta} \frac{|y-x|^k}{k!}\sum_{j=1}^D  |y_j - x_j|^{\frac{\beta- \inner{k}{\bm\alpha}}{\alpha_j} }
			$$
			and \prpref{taylor} is proved. 
		\end{proof}

		\begin{proof}[Proof of Proposition \ref{prp:der2}] 
		 Either $\inner{k}{\bm\alpha} \geq \beta-\alpha_{\max}$ and then there is nothing to show, or $\inner{k}{\bm\alpha} < \beta-\alpha_{\max}$, in which case $\inner{k+e_i}{\bm\alpha} <\beta$ for any $i$ and thus $\diff \Diff^k f$ is well defined. There thus exist $z \in [x,y]$ such that $\Diff^k f(y) - \Diff^k f(x) = \diff \Diff^k f(z)[y-x]$ leading to
			$$
			|\Diff^k f(y) - \Diff^k f(x)| \leq \sum_{i=1}^D |\Diff^{k+e_i} f(z)|\times |y_i-x_i|. 
			$$
			Using an induction argument, there exists $C_{i} > 0$ such that
			$$
			|\Diff^{k+e_i} f(z) - \Diff^{k+e_i} f(x) | \leq C_{i} L(x) \{\sum_{i=1}^D  |y_i - x_i|^{\frac{\beta- \inner{k+e_i}{\bm\alpha}}{\alpha_i} \wedge 1} \} \leq C_{i} D (1\vee \zeta) L(x)
			$$
			leading the the right results with $C = 1 + D (1\vee \zeta) \max_{1 \leq i \leq D} C_{i}$.
		\end{proof}

				\begin{proof}[Proof of Proposition \ref{prp:less}]

		 Let $\beta'$ be the harmonic mean of $\bm\beta'$ and $\bm\alpha' = \beta'/\bm\beta'$. For any $k \in \bbN^D$ such that $\inner{k}{\bm\alpha'} < \beta'$, we have $\inner{k}{\bm\alpha} <\beta$ so that $\Diff^k f$ is well defined and bounded from above by $L$. What's more, using \prpref{der2}, we have
			\begin{align*} 
				|\Diff^k f(y) - \Diff^k f(x) | \leq C L(x) \{\sum_{i=1}^D  |y_i - x_i|^{\frac{\beta- \inner{k}{\bm\alpha}}{\alpha_i} \wedge 1} \}.
			\end{align*} 
			Now notice that
			$$
			\frac{\beta- \inner{k}{\bm\alpha}}{\alpha_i} = \beta_i \{1 - \inner{k}{1/\bm\beta}\} \geq \beta_i' \{1 - \inner{k}{1/\bm\beta'}\} = \frac{\beta'- \inner{k}{\bm\alpha'}}{\alpha_i'}
			$$
			so that
			$$
			|y_i - x_i|^{\frac{\beta- \inner{k}{\bm\alpha}}{\alpha_i} \wedge 1}  \leq (1\vee\zeta) \times |y_i - x_i|^{\frac{\beta'- \inner{k}{\bm\alpha'}}{\alpha_i'} \wedge 1}
			$$
			yielding the result.
		\end{proof}

		\begin{proof}[Proof of Proposition \ref{prp:tens}] Let $k = (k_1,k_2)\in \bbN^{D_1+D_2}$. Let $\bm\beta = (\bm\beta_1,\bm\beta_2)$ and $\beta, \beta_1, \beta_2$ be the harmonic means of $\bm\beta,\bm\beta_1$ and $\bm\beta_2$. Let also $\bm\alpha = \beta/\bm\beta$ and $\bm\alpha_i = \beta_i/\bm\beta_i$ for $i \in \{1,2\}$. If $\inner{k}{\bm \alpha} < \beta$, then
			$$
			\inner{k_1}{1/\bm\beta_1} +\inner{k_2}{1/\bm\beta_2} < 1 
			$$
			so that both term is stricly less that one $\Diff^{k_1}f$ and $\Diff^{k_2} g$ are well defined and $\Diff^k (f\otimes g) = \Diff^{k_1}f \otimes \Diff^{k_2}g$ is well defined as well. Furthermore, if $x = (x_1,x_2)$ and $y = (y_1,y_2)$, is such that $\|x-y\|\leq\zeta$, then $\|x_1-y_1\| \leq \zeta \leq \zeta_1$ and $\|x_2-y_2\| \leq \zeta \leq\zeta_2$. Using \prpref{der2}, one find that
			\begin{align*} 
				&|\Diff^k (f\otimes g)(x)-\Diff^k (f\otimes g)(y)| \\
				&\leq  |\Diff^{k_2} g(x_2)|\times |\Diff^{k_1} f(x_1)-\Diff^{k_1} f(y_1)| +  |\Diff^{k_1} f(y_1)|\times |\Diff^{k_2} g(x_2)-\Diff^{k_2} f(y_2)| \\
				&\leq C L_2(x_2)L_1(x_1) \{\sum_{i=1}^{D_1}  |y_{1,i} - x_{1,i}|^{\frac{\beta_1- \inner{k_1}{\bm\alpha_1}}{\alpha_{1,i}} \wedge 1} \} 
				\\&+ C|\Diff^{k_1} f(y_1)| L_2(x_2)  \{\sum_{i=1}^{D_2}  |y_{2,i} - x_{2,i}|^{\frac{\beta_2- \inner{k_2}{\bm\alpha_2}}{\alpha_{2,i}} \wedge 1} \}.
			\end{align*} 
			for some constant $C > 0$ depending on $D_1,D_2,\zeta$ and $\bm\beta$.
			Now notice first that 
			$$|\Diff^{k_1} f(y_1)| \leq |\Diff^{k_1} f(y_1) - \Diff^{k_1} f(x_1)|+|\Diff^{k_1} f(x_1)| \leq (C D (1\vee \zeta) + 1)L_1(x_1),$$
			and notice furthermore that of either $j\in\{1,2\}$,
			$$
			\frac{\beta_j- \inner{k_j}{\bm\alpha_j}}{\alpha_{j,i}} = \bm\beta_{j,i}\{1 - \inner{k_j}{1/\bm\beta_j}\} \geq   \bm\beta_{j,i}\{1 - \inner{k}{1/\bm\beta}\} = \frac{\beta- \inner{k}{\bm\alpha}}{\alpha_{i}} ~~\left(= \frac{\beta- \inner{k}{\bm\alpha}}{\alpha_{i+D_1}}~\text{if}~j=2\right)
			$$
			so that
			\begin{align*} 
				&|\Diff^k (f\otimes g)(x)-\Diff^k (f\otimes g)(y)| \\
				&\leq (1\vee\zeta) C L_2(x_2)L_1(x_1) \{\sum_{i=1}^{D_1}  |y_{1,i} - x_{1,i}|^{\frac{\beta- \inner{k}{\bm\alpha}}{\alpha_{i}} \wedge 1} \} \\
				&+ (1\vee\zeta) C (C D (1\vee \zeta) + 1) L_1(x_1)L_2(x_2)  \{\sum_{i=1}^{D_2}  |y_{2,i} - x_{2,i}|^{\frac{\beta- \inner{k}{\bm\alpha}}{\alpha_{i+D_1}} \wedge 1} \} \\
				&\leq (1\vee\zeta) C (C D (1\vee \zeta) + 1) (L_1\otimes L_2)(x)  \{\sum_{i=1}^{D_1+D_2}  |y_{i} - x_{i}|^{\frac{\beta- \inner{k}{\bm\alpha}}{\alpha_{i}} \wedge 1} \}
			\end{align*} 
			ending the proof.
		\end{proof}

		\begin{proof}[ Proof of Proposition \ref{prp:mult}]
		
		 We let $\bm\alpha = \beta/\bm\beta$ with $\beta$ the harmonic mean of $\bm\beta$. Now for any multi-index $k$ such that $\inner{k}{\bm\alpha} < \beta$, we have $\inner{\ell}{\bm\alpha} < \beta$ for any $\ell \leq k$ so that $\Diff^{k} fg$ is indeed well-defined and
			$$
			\Diff^{k}fg(x) = \sum_{\ell \leq k} {k\choose \ell} \Diff^{\ell} f(x) \Diff^{k-\ell}g(x) 
			$$
			so that $|\Diff^{k}fg(x)| \leq 2^{|k|} L_1(x) L_2(x)$ and 
			$$
			|\Diff^{k}fg(y)-\Diff^{k}fg(x)| \leq L_2(x) \sum_{\ell \leq k} {k\choose \ell} | \Diff^{\ell} f(y) - \Diff^{\ell} f(x)|+ L_1(x) \sum_{\ell \leq k} {k\choose \ell} | \Diff^{\ell} g(y) - \Diff^{\ell} g(x)|.
			$$
			Using now \prpref{der2}, we find that there exists $C > 0$ depending on $\bm\beta$, $D$ and $\zeta$ such that
			\begin{align*} 
				|\Diff^{k}fg(y)-\Diff^{k}fg(x)| &\leq  2 C   L_1(x)L_2(x) \sum_{\ell \leq k} {k\choose \ell} \sum_{i=1}^D |y_i - x_i|^{\frac{\beta- \inner{\ell}{\bm\alpha}}{\alpha_i} \wedge 1} \\
				&\leq 2^{|k|+1} (1\vee\zeta) C L_1(x)L_2(x) \sum_{i=1}^D |y_i - x_i|^{\frac{\beta- \inner{k}{\bm\alpha}}{\alpha_i} \wedge 1}
			\end{align*}
			where we used again that 
			$$
			|y_i - x_i|^{\frac{\beta- \inner{\ell}{\bm\alpha}}{\alpha_i} \wedge 1}  \leq (1\vee\zeta)\times |y_i - x_i|^{\frac{\beta- \inner{k}{\bm\alpha}}{\alpha_i} \wedge 1},   
			$$ 
			for $\ell \leq k$, ending the proof.
		\end{proof}

%
%
	
		\begin{proof}[ Proof of Proposition \ref{prp:comp}]
		
		A simple use of the multivariate Faa Di Bruno formula \cite{constantine1996multivariate} yields that, for any $k \in \bbN^D$ with $|k| \leq \beta$, $\Diff^{k} (f \circ g)(x)$ for $x \in \cU_0$ is a sum a term of the form
			$$
			\Diff^{\ell} f(g(x)) \prod_{j=1}^s   (\Diff^{k^{(j)}} g(x))^{\ell^{(j)}}
			$$
			with $|\ell| \leq |k|$, $s \leq |k|$, $\sum_j \ell^{(j)} = \ell$, and $\sum_{j} |\ell^{(j)}| k^{(j)} = k$. In particular, it is bounded by $C L_1(g(x))$ where $C$ depends on $C_0$ and $\beta$. Likewise, a telescopic argument would yield that 
			$$
			|\Diff^{k} (f\circ g)(y)-\Diff^{k} (f\circ g)(x) | \leq C L_1(g(x)) \prod_{i=1}^D |x-y|^{(\beta - |k|) \wedge 1}
			$$
			where $C$ depends on $C_0$, $\beta$ and $\diam \cU_0$, ending the proof.
		\end{proof}

		\subsection{Proofs associated to the examples of \prpref{model}} \label{app:prpmodel}
		
	\begin{proof}[Proof of \prpref{model}]
	
	 Take $x_0 \in M$. In the orthonormal noise model, the density $f$ takes the very simple form
			\begin{align*} 
				\bar f_{x_0,\delta}(v,\eta) &= \delta^{D-d} f \circ \bar\Psi_{x_0,\delta}(v,\eta) =  \delta^{D-d} \times f_0(\Psi_{x_0}(v)) \times \delta^{-(D-d)} c_{\bot} K\left(\frac1\delta N_{x_0}(v,\delta \eta)\right)\\
				&=  f_{x_0}(v) \times c_{\bot} K\left(\eta\right)
			\end{align*} 
			where we used the isotropy of $K$. Thus, $\bar f_{x_0,\delta}$ lies in $\cH^{\bm\beta}_{\an}(\cW_{x_0,\delta}, L_{x_0,\delta})$ where $L$ was defined in the statement of \prpref{model}, through \prpref{tens} and the isotropy of $L_\perp$. 
			
			In the isotropic noise model, one can write
			$$
			\bar f_{x_0,\delta}(v,\eta) = \delta^{D-d}\int_{M} \delta^{-D}K\left(\frac{\bar\Psi_{x_0,\delta}(v,\eta)-x}{\delta}\right)f_0(x) \diff \mu(x). 
			$$
			If $\| \eta \| \geq 1$, the integrand is trivially $0$, so one may focus on $\|\eta\| \leq 1$. Now if $x \in M$ is at least $2\delta$ apart from $\Psi_{x_0}(v)$, there holds
			$$
			\|x-\bar\Psi_{x_0,\delta}(v,\eta) \| \geq \|x - \Psi_{x_0}(v)\| - \delta  \geq \delta
			$$
			so that the integrand in the integral above is zero for $x$ outside of $B(\Psi_{x_0}(v),2\delta)$. Doing the variable change $x = \Psi_{x_0}(w)$, we can write
			\begin{align*} 
				\bar f_{x_0,\delta}(v,\eta) &= \frac1{\delta^{d}}\int_{\exp_{x_0}^{-1}\ball(\Psi_{x_0}(v),2\delta)} K\left(\frac{\bar\Psi_{x_0,\delta}(v,\eta)-\Psi_{x_0}(w)}{\delta}\right)f_{x_0}(w) |\det \diff \Psi_{x_0}(w)| \diff w \\
				&= \int_{\cZ_\delta} \underbrace{K\left(\frac{\Psi_{x_0}(v)-\Psi_{x_0}(v+\delta s)}{\delta} + N_{x_0}(v,\eta)\right)}_{=: K\circ g_{\delta,s}(v,\eta)} \underbrace{f_{x_0}(v + \delta s) |\det \diff \Psi_{x_0}(v + \delta s)|}_{=: h_{\delta,s}(v)} \diff s \\
			\end{align*} 
			where 
			$$\cZ_\delta = \frac{1}{\delta}\{\exp_{x_0}^{-1}\ball(\Psi_{x_0}(v),2\delta) - v\}.$$
			Now notice that, using the first inequality of \eqref{lem:useful}, we have $\ball(\Psi_{x_0}(v),2\delta) \subset \ball (x_0,\tau/8+2\delta) \subset \ball(x_0,3\tau/16)$, so that, using the second inequality of \eqref{lem:useful} with $\gamma = 3/8$, we find
			$$
			\exp_{x_0}^{-1}\ball(\Psi_{x_0}(v),2\delta) \subset \ball_{T_{x_0}M}(0, \kappa(3/8)3\tau/16) \subset  \ball_{T_{x_0}M}(0, \tau/4) 
			$$
			so that in particular, according to \eqref{lem:doublelip}, $\exp_{x_0}^{-1}$ is $16/11$-Lipschitz on $\ball(\Psi_{x_0}(v),2\delta)$ and consequently, $\cZ_\delta \subset \ball_{T_{x_0} M}(0,32/11)$. Furthermore, notice that
			$$v + \delta\ball_{T_{x_0} M}(0,32/11) \subset \ball_{T_{x_0} M}(0,19\tau/88)
			$$
			with $19\tau/88$ being smaller than the injectivity radius, so that finally
			\beq \label{kgh}
			\bar f_{x_0,\delta}(v,\eta) =  \int_{ \ball_{T_{x_0} M}(0,32/11)}  K\circ g_{\delta,s}(v,\eta) \times h_{\delta,s}(v) \diff s
			\eeq
			It is straighforward to see that 
			$$ v\mapsto \frac{\Psi_{x_0}(v)-\Psi_{x_0}(v+\delta s)}{\delta}
			$$
			has derivatives bounded from above, in virtue of \prpref{der2}, by something depending on $C_M$ and $\beta_M$ only (and not on $\delta$) and up to order $\ceil{\beta_M-1}\geq \beta_\perp$. Furthermore, $(v,\eta) \mapsto N_{x_0}(v,\eta)$ is $(\beta_M - 1)$-H\"older by construction. It so happens that $g_{\delta,s} \in \cH_{\iso}^{\beta_\perp}(\cW_{x_0,\delta}, C_g)$ for some $g$ depending on $C_M$ and $\beta_M$. Using \prpref{comp}, we get that $K \circ g_{\delta,s} \in \cH^{\beta_\perp}_{\iso}(\cW_{x_0,\delta},C_1 L_\perp \circ g_{\delta,s})$ for some constant $C_1$ depending on $C_g$, $C_K$ and $\beta_\perp$. Similar reasoning and the use and \prpref{mult} would lead to $h_{\delta,s} \in  \cH^{\beta_0}_{\iso}(\cV_{x_0}, C_2 L_0\circ \Psi_{x_0} \circ \tau_{\delta s})$ where $\tau_{\delta s}(v) = v +\delta s$ and for some constant $C_2$ depending on $C_M$, $\tau$ and $\beta_0$. Now seeing $h_{\delta,s}$ as a function of $(v,\eta)$ would trivially lead to $h_{\delta,s} \in  \cH^{\bm\beta}_{\an}(\cW_{x_0,\delta}, C_2 L_0 \circ \Psi_{x_0} \circ \tau_{\delta s} \circ \pi_1)$ with $\bm\beta = (\beta_0,\dots,\beta_0,\beta_\perp,\dots,\beta_\perp)$ and $\pi_1(v,\eta) = v$, and an application of \prpref{mult} together with the observation that $\beta_0 \leq \beta_\perp$ yields 
			$$K\circ g_{\delta,s} \times h_{\delta,s} \in  \cH^{\bm\beta}_{\an}(\cW_{x_0,\delta}, C L_\perp \circ g_{\delta,s} \times L_0 \circ \Psi_{x_0} \circ \tau_{\delta s} \circ \pi_1)$$
			for some constant $C$ depending on $C_2$, $C_1$ and $\beta_0,\beta_\perp$. We conclude by using the linearity of the integral, so that integrating \eqref{kgh} gives $\bar f_{x_0,\delta} \in \cH^{\bm\beta}_{\an}\left(\cW_{x_0,\delta}, \wt L\right)$, where
				\begin{align*}
				\wt L(v,\eta) &= C  \int_{ \ball_{T_{x_0} M}(0,32/11)}  L_\perp \circ g_{\delta,s}(v,s) \times L_0 \circ \Psi_{x_0}(v+\delta s)\diff s \\
				&\lesssim  \int_{ \ball_{T_{x_0} M}(0,32/11)}  L_\perp \circ g_{\delta,s}(v,s) \times L_0 \circ \Psi_{x_0}(v+\delta s) \times |\det \diff \Psi_{x_0}(v+\delta s) | \diff s  \\
				&=  \delta^{-d} \int_{M}  L_\perp\left(\frac{\bar \Psi_{x_0,\delta}(v,\eta) - x}{\delta}\right) \times L_0(x) \diff \mu_M(x) 
				\end{align*}
	so that 
	\begin{align*}
				 L(y) &= C\delta^{-D}\int_{M}  L_\perp\left(\frac{y - x}{\delta}\right) \times L_0(x) \diff \mu_M(x) 
				\end{align*}
			where $L$ was defined in the statement of \prpref{model}, and where we used \lemref{defNB}  to justify the introduction of $|\det \diff \Psi_{x_0}(v+\delta s)|$. This ends the proof.
		\end{proof}

			\begin{proof}[Proof of \lemref{moments}]

		 In the orthogonal noise model, recall (see proof of \prpref{model}) that 
			$$
			\begin{cases}
				f_0(x) = f_*(\pr_M(x)) \times c_\perp \delta^{-(D-d)} K((x-\pr_M x)/\delta) \\
				L(x) = C  \delta^{-(D-d)} L_0(\pr_M(x)) \times  L_\perp((x-\pr_M x)/\delta),
			\end{cases}
			$$
			for some $C > 0$. There thus holds, letting $\cO_{x_0} = \bar\Psi_{x_0,1}(\cW_{x_0,1})$, and using Cauchy-Schwartz inequality
			\begin{align*} 
				&\{\int_{\cO_{x_0}} \{\frac{L(x)}{f_0(x)}\}^{\omega^*/2} f_0(x) \diff x\}^2 \\
				&\lesssim \int_{\cO_{x_0}} \{\frac{L_0(\pr_M x)}{f_*(\pr_M x)}\}^{\omega^*} f_0(x) \diff x  \int_{\cO_{x_0}} \{\frac{L_\perp ((x- \pr_M x)/\delta)}{K((x-\pr_M x)/\delta)}\}^{\omega^*} f_0(x) \diff x.
			\end{align*} 
			The first integral in the RHS is simply, up to the constant $c_\perp$, 
			$$
			\int_{\Psi_{x_0}(\cV_{x_0})} \{\frac{L_0(z)}{f_*(z)}\}^{\omega^*} f_*(z) \diff\mu_M(z),
			$$
			which is bounded by assumption. The second integral is exactly
			$$
			\int_{\Psi_{x_0}(\cV_{x_0})} f_*(z) \diff\mu_M(z) \times \int_{\ball_{\bbR^{D-d}}(0,1)} \{\frac{L_\perp (\eta)}{K(\eta)}\}^{\omega^*} K(\eta) \diff x.
			$$
			which is also bounded by assumption. Hence \eqref{assint} holds true in this case.
			
			In the isotropic noise model, there holds this time
			$$
			\begin{cases} \displaystyle
				f_0(x) = \delta^{-D} \int_{M \cap \ball(x,\delta)}  K((y-x)/\delta) f_*(y) \diff \mu_M(y) \\ \displaystyle
				L(x) = C \delta^{-D} \int_{M \cap \ball(x,\delta)}  L_\perp ((y-x)/\delta) L_0(y) \diff \mu_M(y),
			\end{cases}
			$$
			for some $C > 0$. Using the fact that for any integrable function $\phi,\psi \geq 0$ and any $\omega \geq 1$, there holds
			$$
			\{\int \phi\}^\omega \leq \{\int \frac{\phi^\omega}{\psi^{\omega-1}}\} \times \{\int \psi\}^{\omega-1},
			$$
			(which is simple consequence of the H\"older inequality), one find that
			\begin{align*} 
				&\int_{\cO_{x_0}} \{\frac{L(x)}{f_0(x)}\}^{\omega^*/2} f_0(x) \diff x \\
				&\lesssim \int_{\cO_{x_0}}\int_{M} \{\frac{ L_\perp ((y-x)/\delta) L_0(y)}{K((y-x)/\delta) f_*(y)}\}^{\omega^*/2} \delta^{-D}  K((y-x)/\delta) f_*(y) \diff \mu_M(y) \diff x \\
				&\leq \delta^{-D} I_1^{1/2} I_2^{1/2},
			\end{align*} 
			where
			\begin{align*} 
				I_1 &:= \int_{\cO_{x_0}} \int_{M} \{\frac{L_0(y)}{f_*(y)}\}^{\omega^*} K((y-x)/\delta) f_*(y) \diff \mu_M(y) \diff x \\
				&\lesssim \delta^D \int_{M } \{\frac{L_0(y)}{f_*(y)}\}^{\omega^*} f_*(y) \diff \mu_M(y) \lesssim \delta^D	,\end{align*} 
			by assumption, and where
			\begin{align*} 
				I_2 &:=  \int_{\cO_{x_0}}\int_{M} \{\frac{ L_\perp ((y-x)/\delta)}{K((y-x)/\delta)}\}^{\omega^*}  K((y-x)/\delta) f_*(y) \diff \mu_M(y) \diff x \\
				&\lesssim \delta^D \int_M f \diff \mu_{M} \int_{\ball(0,1)}\{\frac{ L_\perp (\eta)}{K(\eta)}\}^{\omega^*}  K(\eta)  \diff \eta,
			\end{align*} 
			so that indeed $ \delta^{-D} I_1^{1/2} I_2^{1/2} \lesssim 1$, which ends the proof.
			\end{proof}

		\subsection{Proof of \prpref{hybridDP}} \label{app:prior}
		
			If $Q_2 =Q_0^{\otimes D} $ with  $Q_0\sim \DP( B H_\lambda):= \Pi_{\DP}$ then, for $0<x_1\leq x_2/2$ and $x_1 < 1$, letting $A_1 =   [x_1,x_1(1+x_1^b)] $, $A_2  =  [x_2,x_2(1+x_1^b)] $ and $A_3 = (A_1 \cup A_2)^c$, we have
			$$(Q_0(A_1), Q_0(A_2), Q_0(A_3) ) \sim \mathcal D( B H_\lambda(A_1),  B H_\lambda(A_2), B H_\lambda(A_3)),$$  
			and 
			$$Q_2(A_1^d \times A_2^{D-d}) = Q_0(A_1)^dQ_0(A_2)^{D-d}.$$
			We let $\gamma =  \Gamma (B) / \(\Gamma( BH_\lambda(A_1))\Gamma( BH_\lambda(A_2))\Gamma( BH_\lambda(A_3)\)$. Noticing that when $x_1, x_2$ are small, $ H_\lambda(A_3) =1-o(1)$, there holds, \newclem{letting $i=1$ if $x_1 \leq x_2$ and $i=2$ otherwise,
			\begin{align*}
				\tilde\Pi_\Lambda &\big[Q_2\left(A_1^d A_2^{D-d} \right) \geq x_1^{B_0} \big] \geq 
				\Pi_{DP}\( Q_0(A_1)> x_1^{B_0/D}, Q_0(A_2)> x_1^{B_0/D} \)\\
				&\geq \frac{ \gamma }{2^{(B-1)_+}} \int_{ x_1^{B_0/D}}^{1/4}x^{BH_\lambda(A_1)-1}dx\int_{ x_1^{B_0/D}}^{1/4}x^{BH_\lambda(A_2)-1}dx\\
				& \geq \frac{ \Gamma (B) }{ (1 + o(1))2^{(B-1)_+}}\left[ 4^{-BH_\lambda(A_1)} - x_1^{\frac{BH_\lambda(A_1)B_0}{D}} \right]\left[ 4^{-BH_\lambda(A_2)} - x_1^{\frac{BH_\lambda(A_2)B_0}{D}} \right]\\
				&\geq  C_1 \left[1 - C_2 e^{- {\frac{BH_\lambda(A_i)B_0}{D}}}\right]^2,
			\end{align*}
			for some constant $C_1,C_2 > 0$, where we used the fact that for $x_1$ and $x_2$ small, $H_\lambda(A_i) = \min\{H_\lambda(A_1), H_\lambda(A_2)\}$ and $H_\lambda(A_1)$ and $H_\lambda(A_2)$ are both small. 
			Under the  assumption on $H_\lambda$ near $0$, 
			$$H_\lambda(A_i) \gtrsim e^{- c_2 x_i^{-1/4}} x_i^{1+b} \gtrsim e^{- c'_2 x_i^{-1/4}} $$ 
			for some $c'_2 > 0$, which in turn implies 
			$$ 
			\tilde\Pi_\Lambda \big[Q_2\left(A_1^d A_2^{D-d} \right) \big]  \gtrsim e^{- c'_2 x_i^{-1/4}} \geq e^{- c'_2 x_1^{-d/2}}
			$$
			where we used that $x_i^{-1/4} \leq x_1^{-1/2} \leq x_i^{-d/2}$, so that \eqref{cond:piH:LB} holds.} Condition \eqref{cond:piH:UB} is verified similarly: first note that 
			$ Q_2\left(\min_{i\leq D} \lambda_i \leq x \right) \leq D Q_0((0,x]) $ and using the fact that $Q_0((0,x])$ follows a Beta variable with parameters $(BH_\lambda((0,x]), B(1 +o(1)))$ we obtain that 
			$$ \bbE_{\tilde \Pi_\Lambda}\left[Q_2\left(\min_{i\leq D} \lambda_i \leq x \right) \right] \lesssim H_\lambda((0,x]) \lesssim e^{- c_3 x^{-b_3}}, 
			$$ 
			for small $x$. Similar computation terminates the proof of \eqref{cond:piH:UB}. 
			

		\section{Appendix to Section \ref{app:thmapprox1}: proof and framework of \thmref{approx}} \label{app:theoretical}

		\subsection{Technical Lemmata} \label{app:thmapprox2}
		We let $\chi_1,\dots,\chi_J$ be the partition of unity defined in Section \ref{sec:partition} associated with a $\tau/64$-packing of $\{x_1,\dots,x_J\}$ of $M \cap \ball(0,R)$. We recall that $J \lesssim R^D$ and we take $R = (H \log(1/\sigma))^{1/\kappa}$. We define
		$$
		f_j := c_j^{-1} \chi_j \times f_0~~~~\text{where}~~~c_j = \int_{\bbR^D} \chi_j(x) f_0(x) \diff x.
		$$
		Before we give the proof of \corref{approx} we need a few technical Lemmata. 
		
		\begin{lem}
			For all $ 1\leq j \leq J$, $f_j \in \cH^{\beta_0,\beta_\bot}_\delta(M,L_j)$ with $L_j(x) := C c_j^{-1} I_K(x-x_j) \chi_j(x) L(x)$ where $C > 0$ is a constant depending on $\tau$ and where we took $K = \ceil{\beta_\perp}$.
		\end{lem}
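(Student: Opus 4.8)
The plan is to verify the two defining conditions of $\cH^{\beta_0,\beta_\bot}_\delta(M,L_j)$ (Definition~\ref{def:defmanihold}) directly. The support condition is immediate, since $\supp f_j\subseteq\supp\chi_j\cap\supp f_0\subseteq M^\delta$. For the anisotropic condition, fix $x_0\in M$ and, following the definition, set $\bar f_{j,x_0,\delta}=\delta^{D-d}f_j\circ\bar\Psi_{x_0,\delta}$ and $L_{j,x_0,\delta}=\delta^{D-d}L_j\circ\bar\Psi_{x_0,\delta}$; one must show $\bar f_{j,x_0,\delta}\in\cH^{\bm\beta_{0,\perp}}_{\an}(\cW_{x_0,\delta},L_{j,x_0,\delta})$. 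Writing $\bar f_{j,x_0,\delta}=c_j^{-1}\,(\chi_j\circ\bar\Psi_{x_0,\delta})\cdot\bar f_{0,x_0,\delta}$ and using that $f_0\in\hbm$ gives $\bar f_{0,x_0,\delta}\in\cH^{\bm\beta_{0,\perp}}_{\an}(\cW_{x_0,\delta},L_{x_0,\delta})$ with $L_{x_0,\delta}=\delta^{D-d}L\circ\bar\Psi_{x_0,\delta}$, the whole statement reduces to a regularity estimate on the factor $\chi_j\circ\bar\Psi_{x_0,\delta}$, followed by an application of the product rule \prpref{mult}. (If $c_j=0$ then $f_j\equiv0$ and there is nothing to prove.)

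The key claim is that $\chi_j\circ\bar\Psi_{x_0,\delta}$ lies in $\cH^{\beta_\perp}_{\iso}\bigl(\cW_{x_0,\delta},\,C\,(I_K(\cdot-x_j)\chi_j)\circ\bar\Psi_{x_0,\delta}\bigr)=\cH^{(\beta_\perp,\dots,\beta_\perp)}_{\an}(\cW_{x_0,\delta},\cdot)$ for a constant $C$ depending only on $\tau$ and $\beta_\perp$, with $K=\ceil{\beta_\perp}$. To see this, note first that $\bar\Psi_{x_0,\delta}(v,\eta)=\bar\Psi_{x_0}(v,\delta\eta)$, so by \eqref{barpsi} and $\delta\leq1$ one has $\bar\Psi_{x_0,\delta}\in\cH^{\beta_M-1}_{\iso}(\cW_{x_0,\delta},C^*_M)$, which is ample regularity since $\beta_M-1\geq\beta_\perp+2$. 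Second, because $f_0$ is supported on $M^\delta$, the product $\bar f_{j,x_0,\delta}$ is supported in the ($\delta$-independent) bounded set $\{(v,\eta)\in\cW_{x_0,\delta}:\|\eta\|\leq1\}$, whose image under $\bar\Psi_{x_0,\delta}$ lies in $M^{\delta}\subseteq M^{\gamma\tau}$; on $M^{\gamma\tau}$ the bump functions $\chi_j$ obey the localized bounds $|\Diff^m\chi_j|\leq I_K(\cdot-x_j)\chi_j$ for $|m|\leq K$ of Lemma~\ref{lem:chixj}(v), the uniform bounds of Lemma~\ref{lem:chixj}(iv), and the moment bounds $\sup_{M^{\gamma\tau}}(I_K(\cdot-x_j))^\omega\chi_j<\infty$. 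Combining these through the multivariate Faà di Bruno formula exactly as in the proof of \prpref{comp} gives the claim on that bounded region, and since $\bar f_{j,x_0,\delta}$ and all its derivatives vanish outside $\{\|\eta\|\leq1\}$, the estimate extends to all of $\cW_{x_0,\delta}$. I expect the main obstacle to be the Hölder part of this estimate: the bounding function $I_K(\cdot-x_j)\chi_j$ degenerates super-exponentially near $\partial\supp\chi_j$, so comparing it at nearby points is delicate, and it is precisely properties (iv)--(v) of Lemma~\ref{lem:chixj} — the simultaneous boundedness of $I_K(\cdot-x_j)\chi_j$ and of all its powers against $\chi_j$ — that make the composition/product estimate close.

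Granting the claim, apply \prpref{mult} on $\cW_{x_0,\delta}$ with $g_1=\chi_j\circ\bar\Psi_{x_0,\delta}\in\cH^{(\beta_\perp,\dots,\beta_\perp)}_{\an}$ and $g_2=\bar f_{0,x_0,\delta}\in\cH^{\bm\beta_{0,\perp}}_{\an}$. Since $\beta_0\leq\beta_\perp$ we have $(\beta_\perp,\dots,\beta_\perp)\wedge\bm\beta_{0,\perp}=\bm\beta_{0,\perp}$, so \prpref{mult} yields $\bar f_{j,x_0,\delta}=c_j^{-1}g_1g_2\in\cH^{\bm\beta_{0,\perp}}_{\an}\bigl(\cW_{x_0,\delta},\,C'c_j^{-1}(I_K(\cdot-x_j)\chi_j)\circ\bar\Psi_{x_0,\delta}\cdot L_{x_0,\delta}\bigr)$. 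The bounding function here equals $\delta^{D-d}\bigl(C'c_j^{-1}I_K(\cdot-x_j)\chi_j\,L\bigr)\circ\bar\Psi_{x_0,\delta}=\delta^{D-d}L_j\circ\bar\Psi_{x_0,\delta}=L_{j,x_0,\delta}$ with $L_j:=C'c_j^{-1}I_K(\cdot-x_j)\chi_j\,L$ and $C=C'$. As this holds for every $x_0\in M$, we conclude $f_j\in\cH^{\beta_0,\beta_\bot}_\delta(M,L_j)$, with $K=\ceil{\beta_\perp}$ and $C$ depending only on $\tau$ and $\beta_\perp$.
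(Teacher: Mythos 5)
Your decomposition---factor $f_j=c_j^{-1}\chi_j f_0$, establish the regularity of $\chi_j\circ\bar\Psi_{x_0,\delta}$ via \prpref{comp}, then apply the product rule \prpref{mult}---is exactly the paper's own (one-line) argument, which cites the same three ingredients; so your high-level route matches the paper's.

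The gap is the one you yourself flag as delicate, and your proposed fix does not close it. For \prpref{mult} to apply you need $\chi_j\in\cH^{\beta_\perp}_{\iso}\bigl(\bbR^D,\,C\,I_K(\cdot-x_j)\chi_j\bigr)$, which requires not only the derivative bound of \lemref{chixj}(v) but also the H\"older increment bound of \dfnref{anihold}(ii) with that weight evaluated at the base point. That increment bound genuinely fails near $\partial\supp\chi_j$: take $x$ at distance $\ve$ from $\partial\ball(x_j,\tau/32)$ and $y$ a distance $2\ve$ further inward. Since $\chi_j\asymp\rho_{x_j}$ with $\rho(t)=e^{-1/(1-\|t\|^2)_+}$, one has $|\chi_j(y)-\chi_j(x)|\asymp\chi_j(y)\asymp e^{-c/(4\ve)}$, whereas the claimed bound is $I_K(x-x_j)\chi_j(x)\,|y-x|^{\beta_\perp\wedge1}\asymp\ve^{(\beta_\perp\wedge1)-(K+1)}e^{-c/(2\ve)}$; the polynomial weight $I_K$ cannot bridge the factor $e^{c/(4\ve)}$. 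So ``properties (iv)--(v) make the estimate close'' is not correct as stated. Repairing it requires either a different weight, or a weaker increment condition in \dfnref{anihold} (e.g.\ $L(x)\vee L(y)$, or displacements small relative to $\dist(x,\partial\supp\chi_j)$), together with a check that the downstream uses---notably the Taylor remainder in the proof of \thmref{approx} and the ratio integrals in \lemref{gkgb}---still go through. This lacuna is inherited from the paper's own one-line proof, but your proposal does not resolve it either.
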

		\begin{proof} The proof is a direct consequence of  \prpref{mult} and \prpref{comp}  applied to $\chi_j(x)$, along with the assertion iv) from \lemref{chixj}.
		\end{proof}
		We let $L_{j,\delta} := \delta^{D-d} L_j \circ \bar \Psi_{x_j,\delta}$ and
		$$
		\bar f_{j,\delta} := f_j \circ \bar \Psi_{x_j,\delta} : \cW_{x_j,\delta} \to \bbR^D.
		$$
		and also introduce the functions $g_j : \bbR^D \to \bbR$ from the proof of \thmref{approx}. Recall that these functions are of the form
		$$
		g_j(x) := f_j(x) + \frac{1}{\delta^{D-d}}\sum_{0 < \inner{k}{\bm\alpha} < \beta} \sigma^{\inner{k}{\bm\alpha}} d_{j,k}(x,\sigma,\delta) \Diff^{k} \bar f_{j,\delta}(z_x)~~~~\text{where}~~z_x :=\Delta_{1,\delta}^{-1} \bar\Psi_{x_j}^{-1}(x) 
		$$
		and satisfy
		\benum
		\item They are supported on $\cO_{x_j}^0$;
		\item The functions $d_{j,k}$ are uniformly bounded by a constant $C$ depending on $C_M$;
		\item $|K_\Sigma g_j(x) - f_j(x)| \lesssim \sigma^{\beta} L_j(x)$ on $\cO_{x_j}^1$;
		\item For all $H>0$, $|K_\Sigma g_j(x) - f_j(x)| \lesssim \sigma^{H} \|L_j\|_\infty$ outside of $\cO_{x_j}^1$.
		\eenum 
		
		\begin{lem} \label{lem:gkgb}
			Under \eqref{assint}, there holds, for any $\ve \in (0,\omega-2\beta)$, any $\inner{k}{\bm\alpha} < \beta$ and any $1 \leq j \leq J$,
			\begin{align*} 
				\int \left( \frac{|\Diff^k \bar f_{j,\delta} (z)|}{\bar f_{j,\delta} (z)} \right)^{\frac{2\beta+\ve}{\inner{k}{\bm\alpha}}} \bar f_{j,\delta} (z)\diff z \lesssim  c_j^{-1}~~~\text{and}~~~\int \left( \frac{L_{j,\delta}(z)}{\bar f_{j,\delta} (z)} \right)^{\frac{2\beta+\ve}{\beta}} \bar f_{j,\delta} (z)\diff z \lesssim c_j^{-1},
			\end{align*} 
			up to a constant depending on the parameters.
		\end{lem}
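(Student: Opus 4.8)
The plan is to reduce both estimates to the moment condition~\eqref{assint} for $f_0$ at the packing point $x_j\in M$, by absorbing the partition‑of‑unity weight $\chi_j$ into a harmless bounded factor with the help of \lemref{chixj}. First I would unravel the definitions: writing $\tilde\chi_j:=\chi_j\circ\bar\Psi_{x_j,\delta}$ and letting $\bar f_{x_j,\delta}$, $L_{x_j,\delta}$ be the pullbacks of $f_0$ and $L$ entering~\eqref{assint} at $x_0=x_j$, the relations $f_j=c_j^{-1}\chi_j f_0$ and (by the preceding lemma) $L_j=Cc_j^{-1}I_K(\cdot-x_j)\chi_j L$ give, on $\cW_{x_j,\delta}$,
$$
\bar f_{j,\delta}=c_j^{-1}\,\tilde\chi_j\,\bar f_{x_j,\delta},\qquad L_{j,\delta}=Cc_j^{-1}\,\tilde I_{K,j}\,\tilde\chi_j\, L_{x_j,\delta},\qquad \tilde I_{K,j}(z):=I_K\!\big(\bar\Psi_{x_j,\delta}(z)-x_j\big).
$$
The factor $c_j^{-1}$ then cancels in each of the two ratios appearing in the statement and survives only as the overall $c_j^{-1}$ on the right‑hand side; so, setting $p_k:=(2\beta+\ve)/\inner{k}{\bm\alpha}$, it suffices to bound by a universal constant the two integrals $\int\big(|\Diff^k(\tilde\chi_j\bar f_{x_j,\delta})|/(\tilde\chi_j\bar f_{x_j,\delta})\big)^{p_k}\tilde\chi_j\bar f_{x_j,\delta}\,\diff z$ and $\int(\tilde I_{K,j})^{(2\beta+\ve)/\beta}(L_{x_j,\delta}/\bar f_{x_j,\delta})^{(2\beta+\ve)/\beta}\tilde\chi_j\bar f_{x_j,\delta}\,\diff z$.

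For the first integral I would expand $\Diff^k(\tilde\chi_j\bar f_{x_j,\delta})$ by Leibniz and control the derivatives of $\tilde\chi_j$ via the multivariate Faa di Bruno formula, using the uniform (in $\delta\le 1$ and in $x_j$) boundedness of the derivatives of $\bar\Psi_{x_j,\delta}$ coming from~\eqref{barpsi} and points (iv)--(v) of \lemref{chixj}; note that on the support of $\bar f_{x_j,\delta}$ one has $\bar\Psi_{x_j,\delta}(z)\in M^\delta\subset M^{\gamma\tau}$ for $\delta$ small, so \lemref{chixj} is indeed applicable. This yields $|\Diff^\ell\tilde\chi_j|\lesssim\tilde I_{K,j}\,\tilde\chi_j$ for $\ell\ne 0$, hence
$$
\frac{|\Diff^k(\tilde\chi_j\bar f_{x_j,\delta})|}{\tilde\chi_j\bar f_{x_j,\delta}}\ \lesssim\ \sum_{\ell\le k}\big(\tilde I_{K,j}\big)^{\1_{\ell\ne 0}}\,\frac{|\Diff^{k-\ell}\bar f_{x_j,\delta}|}{\bar f_{x_j,\delta}} .
$$
Raising to the power $p_k$ and using $(\sum_i a_i)^{p_k}\lesssim\sum_i a_i^{p_k}$, it remains to bound, for each $\ell\le k$, $\int(\tilde I_{K,j})^{p_k\1_{\ell\ne 0}}(|\Diff^{k-\ell}\bar f_{x_j,\delta}|/\bar f_{x_j,\delta})^{p_k}\tilde\chi_j\bar f_{x_j,\delta}\,\diff z$. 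By \lemref{chixj}(v) applied with exponent $p_k$ one has $(\tilde I_{K,j})^{p_k}\tilde\chi_j\lesssim 1$, so this integral is at most a constant times $\int(|\Diff^{k-\ell}\bar f_{x_j,\delta}|/\bar f_{x_j,\delta})^{p_k}\bar f_{x_j,\delta}\,\diff z$. Since $(2\beta+\ve)\inner{k-\ell}{\bm\alpha}\le(2\beta+\ve)\inner{k}{\bm\alpha}<\omega\inner{k}{\bm\alpha}$, we have $p_k<\omega/\inner{k-\ell}{\bm\alpha}$, so this last integral is $\lesssim 1$ by~\eqref{assint} together with Jensen's inequality on the finite measure $\bar f_{x_j,\delta}\,\diff z$ (of total mass $\lesssim 1$ by the Jacobian bounds of \lemref{defNB}); when $k-\ell=0$ the integrand equals $1$ and the bound is trivial.

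The second integral is disposed of in exactly the same way: from $L_{j,\delta}/\bar f_{j,\delta}=C\tilde I_{K,j}(L_{x_j,\delta}/\bar f_{x_j,\delta})$ and $(\tilde I_{K,j})^{(2\beta+\ve)/\beta}\tilde\chi_j\lesssim 1$ (again \lemref{chixj}(v)), the integral is bounded by a constant times $\int(L_{x_j,\delta}/\bar f_{x_j,\delta})^{(2\beta+\ve)/\beta}\bar f_{x_j,\delta}\,\diff z$, which is $\lesssim 1$ by the second half of~\eqref{assint} (legitimate since $(2\beta+\ve)/\beta<\omega/\beta$) and Jensen. Restoring the prefactor $c_j^{-1}$ then gives both claimed bounds. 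I expect the main obstacle to be the derivative bookkeeping of the middle step — establishing $|\Diff^\ell\tilde\chi_j|\lesssim\tilde I_{K,j}\tilde\chi_j$, i.e. that the derivatives of the pulled‑back bump $\tilde\chi_j$ are dominated by $\tilde\chi_j$ itself times a factor with all finite moments — together with the (routine) check that every exponent occurring stays below $\omega/\inner{\cdot}{\bm\alpha}$, which is precisely where the hypothesis $\ve<\omega-2\beta$ enters.
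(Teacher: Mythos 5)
Your proposal follows the same route as the paper's proof: use $\bar f_{j,\delta}=c_j^{-1}\bar\chi_{j,\delta}\bar f_{x_j,\delta}$ and a Leibniz expansion, control the derivatives of the pulled‑back bump via \lemref{chixj}(v) (the bound $|\Diff^\ell\bar\chi_{j,\delta}|\lesssim\bar I_{j,\delta}\bar\chi_{j,\delta}$), absorb the $\bar I_{j,\delta}$ factor by $(\bar I_{j,\delta})^{p_k}\bar\chi_{j,\delta}\lesssim 1$, and conclude by the moment condition~\eqref{assint}. You are somewhat more explicit than the paper on two points — the indicator $\1_{\ell\neq 0}$ tracking the edge case $k-\ell=0$, and the interpolation/Jensen step needed to pass from the exponent $(2\beta+\ve)/\inner{k}{\bm\alpha}$ to $\omega/\inner{k-\ell}{\bm\alpha}$ together with the uniform bound on $\int\bar f_{x_j,\delta}$ — but these are fillings‑in of the paper's abbreviated argument rather than deviations from it.
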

		\begin{proof} We denote by $\bar \chi_{j,\delta} = \chi_j \circ \bar\Psi_{x_j,\delta}$ and $\bar I_{j,\delta} := I_K(\bar\Psi_{x_j,\delta}(\cdot) - x_j)$. Writing that
			$$
			\left | \frac{\Diff^k \bar f_{j,\delta}}{\bar f_{j,\delta}} \right | \leq \sum_{\ell\leq k} {k \choose \ell} \left| \frac{\Diff^{k-\ell}\bar \chi_{j,\delta}}{\bar \chi_{j,\delta}} \right|\times \left| \frac{\Diff^{\ell}\bar f_{x_j,\delta}}{\bar f_{x_j,\delta}}\right| \lesssim \bar I_{j,\delta} \sum_{\ell  \leq k} \left| \frac{\Diff^{\ell}\bar f_{x_j,\delta}}{\bar f_{x_j,\delta}}\right| 
			$$
			we easily see that
			\begin{align*} 
				\int \left| \frac{\Diff^k \bar f_{j,\delta}}{\bar f_{j,\delta}} \right|^{\frac{2\beta+\ve}{\inner{k}{\bm\alpha}}} \bar f_{j,\delta} &\lesssim c_j^{-1} \sum_{\ell  \leq k} \int \{\bar I_{j,\delta} \left| \frac{\Diff^{\ell}\bar f_{x_j,\delta}}{\bar f_{x_j,\delta}}\right| \}^{\frac{2\beta+\ve}{\inner{k}{\bm\alpha}}} \bar \chi_{j,\delta} \bar f_{x_j,\delta} \\
				&\lesssim c_j^{-1}  \sum_{\ell \leq k}  \int \left| \frac{\Diff^{\ell}\bar f_{x_j,\delta}}{\bar f_{x_j,\delta}}\right| ^{\frac{2\beta+\ve}{\inner{k}{\bm\alpha}}} \bar f_{x_j,\delta} \lesssim c_j^{-1}
			\end{align*} 
			where we used both \lemref{chixj} iv) and \eqref{assint} with the fact that $\inner{\ell}{\bm\alpha} \leq \inner{k}{\bm\alpha}$ for $\ell \leq k$. The bound on the second integral follows from the same line of reasoning.
		\end{proof}
		
		\begin{lem} \label{lem:ajs} We let $\cA_{j,\sigma}$ be the event of all $x \in \cO_{x_j}^2$ such that 
			\begin{align*} 
				\forall 0 < \inner{k}{\bm\alpha} < \beta, \frac{|\Diff^k \bar f_{j,\delta}(z_x)|}{\bar f_{j,\delta}(z_x)} \leq \sigma^{-\< k,\alpha \>} \log(1/\sigma)^{-\< k,\alpha \>/2} ~~\text{and}~~
				\frac{L_{j,\delta} (z_x)}{\bar f_{j,\delta} (z_x)} \leq \sigma^{-\beta} \log(1/\sigma)^{-\beta/2}.
			\end{align*} 
			Then, the following assertions hold true for $\sigma$ small enough 
			\bitem
			\item[i)] $g_j(x) \geq f_j(x)/2$ for all $x \in \cA_{j,\sigma}$;
			\item[ii)] $P_{f_j}(\cA_{j,\sigma}^c) \lesssim   c_j^{-1} \{\sigma \log(1/\sigma)\}^{2\beta+\ve}$;
			\item[iii)] For any $\inner{k}{\bm\alpha} <\beta$,
			$
			\int_{\cA_{j,\sigma}^c} |\Diff^{k} \bar f_{j,\delta}(z_x)| \diff x \lesssim  c_j^{-1}  \delta^{D-d} \{\sigma \log(1/\sigma)\}^{2\beta+\ve - \inner{k}{\bm\alpha}}.
			$
			\eitem
		\end{lem}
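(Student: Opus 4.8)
The plan is to dispatch the three items in order, each one reducing to a combination of the moment bounds of \lemref{gkgb}, the change of variables $x = \bar\Psi_{x_j,\delta}(z)$ (under which, by \lemref{defNB}, $\bar\Psi_{x_j,\delta}$ is a diffeomorphism onto its image with $|\det\diff\bar\Psi_{x_j,\delta}|$ pinched between two positive constants times $\delta^{D-d}$, so that $f_j(x)\,\diff x$ and $\bar f_{j,\delta}(z)\,\diff z$ agree up to a universal multiplicative constant and $\bar f_{j,\delta}(z_x) = \delta^{D-d} f_j(x)$), and either Markov's or H\"older's inequality.

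For i), I would start from the explicit form recalled above, $g_j(x) - f_j(x) = \delta^{-(D-d)}\sum_{0 < \inner{k}{\bm\alpha} < \beta}\sigma^{\inner{k}{\bm\alpha}} d_{j,k}(x,\sigma,\delta)\,\Diff^k \bar f_{j,\delta}(z_x)$, and use that the $d_{j,k}$ are uniformly bounded together with the defining inequalities of $\cA_{j,\sigma}$: on that set $|\Diff^k\bar f_{j,\delta}(z_x)| \leq \sigma^{-\inner{k}{\bm\alpha}}(\log(1/\sigma))^{-\inner{k}{\bm\alpha}/2}\bar f_{j,\delta}(z_x)$. The powers of $\sigma$ then cancel and, since $\bar f_{j,\delta}(z_x) = \delta^{D-d} f_j(x)$, one gets $|g_j(x) - f_j(x)| \lesssim f_j(x)\sum_{0 < \inner{k}{\bm\alpha} < \beta}(\log(1/\sigma))^{-\inner{k}{\bm\alpha}/2}$; the sum is finite with strictly positive exponents, hence $\leq f_j(x)/2$ once $\sigma$ is small enough, giving i).

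For ii), write $\cA_{j,\sigma}^c$ (intersected with the support of $f_j$, which lies in $\cO_{x_j}^0$) as the finite union of the bad sets $\{|\Diff^k\bar f_{j,\delta}(z_x)|/\bar f_{j,\delta}(z_x) > \sigma^{-\inner{k}{\bm\alpha}}(\log(1/\sigma))^{-\inner{k}{\bm\alpha}/2}\}$ over $0 < \inner{k}{\bm\alpha} < \beta$ and the one set $\{L_{j,\delta}(z_x)/\bar f_{j,\delta}(z_x) > \sigma^{-\beta}(\log(1/\sigma))^{-\beta/2}\}$. By the change of variables each of these has $P_{f_j}$-mass $\simeq$ the integral of $\bar f_{j,\delta}(z)\,\diff z$ over the corresponding set in $z$-coordinates; Markov's inequality with exponent $(2\beta+\ve)/\inner{k}{\bm\alpha}$ (resp.\ $(2\beta+\ve)/\beta$) together with \lemref{gkgb} then bounds it by $\lesssim c_j^{-1}\big(\sigma\sqrt{\log(1/\sigma)}\big)^{2\beta+\ve} \leq c_j^{-1}(\sigma\log(1/\sigma))^{2\beta+\ve}$, where $\ve\in(0,\omega-2\beta)$ is the constant of \lemref{gkgb}. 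A union bound over the (boundedly many) multi-indices $k$ yields ii).

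For iii), apply the same change of variables to obtain $\int_{\cA_{j,\sigma}^c}|\Diff^k\bar f_{j,\delta}(z_x)|\,\diff x \simeq \delta^{D-d}\int \frac{|\Diff^k\bar f_{j,\delta}(z)|}{\bar f_{j,\delta}(z)}\,\bar f_{j,\delta}(z)\,\diff z$ over the image of $\cA_{j,\sigma}^c$, then split with H\"older's inequality at exponent $q = (2\beta+\ve)/\inner{k}{\bm\alpha} > 1$: the factor $\big(\int(|\Diff^k\bar f_{j,\delta}|/\bar f_{j,\delta})^q\,\bar f_{j,\delta}\big)^{1/q} \lesssim c_j^{-1/q}$ by \lemref{gkgb}, while the complementary factor $\big(\int_{\cA_{j,\sigma}^c}\bar f_{j,\delta}\big)^{1-1/q} \simeq P_{f_j}(\cA_{j,\sigma}^c)^{1-1/q} \lesssim \big(c_j^{-1}(\sigma\log(1/\sigma))^{2\beta+\ve}\big)^{1-1/q}$ by ii). Multiplying and using $(2\beta+\ve)(1-1/q) = 2\beta+\ve-\inner{k}{\bm\alpha}$ gives iii), the case $k=0$ being exactly ii) multiplied by $\delta^{D-d}$.

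\textbf{Main obstacle.} None of the steps is deep; the points to be careful with are the bookkeeping of the $\delta^{D-d}$ factors through the change of variables (i.e.\ checking that $f_j(x)\,\diff x$ and $\bar f_{j,\delta}(z)\,\diff z$ match up to an $O(1)$ constant, which is exactly the two-sided Jacobian bound of \lemref{defNB} applied on $\cW_{x_j,\delta}$), and choosing the Markov/H\"older exponents so the powers of $\sigma$ and $\log(1/\sigma)$ land precisely on $2\beta+\ve$ (resp.\ $2\beta+\ve-\inner{k}{\bm\alpha}$); the one spot worth a second look is the direction of the inequality $\sqrt{\log(1/\sigma)} \leq \log(1/\sigma)$ used to pass from the $\sqrt{\log}$ thresholds defining $\cA_{j,\sigma}$ to the $\log$ appearing in the target bounds.
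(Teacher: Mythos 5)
Your proposal is correct and follows essentially the same route as the paper's proof: item i) via the uniform boundedness of $d_{j,k}$ and the defining thresholds of $\cA_{j,\sigma}$, item ii) via union bound plus Markov with exponent $(2\beta+\ve)/\inner{k}{\bm\alpha}$ (resp.\ $(2\beta+\ve)/\beta$) and \lemref{gkgb}, and item iii) via H\"older at the same exponent together with ii). The only cosmetic difference is that you perform the change of variables before applying H\"older in iii) whereas the paper applies H\"older first; your worry about the direction of $\sqrt{\log(1/\sigma)}\leq\log(1/\sigma)$ resolves exactly as you say since $\log(1/\sigma)\geq 1$ for $\sigma$ small.
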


		\begin{proof}
			We start by proving i). For any $x \in \cA_{j,\sigma}$, there holds, using the fact that the functions $d_{j,k}$ are uniformly bounded,
			\begin{align*}
				\left| g_j(x) - f_j(x) \right| &=  \left| \frac{1}{\delta^{D-d}} \sum_{0 < \<k,\alpha \> < \beta} \sigma^{\<k,\alpha \>}d_k(y,\sigma,\delta) \Diff^k \bar f_{j,\delta} (z_y) \right|  
				\\
				&\lesssim \frac{ \bar f_{j,\delta}(z_x)}{\delta^{D-d}} \sum_{0 < \<k,\alpha \> < \beta} \sigma^{\<k,\alpha \>}  \left|\frac{\Diff^k \bar f_{j,\delta} (z_x)}{\bar f_{j,\delta} (z_x)} \right| \\
				&\lesssim  f_j(x) \sum_{0 < \<k,\alpha \> < \beta}  \log(1/\sigma)^{-\< k,\alpha \>/2} \lesssim \log^{-\alpha_\perp/2}(1/\sigma) f_j(x) < \frac12 f_j(x)  
			\end{align*}
			provided that $\sigma$ is chosen small enough. For ii), notice that	
			\begin{align*} 
				P_{f_j}(\cA_{j,\sigma}^c) &\leq \sum_{0 < \inner{k}{\bm\alpha} < \beta} P_{f_j} \left(\frac{|\Diff^k \bar f_{j,\delta}(z_x)|}{\bar f_{j,\delta}(z_x)} > \sigma^{-\< k,\alpha \>} \log(1/\sigma)^{-\< k,\alpha \>/2} \right)  \\
				&\leq \sum_{0 < \inner{k}{\bm\alpha} < \beta}  \sigma^{2\beta+\ve} \log(1/\sigma)^{2\beta+\ve}  \int_{\cO_{x_j}^0} \left(\frac{|\Diff^k \bar f_{j,\delta}(z_x)|}{\bar f_{j,\delta}(z_x)}\right)^{\frac{2\beta+\ve}{\inner{k}{\bm\alpha}}} f_j(x) \diff x \\
				&\lesssim  c_j^{-1} \sigma^{2\beta+\ve} \log(1/\sigma)^{2\beta+\ve} 
			\end{align*}
			where we used \lemref{gkgb} after a variable change $z = z_x$. Finally, for iii), there holds
			\begin{align*} 
				\int_{\cA_{j,\sigma}^c} &|\Diff^{k} \bar f_{j,\delta}(z_x)| \diff x = \int \frac{|\Diff^{k} \bar f_{j,\delta}(z_x)|}{\bar f_{j,\delta}(z_x)} \bar f_{j,\delta}(z_x) \ind_{\cA_{j,\sigma}^c}(x) \diff x \\
				&\leq \{\int \{\frac{|\Diff^{k} \bar f_{j,\delta}(z_x)|}{\bar f_{j,\delta}(z_x)}\}^{\frac{2\beta+\ve}{\inner{k}{\bm\alpha}}} \bar f_{j,\delta}(z_x) \ind_{\cA_{j,\sigma}^c}(x) \diff x\}^{\frac{\inner{k}{\bm\alpha}}{2\beta+\ve}} \times \{ \delta^{D-d} P_{f_j}(\cA_{j,\sigma}^c)\}^{1 - \frac{\inner{k}{\bm\alpha}}{2\beta+\ve} } \\
				&\lesssim   \{c_j^{-1}\delta^{D-d}\}^{\frac{\inner{k}{\bm\alpha}}{2\beta+\ve}} \times 
				\{c_j^{-1} \delta^{D-d} \{\sigma \log(1/\sigma)\}^{2\beta+\ve} \}^{1 - \frac{\inner{k}{\bm\alpha}}{2\beta+\ve}} \\
				&\lesssim  c_j^{-1}\delta^{D-d} \{\sigma \log(1/\sigma)\}^{2\beta+\ve-\inner{k}{\bm\alpha}}.
			\end{align*} 
			ending the proof.
		\end{proof}

	\subsection{Proof of \corref{approx}}	
		
			Let define
			$$
			\wt h_j := g_j \ind_{g_j > f_j/2} + \frac12 f_j \ind_{g_j \leq f_j/2}
			$$
			and $h_j = \wt h_j / I_j$ where $I_j = \int \wt h_j$. The function $h_j$ is a probability measure on $\cO_{x_j}^0$ and furthermore, using the convexity of $f \rightarrow \dhed(f,f_0)$, there holds
			$$
			\dhed(K_\Sigma(h),f_0) \leq \sum_{j=1}^J c_j \dhed(K_\Sigma(h_j),f_j)~~~\text{where}~~h = \sum_{j=1}^J c_j h_j.
			$$
			We will control each term separately. First notice that
			$$
			\sum_{j=1}^J c_j \dhed(K_\Sigma(h_j),f_j) \leq J \sigma^{2\beta} + \sum_{c_j \geq \sigma^{2\beta}} c_j \dhed(K_\Sigma(h_j),f_j).
			$$ 
			Now take $1 \leq j \leq J$ such that $c_j \geq \sigma^{2\beta}$ and define $\cU_{j,\sigma} = \{x \in \bbR^D~|~ f_j (x) \geq \sigma^{H_1}\}$ for some $H_1 > 0$ to be specified later.
			Then there holds
			\begin{align*}
				\dhed (K_\Sigma h_j, f_j) \leq & \int_{\cU_{j,\sigma} \cap \cO_{x_j}^1} \left(\sqrt{K_\Sigma(h_j)} - \sqrt{f_j}\right)^2 + \int_{(\cO_{x_j}^1)^c} K_\Sigma(h_j) + \int_{\cU_{j,\sigma}^c}[K_\Sigma(h_j) + f_j]
			\end{align*}
			and 
			\begin{align*}
				\int_{\cU_{j,\sigma} \cap \cO_{x_j}^1} &\left(\sqrt{K_\Sigma(h_j)} - \sqrt{f_j}\right)^2 \leq  \int_{\cU_{j,\sigma} \cap \cO_{x_j}^1} \frac{(K_\Sigma(h_j) - f_j)^2}{K_\Sigma(h_j) + f_j} \\
				&\leq \int_{\cU_{j,\sigma} \cap \cO_{x_j}^1} \frac{K_\Sigma(h_j - \tilde{h}_j)^2}{K_\Sigma(h_j) + f_j}  + \int_{\cU_{j,\sigma} \cap \cO_{x_j}^1} \frac{K_\Sigma(\tilde{h}_j - g_j)^2}{K_\Sigma(h_j) + f_j} + \int_{\cU_{j,\sigma} \cap \cO_{x_j}^1} \frac{(K_\Sigma(g_j)-f_j)^2}{K_\Sigma(h_j) + f_j}
			\end{align*}
			and therefore
			\begin{align}
				\dhed (f_j,K_\Sigma(h_j)) \leq&  \int_{(\cO_{x_j}^1)^c} K_\Sigma(h_j)\label{int:1} \\
				&+ \int_{\cU_{j,\sigma}^c}[K_\Sigma(h_j) + f_j]\label{int:2} \\
				&+ (1-I_j)^2\label{int:3} \\
				&+ \int_{\cU_{j,\sigma} \cap \cO_{x_j}^1} \frac{K_\Sigma\left((f_j/2 - g_j)\ind_{g_j < f_j/2}\right)^2}{K_\Sigma(h_j) + f_j}\label{int:4} \\
				&+ \int_{\cU_{j,\sigma} \cap \cO_{x_j}^1} \left(\frac{K_\Sigma(g_j)-f_j}{f_j}\right)^2 f_j\label{int:5}
			\end{align}
			where each term will be bounded independently.
			\begin{enumerate}
				\item For (\ref{int:1}), notice that for any $y  \in \cO_{x_j}^0$ and any $x \notin \cO_{x_j}^1$,  there holds that $ \|x-y\|_{\Sigma_y^{-1}}^2 \geq C/\rho^2(\delta, \sigma($ where $\rho(\delta, \sigma) = \max( \delta \sigma^{\alpha_\perp}, \sigma^{\alpha_0})$ for some $C$ depending on $\tau$. We can thus write
				\begin{align*}
					\int_{(\cO_{x_j}^1)^c} K_\Sigma(h_j) &=  \int_{(O_{x_j}^1)^c} \int h_j(y) \varphi_{\Sigma(y)}(x-y)\diff y \diff x \\
					&=  \int_{(\cO_{x_j}^1)^c} \int_{\cO_{x_j}^0} h_j(y) \frac{\exp\left(- \frac{1}{2} \|x-y\|_{\Sigma^{-1}(y)}^2\right)}{(2\pi)^{D/2}\sigma^D \delta^{D-d}}\diff y \diff x \\
					&\leq  \exp\{- \frac{C^2}{4\delta^2 \sigma^{2\alpha_\perp}}\} \int_{(\cO_{x_j}^1)^c} \int_{\cO_{x_j}^0} h_j(y) \frac{\exp\left(- \frac{1}{4} \|x-y\|_{\Sigma^{-1}(y)}^2\right)}{(2\pi)^{D/2}\sigma^D \delta^{D-d}}\diff y \diff x \\
					&\lesssim  \exp\{- \frac{C^2}{4\rho(\delta, \sigma)^2 }\} \int_{(O_{x_j}^1)^c} K_{2\Sigma}(h_j) \lesssim  \exp\{- \frac{C^2}{4\rho(\delta, \sigma)^2}\} \lesssim \sigma^{2\beta},
				\end{align*}
				because $\rho(\delta,\sigma) = o(|\log(\sigma)|^{1/2})$ by assumption.
				\item For (\ref{int:5}), there holds
				\begin{align*}
					\int_{\cU_{j,\sigma} \cap \cO_{x_j}^1} \left(\frac{K_\Sigma(g_j)-f_j}{f_j}\right)^2 f_j &\leq  \int_{\cO_{x_j}^1} \left(\frac{L_j \sigma^\beta}{f_j}\right)^2 f_j \leq  \sigma^{2\beta} \{\int_{\cO_{x_j}^1} \left(\frac{L_j}{f_j}\right)^{\frac{2\beta+\ve}{\beta}} f_j \}^{\frac{2\beta}{2\beta+\ve}} \\
					&\leq  \sigma^{2\beta}\{\int_{\cW_{x_j}^1} \left(\frac{L_{j,\delta}}{\bar f_{j,\delta}}\right)^{\frac{2\beta+\ve}{\beta}} \bar f_{j,\delta} \frac{|\det \diff \bar{\Psi}_{x_j,\delta}|}{\delta^{D-d}}\}^{\frac{2\beta}{2\beta+\ve}} \\
					&\lesssim  \sigma^{2\beta}\{\int_{\cW_{x_j}^1} \left(\frac{L_{j,\delta}}{\bar f_{j,\delta}}\right)^{\frac{2\beta+\ve}{\beta}} \bar f_{j,\delta} \}^{\frac{2\beta}{2\beta+\ve}}  \lesssim c_j^{-\frac{2\beta}{2\beta+\ve}} \sigma^{2\beta} \lesssim c_j^{-1}\sigma^{2\beta},
				\end{align*}
				where we used that $|\det \diff \bar\Psi_{x_j,\delta}| \lesssim \delta^{D-d}$ and \lemref{gkgb}.
				\item For (\ref{int:3}), notice that
				$$
				I_j = \int g_j \1_{\{f_j < 2g_j \}} + \int \frac{f_j}{2} \1_{\{f_j \geq 2g_j \}} = \int g_j + \int \frac{(f_j-2g_j)}{2} \1_{\{f_j \geq 2g_j \}} 
				$$
				and that $\int g_j = \int K_\Sigma(g_j) $. Moreover
				\[
				\int_{\cO_{x_j}^1} K_\Sigma(g_j) = \int_{\cO_{x_j}^1} f_j + \int_{\cO_{x_j}^1} (K_\Sigma(g_j) - f_j) = 1 + \int_{\cO_{x_j}^1} (K_\Sigma(g_j) - f_j)
				\]
				and since 
				\begin{align*}
					\left|\int_{\cO_{x_j}^1} K_\Sigma(g_j) - f_j \right| &\leq  \int_{\cO_{x_j}^1} L_j \sigma^\beta = \sigma^{\beta} \int_{\cW_{x_j}^1} L_{j,\delta} \times \frac{|\det \diff \bar{\Psi}_{x_j,\delta}|}{\delta^{D-d}} \\
					&\lesssim  \sigma^\beta  \{\int_{\cW_{x_j}^1} \left(\frac{L_{j,\delta}}{\bar f_{j,\delta}}\right)^{\frac{2\beta+\ve}{\beta}} \bar f_{j,\delta}\}^{\frac{\beta}{2\beta+\ve}}  \lesssim c_j^{-\frac{\beta}{2\beta+\ve}}\sigma^{\beta} \lesssim c_j^{-1/2} \sigma^{\beta}.
				\end{align*}
				where we again used that $|\det \diff \bar\Psi_{x_j,\delta}| \lesssim \delta^{D-d}$ and \lemref{gkgb},  we have that 
				\[ \int_{\cO_{x_j}^1} K_\Sigma(g_j) = 1 + O( c_j^{-1/2} \sigma^{\beta}).\]
				We also have, as in (\ref{int:1}) that 
				\begin{align*}
					\int_{(\cO_{x_j}^1)^c} K_\Sigma(|g_j|) &\lesssim e^{-\frac{ C^2  }{ 4 \delta^2 \sigma^{2 \alpha_\perp }}}  	\int_{\cO_{x_j}^1} |g_j|(y)dy \\
					& \lesssim  e^{-\frac{ C^2  }{ 4 \delta^2 \sigma^{2 \alpha_\perp }}}  \|L_j\|_\infty = o( c_j^{-1}  \sigma^{2\beta}) = o( c_j^{-1/2}  \sigma^{\beta}),
				\end{align*}
				where the two last inequality comes from $\rho^2(\delta, \sigma)= o(1/\log (1/\sigma))$ and $c_j \geq \sigma^{2\beta}$.  Therefore $|1 - \int g_j |  \lesssim c_j^{-1/2}  \sigma^\beta$. Moreover, using this time \lemref{ajs}
				\begin{align*}
					\int_{f_j > 2g_j}( f_j-2g_j) & \lesssim  P_{f_j}(f_j > 2g_j)  + \sum_{0<\inner{k}{\bm\alpha}<\beta} \frac{ \sigma^{\inner{k}{\bm\alpha}} }{\delta^{D-d}} \int_{f_j > 2g_j} |\Diff^k \bar f_{j,\delta} (z_x)|dx \\
					&\lesssim  c_j^{-1}\sigma^{2 \beta + \varepsilon}.
				\end{align*} 
				All in all, we find that
				\begin{align*} 
					(1-I_j)^2 &\lesssim \exp\{-C^2/(4\delta^2 \sigma^{2\alpha_\perp})\} +  \{c_{j}^{-1} \sigma^{2\beta} \}\vee \{c_{j}^{-2} \{\sigma \log(1/\sigma)\}^{4\beta+2\ve}\} \\
					&\lesssim \exp\{-C^2/(4\delta^2 \sigma^{2\alpha_\perp})\} +  c_{j}^{-1} \sigma^{2\beta} 
				\end{align*} 
				where the last inequality again holds because $c_j \geq \sigma^{2\beta}$.
				\item For (\ref{int:2}), we start with taking $\inner{k}{\bm\alpha} < \beta$. Notice that, thanks to \prpref{der2}, there exists a constant $C > 0$ such that for any $x,y \in \cO_{x_j}^0$, $|\Diff^k \bar f_{j,\delta}(z_x)-\Diff^k \bar f_{j,\delta}(z_y)|\leq C \delta^{D-d} L_j(x)$. 
				Then, using \eqref{Ksigma:decomp1}, together with $|\det \diff\bar \Psi(w)|\leq C$, 
				\begin{align*} 
					| K_\Sigma(\Diff^k \bar f_{j,\delta}(z_{\square}))(x)| &\lesssim 
					\int_{\Delta^{-1}_{\sigma,1}(\cW^0-z_x)} e^{-B_\sigma(x,z) } |\Diff^k \bar f_{j,\delta}(\Delta_{\sigma,1}z + z_x)| dz \\
					&\lesssim \delta^{D-d} L_j(x) \int_{\Delta^{-1}_{\sigma,1}(\cW^0-z_x)} e^{-B_\sigma(x,z) } dz \lesssim \delta^{D-d} L_j(x) .
				\end{align*}
				
				Now notice that
				\begin{align*} 
					&\int_{\cU_{j,\sigma}^c} |\Diff^k \bar f_{j,\delta}(z_x)| \diff x \\
					&\leq \{\int_{\cU_{j,\sigma}^c} \{\frac{|\Diff^k \bar f_{j,\delta}(z_x)|}{\bar f_{j,\delta}(z_x)}\}^{\frac{2\beta+\ve}{\inner{k}{\bm\alpha}}} \bar f_{j,\delta}(z_x) \diff x\}^{\frac{\inner{k}{\bm\alpha}}{2\beta+\ve}}  \times \{ \int_{\cU_{j,\sigma}^c} \bar f_{j,\delta}(z_x) \diff x \}^{1-\frac{\inner{k}{\bm\alpha}}{2\beta+\ve}} \\
					&\lesssim  c_j^{-1}\delta^{D-d} \sigma^{H_1 \times \{1 - \inner{k}{\bm\alpha}/(2\beta+\ve)\}} \lesssim c_j^{-1}\delta^{D-d}\sigma^{H_1/2}
				\end{align*} 
				and likewise, $\int_{\cU_{j,\sigma}^c} L_j(x) \diff x \lesssim  c_j^{-1} \delta^{D-d} \sigma^{H_1 \times \{1 - \beta/(2\beta+\ve)\}}  \lesssim \delta^{D-d}\sigma^{H_1/2}$. We thus have shown that
				\beq \label{intks}
				\int_{\cU_{j,\sigma}^c} |K_\Sigma(\Diff^k \bar f_{j,\delta}(z_{\square}))(x)| \lesssim c_j^{-1} \delta^{D-d}\sigma^{H_1/2}.
				\eeq
				Coming back to (\ref{int:2}), there immediately holds that $\int_{\cU_{j,\sigma}^c} f_j  \lesssim \sigma^{H_1}$, and furthermore, noticing that
				$$
				h_j \lesssim \wt h_j \lesssim  f_j + 
				\frac{1}{\delta^{D-d}}\sum_{0 \leq \inner{k}{\bm\alpha} < \beta} \sigma^{\inner{k}{\bm\alpha}} |\Diff^k \bar f_{j,\delta}(z_{\square})|
				$$
				and using \eqref{intks} and the monotonicity of $K_\Sigma$, we find that $\int_{\cU_{j,\sigma}^c} K_\Sigma(h_j) \lesssim c_j^{-1}\sigma^{H_1/2}.$
				
				\item Finally, for (\ref{int:4}), notice that $f_j/2 - g_j$ is a sum of terms of the form $\sigma^{\<k,\alpha \>}\delta^{-(D-d)}\Diff^k \bar f_{j,\delta} (z_\square)$ for  $0 \leq \langle k,\alpha \rangle < \beta$. The term $K_\Sigma((f_j/2 - g_j) \ind_{g_j \leq f_j/2})^2$ is thus upper bounded by a sum of terms of the form $\sigma^{2\inner{k}{\bm\alpha}} \delta^{-2(D-d)} K_{\Sigma}(  \Diff^k \bar f_{j,\delta} (z_\square) \ind_{g_j \leq f_j/2})^2$. 
				Now there holds
				\begin{align*}
					\int_{\cU_{j,\sigma}\cap \cO_{x_j}^1} &\frac{K_{\Sigma}(  \Diff^k \bar f_{j,\delta} (z_\square) \ind_{g_j \leq f_j/2})^2}{K_\Sigma(h_j) + f_j} \\
					&\lesssim \sigma^{-H_1}  \| K_{\Sigma}(  \Diff^k \bar f_{j,\delta} (z_\square) \ind_{g_j\leq f_j/2})\|_\infty \times \int K_{\Sigma}( | \Diff^k \bar f_{j,\delta} (z_\square) |\ind_{g_j\leq f_j/2}) \\
					&\lesssim\frac{ \sigma^{-H_1}}{c_j} \int_{\cO_{x_j}^1}| \Diff^k \bar f_{j,\delta} (z_y) |\ind_{g_j\leq f_j/2}) dy \lesssim \frac{ \sigma^{-H_1}}{c_j^2}    \delta^{D-d} \{\sigma \log(1/\sigma)\}^{2\beta+\ve - \inner{k}{\bm\alpha}}
				\end{align*} 
				where we used Lemma \ref{lem:ajs}. 
				Summing all the bounds in $k$, we obtain that (\ref{int:4}) is bounded from above, up to constant, by 
				$$\max_k c_j^{-2} \sigma^{2\beta+\ve-H_1+2\inner{k}{\bm\alpha}} \delta^{-(D-d)}   {\log(1/\sigma)}^{2\beta+\ve} \lesssim c_j^{-1} \delta^{-(D-d)}  \sigma^{\ve-H_1} {\log(1/\sigma)}^{2\beta+\ve} $$
				where we used that $c_j \geq \sigma^{2\beta}$.
			\end{enumerate}
			Collecting all the bounds on (\ref{int:1}-\ref{int:5}) together with $ \delta^{2} \sigma^{2\alpha_\perp} = o(1/\log (1/\sigma))$, we get that
			$$
			c_j \dhed(K_\Sigma(h_j),f_j) \lesssim c_j \left[  \sigma^{2\beta} \log(1/\sigma)^{4\beta+2\ve}+ \sigma^{H_1/2} + \sigma^{\ve-H_1} \delta^{-(D-d)}{\log(1/\sigma)}^{2\beta+\ve}\right].
			$$
			Choosing $H_1 = 4 \beta$ and $\ve \geq 6 \beta + \ve_1$ where $\sigma^{\ve_1} \leq \delta^{D-d}$, we obtain the result. 
			

		\section{Appendix to Section \ref{app:pr:thmain}: proof of \thmref{main}} \label{app:secproofs}

		For any probability distribution $P$ on $\bbR^{D} \times \cS^{++}(D,\bbR)$, one defines the probability density function on $\bbR^D$
		$$
		f_P(x) := \int \varphi_{\Sigma}(x-y) \diff P(y,\Sigma).
		$$
		Note that when $g$ is a probability distribution on $\bbR^D$, then
		$$
		K_\Sigma g(x) = f_P(x)~~~\text{with}~~~\diff P(y,\Sigma) = \delta_{\Sigma(y)}(\Sigma) g(y) \diff y.
		$$
		
		\begin{lem} \label{lem:vj} Let $V_0,\dots,V_N$ be a partition of $\bbR^D$ and let $P = \sum_{j=1}^N \pi_j \delta_{z_j,\Sigma_j}$ with $z_j \in V_j$. Then, for any probability measure $Q$ on $\bbR^D \times \cS^{++}(D,\bbR)$,
			\begin{align*}
				\|f_Q - f_P\|_1 &\leq 2\sum_{j=1}^N |Q_1(V_j) - \pi_j | + \frac12\sup_{1\leq j\leq N} \|\Sigma_j^{-1/2}\|_{\op} \diam V_j +  \frac32 \sup_{1\leq j\leq N} \sup_{\Sigma \in \cS_j}\sqrt{\tr(\Sigma_j^{-1}\Sigma-\Id)^2},
			\end{align*} 
			where $Q_1$ is the first marginal of $Q$ and $\cS_j$ is the support of the second marginal of $\ind_{V_j}(y) \diff Q(y,\Sigma)$.
		\end{lem}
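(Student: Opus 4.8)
The plan is to partition the mixing measure $Q$ along the cells $V_0,\dots,V_N$ and to compare each piece with the single atom of $P$ lying in the same cell. Concretely, for $0\le j\le N$ let $Q^{(j)}$ denote the restriction of $Q$ to $V_j\times\cS^{++}(D,\bbR)$, so that $f_Q=\sum_{j=0}^N f_{Q^{(j)}}$ with $\|f_{Q^{(j)}}\|_1=Q_1(V_j)$, while $f_P=\sum_{j=1}^N\pi_j\varphi_{\Sigma_j}(\cdot-z_j)$. First I would apply the triangle inequality,
\[
\|f_Q-f_P\|_1\le Q_1(V_0)+\sum_{j=1}^N\bigl\|f_{Q^{(j)}}-\pi_j\varphi_{\Sigma_j}(\cdot-z_j)\bigr\|_1,
\]
and observe that, since $\sum_{j\ge1}\pi_j=1=\sum_{j\ge0}Q_1(V_j)$, the leftover mass obeys $Q_1(V_0)=\sum_{j\ge1}\bigl(\pi_j-Q_1(V_j)\bigr)\le\sum_{j\ge1}|Q_1(V_j)-\pi_j|$.

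Next, for each $j\ge1$ I would insert the intermediate density $Q_1(V_j)\varphi_{\Sigma_j}(\cdot-z_j)$ and use $\|\varphi_{\Sigma_j}(\cdot-z_j)\|_1=1$ together with Minkowski's integral inequality:
\[
\bigl\|f_{Q^{(j)}}-\pi_j\varphi_{\Sigma_j}(\cdot-z_j)\bigr\|_1\le|Q_1(V_j)-\pi_j|+\int_{V_j\times\cS^{++}(D,\bbR)}\bigl\|\varphi_{\Sigma}(\cdot-y)-\varphi_{\Sigma_j}(\cdot-z_j)\bigr\|_1\,\diff Q(y,\Sigma).
\]
Summing over $j$ and adding the bound on $Q_1(V_0)$ produces the term $2\sum_{j=1}^N|Q_1(V_j)-\pi_j|$, so it remains only to estimate the integrands $\|\varphi_{\Sigma}(\cdot-y)-\varphi_{\Sigma_j}(\cdot-z_j)\|_1$ uniformly over $y\in V_j$ and $\Sigma\in\cS_j$.

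For that I would peel off the \emph{location} and \emph{scale} contributions via one more triangle inequality through $\varphi_{\Sigma_j}(\cdot-y)$: by translation invariance the scale part is $\|\varphi_{\Sigma}-\varphi_{\Sigma_j}\|_1$, and whitening by $\Sigma_j^{-1/2}$ (an $L^1$-isometry at the level of densities) turns it into $2\,\tv\bigl(\cN(0,\Sigma_j^{-1/2}\Sigma\Sigma_j^{-1/2}),\cN(0,\Id)\bigr)$, while the location part becomes $2\,\tv\bigl(\cN(\Sigma_j^{-1/2}y,\Id),\cN(\Sigma_j^{-1/2}z_j,\Id)\bigr)$. The location term is then controlled by the elementary one-dimensional estimate for shifted standard Gaussians, giving a bound of the form $\tfrac12\|\Sigma_j^{-1/2}(y-z_j)\|\le\tfrac12\|\Sigma_j^{-1/2}\|_{\op}\diam V_j$; the scale term is controlled by the standard bound on the total variation between centered Gaussians in terms of the Frobenius distance of the whitened covariance, i.e. a bound of the form $\tfrac32\sqrt{\tr\bigl((\Sigma_j^{-1/2}\Sigma\Sigma_j^{-1/2}-\Id)^2\bigr)}=\tfrac32\sqrt{\tr\bigl((\Sigma_j^{-1}\Sigma-\Id)^2\bigr)}$, the last equality because $\Sigma_j^{-1/2}\Sigma\Sigma_j^{-1/2}$ and $\Sigma_j^{-1}\Sigma$ are similar matrices. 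Plugging these into the integral, bounding by the suprema over $y\in V_j$ and $\Sigma\in\cS_j$, using $\int_{V_j\times\cS^{++}(D,\bbR)}\diff Q=Q_1(V_j)$ and $\sum_jQ_1(V_j)\le1$, then yields the stated inequality. The only genuinely analytic ingredients are these two Gaussian perturbation estimates; I expect the covariance one to be the main obstacle, since one needs the $L^1$-bound between zero-mean Gaussians to hold with the clean constant over the whole range of admissible $\Sigma\in\cS_j$ rather than only in an infinitesimal regime.
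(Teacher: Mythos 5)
Your proof is correct and follows essentially the same route as the paper: the same decomposition of $f_Q-f_P$ into a residual-mass piece on $V_0$, a mass-discrepancy piece, and a within-cell location/scale perturbation piece, with the same triangle inequality through the intermediate density $\varphi_{\Sigma_j}(\cdot-y)$. The two Gaussian $L^1$-perturbation bounds that you flag as the only genuine analytic ingredients are precisely what the paper imports from Proposition~2.1 and Theorem~1.1 of Devroye, Mehrabian and Reddad (2018), so the global (not merely infinitesimal) validity you worry about for the covariance term is settled by the cited reference rather than being an open gap.
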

		\begin{proof} We write that $f_Q(x) - f_P(x)$ is
			\begin{align*} 
			\int_{V_0} \varphi_\Sigma(x-y) &\diff Q(y,\Sigma) + \sum_{j=1}^N \int_{V_j} \{\varphi_\Sigma(x-y) - \varphi_{\Sigma_j}(x-z_j)\} \diff Q(y,\Sigma) \\
			&+ \sum_{j=1}^N (Q_1(V_j)-\pi_j)\varphi_{\Sigma_j}(x-z_j), 
			\end{align*} 
			The last term can be readily bounded in $L^1$-norm by
			$$
			\sum_{j=1}^N |Q_1(V_j)-\pi_j | \int \varphi_{\Sigma_j}(x-z_j) \diff x = \sum_{j=1}^N |Q_1(V_j)-\pi_j |,
			$$
			and the first one by $Q_1(V_0) = 1 - \sum_{j = 1}^N Q_1(V_j) \leq \sum_{j=1}^N |Q_1(V_j) - \pi_j|$. For the second term, notice that each term of the sum is upper-bounded in $L^1$-norm by
			\begin{align*} 
				&\int_{V_j} \|\varphi_\Sigma(\cdot-y) - \varphi_{\Sigma_j}(\cdot-z_j) \|_1 \diff Q(y,\Sigma)  \\
				&\leq   \int_{V_j} \|\varphi_{\Sigma_j}(\cdot-y) - \varphi_{\Sigma_j}(\cdot-z_j) \|_1 \diff Q(y,\Sigma) + \int_{V_j} \|\varphi_\Sigma(\cdot-y) - \varphi_{\Sigma_j}(\cdot-y) \|_1 \diff Q(y,\Sigma) \\
				&\leq \frac12  \int_{V_j} \|\Sigma^{-1/2}_j(y-z_j)\| \diff Q_1(y) + \frac32\int_{V_j} \sqrt{\tr(\Sigma_j^{-1}\Sigma-\Id)^2} \diff Q(y,\Sigma) \\
				&\leq \frac12 Q_1(V_j) \|\Sigma_{j}^{-1/2}\|_{\op} \diam V_j + \frac32 Q_1(V_j) \sup_{\Sigma \in \cS_j} \sqrt{\tr(\Sigma_j^{-1}\Sigma-\Id)^2},
			\end{align*} 
			where we used Prp 2.1 and Thm 1.1 of \cite{devroye2018total}.
		\end{proof}
		
				\subsection{Proof of \lemref{sieve}} \label{app:sieve}



						Throughout the proof, $C$ denotes a generic constant whose value depends only on $D$.  We start with bounding the probability measure of $\cF_n^c$. There holds
						\begin{align*}
							\Pi(\mathcal F_n^c) &\leq \Pi\{\exists h \leq H_n, \mu_h \notin B(0,R_n)\} +  \Pi\{\sum_{h > H_n} \pi_h \geq \ve_n\} + \Pi(\exists h \leq H_n, \Lambda_h \notin [\underline{\sigma}_n^2,\bar{\sigma}_n^2]^{D}).
						\end{align*} 
						The first mass is bounded using \eqref{condH:partial}:
						$$
						\Pi\{\exists h \leq H, \mu_h \notin B(0,R_n)\} \lesssim H_n \int \|\mu\|^{-b_2}\ind_{\|\mu\| \geq R_n} \diff \mu \lesssim H_n R_n^{-b_2} \lesssim e^{-c_1 n \ve_n^2},
						$$
						as soon as $b_2 R_0\geq 2c_1$. The second term is bounded in \cite[p. 15]{shen2013adaptive} by
						$$
						\Pi\{\sum_{h > H_n} \pi_h \geq \ve_n\} \leq \{\frac{eB}{H_n}\log(1/\ve)\}^{H_n}\lesssim e^{-c_1 n \ve_n^2},$$
						as soon as $H_0$ is large enough. Finally, to bound the last probability, we consider separately the partial and hybrid location-scale priors. 
						\begin{itemize}
							\item Partial location-scale: $\Lambda_h = \Lambda_1$ for all $h$ and using \eqref{cond:piL}, 
							\begin{align*}
								\Pi( \Lambda_1 \notin \cQ_n) \leq  \Pi_\Lambda\{ \min_i \Lambda_{i} \leq \underline{\sigma}_n^2\} +  \Pi_\Lambda\{ \max_i \Lambda_{i} \geq \bar{\sigma}_n^2\} \lesssim e^{-c_3 \sigma_0^{-2b_3} n\ve_n^2 } + \bar \sigma_n^{-2b_4} \leq e^{-c_1 n\ve_n^2 }, 
							\end{align*}
							as soon as $\sigma_1^2$ is large  enough and $\sigma_0^2$ is small enough. 
							\item Hybrid location-scale prior 
							\begin{align*}
								\Pi(\exists h \leq H_n, \,  \Lambda_h \notin \cQ_n) &\leq  H_n \mathbb E_{\tilde\Pi_\Lambda}\left[Q_2( \min_i \Lambda_{i} \leq \underline{\sigma}_n^2)\right] + H_n \mathbb E_{\tilde\Pi_\Lambda}\left[Q_2( \max_i \Lambda_{i} > \bar{\sigma}_n^2)\right] \\
								&\leq H_n e^{-c_3 \sigma_0^{-2b_3} n\ve_n^2 }  + H_n e^{-c_4 \sigma_1^{2 b_4} n\ve_n^2 } \lesssim e^{-c_1 n\ve_n^2}. 
							\end{align*}
						\end{itemize}		
						We then turn on bounding the entropy of the partions of $\cF_n$. Note that on $\mathcal F_n$ $\min_i \lambda_i \geq \underline \sigma_n^2 $ so that 
						$$\frac{ \max_i \lambda_i }{ \min_i \lambda_i } \leq \max_i \lambda_i  \underline \sigma_n^{-2} \leq \sigma_0^{-2} n^{2{t_0}/b_3} \times \max_i \lambda_i . $$  
						From \cite{canale2017posterior}, the covering number of $\mathcal F_{n, \mathbf j,\ell}$ is bounded by 
						\begin{align*} &N( \ve_n, \mathcal F_{n, \mathbf j,  \ell}, \| \cdot \|_1 ) \\
						&\leq \exp \left( 
						C H_n[\log n+\log (1/\ve_n)] +(D-1) \sum_{h\leq H_n} \log(j_h+1) +D(D-1) 2^{\ell-2}H_n\log n   \right), \end{align*} 
						and the covering number of 
						$\mathcal F_{n, \mathbf j}$ is bounded by 
						\begin{align*} 
							N( \ve_n, \mathcal F_{n, \mathbf j}, \| \cdot \|_1 ) &\leq \exp \left( 
							C H_n[\log n+\log (1/\ve_n)] + \sum_{h\leq H_n} \log( j_h+1) +  DH_n\log \bar \sigma_n^2 \right)
							\\
							&\lesssim  \exp \left( 
							C H_n[\log n+\log (1/\ve_n)] + \sum_{h\leq H_n} \log (j_h+1)\right). 
						\end{align*} 
						We bound $\Pi(\mathcal F_{n, \mathbf j,  \ell} )$ in the case of  the partial location-scale prior, for $\ell \geq 1$: 
						\begin{equation*} 
							\begin{split}
								&\Pi(\mathcal F_{n, \mathbf j, \ell} ) \lesssim 
								\prod_{h\leq H_n} (j_h\sqrt{n})^{-(b_2-D)\1_{j_h \geq 1} } \Pi_\Lambda\( \max_{i} \lambda_{i} > \sigma^{2}_0 n^{2^{\ell-1}-2{t_0}/b_3}\)  \\ 
								&\lesssim 
								\prod_{h\leq H_n} (j_h\sqrt{n})^{-(b_2-D)\1_{j_h \geq 1} } n^{ -b_4(2^{\ell-1}-2{t_0}/b_3)} \\
								&= \exp\{-(b_2-D) \sum_{h \leq H_n} \ind_{j_h \geq 1} \log(j_h)- \frac12(b_2-D) n H_n -  b_4(2^{\ell-1}-2{t_0}/b_3) H_n \log n\}.
							\end{split}
						\end{equation*} 
						We bound $\Pi(\mathcal F_{n, \mathbf j} )$ in the case of  the hybrid location-scale prior: 
						\begin{equation*} 
							\begin{split}
								\Pi(\mathcal F_{n, \mathbf j} ) &\lesssim 
								\prod_{h\leq H_n} (j_h\sqrt{n})^{-(b_2-D)\1_{j_h \geq 1} }  \\
								&= \exp\{-(b_2-D) \sum_{h \leq H_n} \ind_{j_h \geq 1} \log(j_h)- \frac12(b_2-D)  H_n\log n\}.
							\end{split}
						\end{equation*} 
						This implies in particular that for the partial location-scale prior
						\begin{align*}
							&\sum_{\mathbf j, \ell} \sqrt{\Pi(\mathcal F_{n, \mathbf j, \ell} )N( \ve_n, \mathcal F_{n, \mathbf j,  \ell}, \| \cdot \|_1 ) } \lesssim 
							\exp \left( 
							\frac12 C H_n[\log n+\log (1/\ve_n)] \right) \\
							&\times \sum_{\mathbf j, \ell}\exp\big( \frac{1}{2}\sum_h [ (D-1)\log(j_h+1)-(b_2-D)(\log(j_h) \ind_{j_h \geq 1}-1)] \\
							&~~~~~~~~~~~~~~+ \1_{\ell\geq 2} 2^{\ell-1 }\log n[ D(D-1)/2 - b_4 ] \big) \\
							& \lesssim 
							\exp \left( C H_n[\log n+\log (1/\ve_n)]/2 \right), 
						\end{align*}
						since $b_4 > D(D-1)/2$ and $b_2 > 2D-1$.
						Therefore, by choosing $M_0>0$ large enough
						$$\sum_{\mathbf j, \ell} \sqrt{\Pi(\mathcal F_{n, \mathbf j, \ell} )N( \ve_n, \mathcal F_{n, \mathbf j,  \ell}, \| \cdot \|_1 ) } e^{-M_0 n \ve_n^2 } = o(1) .$$ 
						We obtain a similar result for the hybrid location-scale prior.

	\subsection{Proof of \lemref{thickness}} \label{app:thickness}
	We let again $R = (H \log(1/\ve)/C_2)^{1/\kappa}$ and define $\sigma := \ve^{1/\beta}$.
			Thanks to \corref{approx}, we know there exists a density function $g$ supported on $M^\delta$ such that $\dhed(K_\Sigma g, f_0) \lesssim \sigma^{2\beta}\log^q(1/\sigma)$ for some $a>0$. We can in turn, thanks to \lemref{disc}, find a discrete probability measure $G$ on $M^\delta \cap \ball(0,R)$ with $N$ atoms at least $\sigma^{\alpha_1} \ve^2$-apart such that 
			$$
			\|K_\Sigma g - K_\Sigma G\|_1 \lesssim \ve^2 \log^{D/2}(1/\ve)~~~\text{and}~~~N \lesssim \sigma^{-D} \log^D(1/\ve).
			$$
			We thus have $\dhed(K_\Sigma G, f_0) \lesssim \sigma^{2\beta} \log^q(1/\sigma)  +  \ve^2 \log^{D/2}(1/\ve) \lesssim \ve^2 \log^{2r}(1/\ve)$ with $r := q/2 \vee D/4$. We let $z_1,\dots,z_N$ be the atoms of $G$ and denote by $p_j = G(z_j)$. We let $V_j$ be the ball centered around $z_j$ with radius $\sigma^{2\alpha_0}\ve^2/2$. We complete $V_1,\dots,V_N$ with sets $V_{N+1},\dots,V_J$ that forms a partition of $M^\tau \cap \ball(0,R)$ with $V_j$ included in balls of the form
			$
			\{x \in \bbR^D~|~\|x-z_j\|_{\Sigma^{-1}(z_j)} \leq 1\}  
			$
			for some $z_j \in M^\tau \cap \ball(0,R)$, so that we can take $J \lesssim N + (R/\sigma)^D \lesssim \sigma^{-D} \log^{D/\kappa}(1/\ve)$. We then set $V_0$ to be the complementary set of the reunion of the $V_j$ and set further $p_j = 0$ for $j$ greater than $N+1$.
			
We write  under the prior $\Pi$, $P = \sum_{h = 1}^\infty \pi_h \delta_{\mu_h,U_h,\Lambda_h}$ and $\Sigma_h = U_h^\top \Lambda_h U_h$. We use the convention that $\Lambda_h = \Lambda$ for all $h$ in the case of the Partial location scale prior and $\pi_h = 0$ for $h \geq K$ for the mixture of finite mixtures prior. 
Set			$\wt N = \log(1/\ve) \times N$, we consider the following events
			\begin{align*} 
				\cP_{J} &= \{\sum_{j=1}^{J} |p_j - P_\mu (V_j)| \leq \ve^2 ~~\text{and} ~~ \min_{1 \leq j \leq J} P_\mu (V_j) \geq \ve^4\},~~~\\
				\cF_{\wt N} &= \{\sum_{h \leq {\wt N}} \pi_h \geq 1 - \ve^8\}, \\
				\cO_{{\wt N}} &= \{\forall 1\leq h \leq {\wt N},~ \|U_h O_{\mu_h}^\top - \Id\|_{\op} \leq \sigma^{2\alpha_0}\ve^2\}, \\
				\text{and}~~\cL_{\wt N} &= \{\forall 1\leq h \leq {\wt N},~ \Lambda_h \in \cS(\sigma^{\alpha_0})^d \times \cS(\delta\sigma^{\alpha_\perp})^{D-d}\}\\
				~~\text{where}~~
				\cS(t) &= \{s~|~t^2 \leq s \leq t^2(1+\sigma^{2\beta})\}.
			\end{align*} 
			We first show that if $P \in \cP_{J} \cap \cF_{\wt N} \cap \cO_{{\wt N}} \cap \cL_{\wt N}$, then
			$$
			\dhe(f_P,f_0) \lesssim  \ve \log^{r}(1/\ve). 
			$$
			Indeed, we have
			$$
			\dhe(f_P,f_0) \leq \dhe(f_P,K_\Sigma G) + \dhe(K_\Sigma G,f_0) \leq \dhe(f_{\wh P},f_{\wh G}) +  \dhe(f_{\wh P},f_{P}) +  \ve \log^{r}(1/\ve),		$$
			where $\diff \wh G(y,\Sigma) = \delta_{\Sigma(y)} \diff G(y)$ and $\wh P = \sum_{h \geq 1} \pi_h \delta_{\mu_h,\wh \Sigma_h}$  with
			$$
			\wh \Sigma_h = \begin{cases} \Sigma_h~~~\text{if $h \leq \wt N$} \\
				\Sigma(z_h)~~~\text{if $h > \wt N$ and $\mu_h \in V_j$ with $j \leq N$;} \\ 0~~~~\text{otherwise},
			\end{cases}
			$$
			where, conventionally, $\varphi_\Sigma(\cdot -z)\diff z = \delta_{z}$ when $\Sigma = 0$.
			Since $P \in \cF_{\wt N}$, there easily holds $\|f_P - f_{\wh P}\|_1 \leq 2\ve^2$ and, using \lemref{vj}, we find that
			\begin{align*} 
				&\|f_{\wh P} - f_{\wh G}\|_1 \\
				&\leq 2\sum_{j=1}^N |P_\mu(V_j) - p_j | + \frac12 \sigma^{-\alpha_0} \sup_{1\leq j\leq N}\diam V_j + \frac32\sup_{1 \leq j \leq N}\sup_{\mu_h \in V_j} \sqrt{\tr(\Sigma(z_j)^{-1}\wh \Sigma_h-\Id)^2},
				\\
				&\lesssim \ve^2 + \sup_{1 \leq j \leq N}\sup_{\mu_h \in V_j} \|\Sigma(z_j)^{-1}\wh \Sigma_h-\Id\|_{\op}.
			\end{align*} 
			where we used that $P \in \cP_J$ and that $\diam V_j \lesssim \sigma^{\alpha_0}\ve^2$ for $j \leq N$. In the last supremum, if $h > \wt N$ and $\mu_h \in V_j$, then $\Sigma_h = \Sigma(z_j)$ so we only need to handle the case when $h \leq \wt N$. Moreover, 
			\begin{align*}
				\Sigma(z_j)^{-1}\wh \Sigma_h-\Id &= O_{z_j}^\top \Delta^{-2}_{\sigma,\delta} O_{z_j} U_{h}^\top \Lambda_h U_{h} - \Id \\
				&= O_{z_j}^\top \Delta^{-2}_{\sigma,\delta} (O_{z_j} U_{h}^\top - \Id)\Lambda_h U_{h} + O_{z_j}^\top \Delta^{-2}_{\sigma,\delta} \Lambda_h U_{h}  - \Id \\
				&=   O_{z_j}^\top \Delta^{-2}_{\sigma,\delta} (O_{z_j} U_{h}^\top - \Id)\Lambda_h U_{h} + O_{z_j}^\top( \Delta^{-2}_{\sigma,\delta} \Lambda_h - \Id) U_{h}  + O_{z_j}^\top U_{h} - \Id,
			\end{align*} 
			so that
			\begin{align*}
				\|\Sigma(z_j)^{-1}\wh \Sigma_h-\Id\|_{\op} &\lesssim \| \Delta^{-2}_{\sigma,\delta}\|_{\op} \|\Lambda_h\|_{\op} \| \|O_{z_j} U_{h}^\top - \Id\|_{\op} + \|\Delta^{-2}_{\sigma,\delta} \Lambda_h - \Id\|_{\op} +  \|O_{z_j}^\top U_{h} - \Id\|_{\op}  \\
				&\lesssim \ve^2+ \sigma^{2\beta} \simeq \ve^2,
			\end{align*} 
			where we used both that $h \leq \wt N$ and $P \in \cO_{\wt N} \cap \cL_{\wt N}$. 
			
			Using \cite[Lem B2]{shen2013adaptive}, we find that for $\lambda > 0$ small enough
			\begin{align*} 
				P_0 \log \frac{f_0}{f_P} &\lesssim \dhed(f_0,f_P)(1+\log(1/\lambda))+P_0\{\log\frac{f_0}{f_P} \ind_{f_P/f_0 < \lambda}\} \\
				P_0 \left(\log \frac{f_0}{f_P}\right)^2 &\lesssim \dhed(f_0,f_P)(1+\log^2(1/\lambda))+P_0\{\left(\log\frac{f_0}{f_P}\right)^2 \ind_{f_P/f_0 < \lambda}\},
			\end{align*} 
			up to numeric constant. Notice that by assumption, $f_0$ is bounded from above by $\|L\|_\infty$. Let $x \in M^\tau \cap \ball(0,R)$, and let $1 \leq j\leq J$ be such that $x \in V_j$. Notice that, since $P \in \cF_{\wt N} \cap \cP_J$, there holds
			$$
			\ve^4 \leq P_\mu(V_j) = \sum_{\mu_h \in V_j} \pi_h \leq \sum_{\substack{\mu_h \in V_j \\ h \leq \wt N}} \pi_h + \ve^8~~~~\text{so that}~~~\sum_{\substack{\mu_h \in V_j \\ h \leq \wt N}}  \pi_h \geq \ve^4/2.
			$$
			Now we can write
			$$
			f_P(x) \geq \sum_{\substack{\mu_h \in V_j \\ h \leq \wt N}} \pi_h \varphi_{\Sigma_h}(x-\mu_h) \gtrsim  \frac{\ve^4}{\delta^{D-d} \sigma^D},  
			$$
			where we used that $|\det \Sigma_h|^{1/2} \leq  (1 + \sigma^{2\beta})^{D/2} \delta^{D-d} \sigma^{D} \lesssim \delta^{D-d} \sigma^{D}$ and that $\|x - y\|_{\Sigma^{-1}(y)} \lesssim 1$ for any $x,y \in V_j$ with a similar line of reasoning as in the proof of \lemref{disc}, relying on $P \in \cO_{\wt N} \cap \cL_{\wt N}$. Furthermore, since $f_P(x) \leq \ve^8 + \sum_{h \leq \wt N} \pi_h \varphi_{\Sigma_h}(x-\mu_h)$, there must be some $h \leq \wt N$ such that both $\pi_{h} \geq \ve^2$ and $\mu_{h} \in \ball(0,R)$ holds, otherwise one would get a contradiction looking at the mass of $f_0$ since one would get
			\begin{align*} 
			\|f_0\|_1 &= \|f_0 \ind_{\ball(0,R/2)}\|_1+ \|f_0 \ind_{\ball(0,R/2)^c}\|_1 \\
			&\leq 
			\underbrace{\|f_P \ind_{\ball(0,R/2)}\|_1}_{\lesssim \ve^2 + \ve^H}+
			\underbrace{\|(f_P - f_0)\ind_{\ball(0,R/2)}\|_1}_{\lesssim \ve + \sigma^\beta}+ 
			\underbrace{\|f_0 \ind_{\ball(0,R/2)^c}\|_1}_{\lesssim \ve^H},
			\end{align*} 
			where the inequalities occurs up to log-term.
			For $x \notin \ball(0,R)$, notice that, for this particular $h \leq \wt N$,
			$
			f_P(x) \gtrsim \ve^2 \varphi_{\Sigma_h}\left(x - \mu_h\right).
			$
			Taking $\lambda = C \ve^4/(\delta^{D-d} \sigma^D)$ for some small constant $C$, one get for any $\ell \geq 1$, 
			\begin{align*}
				&P_0\{\left(\log\frac{f_0}{f_P}\right)^\ell \ind_{f_P/f_0 < \lambda}\} \leq P_0\{\left(\log\frac{f_0}{f_P}\right)^\ell \ind_{\ball(0,R)^c}\} \\
				&\lesssim \log^\ell \frac{\ve^2}{\sigma^D \delta^{D-d}} \times P_0(\ball(0,R)^c)+ \int_{\ball(0,R)^c} \|x-\mu_h\|_{\Sigma_h^{-1}}^{2\ell} f_0(x) \diff x \\
				&\lesssim \ve^{H} \log^\ell \frac{\ve^2}{\sigma^D \delta^{D-d}} + \sigma^{-2 \ell \alpha_0} \{\int \|x\|^{4\ell} f_0(x) \diff x\}^{1/2} P_0(\ball(0,R)^c)^{1/2} \\
				&\lesssim \ve^2 \log^2(1/\ve),
			\end{align*} 
			where the last inequality holds for $\ell \in \{1,2\}$, provided that we chose $H \geq 8\alpha_0+4\beta$.
			This shows that $f_P \in \ball(f_0,\tilde \ve)$ for $\tilde \ve \approx \ve \log^{s}(1/\ve)$ with $s  = r \vee 1$. It only remains to lower bound the prior mass of the event $\cP_{J} \cap \cF_{\wt N} \cap \cO_{{\wt N}} \cap \cL_{\wt N}$. 
			
			For this, one can use \eqref{condH:partial} and the fact that the scales are drawn independently to rest to find that
			$$
			\Pi(\cP_{J} \cap \cF_{\wt N} \cap \cO_{{\wt N}} \cap \cL_{\wt N}) \geq \Pi(\cP_{J} \cap \cF_{\wt N}) \times c_o^{\wt N} \prod_{h \leq \wt N} O\{\|UO_{\mu_h}-\Id\|_{\op} \leq \sigma^{2\alpha_0} \ve^2\} \times \Pi(\cL_{\wt N}).
			$$ 
			We easily get that $O\{\|UO_{\mu_h}-\Id\|_{\op} \leq \sigma^{2\alpha_0} \ve^2\} \gtrsim (\sigma^{2\alpha_0} \ve^2)^{D(D-1)/2}$. For $\Pi(\cL_{\wt N})$, we use  \eqref{cond:piL} or  \eqref{cond:piH:LB} along with a simple Markov inequality to find that
			\begin{align*} 
				\Pi(\cL_{\wt N}) &= \bbE_{\Pi}\{Q_2\left([\sigma^{2\alpha_0},(1+\sigma^{2\beta})\sigma^{2\alpha_0}]^d \times [\delta^2 \sigma^{2\alpha_\perp},(1+\sigma^{2\beta})\delta^2\sigma^{2\alpha_\perp}]^{D-d}\right)^{\wt N}\} \\
				&\gtrsim \exp(-2 \alpha_0 B_0 \log(1/\sigma) \wt N) \exp\{-c_2 D \sigma^{-D}\}.
			\end{align*} 
			For $\Pi(\cP_{J} \cap \cF_{\wt N})$, we write
			$$
			\Pi(\cP_{J} \cap \cF_{\wt N}) \geq \Pi(\cP_{J}) -  \Pi(\cF_{\wt N}^c) \gtrsim e^{-C J \log(1/\ve)} - e^{-\wt N\log(\wt N)},
			$$
			where we used \cite[Lem 10]{ghosal2007posterior} and the bound  \cite[p. 15]{shen2013adaptive}. We conclude by noticing that $e^{-\wt N\log(\wt N)} \ll e^{- N \log(1/\ve)}$ so that in the end 
			$$
			\Pi(\cP_{J} \cap \cF_{\wt N} \cap \cO_{{\wt N}} \cap \cL_{\wt N})  \gtrsim \exp(-C N \log^t(1/\ve))),
			$$
			for some $C$ depending on the parameters and $t = D/\kappa + 2$, ending the proof.

\subsection{Proof of \lemref{disc}} \label{app:disc}
		\begin{proof}[Proof of \lemref{disc}]
			Let $g : \bbR^D \to \bbR$ be a density supported on $M^\delta$ and satisfying \eqref{expdec} and \eqref{assint}. Let
			$R = (H \log(1/\ve)/C_2)^{1/\kappa}
			$,
			so that $g$ is less than $C_1 \ve^H$ outside of $\ball(0,R)$.
			We consider the functions $ \{\chi_i\}_{i \in \cI}$  introduced in \lemref{chixj} associated with a $\tau/64$-packing $\{x_i\}_{i \in \cI}$ of $M \cap \ball(0,R)$. Recall that the number $|\cI|$ of functions is less than of order $R^D$. We can write
			$$
			K_\Sigma g(x) = K_\Sigma (g \ind_{\ball^c(0,R)})(x) + \sum_{i \in \cI} K_\Sigma (\chi_i g \ind_{\ball(0,R)})(x) =K_\Sigma (g \ind_{\ball^c(0,R)})(x) + \sum_{i \in \cI} c_i K_\Sigma g_i(x) 
			$$
			where $c_i = \|\chi_i g \ind_{\ball(0,R)}\|_1$ and $g_i = \chi_i g \ind_{\ball(0,R)} /c_i$ is a density supported on $\cO^0_{x_i}$. Notice that for any $x \in \bbR^D$
			$$
			K_\Sigma (g \ind_{\ball^c(0,R)})(x)  \leq C_1 \ve^H K_\Sigma (\ind_{\ball^c(0,R)})(x) \leq C_1 \ve^H
			$$
			and that
			\begin{align*} 
				\|K_\Sigma(g \ind_{\ball^c(0,R)})\|_1 &= \|g \ind_{\ball^c(0,R)}\|_1 \leq \int_{\|x\| \geq R} C_1 e^{-C_2\|x\|^\kappa} \lesssim  \int_{r \geq R} e^{-C_2 r^\kappa} r^{D-1} \diff r \\
				&\lesssim \log^{(D-2)/\kappa+1} (1/\ve) \times \ve^H,
			\end{align*} 
			where we used a NP-bound on the incomplete Gamma function in the last inequality. Consequently, the term $K_\Sigma(g \ind_{\ball^c(0,R)})$ need not be discretized. Take now $i \in \cI$. As in the proof of \thmref{approx}, there holds that
			$$
			\varphi_{\Sigma(y)}(x - y) \lesssim \frac{\ve^H}{\sigma^D\delta^{D-d}} ~~~~\forall x \notin \cO_{x_i}^1, y \in \cO^0_{x_i},
			$$
			provided that $\tau \gtrsim  R \delta\sigma^{\alpha_\perp}$. This means that $|K_\Sigma g_i(x)| \lesssim  \frac{\ve^H}{\sigma^D\delta^{D-d}}$ for all $x \notin \cO_{x_i}^1$. If now $x \in \cO_{x_i}^1$, there holds   
			\begin{align*} 
				K_\Sigma g_i(x) &= \int_{\cW_{x_i}^0} \varphi_{\Sigma(\bar\Psi_{x_i}(w))}(x-\bar\Psi_{x_i}(w)) \bar g_i(w) \diff w \\
				&=  \sum_{j \in \cJ_i}  \bar c_{i,j} \int_{\cW_{i,j}} \varphi_{\Sigma(\bar\Psi_{x_i}(w))}(x-\bar\Psi_{x_i}(w)) \bar g_{i,j}(w) \diff w
			\end{align*} 
			where $\bar g_i(w) = g_i(\bar \Psi_{x_i}(w)) \times | \det \diff \bar\Psi_{x_i}(w)|$, $\bar c_{i,j} = \int_{\cW_{i,j}} \bar g_i$ and $ \bar g_{i,j} = \bar g_i / \bar c_{i,j}$. The sets $\cW_{i,j}$ form a partition of $\cW_{x_i}^0$ that are included in
			$$
			\prod_{\ell=1}^d [w^0_{t_\ell}, w^0_{t_\ell +1} ) \times  \prod_{\ell=d+1}^D [w^\perp_{t_\ell}, w^\perp_{t_\ell +1} ) \cap \cW_{x_i}^0,
			$$
			for $(w^0_{t_1},\dots,w^0_{t_d}, w^\perp_{t_{d+1}},\dots,w^\perp_{t_D})$ vary on a grid of size $(\sigma^{\alpha_0},\dots,\sigma^{\alpha_0},\delta\sigma^{\alpha_\perp},\dots,\delta\sigma^{\alpha_\perp})$. Notice that
			$$
			\Card \cJ_i  \simeq \frac{1}{\sigma^{\alpha_0 d}} \times \left(\frac{\delta}{\delta \sigma^{\alpha_\perp}} \right)^{D-d} = \sigma^{-D}. 
			$$
			Let  $i \in \cI$ and  $j \in \cJ_i$ be fixed  and denote for short $\bar\Psi = \bar\Psi_{x_i}$ and $\bar \Sigma = \Sigma \circ \bar \Psi$. We let $\Gamma > 0$ and we distinguish two cases: $ \inf_{w \in \cW_{i,j}} \|x - \bar \Psi(w) \|^2_{\bar\Sigma^{-1}(w)} \geq \Gamma \log(1/\ve)$ and  $ \inf_{w \in \cW_{i,j}} \|x - \bar \Psi(w) \|^2_{\bar\Sigma^{-1}(w)} < \Gamma \log(1/\ve)$. In the former, 
			$$
			\int_{\cW_{i,j}} \varphi_{\bar\Sigma(w)}(x-\bar\Psi(w)) \bar g_{i,j}(w) \diff w \lesssim \frac{\ve^{\Gamma/2}}{\sigma^D \delta^{D-d}}.
			$$
			While if $\inf_{w \in \cW_{i,j}} \|x - \bar \Psi(w) \|^2_{\bar\Sigma^{-1}(w)} \leq \Gamma \log(1/\ve)$, we first show that $\sup_{w \in \cW_{i,j}} \|x - \bar \Psi(w) \|^2_{\bar\Sigma^{-1}(w)} \leq \Gamma' \log(1/\ve)$ for some $\Gamma'>\Gamma$. We let $w_0 \in \cW_{i,j}$ such that $\|x - \bar \Psi(w_0) \|^2_{\bar\Sigma^{-1}(w_0 )} \leq 2\Gamma \log(1/\ve)$ and take $w \in \cW_{i,j}$. Using \eqref{lem:angles}, we get that there $\|\pr_{T_{\bar\Psi(w)}} - \pr_{T_{\bar\Psi(w_0)}}\|_{\op} \lesssim \sigma^{\alpha_0}$ and the same holds for $\pr_{N_{\bar\Psi(w)}} - \pr_{N_{\bar\Psi(w_0)}}$. 
			\newclem{Let us denote $y = \bar \Psi(w_0)$ and $z = \bar\Psi(w)$. First notice that if $x \notin \cW_{x_i}^1$, then $\|x-y\|_{\Sigma^{-1}(y)} \gtrsim \sigma^{-\alpha_0} \wedge \delta^{-1} \sigma^{-\alpha_\perp} > \Gamma \log(1/\ve)$ so that necessarily $x \in \cW_{x_i}^1$. In particular, $\|x-y\| \leq B$ for some constant $B > 0$ not depending on $\Gamma$. Now recall that
			$$ \|x-z\|_{\Sigma^{-1}(z)} = \left\|\frac{1}{\sigma^{\alpha_0}}\pr_{T_{z}}(x-z) +  \frac{1}{\sigma^{\alpha_\perp} \delta}\pr_{N_{z}}(x-z)\right\|.$$
			Furthermore, we can write
			\begin{align} 
			x - y &= \Psi(v_x) - \Psi(v_y) + \delta \( N(v_x) \eta_x - N(v_y) \eta_x\)\label{eq:xy1} \\
			&= \diff \Psi(v_y)[v_x - v_y] + \delta \diff N(v_y)[v_x-v_y]\eta_x + \delta N(v_y)[\eta_x-\eta_y] + O(\sigma^{2\alpha_0}).\label{eq:xy2}
			\end{align} 
			Using \eqref{eq:xy1}, there holds
			\begin{align*} 
			\pr_{T_y}(x-y) &= \pr_{T_y}(\Psi(v_x) - \Psi(v_y)+\delta N(v_x)\eta_x)  \\
			&= \pr_{T_y}(\Psi(v_x) - \Psi(v_{z})+\delta N(v_x)\eta_x) + O(\sigma^{\alpha_0}) \\
			&= \pr_{T_{z}}(\Psi(v_x) - \Psi(v_{x_{i,j}})+\delta N(v_x)\eta_x) + O(\sigma^{\alpha_0}) \\
			&= \pr_{T_{z}}(x-z) + O(\sigma^{\alpha_0})
			\end{align*} 
			where $O(\sigma^{\alpha_0})$ is independant of $\Gamma$. Now using \eqref{eq:xy2}, we find that
			\begin{align*} 
			\pr_{N_y}(x-y) &= \delta N(v_y)[\eta_x-\eta_y]+ \delta \pr_{N_y}(\diff N(v_y)[v_x-v_y]\eta_x) + O(\sigma^{2\alpha_0}) \\
			&=  \delta N(v_{z})[\eta_x-\eta_{z}] + \delta  \pr_{N_{z}}(\diff N(v_y)[v_x-v_{x_{i,j}}]\eta_{z}) + O(\delta \sigma^{\alpha_\perp}) + O(\sigma^{2\alpha_0}) \\
			&= \pr_{N_{z}}(x-z) + O(\delta \sigma^{\alpha_\perp} + \sigma^{2\alpha_0}) 
			\end{align*} 
			where we used that $\alpha_0 \geq \alpha_\perp$, and where again $O(\delta \sigma^{\alpha_\perp} + \sigma^{2\alpha_0})$ does not depend on $A$. All in all, we find that
			\begin{align} 
			\|x-y\|_{\Sigma^{-1}(y)} &= \left\|\frac{1}{\sigma^{\alpha_0}}\pr_{T_{x}}(x-y) +  \frac{1}{\sigma^{\alpha_\perp} \delta}\pr_{N_{y}}(x-y)\right\| \nonumber\\
			&= \left\|\frac{1}{\sigma^{\alpha_0}}\pr_{T_{z}}(x-z) +  \frac{1}{\sigma^{\alpha_\perp} \delta}\pr_{N_{z}}(x-z)\right\| + O\(1 + \sigma^{2\alpha_0}/(\delta \sigma^{\alpha_\perp})\) \nonumber\\
			&= \|x-z\|_{\Sigma^{-1}(z)} + O\(1 + \sigma^{2\alpha_0}/(\delta \sigma^{\alpha_\perp})\). \label{eq:xyz}
			\end{align} 
			Now using that $\sigma^{2\alpha_0} = o(\delta \sigma^{\alpha_\perp})$, there thus holds that $\|x - \bar\Psi(w)\|_{\bar\Sigma^{-1}(w)} \leq 2 \Gamma \log(1/\ve)$ for $\Gamma$ large enough.}
			

			Denote $R_T(u) = \exp(u)-\sum_{t=0}^{T-1} u^t/t!$, then $|R_T(u)| \leq e^{u} |u|^T/T!$ and
			\begin{align*} 
				\exp&\{-\frac12\|x - \bar\Psi(w)\|_{\bar\Sigma^{-1}(w)}^{2}\}  = \sum_{t=0}^{T-1} \frac{(-1)^t}{2^t t!} \|x - \bar\Psi(w)\|_{\bar\Sigma^{-1}(w)}^{2t} + \underbrace{R_T\left(-\|x - \bar\Psi(w)\|_{\bar\Sigma^{-1}(w)}^{2}/2\right).}_{:=~ R_T(x,w)}
			\end{align*}  
		Note that $R_T(x,w)$ is uniformly bounded by
			$$
			|R_T(x,w)| \leq e^{5 \Gamma \log(1/\ve)} \frac{\left(5 \Gamma \log(1/\ve)\right)^T}{T!} \approx \frac{\ve^{-5\Gamma}\log^T(1/\ve)}{T!}.
			$$
			Set,
			$$
			A_{i,j}(w) := \inner{e_i}{\bar\Sigma^{-1}(w) e_j},~~~ B_i(w) :=  \inner{e_i}{\bar\Sigma^{-1}(w) \bar\Psi(w)},~~~\text{and}~~~C(w) := \|\bar\Psi(w)\|^2_{\bar\Sigma^{-1}(w)},
			$$
			so that all functions $A_{i,j}$, $B_i$ and $C$ are continuous functions of $w$ and if $x = (x_1,\dots,x_D)$, 
			\begin{align*} 
				&\|x - \bar\Psi(w)\|_{\bar\Sigma^{-1}(w)}^{2t} = \{\|x\|^2_{\bar\Sigma^{-1}(w)} - 2 \inner{x}{\bar\Sigma^{-1}(w)\bar\Psi(w)} + \|\bar\Psi(w)\|_{\bar\Sigma^{-1}(w)}^2\}^t \\
				&= \sum_{|k| = t} {t \choose k} (-2)^{k_2}\{\sum_{1 \leq i,j \leq D} x_i x_j A_{i,j}(w)\}^{k_1} \{\sum_{i=1}^D x_i B_i(w)\}^{k_2} C(w)^{k_3} \\
				&= \sum_{|k|=t} {t \choose k} (-2)^{k_2} C(w)^{k_3}  \sum_{|\ell| = k_1} {k_1 \choose \ell}  \prod_{1 \leq i,j \leq D}(x_i x_j)^{\ell_{i,j}} A_{i,j}^{\ell_{i,j}}(w) \sum_{|m| = k_2} {k_2 \choose m} \prod_{i=1}^D x_i^{m_i} B_i^{m_i}(w) \\
				&= \sum_{(p,\ell,m) \in \cG_t} P_{p,\ell,m}(x) \times \underbrace{C(w)^p \prod_{1 \leq i,j \leq D} A_{i,j}^{\ell_{i,j}}(w) \prod_{1 \leq i \leq D} B_{i}^{m_{i}}(w) }_{ := ~Q_{p,\ell,m}}
			\end{align*}  
			where $P_{p,\ell,m}(x)$ are polynomial functions of $x$, $Q_{p,\ell,m}(w)$ are continuous functions of $w$, and where $\cG_t$ is the set $\{(p,\ell,m)~|~p + |\ell| + |m| = t\} \subset \bbN^{D^2+D+1}$. According to \cite[Lem 3.1]{ghosal2001entropies}, one can always find an atomic probability measure $G_{i,j}$ such that 
			$$
			\int Q_{p,\ell,m}(w) \bar g_{i,j}(w) \diff w = \int Q_{p,\ell,m}(w) G_{i,j}(\diff w) 
			$$
			for all $p,\ell,m$ such that $p + |\ell| + |m| \leq T-1$. Since there are less than $T^{D^2+D+1}$ such triplets, the probability measure $G_{i,j}$ can be taken to have less than $T^{D^2+D+1}$ atoms. Note that then, this measure satisfies that
			\begin{align*} 
				\Big| \int_{\cW_{i,j}} &\varphi_{\bar\Sigma(w)}(x-\bar\Psi(w)) (\bar g_{i,j}(w) \diff w - G_{i,j}(\diff w)) \Big| \\
				&= \frac{1}{(2\pi)^{D/2} \delta^{D-d} \sigma^D} \Big| \int_{\cW_{i,j}} R_T(x,w)(\bar g_{i,j} (w) \diff w - G_{i,j}(\diff w)) \Big| \lesssim \frac{\ve^{-5\Gamma}\log^T(1/\ve)}{\delta^{D-d} \sigma^D T!}.
			\end{align*} 
			All in all, $G_{i,j}$ is such that
			\begin{align*} 
				&\Big| \int_{\cW_{i,j}} \varphi_{\bar\Sigma(w)}(x-\bar\Psi(w)) \bar g_{i,j}(w) \diff w - \int_{\cW_{i,j}} \varphi_{\bar \Sigma(w))}(x-\bar\Psi(w)) G_{i,j}(\diff w) \Big|
				\\
				&\lesssim \begin{cases} \displaystyle\frac{1}{\sigma^D \delta^{D-d}}\ve^H  &\text{if}~~x\notin \cO_{x_i}^1; \vspace{3pt}\\
					\displaystyle\frac{1}{\sigma^D \delta^{D-d}}\ve^{\Gamma/2} &\text{if}~~x\in  \cO_{x_i}^1~\text{and} ~\inf_{w \in J} \|x - \bar \Psi(w) \|^2_{\bar\Sigma^{-1}(w)} \geq \Gamma \log(1/\ve);  \vspace{3pt}\\ 
					\displaystyle\frac{1}{\sigma^D \delta^{D-d}} \frac{1}{T!} \ve^{-5\Gamma} \log^{T} (1/\ve) &\text{if}~~x\in  \cO_{x_i}^1~\text{and} ~\inf_{w \in J} \|x - \bar \Psi(w) \|^2_{\bar\Sigma^{-1}(w)} \leq \Gamma \log(1/\ve).
				\end{cases}
			\end{align*} 
			Taking $H \geq 1$, $\Gamma \geq 2$ and $T \geq 5\Gamma \log(1/\ve)$ yields a bound of order $\ve/\sigma^D \delta^{D-d}$ in every case. Then the probability measure
			$$
			G = \sum_{i \in \cI} \alpha_i  \sum_{j \in \cJ_i}  \bar c_{i,j} (\bar \Psi_{x_i})_{\#} G_{i,j},
			$$
		is a  discrete measure on $M^\delta$ with at most
			$$
		N_\varepsilon = 	T^{D^2+D+1} \sum_{i \in \cI} \Card \cJ_i  \approx \log^{D^2+D+1}(1/\ve) R^D  \sigma^{-D} \leq \sigma^{-D} \log^{D^2+D/(\kappa \wedge 2)}(1/\ve)
			$$
			atoms and such that $\|K_\Sigma G - K_\Sigma g \|_{\infty} \lesssim \ve/(\sigma^D\delta^{D-d})$.
			
			We now turn to bounding the $L^1$ norm. Let again pick $i \in \cI$ and $j \in \cJ_i$.
			We let $x_{i,j} = \bar\Psi_{x_i}(w_{i,j})$ for some $w_{i,j} \in \cW_{i,j}$ and for $T > 0$ we define
			$$
			\cH_A = \{x \in \bbR^D ~|~ \|x - x_{i,j}\|_{\Sigma^{-1}(x_{i,j})} \leq A\}.
			$$
			There holds that $\Leb \cH_A \simeq \delta^{D-d}\sigma^D T^D$. Furthermore, we have, denoting respectively $P_{i,j}$ and $Q_{i,j}$ the push-forwards of $\bar g_{i,j}$ and $G_{i,j}$ through $\bar \Psi_{x_i}$
			\begin{align}\label{KijPQ:1}
				\| K_\Sigma P_{i,j} - K_\Sigma Q_{i,j}\|_1 &=  \int_{\cH_A } | K_\Sigma P_{i,j} - K_\Sigma Q_{i,j} | +   \int_{\cH_A^c} | K_\Sigma P_{i,j} - K_\Sigma Q_{i,j} | \nonumber\\
				&\lesssim  \delta^{D-d} \sigma^D A^D \| K_\Sigma P_{i,j} - K_\Sigma Q_{i,j} \|_\infty +\int_{\cH_A^c} [K_\Sigma Q_{i,j}+K_\Sigma P_{i,j}]  \nonumber\\
				&\lesssim A^D \ve + \int_{\cH_A^c} [K_\Sigma P_{i,j}+K_\Sigma Q_{i,j}] .
			\end{align} 
			Recall that the support of $P_{i,j}$ and $Q_{i,j}$ are in $\bar\Psi_{x_{i}}(\cW_{i,j})$.  Furthermore, since
			$$
			\diam \pr_M \bar\Psi_{x_i}(\cW_{i,j})\lesssim \sigma^{\alpha_0},
			$$
			there holds, using \eqref{lem:angles}, that $\|\pr_{T_y} - \pr_{T_{y'}}\|_{\op} \lesssim \sigma^{\alpha_0}$ for any two $y,y' \in \bar\Psi_{x_i}(\cW_{i,j})$.
			Let $y \in  \bar\Psi_{x_i}(\cW_{i,j})$ and $x \in \cH_A^c$, we now show that 
			$\|x-y\|_{\Sigma^{-1}(y)} \geq A/2 $ if $A$ is chosen large enough.
			\newclem{First notice again that if $x \notin \cW_{x_i}^1$, then $\|x-y\|_{\Sigma^{-1}(y)} \gtrsim \sigma^{-\alpha_0} \wedge \delta^{-1} \sigma^{-\alpha_\perp} \geq A$ for all $A$ so that we can take $x \in \cW_{x_i}^1$. Now using \eqref{eq:xyz} applied to $z = x_{i,j}$, we find that $\|x-y\|_{\Sigma^{-1}(y)} = \|x-x_{i,j}\|_{\Sigma^{-1}(x_{i,j})}+ O(1+\sigma^{2\alpha_0}/(\delta\sigma^{\alpha_\perp})) \geq A/2$,
			where we used that $\sigma^{2\alpha_0} = o(\delta \sigma^{\alpha_\perp})$, and provided that $A$ is chosen big enough.}

			This yields that
			\begin{align*} 
				\int_{\cH_A^c} K_\Sigma P_{i,j} &= \int_{\bar\Psi_{x_i}(\cW_{i,j}} \int_{\cH_A^c} \varphi_{\Sigma(y)}(x-y) \diff x P_{i,j}(\diff y) \\
				&=  \frac{1}{(2\pi)^{D/2} \sigma^D \delta^{D-d}} \int_{\bar\Psi_{x_i}(\cW_{i,j})}\int_{\cH_A^c} \exp\{-\frac1{2} \|x-y\|^2_{\Sigma^{-1}(y)}\} \diff x P(\diff y) \\
				&\leq \frac{1}{(2\pi)^{D/2}} \int_{\|z\| \geq A/2} \exp\{-\frac{1}{2} \|z\|^2\} \diff z \lesssim \exp(-A^2/8),
			\end{align*} 
			where we made the variable change $z= \Sigma^{-1/2}(y)(x-y)$ in the second to last inequality. The same holds for $K_\Sigma Q_{i,j}$. Setting $A = (8\log(1/\ve))^{1/2}$ and combining with \eqref{KijPQ:1}, $\| K_\Sigma P_{i,j} - K_\Sigma Q_{i,j} \|_1  \lesssim \log^{D/2}(1/\ve) \ve$. We finally obtain that 
$$		
\|K_\Sigma g - K_\Sigma G\|_1  \lesssim  \sum_i \alpha_i \sum_{j\in \mathcal J_i} \bar c_{i,j} \log^{D/2}(1/\ve) \ve  \lesssim  \log^{D/2}(1/\ve) \ve.
$$

Also we can choose the atoms of $G$ to be $\sigma^{\alpha_0}\wedge \delta \sigma^{\alpha_\perp} \times \varepsilon $ apart, thanks to \lemref{vj} together with the following  bound on $\Sigma(z)^{-1}\Sigma(y)-\Id $ when $\|z-y\| \leq \sigma^{\alpha_0}\wedge \delta \sigma^{\alpha_\perp} \varepsilon $:
\begin{align*}
\Sigma(z)^{-1}\Sigma(y)-\Id &= O_{z}^\top \Delta^{-2}_{\sigma,\delta} O_{z} O_{y}^\top \Delta_{\sigma,\delta}^2 O_{y} - \Id \\
			&= O_{z}^\top \Delta^{-2}_{\sigma,\delta} (O_{z} O_{y}^\top - \Id) \Delta^2_{\sigma,\delta} O_{y} + O_{z} O_{y}^\top - \Id, 
		\end{align*} 
			so that
			\begin{align*} 
				&\sqrt{\tr(\Sigma(z)^{-1}\Sigma(y)-\Id)^2} \\
				&\leq \sqrt{D} \|\Sigma(z)^{-1}\Sigma(y)-\Id\|_{\op} \lesssim \|\Delta^{-2}_{\sigma,\delta}\|_{\op} \|O_{z} O_{y}^\top - \Id \|_{\op} \| \|\Delta^2_{\sigma,\delta}\|_{\op}
			+ \|O_{z} O_{y}^\top - \Id \|_{\op} \\
			&\lesssim \sigma^{-2\alpha_0} \vee \delta^{-2}\sigma^{-2\alpha_\perp}  \|O_{z} - O_{y}\|_{\op} \lesssim [ \sigma^{-2\alpha_0} \vee \delta^{-2}\sigma^{-2\alpha_\perp}]  \|\pr_{T_{z}} - \pr_{T_{y}}\|_{\op} \\
			 & \quad \lesssim  [\sigma^{-2\alpha_0} \vee \delta^{-2}\sigma^{-2\alpha_\perp} ] \|z-y\| \lesssim \ve,
			\end{align*} 
			where we used \eqref{lem:angles} in the second to last inequality, ending the proof.

		\end{proof}
		
%
%

\section{Appendix to Section \ref{sec:num}} \label{app:num}

\subsection{Additional details on the numerical setting} \label{app:numdetail}
				
				The manifolds used in the numerical experiments are given through the following equations:
	\bitem	
	\item \emph{The two circles}: it is the union of $\cC_1$ and $\cC_2$ with equation $(x-x_i)^2+(y-y_i)^2 = r_i^2$ for $i \in \{1,2\}$. In the experiements, we chose $(x_1,y_1) = (0,0)$, $(x_2,y_2) = (2,0)$ and $r_1 = r_2 = 2$.
	\item \emph{The 2D-spiral}: it is given by the parametric embedding
	$$
	\varphi_{2} : t \in [0,1] \mapsto \begin{cases}
		R (\omega t +\theta_0) \cos(\omega t +\theta_0) \\
		R (\omega t +\theta_0) \sin(\omega t +\theta_0).
	\end{cases} 		
	$$
	In the numerical studies, we chose $R = 1/2\pi$, $\omega = 7\pi/2$ and $\theta_0 = \pi/2$.
	\item \emph{The 3D-spiral}: it is given by the parametric embedding
	$$
	\varphi_{3} : t \in [0,1] \mapsto \begin{cases}
		R (\omega t +\theta_0) \cos(\alpha t +\theta_0) \\
		R (\omega t +\theta_0) \sin(\alpha t +\theta_0) \\
		\nu t. 
	\end{cases} 		
	$$
	In the numerical studies, we chose $R = 1/2\pi$, $\omega = 7\pi/2$, $\theta_0 = \pi/2$ and $\nu = 2$.
	\item \emph{The torus}: it is the set $\cT$ given by the equation $(\sqrt{(x-x_0)^2+(y-y_0)^2}-R)^2+(z-z_0)^2 = r^2$. In the experiements, we chose $(x_0,y_0,z_0) = (0,0,0)$, $R = 3$ and $r = 1$.
	\eitem

\subsection{Details on the Gibbs sampling algorithm} \label{app:gibbssampl}

This first section presents the implementation of the partial location-scale prior \eqref{partialprior} with Gibbs sampling for a particular choice of measure $H$.

In order to simplify the notations we will write indifferently $\Lambda$ as a $D$-dimensional diagonal matrix or a $D$-dimensional vector.

We use the latent cluster representation of the $(\mu_i O_i)$ and denote by $c_i$ the cluster allocation of observation $i$:
\[
c_i = c_j \iff y_i \text{ and } y_j \text{ are in the same cluster}
\] 
and let	$N$ be    the number of clusters, so that $ 1 \leq c_i \leq N$.
We write $\phi_c = (\mu_c^*,O_c^*)_{1 \leq c \leq N}$ for the unique values of the cluster parameters. 

\begin{prp} \label{thm:prior1}
	In the model \eqref{prior:1}, we have :
	\begin{align*}
		p(\Lambda | Y, (\mu_i,O_i),b) = & \bigotimes_{j=1}^D \Invg\left(a_j+n/2, b_j + \frac{1}{2} \sum_{i=1}^n \langle O_i^j | y_i - \mu_i \rangle^2\right) \\
		p(b|Y,(\mu_i,O_i),\Lambda) = & \bigotimes_{j=1}^D \Gamma(a_j+1,\kappa_j + \lambda_j^{-1})
	\end{align*}
	and :
	\begin{align*}
		p(\mu_c^*|O_c^*,Y,(c_i),\Lambda) &=  \mathcal{N}(\cdot | \hat{\mu}_c, A^{-1}) , \, c \leq N\\
		p(O_c^*|\mu_c^*,Y,(c_i),\Lambda) &=  p_{\BMF}(\cdot |S,- \frac{1}{2}\Lambda^{-1},M_0)
	\end{align*}
	where
	\begin{align*}
		A &=  \Sigma_0^{-1} + n_c O_c^* \Lambda^{-1} (O_c^*)^T,~~~~~~
		n_c =  \Card \{i: c_i = c \}, \\
		\hat{\mu}_c &= A^{-1}[ \Sigma_0^{-1}\mu_0 + O_c^*\Lambda^{-1}(O_c^*)^T \sum_{c_i = c}y_i]
		~~\text{and}~~ S =  \sum_{c_i = c} (y_i - \mu_c^*)(y_i - \mu_c^*)^T.
	\end{align*}
\end{prp}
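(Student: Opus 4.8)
\textit{Strategy.} The plan is to prove each full conditional by a conjugacy computation on the joint density of $\big(Y,(\mu_i,O_i)_{i\le n},\Lambda,b\big)$, the Dirichlet process being represented through the latent allocations $(c_i)$ and the distinct cluster atoms $\phi_c=(\mu_c^*,O_c^*)$ as in \cite[Algo 8]{neal2000markov}. The first step is to rewrite the Gaussian kernel in a form adapted to the spectral parametrisation: since $(O\Lambda O^T)^{-1}=O\Lambda^{-1}O^T$ and $\det(2\pi O\Lambda O^T)=(2\pi)^D\prod_j\lambda_j$,
\begin{equation*}
\mathcal N\big(y\mid\mu,O\Lambda O^T\big)=(2\pi)^{-D/2}\Big(\prod_{j=1}^D\lambda_j\Big)^{-1/2}\exp\Big(-\tfrac12\sum_{j=1}^D\lambda_j^{-1}\langle O^j,y-\mu\rangle^2\Big),
\end{equation*}
where $O^j$ is the $j$-th column of $O$.

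\textit{The global parameters.} Multiplying this kernel over $i=1,\dots,n$ with $(\mu_i,O_i)$ fixed and against the $\bigotimes_j\Invg(a_j,b_j)$ prior, the $\lambda_j$'s decouple and the $\lambda_j$-factor becomes $\lambda_j^{-(a_j+n/2)-1}\exp\!\big(-\lambda_j^{-1}\big(b_j+\tfrac12\sum_i\langle O_i^j,y_i-\mu_i\rangle^2\big)\big)$, which is the kernel of $\Invg\!\big(a_j+n/2,\;b_j+\tfrac12\sum_i\langle O_i^j,y_i-\mu_i\rangle^2\big)$; this gives the first display. For $b$, conditionally on $\Lambda$ the only $b_j$-dependent factor is the $\Invg(a_j,b_j)$ density viewed as a function of its rate, proportional to $b_j^{a_j}e^{-b_j/\lambda_j}$, so combining with the $\Exp(\kappa_j)$ hyperprior gives the stated full conditional for $b_j$ (a Gamma, resp.\ exponential, law with rate $\kappa_j+\lambda_j^{-1}$).

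\textit{The cluster atoms.} I would then invoke the standard fact that, conditionally on $(c_i)$, $Y$ and $\Lambda$, the atoms $\phi_c$ are independent across $c$ with density $\propto h(\phi_c)\prod_{c_i=c}\mathcal N\big(y_i\mid\mu_c^*,O_c^*\Lambda(O_c^*)^T\big)$, $h=\mathcal N(\mu_0,\Sigma_0)\otimes\unif(\mathcal O(D))$ being the base measure; this localises the problem to a single cluster. For $\mu_c^*$ (with $O_c^*,\Lambda$ fixed), completing the square in $-\tfrac12\big[(\mu_c^*-\mu_0)^T\Sigma_0^{-1}(\mu_c^*-\mu_0)+\sum_{c_i=c}(y_i-\mu_c^*)^TO_c^*\Lambda^{-1}(O_c^*)^T(y_i-\mu_c^*)\big]$ produces the precision $A=\Sigma_0^{-1}+n_cO_c^*\Lambda^{-1}(O_c^*)^T$ and mean $\hat\mu_c=A^{-1}\big(\Sigma_0^{-1}\mu_0+O_c^*\Lambda^{-1}(O_c^*)^T\sum_{c_i=c}y_i\big)$. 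For $O_c^*$ (with $\mu_c^*,\Lambda$ fixed) the determinant factor $\big(\prod_j\lambda_j\big)^{-n_c/2}$ and the Haar prior drop out, leaving a density $\propto\exp\!\big(-\tfrac12\sum_{c_i=c}(y_i-\mu_c^*)^TO_c^*\Lambda^{-1}(O_c^*)^T(y_i-\mu_c^*)\big)$; using $v^TO\Lambda^{-1}O^Tv=\tr\big(O^T(vv^T)O\,\Lambda^{-1}\big)$ and summing over the cluster gives $\exp\!\big(\tr\big((O_c^*)^TSO_c^*(-\tfrac12\Lambda^{-1})\big)\big)$ with $S=\sum_{c_i=c}(y_i-\mu_c^*)(y_i-\mu_c^*)^T$, exactly the matrix Bingham–von Mises–Fisher kernel on $\mathcal O(D)$ with Bingham matrices $S$ and $-\tfrac12\Lambda^{-1}$ and vanishing linear term $M_0$.

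\textit{Main difficulty.} I expect the only delicate point to be the last identification: matching the $O_c^*$-conditional to the precise convention of $p_{\BMF}$ adopted in the sampler — left versus right placement of the two Bingham matrices, $\mathcal O(D)$ versus the Stiefel manifold, and the sign and role of $M_0$ — and to the associated simulation scheme. Everything else is routine Gaussian/inverse-Gamma conjugacy together with the usual Dirichlet-process cluster factorisation.
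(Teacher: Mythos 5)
Your proof takes exactly the same route as the paper's: rewrite the Gaussian kernel in the spectral parametrisation, decouple across $j$ for the inverse–Gamma conjugacy on $\Lambda$, use the standard Dirichlet-process cluster factorisation for the atoms, complete the square for $\mu_c^*$, and the trace identity $v^TO\Lambda^{-1}O^Tv=\tr\big(O^T vv^T O\Lambda^{-1}\big)$ for $O_c^*$. All of that matches.

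Where you are in fact more careful than the paper is the $b_j$ conditional. You correctly keep the normaliser $b_j^{a_j}$ from the $\Invg(a_j,b_j)$ density viewed as a function of $b_j$, so the full conditional is
\[
p(b_j\mid\Lambda)\;\propto\; b_j^{a_j}\,e^{-b_j(\kappa_j+\lambda_j^{-1})},
\]
i.e.\ a $\mathrm{Gamma}(a_j+1,\;\kappa_j+\lambda_j^{-1})$, \emph{not} an exponential unless $a_j=0$. The paper's own proof computes $p(b\mid\cdot)\propto p(b)p(\Lambda\mid b)$ but then silently drops the $b_j^{a_j}$ factor and records $\Exp(\kappa_j+\lambda_j^{-1})$ in both the proof and the statement of the proposition; since the experiments set $a_j=1$, this is a genuine (if minor) error in the paper, and your parenthetical ``a Gamma, resp.\ exponential'' is pointing at exactly the right thing — you should simply state the Gamma form and note that the proposition as written is off by the shape parameter $a_j$. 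Everything else in your argument, including the anticipated convention-matching for $p_{\mathrm{BMF}}$, is fine.
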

\begin{proof}[Proof of Proposition \ref{thm:prior1}]
	\begin{itemize}
		\item For the variances $\Lambda$ we have 
		\[
		p(\Lambda | Y, (\mu_i,O_i),b) \propto p(\Lambda|b) \prod_i \mathcal{N}(y_i|\mu_i, O_i \Lambda O_i^T) \propto \left[ \prod_{j=1}^D \lambda_j^{-(a+1)} e^{-b_j/\lambda_j} \right] \prod_{i=1}^n \mathcal{N}(y_i|\mu_i, O_i \Lambda O_i^T).
		\]
		But by orthogonality :
		
		\begin{align*}
			\mathcal{N}(y_i|\mu_i, O_i \Lambda O_i^T) \propto & \left[ \prod_{j=1}^D \lambda_j^{-1/2} \right] \exp \left(- \frac{1}{2} \langle y_i - \mu_i | O_i \Lambda^{-1} O_i^T (y_i - \mu_i) \rangle \right) \\
			\propto & \left[ \prod_{j=1}^D \lambda_j^{-1/2} \right] \exp \left(- \frac{1}{2} \langle O_i^T(y_i - \mu_i) | \Lambda^{-1} O_i^T (y_i - \mu_i) \rangle \right) \\
			\propto & \left[ \prod_{j=1}^D \lambda_j^{-1/2} \right] \exp \left(- \frac{1}{2} \sum_{j=1}^D \lambda_j^{-1} \langle O_i^j | y_i - \mu_i \rangle^2 \right) \\
			\propto & \prod_{j=1}^D \left[ \lambda_j^{-1/2} \exp \left(- \frac{1}{2} \lambda_j^{-1} \langle O_i^j | y_i - \mu_i \rangle^2 \right) \right],
		\end{align*}
		where $O_i^j$ is the j-th column of $O_i$. Therefore,
		\begin{align*}
			p(\Lambda | Y, (\mu_i,O_i),b) \propto & \prod_{j=1}^D \left[ \lambda_j^{-(a+1)} e^{-b_j/\lambda_j} \lambda_j^{-n/2} \exp \left(- \frac{1}{2} \lambda_j^{-1} \sum_{i=1}^n \langle O_i^j | y_i - \mu_i \rangle^2 \right)  \right] \\
			\propto & \prod_{j=1}^D \left[ \lambda_j^{-(a+1 + n/2)} \exp \left(- \lambda_j^{-1} \left[b_j + \frac{1}{2} \sum_{i=1}^n \langle O_i^j | y_i - \mu_i \rangle^2 \right] \right)  \right],
		\end{align*}
		i.e conditionally on the rest of the variables, the $(\lambda_j)_{j=1}^D$ are independent with distribution $\Invg(a+n/2, b_j + \frac{1}{2} \sum_{i=1}^n \langle O_i^j | y_i - \mu_i \rangle^2)$.
		
		\item For the hyperparameters $(b_j)_{j=1}^D$ we have
		\[
		p(b|Y,(\mu_i,O_i),\Lambda) \propto p(b)p(\Lambda|b) \propto \bigotimes_{j=1}^D \Gamma (a_j+1,\kappa_j+\lambda_j^{-1})
		\]
		i.e conditionally on the rest of the variables, the $b_j's$ are independent with distribution $\Gamma (a_j+1,\kappa_j + \lambda_j^{-1})$.
		
		\item For the cluster locations $(\mu_c^*)_{1 \leq c \leq K}$ we have 
		\begin{align*}
			p(\mu_c^*|O_c^*,Y,(c_i),\Lambda) &\propto  p(\mu_c^*) \prod_{c_i=c} \mathcal{N}(y_i|\mu_c^*, O_c^* \Lambda (O_c^*)^T) \\
			&\propto  \exp \left(- \frac{1}{2} \{ \langle \mu_c^*-\mu_0 | \Sigma_0^{-1}(\mu_c^*-\mu_0) \rangle + \sum_{c_i=c} \langle y_i - \mu_c^* | O_c^* \Lambda^{-1} (O_c^*)^T(y_i - \mu_c^*) \rangle \}\right),
		\end{align*}
		and since the prior $p(\mu_c^*)$ is Gaussian with mean $\mu_0$ and variance $\Sigma_0$ the above conditional posterior distribution is a Gaussian distribution with mean $\hat{\mu}_c = A^{-1} [\Sigma_0^{-1} \mu_0 + (O_c^*)\Lambda^{-1}(O_c^*)^T \sum_{c_i=c}y_i] $ and variance $A^{-1} = [\Sigma_0^{-1}  + n_c(O_c^*)\Lambda^{-1}(O_c^*)^T]^{-1}$.
		\item For the cluster orientations $(O_c^*)_{1 \leq c \leq K}$ we have
		\begin{align*}
			p(O_c^*|\mu_c^*,Y,(c_i),\Lambda) &\propto  \exp \left( \tr \{ M_0^TO_c^*\} \right) \prod_{c_i = c} \mathcal{N}(y_i|\mu_c^*, O_c^* \Lambda (O_c^*)^T) \\
			&\propto  \exp \left( \tr \{ M_0^TO_c^*\} \right) \exp \left( - \frac{1}{2} \sum_{c_i=c} \langle y_i- \mu_c^* | O_c^* \Lambda^{-1}(O_c^*)^T (y_i - \mu_c^*) \rangle \right) \\
			&\propto  \exp \left( \tr \{ M_0^TO_c^* - \frac{1}{2} \sum_{c_i=c} (y_i-\mu_c^*)(y_i-\mu_c^*)^TO_c^* \Lambda^{-1}(O_c^*)^T \} \right) \\
			&\propto  \exp \left(\tr \{ M_0^TO_c^* - \frac{1}{2} \Lambda^{-1}(O_c^*)^T S O_c^* \},\right).
		\end{align*}
		We thus obtain
		$$
		O_c^* \sim p_{\BMF}(O_c^*|S,- \frac{1}{2}\Lambda^{-1},M_0)~|~\mu_c^*,Y,(c_i),\Lambda.
		$$
	\end{itemize}
\end{proof}

Proposition \ref{thm:prior1} is the key to implement algorithm 8 in \cite{neal2000markov}.

\begin{rem}\label{rem:1}
	Here we can see that when $\sum_{i=1}^n \langle O_i^j | y_i - \mu_i \rangle^2 = o(n^{3/2})$, $\Invg(a+n/2, b_j + \frac{1}{2} \sum_{i=1}^n \langle O_i^j | y_i - \mu_i \rangle^2)$ is tightly concentrated around its mean 
	$$\frac{b_j + \frac{1}{2} \sum_{i=1}^n \langle O_i^j | y_i - \mu_i \rangle^2}{a+n/2-1} \geq \frac{b_j}{a+n/2-1};$$  
	and if we  fix $b_j$ this may induce a rather strong penalization on small values of $\lambda_j$ for finite $n$.
\end{rem}

Even though to the best of our knownledge no direct samplers are available for $p_{\BMF}$, it is still possible to perform a Gibbs sampling update over the columns of $O_c^*$ (cf \cite{hoff2009simulation} and the associated package {\tt rstiefel}). Another more involved (but more efficient) option would be to perform Hamiltonian Monte Carlo via polar expansion as suggested in \cite{jauch2021monte}. With a slight abuse of notation we will write 
$$
O_c^* \sim p_{\BMF,\Gibbs}(\cdot|A,B,C,O_c^*)
$$
for a Gibbs sampling scan on the column of $O_c^*$ starting from $O_c^*$. All in all, this leads us to Algorithm \ref{algo:1} below. It uses the conditional allocation distributions :
\begin{equation}\label{alloc:dis}
	P(c_i=l|c_{-i},y_i,(\phi_l)_{1\leq l \leq h}) \propto
							 \left[
							\begin{array}{cc}
								n_{-i,c} \mathcal{N}(y_i|\mu_l, O_l \Lambda O_l^T) & \forall 1 \leq l \leq k\\
								\frac{\alpha}{m} \mathcal{N}(y_i|\mu_l, O_l \Lambda O_l^T) & \forall k < l \leq h
							\end{array} 
						\right.
\end{equation}

\begin{algorithm}[h!]
    \SetAlgoRefName{1}
    \SetKwInOut{Input}{Input}
    \SetKwInOut{Output}{Output}
    \Input{a dataset $Y = (y_i)_{i=1}^n$, a current partition $c = (c_i)_{i=1}^n$, a current state $\phi = (\mu_l^*,O_l^*)_{l=1}^K$ (with $K$ the current number of clusters), current variances $\Lambda = (\lambda_j)_{j=1}^D$, current hyperparameters $(b_j)_{j=1}^D$, a number of auxilliary parameters $m \in \mathbb{N}^*$, $\alpha > 0,\mu_0 \in \mathbb{R}^D, \Sigma_0 \in \mathcal{S}_D^{++}(\mathbb{R}), M_0 \in \mathcal{M}_D(\mathbb{R})$ the parameters for the base distribution of the Dirichlet process, $(a_j)_{j=1}^D$ the prior parameters for the variances}
    \Output{an updated state $(c,\phi,\Lambda,b)$}
 \For{$i = 1,\ldots,n$}{
							$k \gets \# \{j \neq i : c_j = c_i \}$; $h \gets k + m$; and 							label $(c_j)_{j \neq i}$ with $\{1,\ldots,k\}$\;
							\eIf{$\exists j \neq i : c_i = c_j$}{
								$(\phi_l)_{k < l \leq h} \overset{i.i.d}{\sim} G_0$\;
							}{\If{$\forall j \neq i : c_i \neq c_j$}{
									$c_i \gets k+1$;
									$(\phi_l)_{k+1 < l \leq h} \overset{i.i.d}{\sim} G_0$\;}
							}
							$n_{-i,c} \gets \# \{c_j : j \neq i\}$;
							\text{draw $c_i$ from $\{1,\ldots,h\}$ with probabilitity \eqref{alloc:dis}};
							\text{discard the $(\phi_l)$ not associated with any observations}\;
							\For{$l \in \{c_1,\ldots,c_n\}$}{
								$n_l \gets \# \{i : c_i = l\}$;
								$A=\Sigma_0^{-1} + n_l O_l^* \Lambda^{-1}(O_l^*)^T$ ;
								$\hat{\mu}_l = A^{-1}[\Sigma_0^{-1}\mu_0 + O_l^* \Lambda^{-1}(O_l^*)^T \sum_{c_i=l}y_i$]\;
								$\mu_l \gets \mu_l \sim	 \mathcal N( \hat{\mu}_l, A^{-1})$
					;
								$S \gets \sum_{c_i=l} (y_i-\mu_l^*)(y_i-\mu_l^*)^T$\;
								$O_l^* \gets O_l^* \sim
								p_{BMF,Gibbs}(O_l^* | S,- \frac{1}{2}\Lambda^{-1},M_0,O_l^*)$
								\;
							}\;
							$(\lambda_j)_{j=1}^D \gets (\lambda_j)_{j=1}^D \overset{ind}{\sim} p(\lambda_j|(y_i),c,(\phi_l)) \propto Inv\Gamma(a+n/2,b+\frac{1}{2}\sum_{i=1}^n \langle (O_{c_i}^*)^j | y_i - \mu_{c_i}^* \rangle^2)$\;
							$b \gets (b_j)_{j=1}^D \overset{ind}{\sim} \Gamma(a_j+1,\kappa_j + \lambda_j^{-1})$
							
						}
    \caption{Gibbs sampling algorithm for Partial location scale DP mixture}
    \label{algo:1}
\end{algorithm}

	\end{appendix}

\end{document}